\newtheorem{thm}{Theorem}[section]
\newtheorem{cor}[thm]{Corollary}
\newtheorem{claim}[thm]{Claim}
\newtheorem{fact}[thm]{Fact}
\newtheorem{lemma}[thm]{Lemma}
\newtheorem{prop}[thm]{Proposition}
\theoremstyle{definition}
\newtheorem{definition}[thm]{Definition}
\newtheorem{remark}[thm]{Remark}
\def\rquotient#1#2{%
	\makeatletter
	\raise.3ex\hbox{$#1$}/\lower.3ex\hbox{$#2$}%
	\makeatother
}	
\newcommand{\subjclass}[2][2010]{%
	\let\@oldtitle\@title%
	\gdef\@title{\@oldtitle\footnotetext{#1 \emph{Mathematics subject classification.} #2}}%
}
\newcommand{\keywords}[1]{%
	\let\@@oldtitle\@title%
	\gdef\@title{\@@oldtitle\footnotetext{\emph{Key words and phrases.} #1.}}%
}
\newcommand{\Address}{{
		\bigskip
		\small
		
		\textsc{University of Montpellier\\ 
Institut Math\'ematiques Alexander Grothendieck\\
Place Eug\`ene Bataillon\\
34090 Montpellier (France}\par\nopagebreak
		\textit{E-mail address}: \texttt{anthony.genevois@umontpellier.fr}
		
}}
\title{Translation lengths in crossing and contact graphs of quasi-median graphs}
\date{\today}
\author{Anthony Genevois}
\subjclass{Primary 20F65. Secondary 20F67, 20F10.}
\keywords{Quasi-median graphs, crossing graph, contact graph, translation length, graph products of groups}
\begin{document}

\maketitle

\begin{abstract}
Given a quasi-median graph $X$, the crossing graph $\Delta X$ and the contact graph $\Gamma X$ are natural hyperbolic models of $X$. In this article, we show that the asymptotic translation length in $\Delta X$ or $\Gamma X$ of an isometry of $X$ is always rational. Moreover, if $X$ is hyperbolic, these rational numbers can be written with denominators bounded above uniformly; this is not true in full generality. Finally, we show that, if the quasi-median graph $X$ is constructible in some sense, then there exists an algorithm computing the translation length of every computable isometry. Our results encompass contact graphs in CAT(0) cube complexes and extension graphs of right-angled Artin groups. 
\end{abstract}

\tableofcontents

\section{Introduction}

Given a group $G$ and a finite generating set $S$, the \emph{translation number} of an element $g \in G$ is the limit
$$\tau_S(g):= \lim\limits_{n \to + \infty} \frac{\|g\|_S}{n},$$
where $\| \cdot\|_S$ denotes the word length associated to $S$. The limit exists by subadditivity, and it depends heavily on the chosen generating set. Introduced in \cite{MR1114609} with the purpose of studying subgroups in biautomatic groups, the structure of the \emph{translation spectrum} $\mathrm{TSpec}_S(G):= \{ \tau_S(g) \mid g \in G \text{ infinite order} \}$ have been investigated for many families of groups, including hyperbolic and CAT(0) groups \cite{MR919829, MR1334214, MR1390660, MR1741488, MR1776764, MR1694875}, solvable groups \cite{MR1466819, MR1612184, MR1736519}, small cancellation groups \cite{MR1357396, MR3950644}, Artin groups \cite{MR1714913}, mapping class groups \cite{MR1813237}, outer automorphism groups of free groups \cite{MR1934012}, Garside groups \cite{MR2303195, MR2344226, MR2438075}, and Coxeter groups \cite{MR2804516}. 

\medskip \noindent
Some of these results can be better understood in a more geometric framework. Namely, given a metric space $(X,d)$ and an isometry $g$, the (\emph{asymptotic} or \emph{stable}) \emph{translation length} of $g$ is the limit
$$\tau(g):= \lim\limits_{n \to + \infty} \frac{d(o, g^n \cdot o)}{n},$$
where $o \in X$ is a basepoint. The limit exists (by subadditivity) and does not depend on the chosen basepoint (by triangular inequality). Given a finitely generated group acting on one of its Cayley graphs by left-multiplication, the translation length of an element coincides with the corresponding translation number. Now, the question becomes: given our metric space $X$, what is the structure of the spectrum $\mathrm{TSpect}(X):= \{ \tau(g) \mid g \in \mathrm{Isom}(X) \text{ unbounded orbits}\}$? 

\medskip \noindent
In several families of graphs, there exists a dichotomy in the possible behaviour of an isometry: either it has bounded orbits or one of its powers acts as a translation on a bi-infinite geodesic. One consequence is that the translation spectrum is contained in the rational numbers $\mathbb{Q}$, or even in $\frac{1}{N} \mathbb{Z}$ for some integer $N \geq 1$ if one has a uniform control on the powers previously mentioned. Examples include locally finite hyperbolic graphs \cite{MR919829, MR1334214, MR1390660, MR1741488}; median graphs, or equivalently one-skeleta of CAT(0) cube complexes \cite{HaglundAxis}; quasi-median graphs with finite cliques (see Proposition~\ref{prop:Axis} below); bridged graphs, or equivalently one-skeleta of systolic complexes \cite{MR2507689}; and Helly graphs of finite combinatorial dimension \cite{HaettelOsajda}. 

\medskip \noindent
Instead of considering all the isometries of a metric space, one can focus on a specific subgroup of isometries. One case of interest, which received a lot of attention, is given by mapping class groups acting on their curve graphs. Among the results available in the vast literature on the subject, let us mention that: having a positive translation length characterises pseudo-Anosov elements \cite{MR1714338}; the translation length is always a rational number, with a uniform control on the denominator \cite{MR2367021, MR3378831}; and translation lengths can be computed algorithmically \cite{MR2705485, MR2970053, MR3378831}. 

\medskip \noindent
In order to emphasize the similarity between the \emph{extension graph} introduced in \cite{MR3039768} for right-angled Artin groups and the curve graph for mapping class groups, already motivated in \cite{MR3192368}, the recent work \cite{Rational} initiates the study of translation spectra of right-angled Artin groups acting on their extension graphs. Their main result shows that, given a finite connected graph $\Gamma$, the translation length of an element of the right-angled Artin group $A(\Gamma)$ on the extension graph $\Gamma^e$ is always rational. Moreover, if $\Gamma$ has girth $\geq 6$, then there is a uniform bound on the denominator. 

\medskip \noindent
In this article, our goal is to propose a more geometric and more general perspective on this result. 

\medskip \noindent
As noticed in \cite{QM}, the Cayley graph $\mathrm{QM}(\Gamma):=\mathrm{Cayl}(A(\Gamma), \bigcup_{v \in \Gamma} \langle v \rangle)$ of a right-angled Artin group $A(\Gamma)$ turns out to be a \emph{quasi-median graph}. As such, it geometry is encoded in the combinatorics of its \emph{hyperplanes}. Interestingly, the extension graph $\Gamma^e$, defined algebraically in \cite{MR3039768}, coincides with the \emph{crossing graph} of $\mathrm{QM}(\Gamma)$, namely the graph whose vertices are the hyperplanes of $\mathrm{QM}(\Gamma)$ and whose edges connect two hyperplanes whenever they are transverse. Thus, right-angled Artin groups acting on their extension graphs can be thought of as a particular case of the more general study of crossing graphs of quasi-median graphs.

\medskip \noindent
Crossing graphs have been initially introduced for median graphs (a.k.a.\ one-skeleta of CAT(0) cube complexes) independently in \cite{Roller, MR3217625}. A related graph introduced in \cite{MR3217625} is the \emph{contact graph}. Given a (quasi-)median graph $X$, its contact graph is defined as the graph whose vertices are the hyperplanes of $X$ and whose edges connect two hyperplanes whenever they are \emph{in contact} (i.e.\ transverse or tangent). In the article, we denote by $\Delta X$ the crossing graph and by $\Gamma X$ the contact graph.

\medskip \noindent
Quasi-median graphs and their crossing and contact graphs provide our general geometric framework. Thus, the main question we are interesting in is: given an isometry of a quasi-median graph, what can be said about its translation length in the corresponding crossing and contact graphs? Our first main result is the following Axis Theorem:

\begin{thm}\label{thm:IntroAxis}
Let $X$ be a quasi-median graph, and let $\Omega X$ be the crossing or contact graph of $X$. In the former case, we assume that $X$ has no cut-vertex; and, in any case, we assume that a vertex of $X$ belongs to $\leq N$ cliques for some fixed $N \geq 1$. For every $g \in \mathrm{Isom}(X)$ with unbounded orbits in $\Omega X$, there exists some $k \geq 1$ such that $g^k$ admits an axis, i.e.\ it acts as a translation on some bi-infinite geodesic in $\Omega X$. 
\end{thm}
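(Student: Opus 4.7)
The plan is to leverage the axis theorem for isometries of $X$ itself (Proposition~\ref{prop:Axis}) to construct an axis in the hyperbolic model $\Omega X$. My first step would be to show that $g$ has unbounded orbits in $X$: the alternative is that $g$ is elliptic on $X$, in which case the clique-finiteness hypothesis together with a fixed-point argument produces a power of $g$ fixing a clique of $X$, and one can then argue, using the uniform bound $N$ on the number of cliques at each vertex, that the orbits of $g$ in $\Omega X$ must also be bounded, contradicting the hypothesis. Applying Proposition~\ref{prop:Axis} then provides a power $g^m$ admitting an axis $\gamma$ in $X$, on which $g^m$ acts by an integer translation of length $p \geq 1$.

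Second, I would extract a candidate axis in $\Omega X$ from the sequence $(H_i)_{i \in \mathbb{Z}}$ of hyperplanes dual to the consecutive edges of $\gamma$; by construction these are pairwise distinct and $g^m$ acts on them by shift $H_i \mapsto H_{i+p}$. In the contact graph case, consecutive $H_i, H_{i+1}$ meet at the shared vertex of the corresponding edges and are hence adjacent in $\Gamma X$, so the sequence is already a bi-infinite walk. For the crossing graph, consecutive hyperplanes may only be tangent; here the no-cut-vertex hypothesis lets one insert bounded-length detours through hyperplanes transverse to both $H_i$ and $H_{i+1}$, yielding a bi-infinite walk in $\Delta X$ on which $g^m$ still acts cocompactly. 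The crucial third step is then to verify that this walk is actually a geodesic in $\Omega X$: a shortcut between $H_i$ and $H_j$ in $\Omega X$ would, via the dual edges, translate into a strictly shorter path in $X$ between $e_i$ and $e_j$, contradicting that $\gamma$ is an axis. A further power of $g^m$ then aligns the translation with the period of the walk and produces a genuine translation axis in $\Omega X$.

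The main obstacle I expect is this third step: establishing that the candidate hyperplane sequence is a \emph{geodesic}, not merely a quasi-geodesic, in $\Omega X$. This is where the combinatorial structure of quasi-median graphs must be leveraged most carefully, presumably via a projection or disc-diagram argument relating distances in $\Omega X$ to distances in $X$. It is precisely here that the hypotheses of the theorem intervene: the uniform clique bound controls how one reconstructs a path in $X$ from a path in $\Omega X$, while the absence of cut-vertices (in the crossing-graph case) ensures that the detours needed to bridge tangencies remain of bounded length and compatible with the action of $g^m$.
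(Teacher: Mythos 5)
Your third step is where the argument breaks, and not merely as a technical obstacle: the sequence of hyperplanes dual to the consecutive edges of an axis $\gamma \subset X$ is essentially never a geodesic in $\Omega X$, and the claimed contradiction (``a shortcut between $H_i$ and $H_j$ in $\Omega X$ would translate into a strictly shorter path in $X$ between $e_i$ and $e_j$'') is false. Adjacency in $\Omega X$ records transversality or contact, which imposes no lower bound on distances in $X$: a single hyperplane $K$ transverse to both $H_i$ and $H_j$ forces $d_{\Omega X}(H_i,H_j) \leq 2$ no matter how large $j-i$ is, and this happens as soon as the convex hull of $\gamma$ contains a large grid. Distances in $\Omega X$ between hyperplanes crossing $\gamma$ are controlled by the number of pairwise \emph{strongly separated} hyperplanes between them (Lemmas~\ref{lem:DistGamma} and~\ref{lem:DistDelta}), not by the number of edges of $\gamma$ separating them; in particular the translation length of $g$ in $\Omega X$ is in general much smaller than its translation length in $X$, and can even be an arbitrarily small positive rational (see the staircase examples of Section~\ref{section:Examples}, where the translation length in $X_n$ is $2$ but the translation length in $\Delta X_n$ is $2/n$). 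So your candidate walk is a typically very distorted path; no power of $g$ acts on it as a translation along a geodesic, and passing to a further power cannot repair distortion.

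There is a second, related gap: even once one knows that $g$ is loxodromic on the hyperbolic graph $\Omega X$ (which already requires producing pairs of strongly separated hyperplanes along $\gamma$, cf.\ Proposition~\ref{prop:Loxo}), a loxodromic isometry of a hyperbolic graph only has a quasi-axis in general, so the existence of a genuine invariant bi-infinite geodesic for some power still has to be established. The paper does this by a different construction: it considers the set $A(g)$ of all hyperplanes in contact with $\mathrm{CH}(\gamma)$, proves that $A(g)$ is isometrically embedded in $\Omega X$ (using the weak-separation statement, Proposition~\ref{prop:separation}), that $\langle g \rangle$ acts on it geometrically, and that it is a quasi-line of finite width; the criterion of Proposition~\ref{prop:FindingAxis} then produces $k \geq 1$, bounded by the width, such that $g^k$ translates along a geodesic of $A(g)$, hence of $\Omega X$. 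Your proposal would need a substitute for this isometric embedding and width bound; reconstructing a short path in $X$ from a short path in $\Omega X$ is precisely what the distortion described above rules out.
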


\noindent
As already mentioned, such a dichotomy implies immediately that translation lengths must be rational numbers. 

\begin{cor}\label{cor:IntroRational}
Let $X$ be a quasi-median graph, and let $\Omega X$ be the crossing or contact graph of $X$. In the former case, we assume that $X$ has no cut-vertex; and, in any case, we assume that a vertex of $X$ belongs to $\leq N$ cliques for some fixed $N \geq 1$. For every $g \in \mathrm{Isom}(X)$, the translation length of $g$ in $\Omega X$ is rational. 
\end{cor}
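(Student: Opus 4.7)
The plan is to derive the corollary as an essentially immediate consequence of Theorem~\ref{thm:IntroAxis}, together with the standard homogeneity property $\tau(g^k) = k \tau(g)$ of the stable translation length. The only case distinction I would make is whether $g$ has bounded orbits in $\Omega X$ or not.

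First, I would dispatch the trivial case. If $g \in \mathrm{Isom}(X)$ has bounded orbits in $\Omega X$, then the sequence $d_{\Omega X}(o, g^n \cdot o)$ is bounded in $n$, so $\tau(g) = 0 \in \mathbb{Q}$. (Here one should check that $g$, which is an isometry of $X$, does act on $\Omega X$; this is because an isometry of $X$ permutes the hyperplanes and preserves the transversality and contact relations, so it induces a graph automorphism of both $\Delta X$ and $\Gamma X$.)

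Next I would handle the interesting case, where $g$ has unbounded orbits in $\Omega X$. Theorem~\ref{thm:IntroAxis} provides an integer $k \geq 1$ such that $g^k$ admits an axis $\gamma$ in $\Omega X$, i.e.\ $g^k$ acts on the bi-infinite geodesic $\gamma$ as a translation. Call $m$ the combinatorial translation length of $g^k$ along $\gamma$, so that $m$ is a non-negative integer. Restricting attention to a basepoint $o$ on $\gamma$, one immediately reads off $d_{\Omega X}(o, (g^k)^n \cdot o) = nm$ for all $n \geq 0$, hence $\tau(g^k) = m \in \mathbb{Z}$.

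Finally I would combine these observations with the homogeneity $\tau(g^k) = k \tau(g)$, valid for any isometry of any metric space as a direct consequence of the definition via the limit. This yields $\tau(g) = m/k \in \mathbb{Q}$, finishing the proof. There is essentially no obstacle once Theorem~\ref{thm:IntroAxis} is in hand: the whole argument amounts to noting that integer-translation along an axis plus rescaling produces a rational number. The substantive content lies entirely in the axis theorem itself; the corollary is a formal consequence.
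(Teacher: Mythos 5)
Your proposal is correct and is essentially the paper's own argument: the paper treats Corollary~\ref{cor:IntroRational} as an immediate consequence of Theorem~\ref{thm:IntroAxis}, exactly via the dichotomy you describe (bounded orbits give $\tau(g)=0$; otherwise $g^k$ translates along a geodesic axis in $\Omega X$, so $\tau(g^k)\in\mathbb{Z}$ and homogeneity gives $\tau(g)=\tau(g^k)/k\in\mathbb{Q}$). No further comment is needed.
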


\noindent
Regarding Theorem~\ref{thm:IntroAxis}, it is natural to ask whether there is a uniform upper bound on powers we can take, which would imply that the rational numbers from Corollary~\ref{cor:IntroRational} can be written with uniformly bounded denominators. This turns out to be true when the underlying quasi-median graph is hyperbolic, see Corollary~\ref{cor:HyperbolicCase}. However, this is not true in full generality. In Section~\ref{section:Examples}, we show that \cite{NoFactorSystem} provides the example of a group acting geometrically on a median graph that contains elements with arbitrarily small translation lengths in the crossing graph. 

\medskip \noindent
As another natural question, are the translation lengths in crossing and contact graphs algorithmically computable? In order to answer this question, we introduce \emph{constructible} quasi-median graphs and \emph{computable} isometries (see Section~\ref{section:Constructible}), and we prove the second main result of this article:

\begin{thm}\label{thm:AsymptoticLength}
Let $X$ be a constructible quasi-median graph. There exists an algorithm that computes, given a computable isometry $g \in \mathrm{Isom}(X)$, the asymptotic translation length of $g$ in $\Omega X$. 
\end{thm}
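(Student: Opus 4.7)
The plan is to reduce the computation of $\tau(g)$ in $\Omega X$ to locating a single iterate $g^k$ that admits an axis, at which point the translation length can be read off as $\tau(g) = d_{\Omega X}(o, g^k \cdot o)/k$ for any vertex $o$ on the axis. This reduction is justified by Theorem~\ref{thm:IntroAxis}: either $g$ has bounded orbits in $\Omega X$ (so $\tau(g) = 0$), or some iterate $g^k$ acts as a translation on a bi-infinite geodesic of $\Omega X$, in which case $\tau(g^k)$ is an integer and $\tau(g) = \tau(g^k)/k$.

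First, using the constructibility of $X$ and the computability of $g$, I would package the effective primitives from Section~\ref{section:Constructible} into the subroutines I need: enumerating the hyperplanes of $X$ that meet a given ball, computing the image of a hyperplane under any power $g^n$, and computing the combinatorial distance $d_{\Omega X}(H,H')$ between two hyperplanes. These allow one to evaluate $d_{\Omega X}(H, g^n \cdot H)$ for any $n$ and to test whether a given finite sequence of hyperplanes is a geodesic of $\Omega X$.

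Second, I would run two semi-algorithms in parallel. The first enumerates $k=1,2,3,\ldots$ and, for each $k$, searches for a finite combinatorial certificate that $g^k$ admits an axis in $\Omega X$ through some hyperplane $H$: namely a geodesic segment from $H$ to $g^k \cdot H$ whose consecutive $\langle g^k\rangle$-translates concatenate into a bi-infinite geodesic. This should reduce to a finite check by exploiting the description of geodesics of $\Omega X$ in terms of sequences of hyperplanes of $X$ that are pairwise transverse (for $\Delta X$) or in contact (for $\Gamma X$); once such a $k$ is found, the algorithm returns $d_{\Omega X}(H, g^k \cdot H)/k$. The second semi-algorithm handles the bounded-orbit case, searching for an explicit combinatorial certificate---for instance, a finite $g$-invariant collection of hyperplanes containing the orbit, or a diameter bound witnessed on a sufficiently large ball---that forces $\tau(g) = 0$. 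By Theorem~\ref{thm:IntroAxis}, exactly one of these processes terminates.

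The main obstacle is producing the effective finite certificate for ``$g^k$ acts as a translation along an axis of $\Omega X$''. In the hyperbolic setting of Corollary~\ref{cor:HyperbolicCase} this is considerably easier because $k$ can be bounded uniformly; in the general quasi-median case one must exploit the combinatorial structure developed earlier in the paper---describing precisely which hyperplanes are skewered by $g^k$ and how they arrange themselves in $\Omega X$---to reduce the bi-infinite periodicity of the axis to a verification that can be carried out in finitely many steps, and dually to ensure that bounded orbits admit a terminating combinatorial witness.
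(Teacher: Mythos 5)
Your high-level reduction (either $\tau(g)=0$ or some power $g^k$ has an axis in $\Omega X$, and then $\tau(g)=d_{\Omega X}(H,g^kH)/k$) matches the paper's endpoint, but the core of your algorithm is missing, and you acknowledge it yourself: you never produce the finite certificates on which the whole parallel-search scheme rests. Both sides are genuinely problematic. On the loxodromic side, ``a geodesic segment from $H$ to $g^kH$ whose $\langle g^k\rangle$-translates concatenate into a bi-infinite geodesic'' is not a finite check: geodesics in $\Omega X$ have no local characterisation (the transverse/in-contact description only characterises \emph{paths}), and even in a hyperbolic graph local geodesity only yields quasi-geodesics, hence a positive but not exact translation length. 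On the elliptic side, the witnesses you suggest do not work as stated: the $\langle g\rangle$-orbit of a hyperplane can consist of infinitely many distinct hyperplanes even when it is bounded in $\Omega X$, so no finite $g$-invariant collection need exist, and ``a diameter bound witnessed on a sufficiently large ball'' is not verifiable without an a priori bound on how large is sufficient. So as written, neither semi-algorithm is shown to terminate in the case it is supposed to handle.

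The paper circumvents both obstacles by a different route, and the ingredients it uses are exactly the ones absent from your sketch. First, $\Omega X$ itself is hyperbolic with explicit constants (Lemmas~\ref{lem:Contact} and~\ref{lem:Crossing}) and no isometry of $X$ acts parabolically on it (Proposition~\ref{prop:Loxo}); combined with a constructible geodesic combing of $\Omega X$ (Proposition~\ref{prop:AlgoDistCrossing}) and the displacement inequalities of Lemmas~\ref{lem:WhenLoxo} and~\ref{lem:WhenEll}, this gives a \emph{terminating decision procedure} for ``bounded orbits versus loxodromic'' (Proposition~\ref{prop:EllOrLoxo}), so no parallel search and no bounded-orbit certificate are needed. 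Second, instead of certifying bi-infinite periodicity, the paper uses the effective localisation in Theorem~\ref{thm:Rational}: some axis of $g^k$, for a power $k$ bounded in terms of $N$ and $\mathrm{QC}(g)$, contains a hyperplane in contact with an explicit finite set near an axis $\gamma$ of $g$ in $X$. After algorithmically finding an axis of a power of $g$ in $X$ via skewered edges (Proposition~\ref{prop:FindAxis}) and computing $\mathrm{HQC}(g)$ (hence bounding $\mathrm{QC}$ by Lemma~\ref{lem:FormulaQC}), the algorithm enumerates this finite list of candidate hyperplanes, takes the single power $k=N^{2\mathrm{HQC}(g)}!$, and picks the candidate $K$ minimising $d_{\Omega X}(K,g^kK)$; Lemma~\ref{lem:AxisFromAnother} guarantees that a minimiser lies on an axis, so the answer is $d_{\Omega X}(K,g^kK)/k$. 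If you want to salvage your scheme, you would need to supply precisely these two ingredients: a terminating ellipticity test (which essentially forces you to use hyperbolicity of $\Omega X$), and a finite, provably complete search space for the axis (which is what the ``moreover'' clause of Theorem~\ref{thm:Rational} provides).
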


\noindent
Theorems~\ref{thm:IntroAxis} and~\ref{thm:AsymptoticLength} are already new for median graphs (a.k.a. one-skeleta of CAT(0) cube complexes), providing a vast range of applications. As already mentioned, including quasi-median graphs also allows to deduce information about right-angled Artin groups and their extension graphs. In fact, it is more natural to deal with \emph{graph products of groups}. Given a graph $\Gamma$ and a collection of groups $\mathcal{G}:= \{G_u \mid u \in V(\Gamma)\}$ indexed by its vertex-set $V(\Gamma)$, the graph product $\Gamma \mathcal{G}$ is
$$\langle G_u \ (u \in V(\Gamma)) \mid [G_u,G_v]=1 \ (\{u,v\} \in E(\Gamma)) \rangle$$
where $E(\Gamma)$ denotes the edge-set of $\Gamma$ and where $[G_u,G_v]=1$ is a shorthand for $[g,h]=1$ for all $g \in G_u$, $h \in G_v$. Usually, one says that graph products interpolate between free products (when $\Gamma$ has no edge) and direct sums (when $\Gamma$ is a complete graph). For instance, right-angled Artin groups coincide with graph products of infinite cyclic groups and right-angled Coxeter groups coincide with graph products of cyclic groups of order two. As shown in \cite{QM}, the Cayley graph
$$\mathrm{QM}(\Gamma, \mathcal{G}):= \mathrm{Cayl} \left( \Gamma \mathcal{G}, \bigcup\limits_{u \in V(\Gamma)} G_u \right)$$
is a median graph. We denote by $\Delta (\Gamma, \mathcal{G})$ its crossing graph. This is the natural generalisation of extension graphs for right-angled Artin groups. Algebraically, it can be defined as the graph whose vertices are the conjugates of vertex-groups and whose edges connect two subgroups whenever they commute (in the sense that every element of one subgroup commutes with every element of the other). 

\medskip \noindent
As an application of our general study of crossing graphs in quasi-median graphs, we get the following statement:

\begin{thm}\label{thm:IntroGP}
Let $\Gamma$ be a finite connected graph and $\mathcal{G}$ a collection of groups indexed by $V(\Gamma)$. The following statements hold.
\begin{itemize}
	\item For every $g \in \Gamma \mathcal{G}$, the translation length of $g$ in $\Delta(\Gamma, \mathcal{G})$ is rational.
	\item Moreover, if $\Gamma$ has no induced $4$-cycle, then this rational number can be written with a denominator $\leq |V(\Gamma)|^{40 \cdot \mathrm{clique}(\Gamma)}$.
	\item If the groups in $\mathcal{G}$ have solvable word problems, then there exists an algorithm that computes, given a $g \in \Gamma \mathcal{G}$, the translation length of $g$ in $\Delta(\Gamma, \mathcal{G})$. 
\end{itemize}
\end{thm}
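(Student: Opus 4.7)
The plan is to deduce the three assertions of Theorem~\ref{thm:IntroGP} from the three general results stated earlier in the introduction, applied to the quasi-median graph $X := \QM$ equipped with the left-multiplication action of $\Gamma\mathcal{G}$ and with $\Omega X := \Delta(\Gamma, \mathcal{G})$. For the first bullet, I would apply Corollary~\ref{cor:IntroRational}. Two hypotheses must be checked: every vertex of $X$ lies in exactly $|V(\Gamma)|$ cliques (one per right-coset of a vertex-group through it), so the clique-multiplicity at a vertex is uniformly bounded by $N := |V(\Gamma)|$; and the connectedness of $\Gamma$ excludes cut-vertices in $X$, since any two edges at a vertex $x$ lie in cliques labelled by vertices of $\Gamma$, and a path in $\Gamma$ between those labels lifts to a chain of cliques at $x$ whose neighbourhoods bypass $x$. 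Corollary~\ref{cor:IntroRational} then yields rationality.

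For the second bullet, I would apply Corollary~\ref{cor:HyperbolicCase}. The key observation is that two hyperplanes of $X$ labelled by $u,v \in V(\Gamma)$ can only be transverse when $\{u,v\} \in E(\Gamma)$; hence, under the no-induced-$4$-cycle hypothesis on $\Gamma$, the graph $X$ contains no bi-infinite flat spanned by two transverse families of hyperplanes, and a standard argument for graph products makes $X$ hyperbolic. The explicit denominator bound $|V(\Gamma)|^{40 \cdot \mathrm{clique}(\Gamma)}$ is then obtained by substituting the combinatorial invariants of $X$ into the uniform bound provided by that corollary, namely the clique-multiplicity $|V(\Gamma)|$ and a maximal prism dimension controlled by $\mathrm{clique}(\Gamma)$.

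For the third bullet, I would apply Theorem~\ref{thm:AsymptoticLength}. This requires both that $X$ be constructible in the sense of Section~\ref{section:Constructible} and that left multiplication by any $g \in \Gamma\mathcal{G}$ defines a computable isometry of $X$. Both points reduce to the effectiveness of the standard normal forms in graph products: when each $G_u$ has solvable word problem, equality in $\Gamma\mathcal{G}$ is decidable, which allows one to enumerate the vertices of $X$, test adjacency, and compute $g \cdot x$ algorithmically.

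The main technical obstacle lies in the second bullet: matching the specific exponent $40 \cdot \mathrm{clique}(\Gamma)$ in the denominator will require carefully tracking constants through the proof of Corollary~\ref{cor:HyperbolicCase} and exploiting the precise combinatorial structure of $\QM$ — in particular the sizes and dimensions of its prisms, governed by $\mathrm{clique}(\Gamma)$ — rather than simply invoking the existence of some uniform bound. Verifying that the absence of induced $4$-cycles really implies hyperbolicity of $\QM$, and not merely of some quotient, is the other delicate point that deserves a dedicated argument.
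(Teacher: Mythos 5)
Your overall route coincides with the paper's: bullet one via Corollary~\ref{cor:IntroRational} (with $N=|V(\Gamma)|$ cliques per vertex and no cut-vertex because the clique-link of every vertex of $\QM$ is isomorphic to the connected graph $\Gamma$, cf.\ Claim~\ref{claim:CutVertex}), bullet three via Theorem~\ref{thm:AsymptoticLength} together with the observation that solvable word problems in the vertex-groups make $\QM$ constructible and left-multiplications computable (using the normal form of Lemma~\ref{lem:DistInX}). These parts are fine and essentially identical to the paper's argument.

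The genuine gap is in bullet two. Corollary~\ref{cor:HyperbolicCase} bounds the denominator by $N^{8\delta}$, where $\delta$ is a hyperbolicity constant of the quasi-median graph itself; so to reach the stated exponent $40\cdot\mathrm{clique}(\Gamma)$ you must prove that, when $\Gamma$ has no induced $4$-cycle, $\QM$ is $\delta$-hyperbolic with $\delta\leq 5\,\mathrm{clique}(\Gamma)$ (indeed $8\cdot 5=40$). Your proposal replaces this by the assertion that the absence of pairs of transverse families of hyperplanes with non-adjacent labels rules out flats and that ``a standard argument for graph products makes $X$ hyperbolic'': this is not a proof (absence of flats does not imply hyperbolicity in general, and in any case it produces no explicit constant), and substituting ``clique-multiplicity and prism dimension'' into Corollary~\ref{cor:HyperbolicCase} is not meaningful, since that corollary consumes a hyperbolicity constant, not a prism dimension. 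The paper fills exactly this hole with a dedicated statement (Lemma~\ref{lem:QMHyp}): using that transverse hyperplanes carry adjacent labels and tangent hyperplanes carry distinct labels (Lemma~\ref{lem:LabelHyp}), one shows that a geodesic bigon of width $>\mathrm{clique}(\Gamma)$ would force an induced $4$-cycle in $\Gamma$, whence bigons are $2\,\mathrm{clique}(\Gamma)$-thin, and then a median-triangle argument upgrades this to $5\,\mathrm{clique}(\Gamma)$-hyperbolicity. You flag this as the main obstacle but leave it unresolved, so as written your argument does not deliver the bound $|V(\Gamma)|^{40\cdot\mathrm{clique}(\Gamma)}$; supplying a quantitative hyperbolicity lemma of this type is the missing step.
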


\noindent
Theorem~\ref{thm:IntroGP} improves the main result of \cite{Rational} in several ways. First, it does not only apply to right-angled Artin groups but to arbitrary graph products, including for instance right-angled Coxeter groups. Next, even when restricted to right-angled Artin groups, Theorem~\ref{thm:IntroGP} weakens the conditions required to control the denominators of the rational translation lengths: we replace the condition of having girth $\geq 6$ with the condition of having no induced $4$-cycle. Finally, the (explicit) algorithm we describe offers the possibility to investigate computably the structure of translation spectra of right-angled Artin groups acting on their extension graphs, which remains poorly understood. For instance, we do not if such a spectrum can contain a value less than $2$, or when only integer values are taken. 

\medskip \noindent
However, it is worth mentioning that the upper bound given by the second item of Theorem~\ref{thm:IntroGP} is far from being optimal in general (which is not surprising since it is obtained from an argument applying to arbitrary quasi-median graphs). For instance, it is proved in \cite{Rational} that the upper bound for a right-angled Artin group defined by a graph $\Gamma$ of girth $\geq 6$ can be taken as the maximal degree of a vertex in $\Gamma$, which is optimal in some cases. This upper bound can be reproved geometrically in our framework, but the global picture remains unclear.

\paragraph{Acknowledgements.} I am grateful to Hyungryul Baik and Donggyun Seo for interesting discussions regarding their work \cite{Rational}, and to Sam Shepherd for interesting discussions about his recent preprint \cite{NoFactorSystem}.

\section{Quasi-median geometry}

\subsection{Generalities}

\noindent
There exist several equivalent definitions of quasi-median graphs; see for instance \cite{quasimedian}. Below is the definition used in \cite{QM}.

\begin{definition}
A connected graph $X$ is \emph{quasi-median} if it does not contain $K_4^-$ and $K_{3,2}$ as induced subgraphs, and if it satisfies the following two conditions:
\begin{description}
	\item[(triangle condition)] for every vertices $a, x,y \in X$, if $x$ and $y$ are adjacent and if $d(a,x)=d(a,y)$, then there exists a vertex $z \in X$ which adjacent to both $x$ and $y$ and which satisfies $d(a,z)=d(a,x)-1$;
	\item[(quadrangle condition)] for every vertices $a,x,y,z \in X$, if $z$ is adjacent to both $x$ and $y$ and if $d(a,x)=d(a,y)=d(a,z)-1$, then there exists a vertex $w \in X$ which adjacent to both $x$ and $y$ and which satisfies $d(a,w)=d(a,z)-2$.
\end{description}
\end{definition}

\noindent
The graph $K_{3,2}$ is the bipartite complete graph, corresponding to two squares glued along two adjacent edges; and $K_4^-$ is the complete graph on four vertices minus an edge, corresponding to two triangles glued along an edge. The triangle and quadrangle conditions are illustrated by Figure \ref{Quadrangle}.
\begin{figure}[h!]
\begin{center}
\includegraphics[scale=0.45]{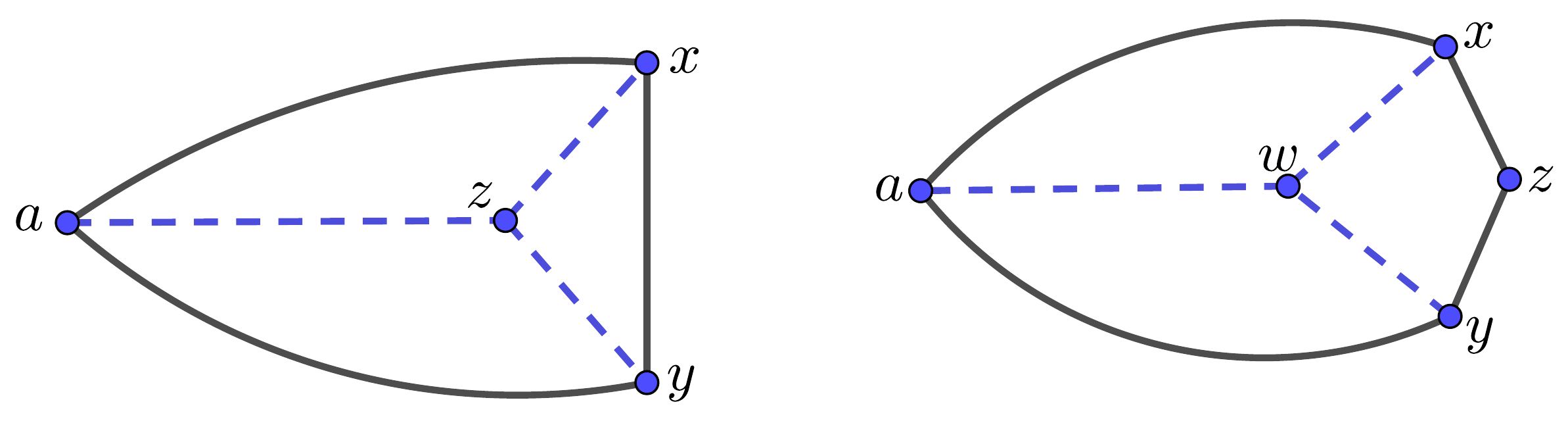}
\caption{Triangle and quadrangle conditions.}
\label{Quadrangle}
\end{center}
\end{figure}

\noindent
Recall that a \emph{clique} is a maximal complete subgraph, and that cliques in quasi-median graphs are gated \cite{quasimedian}. The fact quasi-median graphs do not contain induced copy of $K_4^-$ implies that the intersection between two distinct cliques is always either empty or reduced to a single vertex; in particular, an edge belongs to a unique clique. A \emph{prism} is a subgraph which a product of cliques. In the same way that median graphs can be naturally thought of as made of cubes, quasi-median graphs can be thought of as made of prisms. In a quasi-median graph, the maximal number of factors of a prism is referred to as its \emph{cubical dimension}. 

\paragraph{Hyperplanes.} A fundamental tool in the study of quasi-median graphs is given by \emph{hyperplanes}. 

\begin{definition}
Let $X$ be a graph. A \emph{hyperplane} $J$ is an equivalence class of edges with respect to the transitive closure of the relation saying that two edges are equivalent whenever they belong to a common triangle or are opposite sides of a square. We denote by $X \backslash \backslash J$ the graph obtained from $X$ by removing the interiors of all the edges of $J$. A connected component of $X \backslash \backslash J$ is a \emph{sector}. The \emph{carrier} of $J$, denoted by $N(J)$, is the subgraph generated by all the edges of $J$. Two hyperplanes $J_1$ and $J_2$ are \emph{transverse} if there exist two edges $e_1 \subset J_1$ and $e_2 \subset J_2$ spanning a $4$-cycle in $X$; and they are \emph{tangent} if they are not transverse but $N(J_1) \cap N(J_2) \neq \emptyset$. 
\end{definition}
\begin{figure}
\begin{center}
\includegraphics[trim={0 16.5cm 10cm 0},clip,scale=0.45]{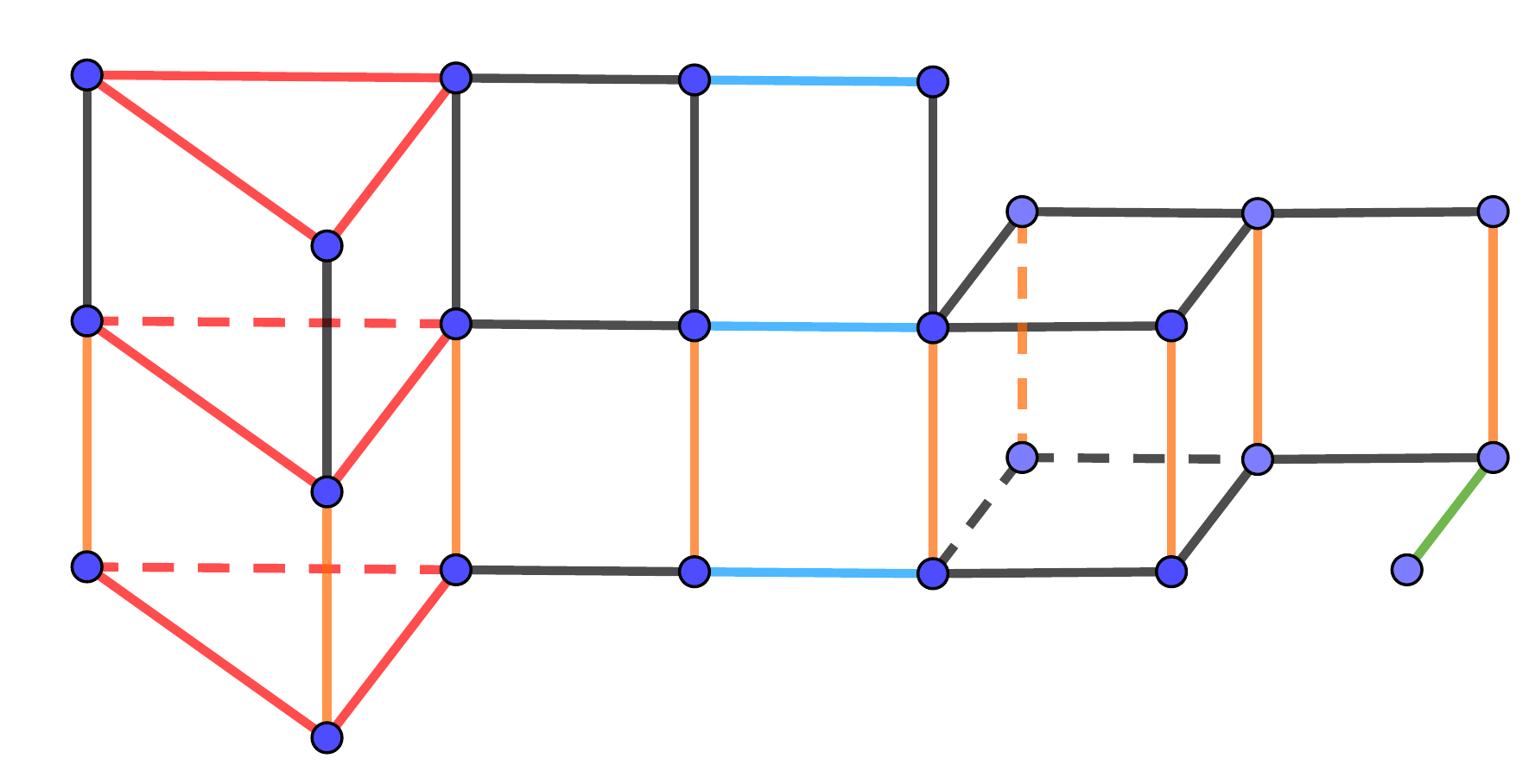}
\caption{A quasi-median graph and some of its hyperplanes.}
\label{figure3}
\end{center}
\end{figure}

\noindent
See Figure \ref{figure3} for examples of hyperplanes in a quasi-median graph.

\medskip \noindent
The key point is that the geometry of a quasi-median graph reduces to the combinatorics of its hyperplanes. This idea is motivated by the following statement:

\begin{thm}[\cite{QM}]\label{thm:BigQM}
Let $X$ be a quasi-median graph. 
\begin{itemize}
	\item[(i)] Every hyperplane $J$ separates $X$, i.e.\ $X \backslash \backslash J$ contains at least two connected components.
	\item[(ii)] Carriers and sectors are gated subgraphs.
	\item[(iii)] A path in $X$ is geodesic if and only if it intersects every hyperplane at most once.
	\item[(iv)] The distance between two vertices coincides with the number of vertices separating them.
\end{itemize}
\end{thm}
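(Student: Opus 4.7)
The plan is to prove the four assertions in the order (iii), (i), (iv), (ii), since each subsequent claim leans on its predecessors. The essential input is the triangle and quadrangle conditions together with the exclusion of induced $K_4^-$ and $K_{3,2}$, which ensure that local defects admit local shortcuts, plus the fact that cliques are gated (available from the start, e.g.\ from \cite{quasimedian}).

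For (iii), I would argue by contradiction: suppose a geodesic $\gamma = (x_0, x_1, \dots, x_n)$ contains two distinct edges $e_i = [x_i, x_{i+1}]$ and $e_j = [x_j, x_{j+1}]$ with $i < j$ that are equivalent under the transitive closure defining a hyperplane. I would induct on the combinatorial length of a chain of triangles and squares witnessing this equivalence. The base cases, where $e_i$ and $e_j$ lie in a common triangle or are opposite sides of a common square, yield strictly shorter paths from $x_i$ to $x_{j+1}$ by cutting a corner, contradicting geodesicity. For the inductive step, I use the triangle or quadrangle condition to slide $\gamma$ locally through an intermediate triangle or square, obtaining a new geodesic of the same length whose offending edges lie one equivalence link closer, and then apply the induction hypothesis.

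Granted (iii), item (i) follows from gatedness of cliques. Fix a hyperplane $J$, an edge $e \in J$, and let $C$ be the unique clique containing $e$; then all edges of $C$ lie in $J$. For any vertex $v$, its gate $g_C(v) \in C$ is characterized by the property that every geodesic from $v$ to a vertex of $C$ passes through $g_C(v)$. By (iii), the initial segment of such a geodesic from $v$ to $g_C(v)$ crosses no edge of $J$, so $v$ and $g_C(v)$ lie in the same component of $X \backslash \backslash J$; conversely, if $v$ and $w$ share a component, concatenating the corresponding paths yields a path from $g_C(v)$ to $g_C(w)$ avoiding $J$-edges, forcing $g_C(v) = g_C(w)$ since distinct vertices of $C$ are joined only by $J$-edges. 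Hence the components of $X \backslash \backslash J$ are indexed by the vertices of $C$, yielding at least two. Item (iv) is then a counting argument: by (i), every hyperplane separating $u$ from $v$ must be crossed, so $d(u,v) \ge \#\{\text{separating hyperplanes}\}$; by (iii), any geodesic crosses each hyperplane at most once, and using (i) once more a geodesic cannot cross a non-separating hyperplane (else it would have to return, violating (iii)), so the two quantities are equal.

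The main obstacle will be (ii). For a sector $S$, given $v \notin S$, I would enumerate the hyperplanes $J_1, \dots, J_k$ separating $v$ from $S$, then move $v$ across them one at a time using the triangle and quadrangle conditions to produce a vertex $g \in S$ at distance exactly $k$ from $v$ whose construction is independent of the chosen order. The delicate step is verifying that $g$ actually serves as a gate: every geodesic from $v$ to any $w \in S$ must pass through $g$. This requires (iii) and (iv) to show that any such geodesic crosses exactly $J_1, \dots, J_k$ first and then only hyperplanes separating $g$ from $w$, with no overlap. Carriers are handled similarly, using that $N(J)$ is controlled by the two sectors adjacent to $J$ together with $J$ itself. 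I expect this coordination of purely local moves into a coherent global projection to be the main technical hurdle.
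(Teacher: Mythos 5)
The paper does not actually prove this theorem: it is imported from \cite{QM}, and the argument there runs in essentially the opposite order to yours. One first shows, using gatedness of cliques, that the gate map onto a clique $C$ contained in $J$ is constant along every edge not belonging to $J$, so that its fibres are unions of components of $X \backslash \backslash J$; separation (i) and gatedness of sectors and carriers (ii) fall out of this projection lemma, and (iii), (iv) are then easy counting consequences. Your plan reverses that order, and the two places where the real content of the theorem lives are exactly the places where your sketch does not supply an argument.

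First, the inductive step of your proof of (iii) does not work as described. The chain of triangles and squares witnessing that $e_i$ and $e_j$ lie in a common hyperplane consists of edges that need not meet $\gamma$ at all, and the triangle and quadrangle conditions give no operation that ``slides'' a geodesic one equivalence link along such a chain: they produce new vertices relative to a basepoint, not a homotopy of $\gamma$ across an arbitrary intermediate square of the chain. The forward direction of (iii) is the quasi-median analogue of the transitivity of the Djokovi\'c--Winkler relation for median graphs; it is precisely the statement that the known proofs only reach after the sector/projection structure is in place, and it is not a formal consequence of the local axioms via your base case plus a slide. Second, your derivation of (i) is circular at the decisive point: from a $J$-avoiding path joining $v$ and $w$ you conclude $g_C(v)=g_C(w)$ ``since distinct vertices of $C$ are joined only by $J$-edges'', but the possible existence of a long $J$-avoiding path between two distinct vertices of $C$ is exactly what separation must rule out. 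What is needed, and what you never prove, is that two vertices joined by an edge outside $J$ have the same gate in $C$; with that lemma the components of $X \backslash \backslash J$ are indexed by $V(C)$ and (i), (iv) follow, but that lemma is the heart of the matter rather than a corollary of (iii). Finally, you flag (ii) yourself as the main hurdle, and the ``move across the separating hyperplanes one at a time'' step again relies on the same missing projection lemma, so as it stands the proposal chains the right statements together but leaves the essential steps unproved.
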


\paragraph{Projections.} As usual, a subgraph is \emph{convex} if it contains every geodesic between any two of its vertices. A strong convexity condition, \emph{gatedness}, is also very useful in quasi-median graphs:

\begin{definition}
Let $X$ be a graph and $Y \subset X$ a subgraph. A vertex $y \in Y$ is a \emph{gate} of an other vertex $x \in X$ if, for every $z \in Y$, there exists a geodesic between $x$ and $z$ passing through $y$. If every vertex of $X$ admits a gate in $Y$, then $Y$ is \emph{gated}.
\end{definition}

\noindent
It is worth noticing that the gate of $x$ in $Y$, when it exists, is unique and minimises the distance to $x$ in $Y$. As a consequence, it may be referred to as the \emph{projection} of $x$ onto $Y$. Also, gated subgraphs are automatically convex. Because an intersection of gated subgraphs is gated, we can define the \emph{gated hull} of a subset as the intersection of all the gated subgraphs containing in.

\medskip \noindent
Our next result, which can be found in \cite[Lemma~2.34]{QM}, describes how projections and hyperplanes interact.

\begin{prop}\label{prop:Projection}
Let $X$ be a quasi-median graph, $x \in X$ a vertex, and $Y \subset X$ a gated subgraph. Every hyperplane separating $x$ from its projection on $Y$ separates $x$ from $Y$. 
\end{prop}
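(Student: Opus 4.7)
My plan is to argue vertex by vertex: let $y$ denote the projection (gate) of $x$ onto $Y$, let $J$ be a hyperplane separating $x$ from $y$, and show that for an arbitrary $z \in Y$, the hyperplane $J$ still separates $x$ from $z$. Once this is done for all $z \in Y$, we have that $J$ separates $x$ from $Y$.

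The key tool will be Theorem~\ref{thm:BigQM}, specifically parts (i) and (iii): every hyperplane separates the graph into at least two sectors, and a path is geodesic if and only if it crosses each hyperplane at most once. First I would fix $z \in Y$ and invoke the definition of the gate to produce a geodesic $\gamma$ from $x$ to $z$ that passes through $y$, writing $\gamma = \gamma_1 \cdot \gamma_2$ where $\gamma_1$ goes from $x$ to $y$ and $\gamma_2$ from $y$ to $z$. Because $J$ separates $x$ from $y$, the subpath $\gamma_1$ must cross $J$ (at least once). On the other hand, $\gamma$ is a geodesic, so by Theorem~\ref{thm:BigQM}(iii) it crosses $J$ at most once. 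Consequently $\gamma_2$ does not cross $J$, and $\gamma$ itself crosses $J$ exactly once.

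It follows that $x$ and $z$ lie in different sectors of $J$, so $J$ separates $x$ from $z$. Since $z$ was an arbitrary vertex of $Y$, the hyperplane $J$ separates $x$ from every vertex of $Y$, which is the desired conclusion.

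There is no serious obstacle here; the statement is essentially a repackaging of the gate property together with the hyperplane characterisation of geodesics. The only subtlety worth flagging is that one must use both directions implicit in Theorem~\ref{thm:BigQM}(iii): first that the geodesic $\gamma_1$ from $x$ to $y$ is forced to cross any hyperplane separating its endpoints (which follows from (i) combined with the fact that $\gamma_1$ is a path), and second that the full geodesic $\gamma$ cannot cross $J$ more than once.
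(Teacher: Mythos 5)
Your argument is correct: the gate property yields a geodesic from $x$ to $z$ through $y$, Theorem~\ref{thm:BigQM}(iii) forces this geodesic to cross $J$ at most once, the crossing must occur on the subpath $[x,y]$ because $J$ separates $x$ from $y$, hence $[y,z]$ avoids $J$, so $z$ lies in the same sector as $y$ and is therefore separated from $x$ — exactly the statement as it is used later (e.g.\ in Lemma~\ref{lem:MinDistance}). The paper itself gives no proof, quoting the result from \cite[Lemma~2.34]{QM}, and your argument is precisely the standard one, so there is no gap.
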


\noindent
Let us record the following consequence, which will be useful later:

\begin{lemma}\label{lem:MinDistance}
Let $X$ be a quasi-median graph and $Y,Z \subset X$ two gated subgraphs. For every $y \in Y$, $\mathrm{proj}_Z(y)$ and $\mathrm{proj}_Y(\mathrm{proj}_Z(y))$ minimise the distance between $Y$ and $Z$. Moreover, a hyperplane separates these two vertices if and only if separates $Y$ and $Z$.
\end{lemma}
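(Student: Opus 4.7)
The plan is to prove the ``moreover'' claim first --- that a hyperplane separates $z := \mathrm{proj}_Z(y)$ from $y' := \mathrm{proj}_Y(z)$ if and only if it separates $Y$ from $Z$ --- and to deduce the distance statement as an immediate corollary. One direction of the equivalence is trivial: any hyperplane separating $Y$ from $Z$ also separates $y' \in Y$ from $z \in Z$.

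For the substantive direction, I would apply Proposition~\ref{prop:Projection} twice. Let $H$ be a hyperplane separating $y'$ from $z$. Since $y' = \mathrm{proj}_Y(z)$, the proposition yields that $H$ separates $z$ from $Y$, so $Y$ lies in a single sector $S_Y$ of $H$ distinct from the sector containing $z$. Taking the original $y \in Y$, we then have $y \in S_Y$, so $H$ also separates $y$ from $z$. Applying Proposition~\ref{prop:Projection} a second time --- now to $z = \mathrm{proj}_Z(y)$ --- we conclude that $H$ separates $y$ from $Z$, so $Z$ is contained in a single sector of $H$; because $z \in Z$, this sector must be the one containing $z$, which is distinct from $S_Y$. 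Hence $H$ separates $Y$ from $Z$.

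The distance equality now follows from Theorem~\ref{thm:BigQM}, since the distance between any two vertices of a quasi-median graph is counted by the hyperplanes separating them. Thus $d(y', z)$ equals the number of hyperplanes separating $Y$ from $Z$; but for any $(a,b) \in Y \times Z$, each such hyperplane also separates $a$ from $b$, so $d(a,b) \geq d(y', z)$. Combined with $(y', z) \in Y \times Z$, this gives $d(y', z) = d(Y, Z)$, showing that the pair $(y', z)$ realises the distance between $Y$ and $Z$.

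The main step is the chained double application of Proposition~\ref{prop:Projection}: the first application turns the projection hypothesis on $Y$ into sector information about $H$, which then feeds the hypothesis needed to apply the proposition to the projection onto $Z$. I do not expect a serious obstacle; the one point that needs care is that ``$H$ separates $x$ from $Y$'' must be read in the strong sense that $Y$ lies in a single sector of $H$ (otherwise a geodesic from $x$ through $\mathrm{proj}_Y(x)$ to a vertex of $Y$ on the opposite side of $H$ inside $Y$ would cross $H$ twice, contradicting Theorem~\ref{thm:BigQM}(iii)).
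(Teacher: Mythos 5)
Your proposal is correct and follows essentially the same route as the paper: a double application of Proposition~\ref{prop:Projection} (first to $\mathrm{proj}_Y(z)$, then to $\mathrm{proj}_Z(y)$) to show the hyperplanes separating the two projection points are exactly those separating $Y$ and $Z$, and then the distance equality via the hyperplane count of Theorem~\ref{thm:BigQM}. Your closing remark about reading ``separates $x$ from $Y$'' in the strong single-sector sense is exactly the implicit point the paper relies on as well.
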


\begin{proof}
Let $p$ denote the projection of $y$ on $Z$, and $q$ the projection of $p$ on $Z$. According to Proposition~\ref{prop:Projection}, a hyperplane $J$ separating $p$ and $q$ separates $p$ from $Y$. A fortiori, it separates $y$ and $p$. Applying Proposition~\ref{prop:Projection} again, it follows that $J$ also separates $y$ from $Z$. Thus, every hyperplane separating $p$ and $q$ separates $Y$ and $Z$. Conversely, every hyperplane separating $Y$ and $Z$ has to separate $p$ and $q$. Clearly, the distance between $Y$ and $Z$ is bounded below by the number of hyperplanes separating them, so $d(p,q)$ must be equal to $d(Y,Z)$. 
\end{proof}

\paragraph{Median triangles.} Given a graph $X$ and a triple of vertices $(x_1,x_2,x_3)$, the triple $(a_1,a_2,a_3)$ is a \emph{median triangle} if it satisfies
$$d(x_i,x_j)=d(x_i,a_i)+d(a_i,a_j)+d(a_j,x_j) \text{ for all } i \neq j$$
and if it minises the quantity $d(a_1,a_2)+d(a_2,a_3)+d(a_1,a_3)$ under this condition. In median graphs, median triangles coincide with median points. 

\begin{prop}\label{prop:MedianTriangle}
In a quasi-median graph, every triple of vertices admits one and only one median triangle. Moreover, a hyperplane crossing this median triangle pairwise separates its vertices, which implies that the gated hull of the median triangle is a prism. 
\end{prop}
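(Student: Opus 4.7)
The approach is to classify the hyperplanes of $X$ by how they distribute the three points across their sectors, and to construct the median triangle via projections onto gated hulls. Call a hyperplane of \emph{type $A_i$} when $x_i$ lies in a sector different from the common sector of $x_j, x_k$; of \emph{type $B$} when $x_1, x_2, x_3$ lie in three pairwise distinct sectors; and of \emph{type $C$} when all three share a sector. By Theorem~\ref{thm:BigQM}(iv), $d(x_i,x_j) = |A_i| + |A_j| + |B|$.

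For existence, let $Y_{jk}$ denote the gated hull of $\{x_j, x_k\}$. Since sectors are gated (Theorem~\ref{thm:BigQM}(ii)), $Y_{jk}$ lies entirely in the $\{x_j,x_k\}$-sector of every type-$A_i$ hyperplane and crosses every hyperplane of types $A_j, A_k, B$. Define $a_i$ as the projection of $x_i$ onto $Y_{jk}$. By Proposition~\ref{prop:Projection}, the hyperplanes separating $x_i$ from $a_i$ lie in $A_i$; conversely each $h \in A_i$ separates $x_i$ from $Y_{jk}$, hence from $a_i$, so $d(x_i,a_i) = |A_i|$. Applying this characterisation simultaneously to $a_i$ and $a_j$, a case analysis across the hyperplane types shows that $a_i, a_j$ share a sector for every hyperplane outside $B$ and lie respectively in the sectors of $x_i$ and $x_j$ for every $h \in B$; hence $d(a_i, a_j) = |B|$, and the geodesic identity $d(x_i,x_j) = d(x_i,a_i) + d(a_i,a_j) + d(a_j,x_j)$ holds.

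For minimality and uniqueness, consider any triple $(a_1',a_2',a_3')$ satisfying the geodesic identity, and fix $h \in B$. Each of the three paths $x_i \to a_i' \to a_j' \to x_j$ is geodesic and crosses $h$ exactly once (Theorem~\ref{thm:BigQM}(iii)), whereas a geodesic between two points in a common sector of $h$ crosses $h$ an even, hence zero, number of times. Playing the three geodesics against each other pins down $a_\ell'$ in the sector of $x_\ell$ for $h \in B$, forcing the crossing of $h$ on any $(i,j)$-path to occur precisely on the middle segment $a_i' \to a_j'$. Therefore $d(a_i',a_j') \geq |B|$ for every pair, so the sum $\geq 3|B|$, with equality for our construction. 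If $(a_1',a_2',a_3')$ is a median triangle, all these inequalities are equalities, and a sector-by-sector comparison across all hyperplane types shows $a_i' = a_i$, using that a vertex of a quasi-median graph is determined by the family of sectors containing it.

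For the moreover statement, the hyperplanes separating any two vertices of the median triangle are exactly those of type $B$, and each pairwise separates $a_1, a_2, a_3$ by construction. The gated hull $Y$ of $\{a_1, a_2, a_3\}$ is therefore crossed by exactly the hyperplanes in $B$, and it remains to show these are pairwise transverse: for $h, h' \in B$, two geodesics from $a_1$ to $a_2$ — one obtained via $a_3$, the other directly — must cross $h, h'$ in opposite orders, and the quadrangle condition applied to adjacent edges crossing $h$ and $h'$ produces a square with one pair of opposite edges in $h$ and the other pair in $h'$. Since a gated subgraph of a quasi-median graph whose crossing hyperplanes are pairwise transverse is a prism, $Y$ is a prism. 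The main technical point I expect is this last transversality argument: one must carefully orchestrate the quadrangle condition with the median triangle's sector structure, being mindful that hyperplanes in quasi-median graphs may have more than two sectors, so the naive two-sector reasoning from median graphs needs to be reworked.
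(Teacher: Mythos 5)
Your classification of hyperplanes into types $A_i$, $B$, $C$, the construction of $a_i$ as the projection of $x_i$ onto the gated hull of $\{x_j,x_k\}$, and the verification that the separating hyperplanes of each pair $a_i,a_j$ are exactly those of type $B$ all check out; so does the "playing the three geodesics against each other" step (if $a_1'$ avoided the sector of $x_1$ for some $h\in B$, the two geodesics to $x_2$ and to $x_3$ would place $a_1'$ simultaneously in the sectors of $x_2$ and of $x_3$), and the sector-by-sector identification $a_i'=a_i$. (One small slip: the "even, hence zero" parity argument is not valid when a hyperplane has three or more sectors; the correct reason a geodesic between two vertices of a common sector never crosses $h$ is Theorem~\ref{thm:BigQM}(iii) together with the fact that the endpoints of an edge of $h$ lie in distinct sectors.) The genuine gap is the transversality step for the prism. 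The path $a_1\to a_3\to a_2$ is \emph{not} a geodesic from $a_1$ to $a_2$: it has length $2|B|$ while $d(a_1,a_2)=|B|$, and it crosses each $h\in B$ twice. Moreover, every side of the median triangle crosses each $h,h'\in B$ exactly once, so there is no "opposite order" configuration available to feed into the quadrangle condition; as you yourself note, this is precisely where two-sided intuition fails (a two-sided hyperplane can never be of type $B$ at all), and your sketch is still the two-sided argument.

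Here is a way to close the gap. First prove the structural fact that if $h\neq h'$ are \emph{not} transverse, then all edges of $h'$ lie in a single sector $S$ of $h$: follow the defining chain of triangles and squares inside $h'$; two consecutive edges of the chain lying in different sectors of $h$ would force either a triangle with an edge of $h$ (whence $h=h'$, since all edges of a triangle are equivalent) or a square with one pair of opposite edges in $h'$ and the other pair in $h$ (whence transversality). Now suppose $h,h'\in B$ are not transverse. Since the three vertices of the median triangle lie in pairwise distinct sectors of $h$, at least two of them, say $a_i,a_j$, lie outside $S$. The geodesic $[a_i,a_j]$ crosses $h'$, and the crossing edge lies in $S$, so the geodesic enters and leaves $S$ and hence crosses $h$ at least twice, contradicting Theorem~\ref{thm:BigQM}(iii). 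This yields pairwise transversality of the hyperplanes of type $B$. Finally, be aware that your concluding principle, "a gated subgraph all of whose crossing hyperplanes are pairwise transverse is a prism", is true but is not among the facts quoted in this paper (it is part of the background in \cite{QM}, where the present proposition is itself taken from); if you want a self-contained proof you must also justify that step, e.g.\ via the embedding of a quasi-median graph into the product of the sector-sets of its hyperplanes.
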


\noindent
See \cite[Proposition~2.84 and Fact~2.90]{QM}.

\subsection{Convex subgraphs}

\noindent
Recall that a subgroup is \emph{convex} if it contains all the geodesics between any two of its vertices. Because an intersection of convex subgraphs is again convex, we can define the \emph{convex hull} of a subset as the intersection of all the convex subgraphs containing it. The convex hull of two vertices $a$ and $b$ is referred to as the \emph{interval between $a$ and $b$}, denoted by $I(a,b)$. Alternatively, it is the union of all the geodesics connecting $a$ and $b$. According to \cite[Corollary~2.107]{QM}, the convex hull of a finite subset is always finite, so as a particular case:

\begin{lemma}\label{lem:IntervalFinite}
In a quasi-median graph, the interval between two vertices is always finite. 
\end{lemma}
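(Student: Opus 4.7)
The approach is to bound $|I(a,b)|$ directly by $2^{d(a,b)}$ by assigning to each $x \in I(a,b)$ a finite combinatorial invariant, namely a ``sector profile'' with respect to the hyperplanes separating $a$ from $b$.

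Concretely, I would fix $x \in I(a,b)$ together with a geodesic $\gamma$ from $a$ to $b$ passing through $x$. By parts (iii) and (iv) of Theorem~\ref{thm:BigQM}, $\gamma$ has length $d(a,b)$ and crosses each hyperplane separating $a$ from $b$ exactly once, while avoiding all other hyperplanes. For a separating hyperplane $J$, this single crossing forces $x$ to lie either in the sector of $J$ containing $a$ or in the sector containing $b$: a third sector of $J$ would require $\gamma$ to cross $J$ a second time, contradicting (iii). For a non-separating hyperplane, $\gamma$ stays in the common sector of $a$ and $b$, so $x$ lies there as well.

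Thus each $x \in I(a,b)$ determines a map $\varphi_x$ from the finite set of hyperplanes separating $a$ from $b$ to the two-element set $\{a\text{-side}, b\text{-side}\}$, and the map $x \mapsto \varphi_x$ is injective: if $\varphi_x = \varphi_y$, then no hyperplane separates $x$ from $y$, whence $d(x,y) = 0$ by (iv) and so $x = y$. Therefore $|I(a,b)| \leq 2^{d(a,b)} < \infty$.

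The only mildly subtle point is excluding the ``third sector'' scenario, which is genuinely relevant since hyperplanes in quasi-median graphs can have more than two sectors (e.g.\ the hyperplane dual to a triangle has three sectors, one per vertex); the at-most-one-crossing property of geodesics makes this exclusion automatic. Alternatively, one could simply invoke \cite[Corollary~2.107]{QM} on finiteness of convex hulls of finite subsets, as the lemma is merely the special case of that result for the two-point set $\{a,b\}$; but the argument above is self-contained and additionally yields the explicit exponential bound.
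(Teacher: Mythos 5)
Your argument is correct, but it is not the paper's route: the paper simply observes that $I(a,b)$ is by definition the convex hull of the two-point set $\{a,b\}$ and invokes \cite[Corollary~2.107]{QM}, which states that convex hulls of finite subsets are finite --- exactly the fallback you mention in your last sentence. Your proof instead gives a self-contained hyperplane argument: using Theorem~\ref{thm:BigQM}(iii)--(iv), a geodesic from $a$ to $b$ crosses precisely the $d(a,b)$ hyperplanes separating $a$ from $b$, each exactly once, so any vertex on such a geodesic sits in the $a$-sector or the $b$-sector of every separating hyperplane and in the common sector of $a$ and $b$ for every non-separating one; the resulting sector profile determines the vertex by (iv), giving the bound $2^{d(a,b)}$. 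Your handling of the ``third sector'' issue is exactly right and is the one genuinely quasi-median (as opposed to median) point. The only dependency worth flagging is that your count controls the union of geodesics from $a$ to $b$, whereas $I(a,b)$ is defined as a convex hull; you are implicitly using the identification of the two stated in the paper just before the lemma, which is legitimate here. In exchange for being slightly longer than the paper's one-line citation, your argument avoids importing the more general finiteness result for convex hulls and produces an explicit (if crude) exponential bound, which the paper's proof does not.
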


\noindent
In a quasi-median graph, a \emph{multisector} is the subgraph induced by a union of sectors delimited by a given hyperplanes. For instance, the complement of a single sector, referred to as a \emph{cosector}, is a multisector. According to \cite[Proposition~2.104]{QM}, convex hulls can be characterised in terms of multisectors:

\begin{prop}\label{prop:ConvexHull}
Let $X$ be a quasi-median graph. The convex hull of a subset $S \subset X$ coincides with the intersection of all the multisectors containing $S$.
\end{prop}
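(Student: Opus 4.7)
The plan is to establish the two inclusions separately. For the easy direction $\mathrm{conv}(S) \subseteq Y$, where $Y$ denotes the intersection of all multisectors containing $S$, it suffices to verify that every multisector is convex. Given a multisector $M$ associated with a hyperplane $J$, if two vertices of $M$ were connected by a geodesic escaping $M$, that geodesic would have to cross edges of $J$ at least twice (once to exit into a sector disjoint from $M$, once to come back), contradicting Theorem~\ref{thm:BigQM}(iii). So every multisector is convex, and taking an intersection yields a convex subgraph containing $S$.

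For the non-trivial inclusion $Y \subseteq \mathrm{conv}(S)$, I would argue by contradiction. Observe first that $Y$ is itself convex, and that the condition $x \in Y$ unpacks to the concrete statement: for every hyperplane $J$, the sector of $J$ containing $x$ meets $S$ (since the smallest multisector of $J$ containing $S$ is the union of the sectors of $J$ meeting $S$). Assuming $Y \setminus \mathrm{conv}(S)$ is non-empty, pick $x$ in this set minimising $d(x,\mathrm{conv}(S))$, and let $p \in \mathrm{conv}(S)$ realise this distance. The geodesic from $x$ to $p$ lies in $Y$ by convexity. If $d(x,p) \geq 2$, its first interior vertex $v$ would satisfy $d(v,\mathrm{conv}(S)) \leq d(x,p)-1 < d(x,\mathrm{conv}(S))$; $v$ would then be either in $\mathrm{conv}(S)$, contradicting the fact that $p$ realises $d(x,\mathrm{conv}(S))$, or in $Y \setminus \mathrm{conv}(S)$, contradicting the minimal choice of $x$. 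Therefore $d(x,p)=1$.

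Let $J_0$ be the unique hyperplane containing the edge $xp$; then $x$ and $p$ lie in distinct sectors of $J_0$. By the unpacked hypothesis applied to $J_0$, its $x$-sector contains some $s \in S$. The core computation is a direct consequence of Theorem~\ref{thm:BigQM}(iv): since $d(x,p)=1$, the hyperplane $J_0$ is the only one separating $x$ from $p$, so for every other hyperplane $J$ the vertices $x,p$ lie in a common sector, and hence $J$ separates $x$ from $s$ if and only if it separates $p$ from $s$. Since $J_0$ itself separates $p$ from $s$ but not $x$ from $s$, counting hyperplanes yields $d(p,s)=d(x,s)+1$. Consequently $d(x,p)+d(x,s)=d(p,s)$, meaning that $x$ lies on a geodesic from $p$ to $s$, so $x \in I(p,s)$. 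Since $p,s \in \mathrm{conv}(S)$, convexity of $\mathrm{conv}(S)$ forces $I(p,s) \subseteq \mathrm{conv}(S)$, whence $x \in \mathrm{conv}(S)$---the desired contradiction.

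The delicate point of the proof is the minimisation argument reducing to $d(x,\mathrm{conv}(S))=1$, which is what allows a single ambient hyperplane $J_0$ to mediate between $x$ and $\mathrm{conv}(S)$; once this is arranged, the hyperplane calculus of Theorem~\ref{thm:BigQM}(iv) immediately supplies the equality $d(p,s)=d(x,s)+1$ that places $x$ on a geodesic inside $\mathrm{conv}(S)$. No appeal to gatedness, median triangles, or the projection lemmas is required, which is notable given that the convex hull of $S$ need not itself be gated.
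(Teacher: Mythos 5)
Your proof is correct. Note that the paper does not prove Proposition~\ref{prop:ConvexHull} at all: it simply quotes it from \cite[Proposition~2.104]{QM}, so there is no in-paper argument to compare yours against; what you have written is a complete, self-contained substitute. Both inclusions check out: multisectors are convex because a geodesic leaving and re-entering a union of sectors of a hyperplane $J$ would have to use two edges of $J$, contradicting Theorem~\ref{thm:BigQM}(iii); and your reduction to $d(x,\mathrm{conv}(S))=1$ by minimising over $Y \setminus \mathrm{conv}(S)$, followed by the hyperplane count $d(p,s)=d(x,s)+1$ via Theorem~\ref{thm:BigQM}(iv) (whose statement in the paper contains a typo, ``vertices'' for ``hyperplanes''), correctly places $x$ in $I(p,s) \subseteq \mathrm{conv}(S)$. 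The only step you leave implicit is that the two endpoints of the edge $xp$ lie in distinct sectors of $J_0$; this is standard and also follows from (iv), since no hyperplane other than $J_0$ can separate the endpoints of an edge of $J_0$, while exactly one hyperplane must. It is also worth recording that the argument tacitly assumes $S \neq \emptyset$ (you choose $s \in S$ and $p \in \mathrm{conv}(S)$), which is the only sensible reading of the statement anyway. A pleasant feature of your route, as you observe, is that it needs no gatedness, projections, or median triangles, only the separation calculus of Theorem~\ref{thm:BigQM}.
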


\begin{cor}\label{cor:ConvexHull}
Let $X$ be a quasi-median graph and $S \subset X$ a set of vertices. Every sector intersecting the convex hull of $S$ intersects $S$ itself.
\end{cor}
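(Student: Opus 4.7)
The plan is to argue by contrapositive, relying entirely on the multisector description of the convex hull given in Proposition~\ref{prop:ConvexHull}. Suppose a sector $\sigma$, delimited by some hyperplane $J$, is disjoint from $S$. Then $S$ is contained in the union of the remaining sectors of $X \backslash \backslash J$, that is, in the cosector associated to $\sigma$. Since cosectors are multisectors (as recalled just before Proposition~\ref{prop:ConvexHull}), this cosector is a multisector containing $S$.

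By Proposition~\ref{prop:ConvexHull}, the convex hull of $S$ is the intersection of all multisectors containing $S$, so in particular it lies inside this cosector. Hence $\mathrm{conv}(S) \cap \sigma = \emptyset$, which is exactly the contrapositive of the statement we want.

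There is essentially no obstacle here: once we recognize that ``not meeting $\sigma$'' means ``lying in the cosector of $\sigma$'' and that cosectors qualify as multisectors, Proposition~\ref{prop:ConvexHull} does all the work. The proof is a two-line application of that characterization, and no further quasi-median machinery (triangle/quadrangle conditions, gatedness, etc.) needs to be invoked explicitly.
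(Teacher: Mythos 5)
Your proof is correct and is essentially the paper's own argument: the paper likewise observes that a sector missing $S$ has its complement (a cosector, hence a multisector) containing $S$, so by Proposition~\ref{prop:ConvexHull} the convex hull lies in that complement. Nothing further is needed.
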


\begin{proof}
If a sector does not intersect $S$, then its complement contains the convex hull of $S$ according to Proposition~\ref{prop:ConvexHull}, proving that our sector actually did not intersect the convex hull of $S$.
\end{proof}

\noindent
It follows from Lemma~\ref{lem:MinDistance} that, in a quasi-median graph, two disjoint gated subgraphs are separated by at least one hyperplane, in the sense that the two subgraphs lie into two distinct sectors. This is no longer true for convex subgraphs, but a weaker version of this property still holds. One says that two subsets in a quasi-median graph are \emph{weakly separated} by a hyperplane if they lie in two disjoint multisectors. Then:

\begin{prop}\label{prop:separation}
Let $X$ be a quasi-median graph and $A,B \subset X$ two convex subgraphs. If $A \cap B = \emptyset$ then there exists a hyperplane weakly separating $A$ and $B$. 
\end{prop}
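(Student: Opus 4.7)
The plan is to argue by contradiction using closest-pair minimality, exploiting the gated product structure of hyperplane carriers in quasi-median graphs. Since $A \cap B = \emptyset$, the quantity $k := d(A, B)$ satisfies $k \geq 1$; I pick a closest pair $(a_0, b_0) \in A \times B$, fix a geodesic $a_0 = x_0, x_1, \ldots, x_k = b_0$, and let $J$ be the hyperplane containing its last edge $[x_{k-1}, b_0]$. Denote by $s_A, s_B$ the sectors of $J$ containing $a_0$ and $b_0$, respectively. The carrier $N(J)$ is gated by Theorem~\ref{thm:BigQM}(ii) and, as is standard in quasi-median geometry (cf.\ \cite{QM}), decomposes as a product $N(J) \cong F \times C$ where $C$ is the transverse clique of $J$ with one vertex $c_s$ per sector $s$, each sector slice of $N(J)$ corresponding to $F \times \{c_s\}$. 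I would write $\pi_J(a_0) = (\alpha, c_A)$ with $r := d(a_0, \pi_J(a_0)) \geq 0$ and $b_0 = (b_0^F, c_B)$, yielding $k = r + d_F(\alpha, b_0^F) + 1$ from gatedness.

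My candidate is that $J$ weakly separates $A$ from $B$. Toward a contradiction, suppose some sector $\sigma$ of $J$ meets both $A$ and $B$, say $a \in A \cap \sigma$ and $b \in B \cap \sigma$, and let $c_\sigma \in C$ correspond to $\sigma$. Writing $\pi_J(a) = (\beta, c_\sigma)$, the concatenated path $a_0 \to \pi_J(a_0) \to (\alpha, c_\sigma) \to \pi_J(a) \to a$ has total length $r + 1 + d_F(\alpha, \beta) + d(\pi_J(a), a)$, which equals $d(a_0, a)$ provided $\sigma \neq s_A$ (so the geodesic from $a_0$ to $a$ crosses $J$ and must enter $N(J)$). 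By convexity of $A$, the vertex $w_A := (\alpha, c_\sigma)$ then lies in $I(a_0, a) \subseteq A$. Symmetrically, the vertex $w_B := (b_0^F, c_\sigma)$ lies in $B$ via the geodesic $b_0 \to (b_0^F, c_\sigma) \to \pi_J(b) \to b$ and convexity of $B$.

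A direct computation in the product $F \times C$ yields $d(w_A, w_B) = d_F(\alpha, b_0^F) = k - 1 - r$. In the generic sub-case $\sigma \notin \{s_A, s_B\}$ with $r < k - 1$, this produces a pair $(w_A, w_B) \in A \times B$ at distance strictly less than $k$, contradicting the minimality of $(a_0, b_0)$; in the boundary sub-case $r = k - 1$, one has $w_A = w_B$, hence $A \cap B \neq \emptyset$, again a contradiction. The sub-case $\sigma = s_B$ is closed by the variant vertex $w_A' := (\alpha, c_B) \in A$ satisfying $d(w_A', b_0) = k - 1 - r < k$ (unless $w_A' = b_0$, which puts $b_0 \in A \cap B$); and the sub-case $\sigma = s_A$ is closed symmetrically, by observing that the construction on the $B$-side places the geodesic vertex $x_{k-1} = (b_0^F, c_A) \in B$, yielding the closer pair $(a_0, x_{k-1})$.

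The main obstacle will be invoking the product decomposition $N(J) \cong F \times C$ and the companion gatedness identity $d(x, y) = d(x, \pi_J(x)) + d_{N(J)}(\pi_J(x), \pi_J(y)) + d(\pi_J(y), y)$ for $x, y$ in distinct sectors of $J$: these are standard features of quasi-median graphs from \cite{QM} but are not explicitly recalled in the excerpt. Once they are in hand, the three sub-case contradictions above close cleanly, and $J$ is the desired weakly separating hyperplane.
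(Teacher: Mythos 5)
Your argument is correct, but it takes a genuinely different route from the paper's. The paper does not single out a candidate hyperplane: it assumes that every hyperplane $J_1,\ldots,J_n$ separating a closest pair $a\in A$, $b\in B$ fails to weakly separate, classifies those hyperplanes via Lemma~\ref{lem:projConv}, checks that the chosen sectors $Z_1,\ldots,Z_n$ pairwise intersect, and then applies the mixed Helly property of Lemma~\ref{lem:HellyBis} to the gated hull of $\{a,b\}$ to manufacture a vertex of $A\cap B$. You instead fix one explicit hyperplane, the one dual to the last edge of a geodesic realising $d(A,B)$, and show directly that a sector meeting both $A$ and $B$ would produce either a strictly closer pair or a common vertex, using the product structure $N(J)\cong F\times C$ of carriers and the identity $d(x,y)=d(x,\pi_J(x))+d_{N(J)}(\pi_J(x),\pi_J(y))+d(\pi_J(y),y)$ for $x,y$ in distinct sectors. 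Both ingredients you flag are legitimately available: the decomposition of carriers, with sectors corresponding to the vertices of the dual clique, is established in \cite{QM}, and the distance identity follows in two lines from gatedness of carriers (Theorem~\ref{thm:BigQM}): any path between distinct sectors must use an edge $\{u,v\}$ of $J$, gatedness gives $d(x,u)=d(x,\pi_J(x))+d(\pi_J(x),u)$ and $d(y,v)=d(y,\pi_J(y))+d(\pi_J(y),v)$, and the triangle inequality turns the resulting lower bound into an equality, so your concatenated paths are indeed geodesics and convexity applies. Your case analysis ($\sigma\notin\{s_A,s_B\}$, $\sigma=s_B$, $\sigma=s_A$, together with the degenerate subcases $r=k-1$ and $k=1$) is exhaustive. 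As for what each approach buys: the paper's proof uses no structure theory of carriers beyond gatedness, at the cost of two bespoke preliminary lemmas and of only asserting existence of a weakly separating hyperplane among those separating the closest pair; your proof trades those lemmas for the carrier decomposition and yields the sharper, more constructive statement that the hyperplane dual to the last (equivalently, first) edge of any geodesic realising $d(A,B)$ weakly separates $A$ and $B$, while also avoiding the paper's initial reduction to the case where no hyperplane strongly separates $A$ and $B$.
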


\noindent
As a first observation towards the proof of the proposition, notice that, even though gated subgraphs are known to satisfy the Helly property (see for instance \cite[Propositon~2.8]{QM}), it clearly fails for convex subgraphs. (For instance, the three edges of a $3$-cycle pairwise intersect but they do not globally intersect.) Nevertheless:

\begin{lemma}\label{lem:HellyBis}
Let $X$ be a quasi-median graph, $Y \subset X$ a convex subgraph, and $Z_1,\ldots, Z_n \subset X$ gated subgraphs. If $Y,Z_1, \ldots, Z_n$ pairwise intersect, then their total intersect is non-empty.
\end{lemma}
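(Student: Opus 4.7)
The plan is to induct on $n$, with the bulk of the work concentrated in the case $n = 2$; the inductive step will then reduce to this case via the classical Helly property for gated subgraphs. The case $n=1$ is of course trivial.

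For the case $n=2$, the key observation is the following projection identity: whenever $Z_1, Z_2$ are gated subgraphs with $Z_1 \cap Z_2 \neq \emptyset$, one has $\mathrm{proj}_{Z_2}(Z_1) \subseteq Z_1 \cap Z_2$. Indeed, for any $x \in Z_1$, picking $z \in Z_1 \cap Z_2$ and invoking the gate property for $p := \mathrm{proj}_{Z_2}(x)$ yields a geodesic from $x$ to $z$ through $p$; since $x, z \in Z_1$ and $Z_1$ is convex (as it is gated), this forces $p \in Z_1$, whence $p \in Z_1 \cap Z_2$. With this identity in hand, pick $y_1 \in Y \cap Z_1$ and $y_2 \in Y \cap Z_2$, and set $p := \mathrm{proj}_{Z_2}(y_1)$. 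The identity gives $p \in Z_1 \cap Z_2$, while applying the gate property to $y_2 \in Z_2$ yields a geodesic from $y_1$ to $y_2$ through $p$, so $p \in I(y_1, y_2) \subseteq Y$ by convexity of $Y$. Hence $p \in Y \cap Z_1 \cap Z_2$.

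For the inductive step with $n \geq 3$, I would set $W := Z_1 \cap \cdots \cap Z_{n-1}$, which is gated as a finite intersection of gated subgraphs. The inductive hypothesis applied to $Y, Z_1, \ldots, Z_{n-1}$ (which still pairwise intersect) gives $Y \cap W \neq \emptyset$, while the Helly property for gated subgraphs applied to $Z_1, \ldots, Z_n$ yields $W \cap Z_n \neq \emptyset$; combined with $Y \cap Z_n \neq \emptyset$, this means the triple $(Y, W, Z_n)$ satisfies the assumptions of the $n=2$ case, which concludes. The main obstacle is really just spotting the projection identity $\mathrm{proj}_{Z_2}(Z_1) \subseteq Z_1 \cap Z_2$: once this is in place, the rest of the argument depends only on convexity of $Y$, gatedness being preserved under finite intersection, and the standard Helly property.
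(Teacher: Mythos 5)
Your proof is correct, and its key step differs from the paper's. The reduction to $n=2$ is exactly what the paper does (it notes the general case follows ``by induction or by the Helly property for gated subgraphs''), but for $n=2$ the paper picks $a \in Y \cap Z_1$, $b \in Y \cap Z_2$, $c \in Z_1 \cap Z_2$ and analyses the median triangle of $(a,b,c)$, using Proposition~\ref{prop:MedianTriangle}-style structure together with gatedness of $Z_1,Z_2$ and convexity of $Y$ to place two of its vertices in the triple intersection. You instead exhibit a single explicit point, $p = \mathrm{proj}_{Z_2}(y_1)$, and verify $p \in Z_1$, $p \in Y$, $p \in Z_2$ using only the gate property, the fact that gated subgraphs are convex, and convexity of $Y$; your ``projection identity'' is essentially the paper's Lemma~\ref{lem:ProjConvexGated} (projection of a point of a convex subgraph onto an intersecting gated subgraph lands in the intersection), applied in two guises to the same gate. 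Your route buys simplicity and generality: it bypasses median triangles entirely, so it needs nothing specific to quasi-median graphs beyond the gate property itself, whereas the paper's argument is in the median-triangle style it reuses elsewhere (e.g.\ in Lemma~\ref{lem:projConv} and in Claim~\ref{claim:Four}) and produces the intersection point as part of that standard toolkit. The inductive step you give (intersect $Z_1,\dots,Z_{n-1}$ into a gated $W$, use Helly for $W \cap Z_n \neq \emptyset$ and the inductive hypothesis for $Y \cap W \neq \emptyset$, then apply the $n=2$ case to $(Y,W,Z_n)$) is a correct implementation of the reduction the paper only sketches.
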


\begin{proof}
It suffices to prove the lemma for $n=2$, the general case following easily by induction or by the Helly property for gated subgraphs. Fix three vertices $a \in Y \cap Z_1$, $b \in Y \cap Z_2$, and $c \in Z_1 \cap Z_2$. Let $(x,y,z)$ be the median triangle of $(a,b,c)$. Because $Z_1$ is gated, we have $a,x,y, z,c \in Z_1$; because $Z_2$ is gated, we have $b,x, y,z,c \in Z_2$; and because $Y$ is convex, we have $a,x,z,c \in Y$. Thus, $x,y \in Y \cap Z_1 \cap Z_2$ proving that $Y \cap Z_1 \cap Z_2$ is non-empty. 
\end{proof}

\noindent
Our next preliminary lemma is a weakened version of Proposition~\ref{prop:Projection} satisfied by convex subgraphs. 

\begin{lemma}\label{lem:projConv}
Let $X$ be a quasi-median graph, $Y \subset X$ a convex subgraph, $x \in X$ a vertex, and $M \subset Y$ the set of the vertices of $Y$ minimising the distance to $x$. For every $z \in M$, a hyperplane separating $x$ from $z$ either separates $x$ from $Y$ or crosses $M$. Moreover, any two hyperplanes crossing $M$ are transverse.
\end{lemma}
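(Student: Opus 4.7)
The plan is to prove both assertions by projection arguments onto gated sectors and the carrier of $J$. First I will show that if $J$ separates $x$ from some $z \in M$, then $Y$ cannot meet the sector $S_x$ of $J$ containing $x$. Suppose otherwise and pick $y \in Y \cap S_x$; since $S_x$ is gated by Theorem~\ref{thm:BigQM}(ii), the projection $z_x$ of $z$ onto $S_x$ is defined. The gate property applied to $x \in S_x$ gives $d(x, z_x) = d(x, z) - d(z, z_x) \leq d - 1$, where $d = d(x, Y)$ and $z \neq z_x$ because $z \notin S_x$. On the other hand, the same property yields a geodesic from $z$ to $y$ through $z_x$, and this geodesic lies in $Y$ by convexity, so $z_x \in Y$---contradicting $d(x, Y) = d$. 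In particular, the first alternative of the lemma holds whenever $Y$ is contained in a single sector of $J$.

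If instead $Y$ meets at least two sectors of $J$ distinct from $S_x$, I claim that $J$ crosses $M$. Picking $y' \in Y$ in a sector $S_i$ of $J$ different from both $S_x$ and the sector $S_z$ of $z$, a geodesic from $z$ to $y'$ in $Y$ (convexity) crosses $J$ in a single edge $(u_1, u_2) \subset Y \cap N(J)$ with $u_1 \in S_z$ and $u_2 \in S_i$. Now project $z$ onto the gated carrier $N(J)$: the gate property applied through $u_1$, combined with convexity of $Y$, places the projection in $Y$, and a gate-property computation as in the first step forces $z$ itself to lie on $N(J)$. The clique structure of $J$-edges in a quasi-median graph---each $J$-edge extends uniquely to a clique with exactly one vertex in each sector of $J$, so that the matchings between the sides of $N(J)$ are well-defined isometries---then produces the $J$-matched partner $z' \in S_i$ of $z$. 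One more application of convexity of $Y$ to a geodesic $z \to z' \to u_2$ yields $z' \in Y$, while a distance computation through the common $S_x$-vertex of the $J$-clique of $\{z, z'\}$ gives $d(x, z') = d(x, z) = d$. Hence $(z, z')$ is a $J$-edge contained in $M$, which shows $J$ crosses $M$.

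For the second assertion, I consider two hyperplanes $J_1, J_2$ crossing $M$, witnessed by edges $(a_i, b_i) \subset M$ of $J_i$. The analysis above gives $M \subset N(J_1) \cap N(J_2)$; applying the matched-partner argument to $J_2$, the $J_2$-matched partners $a_1', b_1'$ of $a_1, b_1$ (in the sector of $J_2$ containing $a_2$) also lie in $M$. Because the $J_1$-clique through $(a_1, b_1)$ propagates under the $J_2$-matching to the $J_1$-clique through $(a_1', b_1')$, the four vertices $\{a_1, b_1, b_1', a_1'\}$ span a $4$-cycle with opposite edges $(a_1, b_1), (a_1', b_1') \in J_1$ and $(a_1, a_1'), (b_1, b_1') \in J_2$, witnessing the transversality of $J_1$ and $J_2$.

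The main obstacle is verifying the clique-and-matching structure of the carriers of hyperplanes in quasi-median graphs and the compatibility of the matchings associated to two distinct hyperplanes both crossing $M$. These structural facts should follow from the quadrangle condition and the exclusion of induced $K_4^-$ and $K_{3,2}$ subgraphs, but they have to be set up carefully before the convexity-plus-gate arguments above can be assembled into a complete proof.
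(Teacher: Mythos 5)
Your strategy is sound and genuinely different from the paper's, but as written it is incomplete in the place you yourself flag. The paper disposes of both assertions in a few lines by taking the median triangle $(a,b,c)$ of $(x,y,z)$ (where $y\in Y$ is a vertex separated from $z$ by $J$, resp.\ a second vertex of $M$), using convexity of $Y$ to force $c=z$ (and $b=y$ in the second part), and then invoking Proposition~\ref{prop:MedianTriangle}: a hyperplane crossing the median triangle pairwise separates its corners, which gives the vertex $b\in M$ separated from $z$ by $J$, and the transversality of two hyperplanes crossing the triangle. No information about carriers is needed beyond what is already stated in the paper. Your route instead goes through gates of sectors and the structure of $N(J)$: Step~1 (every hyperplane separating $x$ from a point of $M$ has $Y$ disjoint from the sector of $x$) is correct and is even a slight strengthening of the first alternative; and the rest of the argument does go through, but only modulo the decomposition of carriers of quasi-median graphs (each vertex of $N(J)$ lies in a unique clique dual to $J$ meeting every sector exactly once, and the matchings between sector-fibres of $N(J)$ preserve adjacency, i.e.\ $N(J)\cong F\times C$). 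These facts are true and appear in \cite{QM}, but they are not available in the present paper and are not established in your proposal, so as it stands the proof has a real dependency left open; proving them from the quadrangle condition and the forbidden subgraphs is a non-trivial detour compared to the median-triangle argument.

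Two smaller points would also need patching. First, your claim that ``a gate-property computation as in the first step forces $z$ to lie on $N(J)$'' does not transfer verbatim, because $x$ need not belong to $N(J)$ (whereas in Step~1 you used $x\in S_x$). The fix: since $J$ separates $x$ from $z$, a geodesic $[x,z]$ contains a vertex $w\in N(J)$; writing $p$ for the gate of $z$ in $N(J)$, the gate property gives $d(x,z)=d(x,w)+d(w,p)+d(p,z)\geq d(x,p)+d(p,z)$, and since $p\in Y$ (via the geodesic from $z$ to $u_1$ through $p$ and convexity) and $d(x,Y)=d(x,z)$, this forces $z=p$. Second, in the transversality step the $J_2$-partners of $a_1,b_1$ should be taken in a sector of $J_2$ different from the one containing $a_1,b_1$; the sector of $a_2$ may well be that very sector, in which case your $4$-cycle degenerates (any other sector works, since $J_2$ has at least two). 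Note also that, once you know $a_1,b_1\in N(J_2)$, the partners need not lie in $M$ at all for the square argument, so that part can be streamlined. In exchange for its extra machinery, your approach does yield stronger by-products than the lemma asks for ($Y$ misses the whole sector of $x$, $M\subset N(J)$, and $M$ contains genuine edges of every hyperplane crossing it), which may be of independent use.
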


\begin{proof}
Let $J$ be a hyperplane separating $x$ from some $z \in M$. If $J$ does not cross $Y$, there is nothing to prove, so we assume that $J$ does cross $Y$. Consequently, there exists some $y \in Y$ such that $J$ separates $z$ and $y$. Let $(a,b,c)$ be the median triangle of $(x,y,z)$. Because $Y$ is convex, we have $y,b,c,z \in Y$. Necessarily $z=c$. Because $J$ separates both $z$ from $y$ and $z$ from $x$, necessarily $J$ crosses the median triangle. A fortiori, it separates $c=z$ and $b$. But $d(x,b)=d(x,c)=d(x,Y)$, so $b$ belongs to $M$. We conclude that $J$ crosses $M$, as desired.

\medskip \noindent
Let $J$ and $H$ be any two hyperplanes crossing $M$. Fix two vertices $y,z \in M$ separated by both $J$ and $H$. Let $(a,b,c)$ be the median triangle of $(x,y,z)$. Because $Y$ is convex, we have $y,b,c,z \in Y$. Necessarily, $y=b$ and $z=c$. Thus, $J$ and $H$ cross the median triangle, which implies that they must be transverse according to Proposition~\ref{prop:MedianTriangle}.
\end{proof}

\begin{proof}[Proof of Proposition~\ref{prop:separation}.]
If there exists a hyperplane (strongly) separating $A$ and $B$, then there is nothing to prove, so we assume that no hyperplane (strongly) separates $A$ and $B$. Fix two vertices $a \in A$ and $b \in B$ minimising the distance between $A$ and $B$, and let $Z_0$ denote the gated hull of $\{a,b\}$. The hyperplanes crossing $Z_0$ coincide with the hyperplanes separating $a$ and $b$, say $J_1, \ldots, J_n$. For every $1 \leq i \leq n$, assume that $J_i$ delimits some sector $Z_i$ that intersects both $A$ and $B$. 

\medskip \noindent
Observe that $Z_1, \ldots, Z_n$ pairwise intersect. Indeed, it follows from Lemma~\ref{lem:projConv} that a hyperplane separating $a$ and $b$ belongs one of the following families:
\begin{itemize}
	\item the hyperplanes separating $a$ from $B$ and crossing the set $M$ of the vertices of $A$ minimising the distance to $b$;
	\item the hyperplanes separating $b$ from $A$ and crossing the set $N$ of the vertices of $B$ minimising the distance to $a$;
	\item the hyperplanes crossing both $M$ and $N$.
\end{itemize}
Thus, each $Z_i$ either contains $B$ and separates $M$; or contains $A$ and separates $N$; or separates both $M$ and $N$. Any two sectors of the first two types intersect every sector of any type. And, because we also know from Lemma~\ref{lem:projConv} that two hyperplanes crossing both $M$ or both $N$ are transverse, it follows that two sectors of the third type also intersect. 

\medskip \noindent
We are now in good position to apply Lemma~\ref{lem:HellyBis} and conclude that the intersections $A \cap Z_0 \cap Z_1 \cap \cdots \cap Z_n$ and $B \cap Z_0 \cap Z_1 \cap \cdots \cap Z_n$ are both non-empty. But, because $Z_1, \ldots, Z_n$ are sectors delimited by the hyperplanes $J_1,\ldots, J_n$, which exhaust all the hyperplanes of $Z_0$, the intersection $Z_0 \cap Z_1 \cap \cdots \cap Z_n$ must be reduced to a single vertex. This vertex must then belong to both $A$ and $B$, contradicting the assumption $A \cap B= \emptyset$.

\medskip \noindent
Thus, we have proved that there exists a hyperplane separating $a$ and $b$ that weakly separates $A$ and $B$. 
\end{proof}

\noindent
We conclude this subsection with a last easy observation, which we record for future use.

\begin{lemma}\label{lem:ProjConvexGated}
Let $X$ be a quasi-median graph, $Y \subset X$ a convex subgraph, and $Z \subset X$ a gated subgraph. If $Y \cap Z \neq \emptyset$, then the projection on $Z$ of a vertex in $Y$ belongs to $Y \cap Z$.
\end{lemma}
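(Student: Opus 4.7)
The plan is to exploit the defining property of a gate, combined with convexity of $Y$. Let $y \in Y$ be arbitrary and set $p := \mathrm{proj}_Z(y)$. By definition $p \in Z$, so it only remains to show $p \in Y$.

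Pick any vertex $w \in Y \cap Z$, which exists by assumption. Since $Z$ is gated and $p$ is the gate of $y$ in $Z$, there exists a geodesic from $y$ to $w$ passing through $p$. But $y,w \in Y$, and $Y$ is convex, so every geodesic between $y$ and $w$ lies in $Y$. In particular $p \in Y$, hence $p \in Y \cap Z$.

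There is essentially no obstacle here: the statement is a direct consequence of the definition of a gate together with the definition of convexity, and requires no appeal to hyperplane machinery or median triangles. The only mild care is in verifying that \emph{some} vertex of $Y \cap Z$ can serve as the witness $w$, which is exactly the hypothesis $Y \cap Z \neq \emptyset$.
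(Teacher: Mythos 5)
Your proof is correct and is essentially the paper's own argument: the gate property gives a geodesic from $y$ through $p=\mathrm{proj}_Z(y)$ to a point of $Y\cap Z$, and convexity of $Y$ forces $p\in Y$ (the paper phrases this as $\mathrm{proj}_Z(y)\in I(y,p)\subset Y$, with a passing mention of the median triangle that plays no essential role). Nothing is missing.
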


\begin{proof}
Fix two vertices $y \in Y$ and $p \in Y \cap Z$. Let $z$ denote the projection of $y$ on $Z$ and let $(ab,c)$ be the median triangle of $(y,z,p)$. Because $z$ has to belong to $I(y,p)$, necessarily $z=a \in I(y,p) \subset Y$. 
\end{proof}

\subsection{Loxodromic isometries}

\noindent
As a consequence of \cite{HaglundAxis}, an isometry of a median graph, as soon as it has unbounded orbits and has no power that is an inversion, admits an \emph{axis}, i.e.\ a bi-infinite geodesic on which it acts as a translation. This section is dedicated to an analogous statement for quasi-median graphs. In this broader context, an \emph{inversion} is an isometry that stabilises a hyperplane and permutes non-trivially its sectors.

\begin{prop}\label{prop:Axis}
Let $X$ be a quasi-median graph and $g\in \mathrm{Isom}(X)$ an isometry. If $\langle g \rangle$ acts on $X$ without inversions and with unbounded orbits, then $g$ admits an axis in $X$. More precisely, for every $x \in \mathrm{Min}(g)$ and for every geodesic $[x,gx]$ between $x$ and $gx$, the concatenation of the $g^k [x,gx]$, $k \in \mathbb{Z}$, defines an axis of $g$. 
\end{prop}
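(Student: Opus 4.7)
The strategy is to adapt Haglund's axis theorem for median graphs to the quasi-median setting, using the hyperplane description of geodesics from Theorem~\ref{thm:BigQM}(iii).

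First I would verify that $\mathrm{Min}(g)$ is non-empty: the displacement function $y \mapsto d(y,gy)$ takes non-negative integer values, so its infimum $L$ is attained; the unboundedness of orbits rules out any fixed vertex of $g$, so $L \geq 1$. Fix $x \in \mathrm{Min}(g)$ and a geodesic $\gamma = [x, gx]$, and let $\eta := \bigcup_{k \in \mathbb{Z}} g^k\gamma$. A useful preliminary observation is that every vertex $y$ of $\gamma$ also belongs to $\mathrm{Min}(g)$: concatenating the subpath of $\gamma$ from $y$ to $gx$ with the subpath of $g\gamma$ from $gx$ to $gy$ yields a path from $y$ to $gy$ of length $d(y,gx)+d(x,y) = L$, so $d(y,gy) \leq L$, forcing equality, and this concatenated path must itself be geodesic.

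To conclude that $\eta$ is a bi-infinite geodesic on which $g$ acts by translation of length $L$, it suffices by Theorem~\ref{thm:BigQM}(iii) to show that no hyperplane is crossed by $\eta$ more than once. By $\langle g\rangle$-equivariance this reduces to proving that, for every $k \geq 1$, no hyperplane crosses both $\gamma$ and $g^k\gamma$. Suppose for contradiction some hyperplane $J$ crosses both, with $k$ chosen minimal. By minimality of $k$, the concatenation $\gamma \cdot g\gamma \cdots g^{k-1}\gamma$ is already geodesic, so $d(x, g^k x) = kL$, while $d(x, g^{k+1}x) < (k+1)L$.

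From here I would project the configuration onto the carrier $N(J)$, which is gated by Theorem~\ref{thm:BigQM}(ii). Proposition~\ref{prop:Projection} ensures that the projections $\mathrm{proj}_{N(J)}(g^i x)$ for $0 \leq i \leq k+1$ faithfully reflect the sector structure of $J$, and Lemma~\ref{lem:MinDistance} computes the relevant distances via hyperplane counts. The no-inversion hypothesis enters to eliminate the degenerate case $g^k J = J$: a power of $g$ stabilizing $J$ must then preserve every sector of $J$, and a careful comparison of how $x, gx, g^k x, g^{k+1}x$ are distributed across the sectors of $J$ and $gJ$ yields a contradictory configuration. In the remaining case, where $J$ and $g^kJ$ are distinct hyperplanes in the orbit of $J$ that both cross $\gamma$ after appropriate translation, I plan to combine the median-triangle description of $(x, g^k x, g^{k+1}x)$ via Proposition~\ref{prop:MedianTriangle} with a shortcut through $N(J)$, in order to locate a vertex $y$ along $\gamma$ (which is automatically in $\mathrm{Min}(g)$ by the preliminary observation) satisfying $d(y, gy) < L$, contradicting the definition of $L$.

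The main obstacle I anticipate is the execution of that shortcut. In the median case, Haglund's argument works cleanly because each hyperplane has exactly two sectors, so the positions of $x, gx, g^k x, g^{k+1}x$ relative to $J$ are strongly constrained. In the quasi-median setting a hyperplane may have arbitrarily many sectors, and $x, gx, g^k x, g^{k+1}x$ may lie in up to four distinct sectors of $J$. Disentangling these possibilities rigorously, using the triangle and quadrangle conditions together with the absence of $K_4^-$ and $K_{3,2}$ as induced subgraphs, is the core technical difficulty.
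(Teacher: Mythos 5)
Your setup is fine as far as it goes: $\mathrm{Min}(g)$ is non-empty, $L\geq 1$, every vertex of $\gamma=[x,gx]$ again lies in $\mathrm{Min}(g)$, and by Theorem~\ref{thm:BigQM}(iii) plus equivariance the whole statement reduces to showing that no hyperplane $J$ crosses both $\gamma$ and $g^k\gamma$, with $k\geq 1$ minimal, whence $d(x,g^kx)=kL$ and $d(x,g^{k+1}x)<(k+1)L$. But from that point on your text is a plan rather than a proof: the phrases ``a careful comparison \dots yields a contradictory configuration'' and ``I plan to combine \dots with a shortcut through $N(J)$ in order to locate a vertex $y$ along $\gamma$ with $d(y,gy)<L$'' are exactly the missing content, and you say yourself in the last paragraph that this is the core technical difficulty you have not resolved. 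Nothing in the proposal explains how the projection to $N(J)$, Proposition~\ref{prop:Projection}, Lemma~\ref{lem:MinDistance} or Proposition~\ref{prop:MedianTriangle} actually produce the shorter displacement, nor how the no-inversion hypothesis kills the case $g^kJ=J$ when $J$ has more than two sectors and $x,gx,g^kx,g^{k+1}x$ may occupy up to four of them. Since the entire point of the proposition in the quasi-median (as opposed to median) setting is precisely this multi-sector combinatorics, the argument has a genuine gap at its central step.

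It is worth noting that the paper deliberately avoids the route you chose. Its proof never argues from $\mathrm{Min}(g)$ and never analyses a hyperplane crossing $\gamma$ and $g^k\gamma$: it picks a vertex whose orbit's gated hull meets the fewest $\langle g\rangle$-orbits of hyperplanes, uses the no-inversion hypothesis only to show that each hyperplane separates $g^kx$ from $g^{k+1}x$ for finitely many $k$, then classifies hyperplanes as one- or two-sided along the orbit path, chooses one preferred sector per hyperplane, and uses the Helly property for gated subgraphs to define window-centres $c_k$ that all lie on a common bi-infinite geodesic; the assertion about arbitrary $x\in\mathrm{Min}(g)$ is then deduced a posteriori from Lemma~\ref{lem:AxisFromAnother}. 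If you want to keep your Haglund-style approach, the burden is to carry out in detail the sector analysis you postponed; alternatively, you could prove existence of \emph{some} axis by any means and then invoke Lemma~\ref{lem:AxisFromAnother}, which is exactly how the paper obtains the ``more precisely'' part of the statement.
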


\noindent
Here, $\mathrm{Min}(g):= \{ x \in X \mid d(x,gx)= \min \{ d(z,gz) \mid z \in X\} \}$ is the \emph{minimising set} of $g$. An isometry admitting an axis is referred to as a \emph{loxodromic isometry}. In fact, the second assertion of our proposition follows from the first one according to our next general observation:

\begin{lemma}\label{lem:AxisFromAnother}
Let $X$ be a graph and $g \in \mathrm{Isom}(X)$ an isometry. Assume that there exists a bi-infinite geodesic $\gamma$ on which $g$ acts as a translation of length $\ell$. Then $d(x,gx) \leq \ell$ for every $x \in X$. Moreover, if $x \in X$ is vertex satisfying $d(x,gx)= \ell$, then $x$ belongs to an axis of $g$. 
\end{lemma}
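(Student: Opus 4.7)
The plan is to route everything through the asymptotic translation length $\tau(g) := \lim_{n \to +\infty} d(x,g^nx)/n$. The sequence $n \mapsto d(x,g^nx)$ is subadditive by the triangle inequality ($d(x,g^{n+m}x) \leq d(x,g^nx) + d(g^nx,g^{n+m}x) = d(x,g^nx) + d(x,g^mx)$), so by Fekete's lemma the limit exists, equals $\inf_n d(x,g^nx)/n$, and (again by the triangle inequality) does not depend on the basepoint. The key consequence I will use repeatedly is the two-sided bound $n\tau(g) \leq d(x,g^nx) \leq n \cdot d(x,gx)$ for every $x$ and every $n \geq 1$.

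The first step is to pin down $\tau(g) = \ell$ from the given geodesic $\gamma$. Picking any $p \in \gamma$, the iterates $p, gp, g^2 p, \ldots$ all lie on $\gamma$ at consecutive distances $\ell$, so $d(p,g^n p) = n\ell$ and $\tau(g) = \ell$ upon passage to the limit. Combined with the Fekete infimum formula, this forces $d(x,gx) \geq \tau(g) = \ell$ for every $x \in X$. (I am reading the ``$\leq$'' in the statement as a typo for ``$\geq$'': otherwise the \emph{moreover} clause is incompatible with the definition of $\mathrm{Min}(g)$ recalled just after Proposition~\ref{prop:Axis}, and what is actually needed in that proposition is the lower bound.)

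For the moreover clause, suppose $x$ realises $d(x,gx) = \ell$, fix any geodesic $\alpha$ from $x$ to $gx$, and set $\tilde \gamma := \bigcup_{k \in \mathbb{Z}} g^k \alpha$. By construction $\tilde \gamma$ is $g$-invariant, meets itself end-to-end at the points $g^k x$, and carries $g$ as a translation of length $\ell$, so the only thing left to check is that $\tilde \gamma$ is a bi-infinite geodesic. Equivalently, I need $d(x, g^n x) = n\ell$ for every $n \geq 1$: the upper bound is subadditivity, $d(x, g^nx) \leq n \cdot d(x,gx) = n\ell$, and the lower bound is the Fekete infimum formula $d(x, g^n x) \geq n \tau(g) = n\ell$ from the first step. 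Equality then forces the concatenated path (of length $n\ell$) to be a geodesic between its endpoints, and letting $n$ vary yields that $\tilde \gamma$ is a bi-infinite geodesic axis of $g$.

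The only genuinely non-trivial ingredient is Fekete's lemma in the strong form that the limit equals the infimum; everything else reduces to the triangle inequality. In particular no structural hypothesis on $X$ enters the argument, which is why the statement is made for arbitrary graphs; I do not foresee any real obstacle beyond keeping the direction of the inequalities straight.
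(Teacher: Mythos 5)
Your proof is correct and essentially the paper's argument: both compare $d(x,g^nx)$ with the orbit along $\gamma$ and let $n \to \infty$, the only cosmetic difference being that you invoke Fekete's limit-equals-infimum formula where the paper obtains the lower bound $d(x,g^nx) \geq n\ell$ by applying its first assertion to $g^n$. Your reading of the ``$\leq$'' in the statement as a typo for ``$\geq$'' is also what the paper's own proof establishes, and it is the inequality actually used for $\mathrm{Min}(g)$ in Proposition~\ref{prop:Axis}.
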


\begin{proof}
Fix two vertices $x \in X$ and $y \in \gamma$. Then, for every $n \geq 1$, we have
$$n \ell - 2d(x,y) = d(y,g^ny)-2d(x,y) \leq d(x,g^nx) \leq n d(x,gx),$$
hence $\ell -2d(x,y)/n \leq d(x,gx)$. When $n\to + \infty$, we find $\ell \leq d(x,gx)$. This proves the first assertion of our lemma.

\medskip \noindent
Now, fix a vertex $x \in X$ satisfying $d(x,gx)=\ell$. In order to show that $x$ belongs to an axis of $g$, we need to prove that the $\langle g \rangle$-translates of $x$ all lie on a common bi-infinite geodesic. It suffices to show that $d(x,g^nx) = \sum_{i=0}^{n-1} d(g^ix,g^{i+1}x)$ for every $n \geq 1$. Observe that
$$n \ell \leq d(x,g^nx) \leq n d(x,gx)=n \ell,$$
where the first inequality is obtained by applying our previous assertion to $g^n$. So
$$d(x,g^nx) = n \ell = \sum\limits_{i=0}^{n-1} d(g^ix,g^{i+1} x),$$
as desired. 
\end{proof}

\noindent
Our strategy to prove Proposition~\ref{prop:Axis} differs from \cite{HaglundAxis} (thus providing an alternative proof of the main result of \cite{HaglundAxis}). The idea is the following. Starting from an arbitrary vertex $x$ in our quasi-median graph $X$, the orbit $\langle g \rangle \cdot x$ under our isometry $g$ has to stay at bounded distance from the axis we are looking for. Loosely speaking, $\langle g \rangle  \cdot x$ coincides with this axis up to some noise we have to remove. In order to smoothen the quasi-line $\langle g \rangle \cdot x$, we fix a large integer $N \geq 1$, define a reasonable centre $c_k$ of $\{g^{k+i}x \mid -N \leq k \leq N\}$ for every $k \in \mathbb{Z}$, and show that the $c_k$ all lie on a common bi-infinite geodesic. 

\begin{proof}[Proof of Proposition~\ref{prop:Axis}.]
Given a vertex $x \in X$, the action of $\langle g \rangle$ on the gated hull of the orbit $\langle g \rangle x$ has only finitely many orbits of hyperplanes. Indeed, every hyperplane separating two vertices in the orbit $\langle g \rangle x$ admits a $\langle g \rangle$-translate separating $x$ and $d(x,gx)$. Fix a vertex $x \in x$ such that the action of $\langle g \rangle$ on the gated hull of $\langle g \rangle x$ has the smallest possible number of orbits of hyperplanes.

\begin{claim}\label{claim:CalJEmpty}
For every hyperplane $J$, the set $\{ k \in \mathbb{Z} \mid \text{$J$ separates $g^kx$ and $g^{k+1}x$}\}$ is finite.
\end{claim}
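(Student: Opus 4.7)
The plan is to argue by contradiction, so suppose the set $\mathcal{J} := \{k \in \mathbb{Z} \mid J \text{ separates } g^k x \text{ and } g^{k+1} x\}$ is infinite. Observe first that $J$ separates $g^k x$ and $g^{k+1} x$ if and only if $g^{-k} J$ separates $x$ and $gx$. Since $d(x,gx)$ is finite, only finitely many hyperplanes separate $x$ from $gx$; hence if $\mathcal{J}$ is infinite, the pigeonhole principle forces $g^{-k_1} J = g^{-k_2} J$ for some distinct $k_1, k_2 \in \mathcal{J}$, so some positive power $g^n$ stabilises $J$. Let $n \geq 1$ be the smallest such integer.

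If $n = 1$, then $g$ stabilises $J$, and the no-inversion hypothesis forces $g$ to preserve each sector of $J$ set-wise. Consequently the entire orbit $\langle g \rangle x$ lies in one sector of $J$, which contradicts the existence of any $k$ with $J$ separating $g^k x$ and $g^{k+1} x$.

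If $n \geq 2$, the $\langle g \rangle$-orbit of $J$ consists of $n$ distinct hyperplanes $J_0 := J, J_1 := gJ, \ldots, J_{n-1} := g^{n-1} J$, cyclically permuted by $g$, and each stabilised sector-wise by $g^n$. The plan is to produce a vertex $x'$ whose orbit's gated hull contains strictly fewer $\langle g \rangle$-orbits of hyperplanes than the gated hull of $\langle g \rangle x$, contradicting the minimality of $x$. Concretely, I will choose a sector $S$ of $J$ for which the intersection $Y := \bigcap_{i=0}^{n-1} g^{-i} S$ is non-empty; each $g^{-i} S$ is a sector of $g^{-i} J$, hence gated by Theorem~\ref{thm:BigQM}(ii), so $Y$ is a gated subgraph, and the identity $gY = Y$ (which uses that $g^{-n} S = S$ as $g^n$ preserves $S$) makes it $\langle g \rangle$-invariant. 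The non-emptiness of $Y$ will be established by invoking Lemma~\ref{lem:HellyBis} to reduce to pairwise intersections, which by $\langle g \rangle$-equivariance reduce further to verifying $S \cap g^k S \neq \emptyset$ for $k = 1, \ldots, n-1$, which can be arranged by an appropriate choice of $S$ guided by the configuration of $\langle g \rangle x$ relative to the $J_i$. Setting $x' := \mathrm{proj}_Y(x)$, the orbit $\langle g \rangle x'$ lies entirely inside $Y$, hence in a single sector of each $J_i$, so none of the $J_i$ crosses the gated hull of $\langle g \rangle x'$. Finally, Proposition~\ref{prop:Projection} identifies the hyperplanes separating $x$ from $x'$ with those separating $x$ from $Y$, and these all belong to the orbit of $J$; consequently every other $\langle g \rangle$-orbit of hyperplanes crossing the gated hull of $\langle g \rangle x$ still crosses that of $\langle g \rangle x'$, yielding the desired strict decrease.

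The main obstacle is the $n \geq 2$ case: both establishing the non-emptiness of $Y$ for a suitable sector $S$ (which ultimately reflects how the translates $J_0, \ldots, J_{n-1}$ sit relative to one another, transverse, tangent, or disjoint) and verifying that the projection onto $Y$ eliminates only the $\langle g \rangle$-orbit of $J$ and no other $\langle g \rangle$-orbit of hyperplanes. Both points depend critically on the quasi-median geometry of sectors developed in the previous subsections.
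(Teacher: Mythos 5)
Your pigeonhole reduction (some minimal $n\geq 1$ with $g^nJ=J$), your treatment of the case $n=1$ via the no-inversion hypothesis, and the observation that $Y=\bigcap_{i=0}^{n-1}g^{-i}S$ is gated and $\langle g\rangle$-invariant once $g^nS=S$, are all correct, and your overall skeleton (project onto a $\langle g\rangle$-invariant gated subgraph missed by every translate of $J$, then contradict the minimality in the choice of $x$) is indeed the mechanism hiding behind the paper's phrase ``this contradicts our choice of $x$''. But the proposal has a genuine gap exactly at the point you flag as the ``main obstacle'': you assert that a sector $S$ with $S\cap g^kS\neq\emptyset$ for all $k$ ``can be arranged by an appropriate choice of $S$''. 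This is the entire difficulty, and it is not a routine verification: for an arbitrary sector the translates can fail to intersect pairwise (e.g.\ for a finite orbit of pairwise disjoint or cyclically crossing translates, one of the two sectors of $J$ typically fails), so one must prove that \emph{some} equivariant choice is consistent. That statement amounts to a fixed-point theorem for the finite cyclic action on the sector structure of the orbit $\{J,gJ,\dots,g^{n-1}J\}$, using the no-inversion hypothesis in an essential way; the paper circumvents having to prove it by a different argument: it takes an arbitrary sector $S$, and when the translates of $S$ (and then of a second, explicitly chosen sector $S'$, the one of $J$ containing $h_1J$) fail to pairwise intersect, it produces a strictly nested chain of sectors $h_1^{-1}h_2S'\subsetneq S'$, contradicting the finiteness of $\mathcal{J}$ (equivalently, of the orbit of $J$). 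Without either that nesting argument or a genuine proof of your existence claim, the case $n\geq 2$ is not closed.

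There is a second, more local error in your final step. The claim that the hyperplanes separating $x$ from $x'=\mathrm{proj}_Y(x)$ ``all belong to the orbit of $J$'' is false in general: Proposition~\ref{prop:Projection} only says they separate $x$ from $Y$, and nothing prevents many hyperplanes outside the orbit of $J$ from doing so. Moreover, even granting it, the conclusion you draw (every other orbit crossing the gated hull of $\langle g\rangle x$ still crosses that of $\langle g\rangle x'$) is the wrong inclusion for contradicting minimality: it would only bound the new count from \emph{below}. What you need is the reverse statement, that every hyperplane separating two vertices of $\langle g\rangle x'$ already separates two vertices of $\langle g\rangle x$; this is true, but the correct argument is that such a hyperplane crosses $Y$, hence by Proposition~\ref{prop:Projection} it cannot separate any $g^ax$ from its gate $g^ax'$, and therefore it separates $g^ax$ from $g^bx$. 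Combined with the fact that $J$ separates consecutive orbit points of $x$ but no translate of $J$ crosses $Y$, this gives the strict decrease. So the counting step is repairable, but as written it rests on an unjustified (and generally false) intermediate claim and a reversed implication, while the existence of the good sector $S$ remains the substantive missing piece that the paper's $h_1,h_2$ argument was designed to replace.
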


\noindent
Let $\mathcal{J}$ denote the set of the hyperplanes $J$ separating $g^kx$ and $g^{k+1}x$ for infinitely many $k \in \mathbb{Z}$. 

\medskip \noindent
First, observe that $\mathcal{J}$ is finite. Indeed, it is clear that $\mathcal{J}$ is $\langle g \rangle$-invariant. And, because every hyperplane in $\mathcal{J}$ has a $\langle g \rangle$-translate separating $x$ and $gx$ while exactly $d(x,hx)$ hyperplanes separate $x$ and $gx$, $\mathcal{J}$ contains only finitely many $\langle g \rangle$-orbits and they all have sizes $\leq d(x,gx)$. 

\medskip \noindent
Fix a hyperplane $J \in \mathcal{J}$ and a sector $S$ delimited by $J$. If the $g^kS$, $k \in \mathbb{Z}$, pairwise intersect, then $\bigcap_{k \in \mathbb{Z}} g^kS$ yields a $\langle g \rangle$-invariant gated subgraph not crossed by any translate of $J$. This contradicts our choice of $x$. So there exists some $h_1 \in \langle g \rangle$ such that $h_1S \cap S = \emptyset$. Of course, $h_1J$ and $J$ cannot be transverse, but they also have to be distinct because $h_1$ is not an inversion. Let $S'$ denote the sector delimited by $J$ and containing $h_1J$. Because $\mathcal{J}$ is $\langle g \rangle$-invariant and finite, we cannot have $h_1S' \subsetneq S'$, so $h_1S'$ must be the sector delimited by $h_1J$ containing $J$. As before, if the $g^kS$, $k \in \mathbb{Z}$, pairwise intersect, we get a contradiction; so there must exist some $h_2 \in \langle g \rangle$ satisfying $h_2 S' \cap S' = \emptyset$. But then $h_2S' \subset (S')^c \subsetneq h_1S'$, hence $h_1^{-1}h_2 S' \subsetneq S'$. Again, this contradicts the fact that $\mathcal{J}$ is finite.

\medskip \noindent
Thus, the only possibility is that $\mathcal{J}$ is empty, concluding the proof of Claim~\ref{claim:CalJEmpty}. 

\medskip \noindent
From now on, we fix a geodesic $[x,gx]$ between $x$ and $gx$, and we denote by $\gamma$ the concatenation of the $\langle g \rangle$-translates of $[x,gx]$. We enumerate the edges of $\gamma$ as $\ldots, e_{-1},e_0, e_1, \ldots$ such that $e_i$ and $e_{i+1}$ have a common endpoint for every $i \in \mathbb{Z}$. Claim~\ref{claim:CalJEmpty} allows us to define, for every hyperplane $J$ crossing $\langle g \rangle x$, the numbers
$$\ell(J) := \min \{ k \in \mathbb{Z} \mid e_k \in J \} \text{ and }  r(J):= \max \{ k \in \mathbb{Z} \mid e_k \in J \} .$$
In other words, $\ell(J)$ (resp. $r(J)$) indicates the leftmost (resp. rightmost) part of $\langle g \rangle x$ crossed by $J$. Notice that $\ell(gJ)=\ell(J)+1$ and $r(gJ)=r(J)+1$. As a consequence, the difference $r(J) - \ell(J)$ depends only on the $\langle g \rangle$-orbit of $J$. Because there are only finitely many orbits of hyperplanes crossing $\langle g \rangle x$, we can fix an integer $N \geq 1$ larger than any of these quantities.

\medskip \noindent
We distinguish two types of hyperplanes crossing $\langle g \rangle x$. Such a hyperplane $J$ is \emph{one-sided} if all the $e_k$ for $k \leq \ell(J)$ and $k \geq r(J)$ belong to the same sector delimited by $J$, which we denote by $S(J)$. Otherwise, $J$ is \emph{two-sided} and we denote by $L(J)$ (resp. $R(J)$) the sector delimited by $J$ containing the $e_k$ for $k \leq \ell(J)$ (resp. $k \geq r(J)$). 

\medskip \noindent
For every $k \in \mathbb{Z}$, set $B_k:= \bigcup_{-N \leq i \leq N} e_{k+i}$. Notice that $gB_k=B_{k+1}$. Given a $k \in \mathbb{Z}$, we want to define a centre of $B_k$. For this purpose, given a hyperplane $J$, we define the sector $S_k(J)$ delimited by $J$ as follows:
\begin{itemize}
	\item if $J$ is one-sided, $S_k(J):=S(J)$;
	\item if $J$ is two-sided and $\ell(J) \geq k$, $S_k(J):=L(J)$;
	\item if $J$ is two-sided and $\ell(J)<k$, $S_k(J):=R(J)$.
\end{itemize}
Roughly speaking $S_k(J)$ represents the sector of $J$ that contains the bigger part of $B_k$. 

\begin{claim}\label{claim:PairwiseIntersect}
The intersection between the gated hull $\mathrm{GH}(B_k)$ of $B_k$ and the $S_k(J)$ for $J$ crossing $B_k$ is reduced to a single vertex, which we denote by $c_k$. 
\end{claim}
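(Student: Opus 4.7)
The plan is to apply the Helly property for gated subgraphs (see \cite[Proposition~2.8]{QM}) to the finite family $\{\mathrm{GH}(B_k)\} \cup \{S_k(J) \mid J \text{ crosses } B_k\}$, all members of which are gated by Theorem~\ref{thm:BigQM}(ii), and to then deduce uniqueness from the separation property of hyperplanes. A preliminary observation, used for both pairwise intersections and uniqueness, is that a hyperplane crosses $\mathrm{GH}(B_k)$ if and only if it crosses $B_k$: if $J$ avoids $B_k$, then the connected subgraph $B_k$ lies in a single sector of $J$, which is gated, hence contains $\mathrm{GH}(B_k)$.

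For $\mathrm{GH}(B_k) \cap S_k(J) \neq \emptyset$, the bound $r(J)-\ell(J) < N$ yields an explicit edge of $B_k$ in $S_k(J)$: in the one-sided case, at least one of $e_{k-N}, e_{k+N}$ lies outside $[\ell(J), r(J)]$ and hence in $S(J)$; in the two-sided case, $e_{k-1}$ lies in $L(J)$ when $\ell(J) \geq k$, and $e_{r(J)+1}$ lies in $R(J) \cap B_k$ when $\ell(J) < k$ (its index is in $[k-N, k+N]$ because $r(J) \geq k-N$ and $r(J)+1 \leq \ell(J)+N \leq k+N-1$).

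The technical heart of the proof is to verify $S_k(J) \cap S_k(J') \neq \emptyset$ for two distinct hyperplanes $J, J'$ crossing $B_k$. If $J, J'$ are transverse, every sector of $J$ meets every sector of $J'$, by a standard property of transverse hyperplanes in quasi-median graphs. Otherwise, $J'$ is nested in a sector of $J$, say $J' \subset \sigma$. When $S_k(J) = \sigma$, the carrier $N(J')$ lies in $S_k(J)$, so $S_k(J)$ meets every sector of $J'$. When $S_k(J) \neq \sigma$, the sector $S_k(J)$ is connected and contains no edge of $J'$, hence lies in a single sector $\tau$ of $J'$; one must verify $\tau = S_k(J')$. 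The key observation is that, by Claim~\ref{claim:CalJEmpty}, both tails of $\gamma$ lie eventually in $L(J) \cup R(J)$ (two-sided case) or in $S(J)$ (one-sided case). If $\sigma$ is not one of these distinguished sectors, both tails of $\gamma$ lie in $\sigma^c$, hence in the unique sector $\tau_{\mathrm{out}}$ of $J'$ containing $\sigma^c$, forcing $J'$ to be one-sided with $S_k(J') = S(J') = \tau_{\mathrm{out}} = \tau$. In the remaining configuration, $\sigma \in \{L(J), R(J)\}$, say $\sigma = L(J)$: then $S_k(J) = R(J)$ with $\ell(J) < k$, the $J'$-crossings of $\gamma$ all lie in $L(J)$ (so $r(J') < \ell(J) < k$), and $S_k(J')$ coincides with the sector of $J'$ containing the right tail of $\gamma$, which is $\tau$.

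Once all pairwise intersections are established, the Helly property yields a vertex $c_k$ in the total intersection. For uniqueness, if $u \neq v$ were two such vertices, any hyperplane $J$ separating them would cross $\mathrm{GH}(B_k)$ by convexity of gated subgraphs, hence cross $B_k$ by the preliminary observation; but then $S_k(J)$ is a member of our family containing both $u$ and $v$, contradicting the assumption. The main obstacle is the case analysis for the non-transverse pairwise intersection; its critical subtlety is to leverage the finiteness of $\gamma$'s crossings of every hyperplane to rule out configurations where $J'$ could be two-sided while nested in a non-extremal sector of $J$.
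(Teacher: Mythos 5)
Most of your proposal is sound and runs parallel to the paper's argument: the reduction to pairwise intersections plus the Helly property for gated subgraphs, the uniqueness argument via the observation that a hyperplane crossing $\mathrm{GH}(B_k)$ must cross $B_k$, and the explicit edges of $B_k$ witnessing $\mathrm{GH}(B_k)\cap S_k(J)\neq\emptyset$ (a more explicit version of the paper's ``by definition of $N$'' step) are all correct. The transverse case and the sub-case $S_k(J)=\sigma$, handled via $N(J')\subseteq\sigma$, are fine as well, and organizing the non-transverse case by nesting rather than by the paper's $\ell,r$ bookkeeping is a legitimate alternative route.

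The gap is in the configuration you yourself flag as the critical one: $\sigma=L(J)$, $S_k(J)=R(J)$, with $J'$ possibly two-sided. You write ``the $J'$-crossings of $\gamma$ all lie in $L(J)$ (so $r(J')<\ell(J)<k$)''. The parenthetical does not follow: an edge of $\gamma$ lying in the sector $L(J)$ need not have index $<\ell(J)$, because $\gamma$ is not a geodesic and may re-enter $L(J)$ between its first and last crossings of $J$ --- this is precisely the phenomenon that makes $\ell(J)\neq r(J)$ possible, so it cannot be waved away here, and you offer no argument for $r(J')<\ell(J)$. (Note also that the mirror sub-case $\sigma=R(J)$, $S_k(J)=L(J)$ is not literally symmetric, because of the tie-breaking $\ell(J)\geq k$ versus $\ell(J)<k$ in the definition of $S_k$; in that direction the analogous inference \emph{is} valid, since no edge of index $<\ell(J)$ can lie in $R(J)$, which is exactly why the case you wrote out is the delicate one.) What you actually need is only $\ell(J')<k$, and this is true but requires an argument: if $\ell(J')>\ell(J)$, the terminal endpoint $v$ of $e_{\ell(J)}$ lies outside $\sigma=L(J)$, hence in the sector of $J'$ containing $\sigma^c$, which contains the right tail of $\gamma$ and is therefore $R(J')$; but $v$ lies on the initial portion of $\gamma$ preceding the first $J'$-crossing, which contains no edge of $J'$ and hence is contained in $L(J')$, contradicting $L(J')\neq R(J')$. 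Thus $\ell(J')<\ell(J)<k$, and your conclusion $S_k(J')=\tau$ follows. With this patch your treatment of the non-transverse case is correct (and arguably cleaner than the paper's); as written, however, the key step is a non sequitur.
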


\noindent
It suffices to show that $\mathrm{GH}(B_k)$ and the $S_k(J)$ for $J$ crossing $B_k$ pairwise intersect. Then the Helly property for gated subgraphs assures that the total intersection is non-empty, and, because we are choosing one sector for each hyperplane crossing $B_k$, there cannot be two vertices in the intersection.

\medskip \noindent
If $J_1$ is a one-sided hyperplane, then $S(J_1)$ clearly intersects $S(J_2)$ if $J_2$ is another one-sided hyperplane and both $L(J_2)$ and $R(J_2)$ if $J_2$ is a two-sided hyperplane. Moreover, if $J_1$ and $J_2$ are two two-sided hyperplanes, then $R(J_1) \cap R(J_2)$ and $L(J_1) \cap L(J_2)$ are both non-empty. So, given two hyperplanes $J_1$ and $J_2$ crossing $B_k$, the only remaining possibility in order to have $S_k(J_1) \cap S_k(J_2) = \emptyset$ is that $J_1,J_2$ are two non-transverse two-sided hyperplanes with $S_k(J_1)=L(J_1)$ and $S_k(J_2)=R(J_2)$ (up to switching $J_1$ and $J_2$). Necessarily, from $S_k(J_1) \cap S_k(J_2)$, it follows that $\ell(J_2) < k \leq \ell(J_1)$; and because $J_1$ and $J_2$ are not transverse, we know that $r(J_2) < \ell(J_1)$. Therefore, $S_k(J_1)=L(J_1)$ and $S_k(J_2)=R(J_2)$ actually intersect.

\medskip \noindent
Next, if $J$ is a one-sided hyperplane crossing $B_k$, then $B_k$ cannot lie in the complement of $S(J)$ by definition of $N$, so $S_k(J) \cap \mathrm{GH}(B_k) \neq \emptyset$. If $J$ is two-sided with $L(J) \cap \mathrm{GH}(B_k)= \emptyset$, then $\{e_i \mid i \leq \ell(J) \} \cap B_k$ must be empty, which implies that $k-N > \ell(J)$. We deduce that $k> r(J)$, $g^kx \in R(J)$, and $S_k(J)=R(J)$. Thus, $e_k$ belongs to $S_k(J) \cap B_k$, proving that $S_k(J)$ and $\mathrm{GH}(B_k)$ intersect. This concludes the proof of Claim~\ref{claim:PairwiseIntersect}. 

\medskip \noindent
Notice that $gc_k=c_{k+1}$ for every $k \in \mathbb{Z}$. We claim that the $c_k$ all belong to a common bi-infinite geodesic. This proves that, fixing a geodesic $[c_0,c_1]$ between $c_0$ and $c_1=gc_0$, the concatenation of the $\langle g \rangle$-translates of $[c_0,c_1]$ defines an axis for $g$. In order to prove our claim, it suffices to observe that: $c_k$ belongs to $S(J)$ for every $k \in \mathbb{Z}$ if $J$ is a one-sided hyperplane; and, if $J$ is two-sided, $c_k$ belongs to $L(J)$ for $k \leq \ell(J)$ and to $R(J)$ for $k>\ell(J)$. Therefore, no hyperplane can separate a $c_k$ from some $c_i$ and $c_j$ with $i<k<j$. 

\medskip \noindent
So far, we have proved the first assertion of Proposition~\ref{prop:Axis}. The second assertion follows from Lemma~\ref{lem:AxisFromAnother}.
\end{proof}

\subsection{Quasiconvex isometries}

\noindent
A loxodromic isometry is \emph{quasiconvex} if it admits a quasiconvex axis. In order to quantify this property, we introduce the two following quantities:

\begin{definition}
Let $X$ be a quasi-median graph and $\gamma$ a bi-infinite geodesic. Define 
\begin{itemize}
	\item $\mathrm{QC}(\gamma)$ as the largest Hausdorff distance between two bi-infinite geodesics in the convex hull of $\gamma$. 
	\item $\mathrm{HQC}(\gamma)$ as the maximal $n \geq 0$ for which there exist two transverse collections of hyperplanes of size $n$ crossing~$\gamma$.
\end{itemize}
\end{definition}

\noindent
Thus, $\gamma$ is quasiconvex if and only if the quantity $\mathrm{QC}(\gamma)$ is finite. The quantity $\mathrm{HQC}(\gamma)$ is a more median-friendly version of $\mathrm{QC}(\gamma)$ which coarsely coincides with $\mathrm{QC}(\gamma)$ if $\gamma$ is an axis according to Lemma~\ref{lem:FormulaQC} below. Our first observation is that $\mathrm{HQC}(\cdot)$ does not depend on a particular choice of an axis. Indeed:

\begin{lemma}\label{lem:TwoAxes}
Let $X$ be a quasi-median graph and $g \in \mathrm{Isom}(X)$ a loxodromic isometry. Any two axes of $g$ cross exactly the same hyperplanes.
\end{lemma}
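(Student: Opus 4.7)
The plan is to argue by contradiction. Suppose $\gamma_1$ and $\gamma_2$ are two axes of $g$ with translation length $\ell$, and that some hyperplane $J$ crosses $\gamma_1$ but not $\gamma_2$. The strategy is to transport a short geodesic between $\gamma_1$ and $\gamma_2$ by high powers of $g$, and to derive a contradiction from the fact that the $\langle g \rangle$-orbit of $J$ is infinite while the transported geodesic has bounded length.

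First, since $\gamma_2$ contains no edge dual to $J$ and is connected, it must lie inside a single sector $S$ of $J$. On the other hand, $\gamma_1$ is a geodesic, so by Theorem~\ref{thm:BigQM}(iii) it crosses $J$ exactly once, which means its forward and backward halves (with respect to the translation direction of $g$) lie in two distinct sectors $S_+$ and $S_-$ of $J$. Since $S_+ \neq S_-$, at least one of them differs from $S$; after replacing $g$ by $g^{-1}$ if necessary, I may assume $S_+ \neq S$. Fixing $x_1 \in \gamma_1$ and $x_2 \in \gamma_2$, I then have $g^n x_1 \in S_+$ for all sufficiently large $n \geq 1$, while $g^n x_2 \in \gamma_2 \subset S$. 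I also let $D := d(x_1, x_2)$ and fix a geodesic $\eta$ from $x_1$ to $x_2$, which crosses at most $D$ hyperplanes.

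A crucial observation is that $g^m J \neq J$ for every $m \neq 0$: otherwise $\gamma_1$ would cross $J$ both at its original crossing and at the $g^m$-translate of that crossing, yielding two distinct crossings of a single hyperplane by a geodesic and contradicting Theorem~\ref{thm:BigQM}(iii). Consequently the family $\{g^{-n} J : n \in \mathbb{Z}\}$ consists of pairwise distinct hyperplanes, and in particular is infinite.

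To conclude, for $n$ large enough the vertices $g^n x_1 \in S_+$ and $g^n x_2 \in S$ lie in distinct sectors of $J$, so $J$ separates them; equivalently, the translated geodesic $g^n \eta$ from $g^n x_1$ to $g^n x_2$ crosses $J$, i.e.\ $\eta$ crosses $g^{-n} J$. But $\eta$ crosses at most $D$ hyperplanes in total, which contradicts the infinitude of $\{g^{-n} J\}_{n \geq 1}$ once $n$ is large enough that $g^{-n} J$ falls outside the finite set of hyperplanes met by $\eta$. I do not foresee any real obstacle in this argument; the only delicate point, compared with the CAT(0) cube complex setting, is that a hyperplane of a quasi-median graph may have more than two sectors, which is precisely why one must isolate the two specific sectors $S_+, S_-$ of $J$ encountered by $\gamma_1$ and then select whichever one is distinct from $S$.
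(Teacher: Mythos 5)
Your argument is correct, and while it shares the paper's basic mechanism---translating the offending hyperplane by powers of $g$ and invoking the fact that a geodesic crosses each hyperplane at most once (Theorem~\ref{thm:BigQM})---it is organised differently. The paper first proves a fellow-travelling claim showing that any two axes of $g$ lie at finite Hausdorff distance, and then notes that if $J$ crossed $\gamma_1$ but not $\gamma_2$, the hyperplanes $J, gJ, \ldots, g^sJ$ would all separate a far-enough point of $\gamma_1$ from $\gamma_2$, contradicting that Hausdorff bound. You dispense with the Hausdorff-distance step entirely: you pull everything back by $g^{-n}$, so that the pairwise distinct hyperplanes $g^{-n}J$ must all cross one fixed geodesic $\eta$ from $x_1 \in \gamma_1$ to $x_2 \in \gamma_2$, while a geodesic of length $D$ meets at most $D$ hyperplanes. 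The price is the small extra observation (which you supply) that no nonzero power of $g$ stabilises $J$, needed to make the translates pairwise distinct; the gain is a slightly more elementary argument requiring only one geodesic between the axes rather than a uniform fellow-travelling constant. The facts you use with little comment---that the two endpoints of an edge of $J$ lie in distinct sectors, so $S_+ \neq S_-$, and that two vertices in distinct sectors are separated by $J$ so any geodesic between them contains an edge of $J$---are immediate from Theorem~\ref{thm:BigQM}(iii)--(iv), so there is no gap there; just remember to note, as the paper does, that the symmetric case of a hyperplane crossing $\gamma_2$ but not $\gamma_1$ is handled by exchanging the roles of the two axes.
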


\begin{proof}
Let $\gamma_1,\gamma_2$ be two axes of $g$. First, we want to show that $\gamma_1$ and $\gamma_2$ fellow-travel in the following sense:

\begin{claim}\label{claim:AxesFellowTravel}
There exist $C,D \geq 0$ such that, for every $t_1 \in \mathbb{Z}$, there exists some $t_2 \in \mathbb{Z}$ such that $|t_1-t_2| \leq C$ and $d(\gamma_1(t_1),\gamma_2(t_2)) \leq D$.
\end{claim}

\noindent
Fix two $s_1,s_2 \in \mathbb{Z}$. Set $C:=\|g\|$ and $D:= \|g\|+ d(\gamma_1(s_1),\gamma_2(s_2))$. For every $t_1 \in \mathbb{Z}$, there exists some $r \in \mathbb{Z}$ such that $d(\gamma_1(t_1), g^r \gamma_1(s_1)) \leq \|g\|$. Set $t_2:= s_1+ r \|g\|$. The latter inequality implies that
$$|t_1-t_2| = d(\gamma_1(t_1), \gamma_1(s_1+ r \|g\|)) \leq \|g\| = C.$$
Moreover, we have
$$\begin{array}{lcl} d(\gamma_1(t_1),\gamma_2(t_2)) & \leq & d(\gamma_1(t_1), \gamma_1(s_1+r \|g\|)) + d(\gamma_1(s_1+r \|g\|), \gamma_2(t_2)) \\ \\ & \leq & d(\gamma_1(t_1), g^r \gamma_1(s_1)) + d(g^r \gamma_1(s_1), g^r \gamma_2(s_2)) \\ \\ & \leq & \|g\| + d(\gamma_1(s_1),\gamma_2(s_2)) = D \end{array}$$
which concludes the proof of Claim~\ref{claim:AxesFellowTravel}.

\medskip \noindent
Now, let $J$ be a hyperplane crossing $\gamma_1$. If $J$ does not cross $\gamma_2$, then there exists some $k_0 \in \mathbb{Z}$ such that $J$ separates $\gamma_1(k)$ from $\gamma_2$ for every $k \geq k_0$. Given an $s \geq 0$ larger than the Hausdorff distance between $\gamma_1$ and $\gamma_2$, which is finite as a consequence of Claim~\ref{claim:AxesFellowTravel}, we deduce that $J,gJ, \ldots, g^s J$ separate $\gamma_1(k+s \|g\|)$ from $\gamma_2$ for every $k \geq k_0$, which is impossible. Thus, every hyperplane crossing $\gamma_1$ has to cross $\gamma_1$ as well. Symmetrically, every hyperplane crossing $\gamma_2$ has to cross $\gamma_1$.
\end{proof}

\noindent
Thus, we can safely define:

\begin{definition}
Let $X$ be a quasi-median graph and $g \in \mathrm{Isom}(X)$ a loxodromic isometry. Define $\mathrm{HQC}(g)$ as $\mathrm{HQC}(\gamma)$ for an arbitrary axis $\gamma$ of $g$. 
\end{definition}

\noindent
We conclude this subsection by proving that $\mathrm{HQC}(g)$ coarsely coincides with $\mathcal{QC}(\gamma)$ for any choice of an axis $\gamma$ of $g$.

\begin{lemma}\label{lem:FormulaQC}
Let $X$ be a quasi-median graph of finite cubical dimension and $\gamma$ an axis of some isometry $g \in \mathrm{Isom}(X)$. The the inequality
$$\mathrm{HQC}(g) \leq \mathrm{QC}(\gamma) \leq 2 \cdot \mathrm{HQC}(g)$$
holds. 
\end{lemma}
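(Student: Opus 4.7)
The plan is to prove the two inequalities separately. A common preliminary observation is that any bi-infinite geodesic $\gamma' \subset \mathrm{conv}(\gamma)$ crosses only hyperplanes that already cross $\gamma$: otherwise $\gamma$ would lie in a single sector $S$ of such a hyperplane, and since $S$ is a multisector, Proposition~\ref{prop:ConvexHull} would give $\mathrm{conv}(\gamma) \subset S$, contradicting that $\gamma'$ crosses out of $S$.

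For the first inequality $\mathrm{HQC}(g) \leq \mathrm{QC}(\gamma)$, fix two transverse collections $\mathcal{H}_1 = \{H_1, \ldots, H_n\}$, $\mathcal{H}_2 = \{K_1, \ldots, K_n\}$ of $n := \mathrm{HQC}(g)$ hyperplanes crossing $\gamma$. I would construct two bi-infinite geodesics in $\mathrm{conv}(\gamma)$ at Hausdorff distance at least $n$. Orienting $\gamma$, let $S_j^-$, $S_j^+$ denote the sectors of $K_j$ containing the negative and positive ends of $\gamma$ respectively. The intersections $Y^\pm := \bigcap_j S_j^\pm$ are gated subgraphs; the negative (resp.\ positive) end of $\gamma$ is eventually contained in $Y^-$ (resp.\ $Y^+$); and the $n$ hyperplanes $K_j$ all separate $Y^-$ from $Y^+$. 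The transversality of $\mathcal{H}_1$ with $\mathcal{H}_2$ ensures that, starting from a point in $Y^-$ deep inside the negative end and pushing across the prism structure formed by $\mathcal{H}_1 \cup \mathcal{H}_2$, one can produce a bi-infinite geodesic $\gamma_- \subset \mathrm{conv}(\gamma)$ that hugs the negative side of every $K_j$, and symmetrically a bi-infinite geodesic $\gamma_+ \subset \mathrm{conv}(\gamma)$ on the positive side. Since all $n$ hyperplanes $K_j$ separate $\gamma_-$ from $\gamma_+$, their Hausdorff distance is at least $n$, giving $\mathrm{QC}(\gamma) \geq n$.

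For the second inequality $\mathrm{QC}(\gamma) \leq 2 \cdot \mathrm{HQC}(g)$, pick bi-infinite geodesics $\gamma_1, \gamma_2 \subset \mathrm{conv}(\gamma)$ achieving Hausdorff distance $D := \mathrm{QC}(\gamma)$, and a vertex $v \in \gamma_1$ with $d(v, \gamma_2) = D$. Let $w \in \gamma_2$ be a closest vertex, and $J_1, \ldots, J_D$ the hyperplanes separating $v$ from $w$; by the preliminary observation they all cross $\gamma$. I would distinguish two types of $J_i$ according to whether $\gamma_1$ crosses $J_i$ or lies entirely on the $v$-side of $J_i$. By pigeonhole, a subfamily $\mathcal{F}$ of size at least $D/2$ consists of hyperplanes of the same type. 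The key claim is that $\mathcal{F}$ admits a transverse companion: a family $\mathcal{F}'$ of $|\mathcal{F}|$ hyperplanes crossing $\gamma$, each transverse to every element of $\mathcal{F}$. In the "non-crossing" case, the fact that $\gamma_1$ is bi-infinite and stays on the $v$-side of every $J \in \mathcal{F}$ forces the hyperplanes that $\gamma_1$ does cross (all of which cross $\gamma$) to be transverse to $\mathcal{F}$, and the minimality of $D$ provides enough of them; a symmetric argument treats the "crossing" case via $\gamma_2$. This yields $\mathrm{HQC}(g) \geq D/2$, hence the desired inequality.

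The main obstacle is the construction of the companion family $\mathcal{F}'$ in the second inequality. Since $\gamma_1$ and $\gamma_2$ need not be axes of $g$, the cyclic group $\langle g \rangle$ does not directly produce transverse hyperplanes, and one has to extract transversality purely from the geodesic structure of $\gamma_1, \gamma_2$ inside $\mathrm{conv}(\gamma)$, together with the finite cubical dimension. Getting the count $|\mathcal{F}'| \geq |\mathcal{F}|$ rather than just a handful should be the most delicate step, and is where the loss of a factor $2$ (as opposed to a sharper constant) presumably enters the argument. The first inequality is comparatively routine once the extreme geodesics $\gamma_\pm$ inside $Y^\pm$ have been set up.
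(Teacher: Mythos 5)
Your proof of the first inequality does not work as written. It rests on producing two bi-infinite geodesics $\gamma_-,\gamma_+\subset\mathrm{CH}(\gamma)$ that are separated by all $n$ hyperplanes $K_j$, i.e.\ with $\gamma_\pm\subset\bigcap_j S_j^\pm$, so that neither crosses any $K_j$. Such geodesics do not exist in the only case that matters: if $\mathrm{QC}(\gamma)$ is infinite the inequality is vacuous, and if $\mathrm{QC}(\gamma)$ is finite then every bi-infinite geodesic $\zeta\subset\mathrm{CH}(\gamma)$ lies at finite Hausdorff distance from $\gamma$, and then the two hypotheses your sketch never actually exploits here --- that $\gamma$ is an axis of $g$ and that $X$ has finite cubical dimension --- force $\zeta$ to cross \emph{every} hyperplane crossing $\gamma$: if some $K$ crossing $\gamma$ missed $\zeta$, choosing $d\leq\dim X$ with $K$ and $g^dK$ non-transverse would give arbitrarily long chains $K,g^dK,\ldots,g^{kd}K$ of pairwise non-transverse hyperplanes all separating $\zeta$ from a subray of $\gamma$, contradicting the finite Hausdorff distance. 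So $\gamma_-$ and $\gamma_+$ must each cross every $K_j$ and cannot be separated by them. The correct construction (the one in the paper) keeps both geodesics equal to $\gamma$ outside a bounded window between two vertices $a,b\in\gamma$ containing all crossings of $\mathcal{H}_1\cup\mathcal{H}_2$, and reroutes one of them through the corner of the grid lying on the $b$-side of every $H\in\mathcal{H}_1$ and the $a$-side of every $K\in\mathcal{H}_2$, the other through the opposite corner (a median-triangle argument shows the reroutings are geodesic); a vertex of the first geodesic in its corner is then separated from \emph{every} vertex of the second by all of $\mathcal{H}_1$ or by all of $\mathcal{H}_2$, which is what gives Hausdorff distance $\geq n$. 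The two geodesics are never globally separated by anything.

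For the second inequality you explicitly leave the key step --- the companion family $\mathcal{F}'$ with $|\mathcal{F}'|\geq|\mathcal{F}|$ --- as an unproven ``key claim'' and call it the main obstacle, so this half is an acknowledged gap rather than a proof; and the route you sketch towards it points in the wrong direction. By the observation above, in the finite-$\mathrm{QC}$ case every hyperplane crossing $\gamma$ crosses \emph{both} $\gamma_1$ and $\gamma_2$, so your ``non-crossing'' class is empty and the factor $2$ does not come from a pigeonhole; also, the relevant use of $\langle g\rangle$ is on $\gamma$ itself (whose crossing hyperplanes are exactly those of $\gamma_1,\gamma_2$), so the fact that $\gamma_1,\gamma_2$ are not axes is not the obstruction you think it is. The paper's count goes as follows: knowing every hyperplane crossing $\gamma$ also crosses $\gamma_2$, choose $q^-,q^+\in\gamma_2$ so that all hyperplanes separating $v$ from a point of $\gamma_2$ cross $\gamma_2$ between them; then $v\in I(q^-,q^+)$, and the hyperplanes separating $v$ from a suitable $p'\in\gamma_2$ split into two families of \emph{equal} size, $\mathcal{H}$ (separating $\{v,q^-\}$ from $\{p',q^+\}$) and $\mathcal{V}$ (separating $\{q^-,p'\}$ from $\{v,q^+\}$), which are transverse to each other and all cross $\gamma$, whence $d(v,\gamma_2)\leq d(v,p')=|\mathcal{H}|+|\mathcal{V}|\leq 2\,\mathrm{HQC}(g)$. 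Finally, your choice of $\gamma_1,\gamma_2$ ``achieving'' $D=\mathrm{QC}(\gamma)$ is meaningless when $\mathrm{QC}(\gamma)$ is infinite; that case needs the separate argument (applying the same estimate with $\gamma$ as the second geodesic) showing $\mathrm{HQC}(g)$ is then infinite as well. As it stands, neither inequality is established by your proposal.
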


\begin{proof}
For short, set $M:= \mathrm{HQC}(g)$. We begin by proving the following observation:

\begin{claim}\label{claim:ForQC}
Let $\alpha, \beta \subset \mathrm{QC}(\gamma)$ be two bi-infinite geodesics. Assume that every hyperplane crossing $\gamma$ also crosses $\beta$. Then $\alpha$ lies in the $2M$-neighbourhood of $\beta$. 
\end{claim}

\noindent
Given two vertices $p \in \alpha$ and $q \in \beta$, fix two vertices $q^-,q^+ \in \beta$ such that all the hyperplanes separating $p$ and $q$ (which must cross $\gamma$, and a fortiori $\beta$) cross $\beta$ between $q^-$ and $q^+$. Observe that $p$ belongs to $I(q^-,q^+)$, because a hyperplane separating $p$ from $\{q^-,q^+\}$ would have to cross $\beta$ elsewhere than between $q^-$ and $q^+$. 

\medskip \noindent
\begin{minipage}{0.48\linewidth}
\includegraphics[width=0.95\linewidth]{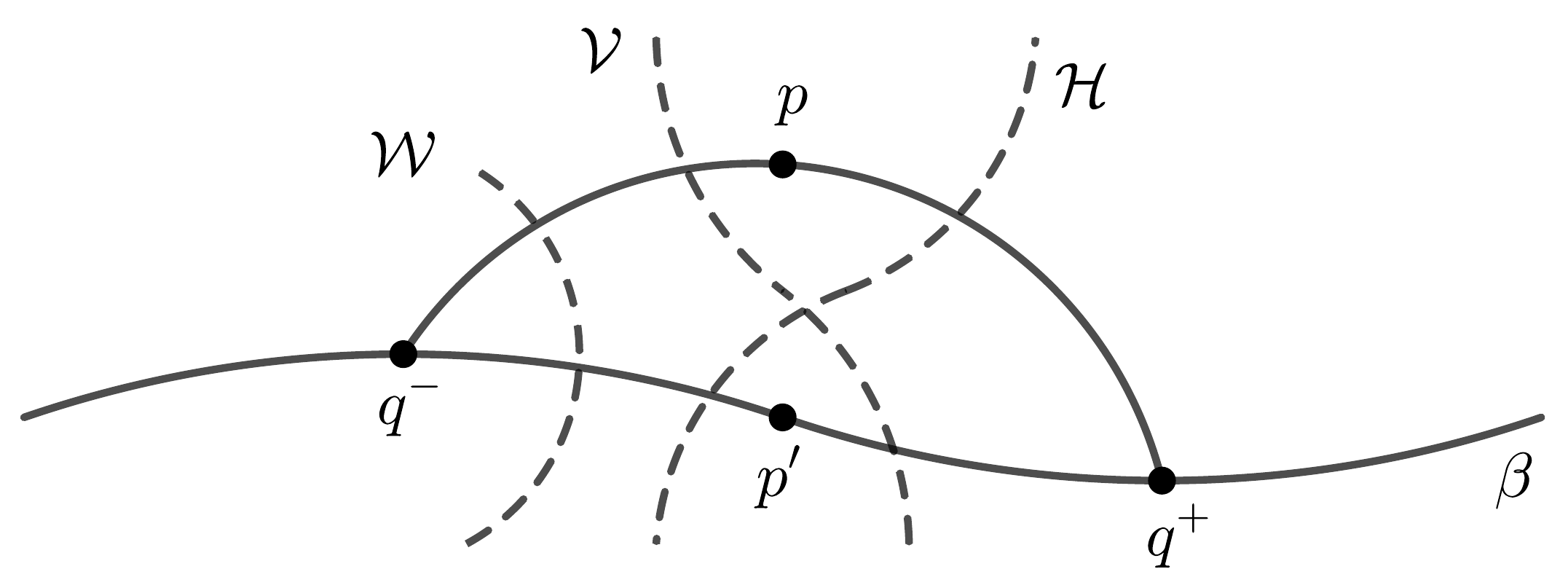}
\end{minipage}
\begin{minipage}{0.5\linewidth}
Let $p'$ denote the vertex of $\beta$ between $q^-$ and $q^+$ satisfying $d(q^-,p)=d(q^+,p')$. Let $\mathcal{H}$ denote the set of the hyperplanes separating $\{p,q^-\}$ and $\{p',q^+\}$, and $\mathcal{V}$ the set of the hyperplanes separating $\{q^-,p'\}$ and $\{p,q^+\}$.  
\end{minipage}

\medskip \noindent
If $\mathcal{W}$ denotes the set of the hyperplanes separating $q^-$ from $\{p,p',q^+\}$, then
$$|\mathcal{W}| + | \mathcal{H} | = d(q^-,p')=d(q^-,p) = |\mathcal{W}|+|\mathcal{V}|,$$
so $\mathcal{H}$ and $\mathcal{V}$ have the same size. Moreover, $\mathcal{H}$ and $\mathcal{V}$ are clearly transverse. Observe that $\mathcal{H} \cup \mathcal{V}$ coincides with the set of the hyperplanes separating $p$ and $p'$, hence
$$d(p, \beta) \leq d(p,p') = | \mathcal{H}| + |\mathcal{V}| \leq 2M.$$
Thus, we have proved that $\alpha$ lies in the $2M$-neighbourhood of $\beta$. Claim~\ref{claim:ForQC} is proved. 

\medskip \noindent
If $\mathrm{QC}(\gamma)$ is infinite, then applying Claim~\ref{claim:ForQC} to $\beta=\gamma$ and $\alpha$ arbitrary shows that $M$ must be infinite as well. A fortiori, the inequality $\mathrm{QC}(\gamma) \leq 2M$ holds.

\medskip \noindent
Now, assume that $\mathrm{QC}(\gamma)$ is finite, which amounts to saying that $\gamma$ is quasi-dense in its convex hull $\mathrm{CH}(\gamma)$. Given a bi-infinite geodesic $\zeta$ in $\mathrm{CH}(\gamma)$, we know from Corollary~\ref{cor:ConvexHull} that every hyperplane crossing $\zeta$ also crosses $\gamma$. It turns out that, because $\gamma$ is an axis, the converse also holds, i.e.\ every hyperplane crossing $\gamma$ also crosses $\zeta$. Indeed, if there exists some hyperplane $J$ crossing $\gamma$ but not $\zeta$, then $J$ separates $\zeta$ from some infinite subray $\gamma_0 \subset \gamma$. up to replacing $g$ with its inverse, we assume that $g\gamma_0 \subset \gamma_0$. Because $X$ has finite cubical dimension, there exists some $1 \leq d \leq \dim(X)$ such that $J$ and $g^dJ$ are not transverse. Then, for every $k \geq 1$, the hyperplanes $J, g^dJ, \ldots ,g^{kd}J$ separate $\zeta$ from some infinite subray of $\gamma_0$. Taking $k$ larger than the Hausdorff distance between $\gamma$ and $\zeta$ yields a contradiction. Thus, we have proved that every bi-infinite geodesic in $\mathrm{CH}(\gamma)$ is crossed exactly by the hyperplanes crossing $\gamma$. 

\medskip \noindent
Consequently, given two bi-infinite geodesics $\alpha, \beta$ in $\mathrm{CH}(\gamma)$, we deduce by applying Claim~\ref{claim:ForQC} twice that the Hausdorff distance between $\alpha$ and $\beta$ is at most $2M$. Therefore, $\mathrm{QC}(\gamma) \leq 2M$. 

\medskip \noindent
Finally, let $\mathcal{H}, \mathcal{V}$ be two finite transverse collections of hyperplanes of the same size crossing $\gamma$. Fix two vertices $a,b \in \gamma$ such that the hyperplanes in $\mathcal{H} \cup \mathcal{V}$ cross $\gamma$ between $a$ and $b$. For every $J \in \mathcal{H} \cup \mathcal{V}$, let $J^-$ (resp. $J^+$) denote the sector delimited by $J$ containing $a$ (resp. $b$). Fix two arbitrary vertices
$$p \in \bigcap\limits_{J \in \mathcal{H}} J^- \cap \bigcap\limits_{J \in \mathcal{V}} J^+=:A \text{ and } q \in \bigcap\limits_{J \in \mathcal{H}} J^+ \cap \bigcap\limits_{J \in \mathcal{V}} J^-=:B.$$
Let $(x,y,z)$ be the median triangle of $(a,b,p)$. We deduce from the convexity of sectors that $z$ belongs to $A$. For the same reason, we know that no hyperplane in $\mathcal{H}$ (resp. in $\mathcal{V}$) can separate $x$ and $z$ (resp. $y$ and $z$). But the hyperplanes separating $x$ and $z$ coincide with the hyperplanes separating $y$ and $z$ according to Proposition~\ref{prop:MedianTriangle}, so no hyperplane in $\mathcal{H} \cup \mathcal{V}$ can separate $x$ and $z$, which implies that $x \in I(a,b) \cap D$. Thus, we have proved that there exists a geodesic $\zeta_0$ between $a$ and $b$ passing through $A$. Let $\zeta$ denote the bi-infinite geodesic obtained from $\gamma$ by replacing the subsegment between $a$ and $b$ with $\zeta_0$. Similarly, there exists a geodesic $\xi_0$ between $a$ and $b$ passing through $B$. Let $\xi$ denote the bi-infinite geodesic obtained from $\gamma$ by replacing the subsegment between $a$ and $b$ with $\xi_0$. The key observation is that a vertex in $\zeta \cap A$ is separated from a given vertex in $\zeta$ either by all the hyperplanes in $\mathcal{H}$ or by all the hyperplanes in $\mathcal{V}$, so the Hausdorff distance between $\alpha$ and $\beta$ must be at least $|\mathcal{H}|=|\mathcal{V}|$. Thus, we have proved that $\mathrm{QC}(\gamma) \geq M$. 
\end{proof}

\begin{remark}
It can be proved that, given a loxodromic isometry $g$ of a quasi-median graph $X$, the quantity $\mathrm{QC}(\gamma)$ does not depend on the axis $\gamma$ we choose. More precisely, the convex hulls of any two axes turn out to be always isometric. However, this observation will be useful in the sequel as $\mathrm{HQC}(g)$ will be a quantity easier to compute. 
\end{remark}

\section{Crossing and contact graphs}

\noindent
Recall that, given a quasi-median graph $X$, the \emph{crossing graph} $\Delta X$ is the graph whose vertices are the hyperplanes of $X$ and whose edges connect two hyperplanes whenever they are transverse; and the \emph{contact graph} $\Gamma X$ is the graph whose vertices are the hyperplanes of $X$ and whose edges connect two hyperplanes whenever they are \emph{in contact} (i.e.\ when their carriers intersect). In this section, we record basic properties of crossing and contact graphs. These properties are well-known for median graphs, and can be found in \cite{MR3217625, MR4057355}; crossing graphs of some specific quasi-median graphs are also studied in \cite{QM, AutGP}. The arguments used in this section are straightforward adaptations.

\subsection{Comparison of the two graphs}

\noindent
Even though crossing and contact graphs have similar definitions, there are a few fundamental differences. The first one is that the contact graph is always connected when the crossing graph may be disconnected. However, the connectedness of the crossing graph can be detected by the existence of cut-vertices. (For median graphs, this observation can be found in \cite{MR1881707}.)

\begin{lemma}
The contact graph of a quasi-median graph is always connected.
\end{lemma}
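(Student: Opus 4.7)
The plan is to produce an explicit path in $\Gamma X$ between any two given hyperplanes by walking along a geodesic of $X$ that joins their carriers. Since $X$ is connected by definition, this strategy will always succeed.

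Concretely, fix two hyperplanes $J_1, J_2$ of $X$, choose vertices $x_1 \in N(J_1)$ and $x_2 \in N(J_2)$, and fix a geodesic $x_1 = v_0, v_1, \ldots, v_n = x_2$ in $X$. For each $1 \leq i \leq n$, let $H_i$ denote the (unique) hyperplane containing the edge $[v_{i-1}, v_i]$. Then $v_i \in N(H_i) \cap N(H_{i+1})$, so consecutive hyperplanes in the sequence $H_1, \ldots, H_n$ are in contact; at the endpoints, $x_1 \in N(J_1) \cap N(H_1)$ and $x_2 \in N(H_n) \cap N(J_2)$, so $J_1$ is in contact with $H_1$ and $H_n$ is in contact with $J_2$. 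Hence $J_1, H_1, \ldots, H_n, J_2$ is a walk in $\Gamma X$ from $J_1$ to $J_2$.

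There is essentially no obstacle here, since we are only using that $X$ is connected, that every edge lies in a hyperplane, and that two hyperplanes sharing a vertex in their carriers are in contact by definition; no quasi-median axioms are actually needed beyond connectedness. The only minor point to record explicitly is that every edge is contained in some hyperplane, which is immediate from the definition of hyperplanes as equivalence classes of edges.
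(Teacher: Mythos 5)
Your proposal is correct and follows essentially the same route as the paper: the paper also takes a path in $X$ joining the two carriers and observes that the hyperplanes successively crossed, together with the two given hyperplanes, are successively in contact, giving a walk in $\Gamma X$. Your version is just slightly more explicit (naming the vertices $v_i$ and noting $v_i \in N(H_i) \cap N(H_{i+1})$), and your remark that only connectedness of $X$ is used matches the spirit of the paper's one-line argument.
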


\begin{proof}
Let $X$ be a quasi-median graph and $A,B$ two hyperplanes. Fix an arbitrary path from $N(A)$ to $N(B)$ and let $J_1, \ldots, J_n$ denote the hyperplanes successively crossed. Then the hyperplanes $A,J_1, \ldots, J_n,B$ are successively in contact, producing a path from $A$ to $B$ in the contact graph. 
\end{proof}

\noindent
Given a quasi-median graph $X$, its \emph{crossing graph} $\Delta X$ is the graph whose vertices are the hyperplanes of $X$ and whose edges link two hyperplanes whenever they are transverse. The crossing graph is not always connected, but we understand the default of connectedness:

\begin{lemma}\label{lem:CrossingConnected}
Let $X$ be a quasi-median graph. Its crossing graph is disconnected if and only if there exists a cut-vertex in $X$.
\end{lemma}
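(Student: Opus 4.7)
The plan is to handle the two implications separately, exploiting a classification of hyperplanes by connected components on one side and combinatorial reductions via the triangle/quadrangle conditions on the other.

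For the forward direction, suppose $v \in X$ is a cut-vertex and let $C_1, \ldots, C_r$ (with $r \geq 2$) denote the components of $X \setminus \{v\}$. I claim each hyperplane $J$ has all of its edges contained in a single thickened component $\overline{C_i} := C_i \cup \{v\}$. Indeed, the defining equivalence relation on edges is generated by triangles and opposite sides of $4$-cycles; any such subgraph contains the vertex $v$ at most once, and deleting $v$ leaves a connected subgraph inside a single $C_i$, so the relation preserves a well-defined index $i(J) \in \{1, \ldots, r\}$. Each index is realised because every neighbour of $v$ lies in some $C_i$. Moreover, if two hyperplanes $J_1, J_2$ are transverse via a $4$-cycle $\Sigma$, then $\Sigma \subset \overline{C_i}$ for some $i$, and so $i(J_1) = i(J_2)$. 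Therefore $\Delta X$ decomposes along this classification and is in particular disconnected.

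For the backward direction, suppose $\Delta X$ is disconnected. The contact graph $\Gamma X$ being connected, I can find two hyperplanes $H_1, H_2$ that are in contact but not transverse and lie in distinct components of $\Delta X$. The intersection $N(H_1) \cap N(H_2)$ is gated (intersection of gated subgraphs), nonempty (contact), and contains no edges since each edge belongs to a unique hyperplane. As gated subgraphs are always connected, this intersection reduces to a single vertex $v$, which I claim is a cut-vertex.

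Suppose for contradiction that $X \setminus \{v\}$ is connected. Let $K_i$ denote the unique clique at $v$ whose edges lie in $H_i$; then $K_1 \cap K_2 = \{v\}$, since distinct cliques in a quasi-median graph meet in at most one vertex. Over all tangent pairs $(H, H')$ in distinct components of $\Delta X$ with $N(H) \cap N(H') = \{v\}$, and all paths in $X \setminus \{v\}$ from a vertex of $K_H \setminus \{v\}$ to a vertex of $K_{H'} \setminus \{v\}$, choose $(H_1, H_2, \gamma)$ realising the minimal length $n$, and write $\gamma = (a, x_1, \ldots, x_{n-1}, b)$. Length $1$ is impossible because it would place $v, a, b$ in a common clique, and length $2$ is impossible because the resulting $4$-cycle $v, a, x_1, b$ would make $H_1, H_2$ transverse; hence $n \geq 3$. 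The triangle condition at $(v, a, x_1)$ forces $d(v, x_1) = 2$ (otherwise $x_1 \in K_1$ and $\gamma$ could be shortened), and symmetrically $d(v, x_{n-1}) = 2$. Depending on $d(v, x_2) \in \{1, 2\}$, one either applies the quadrangle condition at $(v, a, x_2; x_1)$ directly, or first uses the triangle condition at $(v, x_1, x_2)$ to produce a vertex $w \sim x_1, x_2$ at distance $1$ from $v$ and then a further quadrangle argument, to obtain a $4$-cycle $v - a - x_1 - w - v$. Its two remaining opposite edges lie in a common hyperplane $H_3$ which is transverse to $H_1$. If $H_3 = H_2$, one obtains a length-$2$ path from $a$ to a vertex of $K_2 \setminus \{v\}$, contradicting $n \geq 3$. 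Otherwise $H_3$ is a new hyperplane through $v$, in the same $\Delta X$-component as $H_1$ and hence in a different component from $H_2$; the pair $(H_3, H_2)$ is again tangent with carriers meeting only at $v$, while the subpath $(x_2, \ldots, b)$ of $\gamma$ provides a strictly shorter admissible path, contradicting the minimality of $n$.

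The main obstacle is to handle all local configurations of $\gamma$ near $v$, in particular when $\gamma$ wanders away and $d(v, x_i) \geq 3$ for some intermediate vertex; I expect this to be dealt with by iterating the reduction scheme above, each iteration either shortening $\gamma$ or replacing $(H_1, H_2)$ by a tangent pair closer in $\Gamma X$, until a contradiction with minimality is reached.
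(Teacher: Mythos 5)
Your forward direction (a cut-vertex forces $\Delta X$ to be disconnected) is correct, and in fact more carefully written than the paper's one-line version of the same observation. The backward direction, however, has a genuine gap precisely where the difficulty lies. A minor point first: your justification that $N(H_1)\cap N(H_2)$ contains no edge ("each edge belongs to a unique hyperplane") does not suffice, since an edge of the intersection need not be dual to $H_1$ or $H_2$ at all. What you actually need is the fact that an edge with both endpoints in the carrier $N(J)$ is either dual to $J$ or dual to a hyperplane transverse to $J$; with that, an edge in $N(H_1)\cap N(H_2)$ would either make $H_1$ and $H_2$ transverse (contradicting tangency) or produce a third hyperplane transverse to both (contradicting that they lie in distinct components of $\Delta X$). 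This is fixable, but it must be proved or cited, and the same fact is silently reused each time you replace $H_1$ by $H_3$ and claim the new pair again has carriers meeting only in $v$.

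The major gap is in the minimality argument itself. Your triangle/quadrangle reductions only handle the configurations with $d(v,x_2)\leq 2$; minimality of the path in $X\setminus\{v\}$ does not prevent the path from leaving the neighbourhood of $v$ (the shortest detour around $v$ may be long and entirely far from $v$), and once $d(v,x_i)\geq 3$ the hypotheses of the triangle and quadrangle conditions relative to the basepoint $v$ are simply absent, so no new hyperplane at $v$ and no shorter path is produced -- there is nothing concrete to "iterate", as you acknowledge in your final paragraph. Some mechanism controlling \emph{arbitrary} paths, not just paths hugging $v$, is required. This is exactly what the paper supplies: it characterises cut-vertices by disconnectedness of the clique-link and then invokes the fact that any two paths with the same endpoints are related by elementary moves (flipping squares, shortening triangles, removing backtracks), together with a colouring invariant that counts passages through $v$ across components of the clique-link; that invariant is preserved by all elementary moves, which is what rules out a path avoiding $v$ regardless of how far it strays. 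Until your sketch contains an ingredient of comparable global reach, the implication "$\Delta X$ disconnected $\Rightarrow$ cut-vertex" is not established.
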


\begin{proof}
Given a vertex $x \in X$, define its clique-link as the graph whose vertices are the cliques containing $x$ and whose edges connect two vertices if the corresponding cliques span a prism (or equivalently, if the hyperplanes containing the cliques are transverse). We can use clique-links in order to characterise cut-vertices:

\begin{claim}\label{claim:CutVertex}
A vertex $x \in X$ is a cut-vertex if and only if its clique-link is disconnected.
\end{claim}

\noindent
First, assume that the clique-link of $x$ is connected. Given two vertices $a,b \in X$, fix a path between $a$ and $b$, say a geodesic $[a,b]$. If $x$ does not belong to $[a,b]$, then $x$ does not separate $a$ and $b$. Otherwise, let $e_1, \ldots, e_n$ be a sequence of edges having $x$ as an endpoint such that the cliques containing $e_1, \ldots, e_n$ defines a path in the clique-link of $x$, such that $e_1$ is the first edge of $[a,b]$ that belongs to a clique containing $x$, and such that $e_n$ is the last edge of $[a,b]$ that belongs to a clique containing $x$. For every $1 \leq i \leq n-1$, the edges $e_i$ and $e_{i+1}$ span a $4$-cycle, say $e_i \cup e_{i+1} \cup \epsilon_{i+1} \cup \epsilon_i$. Then $\epsilon_1, \ldots, \epsilon_n$ defines a path connecting the endpoints of $e_1$ and $e_n$ distinct from $x$ that does not pass through $x$. Thus, replacing the subsegment of $[a,b]$ between $e_1$ and $e_n$ (including $e_1$ and $e_n$ themselves) with $\epsilon_1 \cup \cdots \cup \epsilon_n$ produces a path between $a$ and $b$ that does not pass through $x$. This proves that $x$ is not a cut-vertex.

\medskip \noindent
Conversely, assume that the clique-link of $x$ is disconnected. Let $\epsilon_1,\epsilon_2$ be two edges containing $x$ that belong to two cliques lying in distinct connected components of the clique-link of $x$. Let $a_1,a_2$ denote the endpoints of $\epsilon_1,\epsilon_2$ distinct from $x$. We claim that $a_1$ and $a_2$ are separated by $x$. The key point is that any path between $a_1$ and $a_2$ can be obtained from $\epsilon_1 \cup \epsilon_2$ by some elementary operations. More precisely, given an oriented path $\gamma$ in our graph $X$, which we decompose as a concatenation of oriented edges $e_1 \cdots e_n$, one says that $\gamma'$ is obtained from $\gamma$ by
\begin{itemize}
	\item \emph{flipping a square}, if there exists some $1 \leq i \leq n-1$ such that $$\gamma'= e_1 \cdots e_{i-1} \cdot a \cdot b \cdot e_{i+2} \cdots e_n,$$ where $e_i,e_{i+1},b,a$ define a 4-cycle in $X$;
	\item \emph{shortening a triangle}, if there exists some $1 \leq i \leq n-1$ such that $$\gamma'= e_1 \cdots e_{i-1} \cdot a \cdot e_{i+2} \cdots e_n,$$ where $e_i,e_{i+1},a$ define a 3-cycle in $X$;
	\item \emph{removing a backtrack}, if there exists some $1 \leq i \leq n-1$ such that $$\gamma'= e_1 \cdots e_{i-1} \cdot e_{i+2} \cdots e_n,$$ where $e_{i+1}$ is the inverse of $e_i$.
\end{itemize}
Then, given two path with the same endpoints, one can be obtained from the other by flipping squares, shortening triangles, removing backtracks, and inverses of these operations. (See for instance \cite[Lemma~2.2]{SpecialRevisited}.)

\medskip \noindent
Now, given a path from $a_1$ to $a_2$, we colour its edges successively such that the colour changes exactly when we pass through $x$ from one clique to a second clique that lies in a distinct connected component in the clique-link of $x$. The number of colours turns out to be an invariant: applying any of our elementary operations to a path does not modify the number of colours. Because $\epsilon_1 \cup \epsilon_2$ has two colours, it turns out that there cannot exist a path from $a_1$ to $a_2$ that does not pass through $x$ because such a path would have only one colour. Thus, we have proved that $x$ is a cut-vertex, concluding the proof of Claim~\ref{claim:CutVertex}. 

\medskip \noindent
We are finally ready to prove our lemma. If $X$ has a cut-vertex $x$, then two hyperplanes lying in distinct components of $X \backslash \{x\}$ cannot be transverse, so the crossing graph is clearly not connected. Conversely, assume that $X$ has no cut-vertex. Let $A,B$ be two hyperplanes. We fix an arbitrary path from $N(A)$ to $N(B)$, and we denote by $J_1, \ldots, J_n$ the hyperplanes successively crossed. Observe that $A,J_1, \ldots, J_n,B$ are successively in contact. But the fact that the clique-link of every vertex is connected, which follows from Claim~\ref{claim:CutVertex}, implies that two hyperplanes in contact are connected by a path in the crossing graph. Therefore, there exists path in $\Delta X$ connecting $A$ and $B$. This proves that $\Delta X$ is connected.
\end{proof}

\noindent
Even when a quasi-median graph has no cut-vertex, the crossing and contact graphs may be quite different. In fact, according to \cite[Proposition~2.19]{MR3217625}, given any graph, it is possible to construct a median graph whose crossing graph is this particular graph. On the other hand, the contact graph is always a quasi-tree \cite{MR3217625}. Nevertheless:

\begin{prop}
Let $X$ be a quasi-median graph. Assume that $X$ has no cut-vertex and that every vertex in $X$ belongs to $\leq N$ cliques for some fixed $N$. Then the canonical map $\Delta X \to \Gamma X$ is a quasi-isometry. 
\end{prop}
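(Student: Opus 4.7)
The plan is to check that the identity map on vertex sets is a quasi-isometry, which amounts to a two-sided Lipschitz estimate between $d_{\Delta X}$ and $d_{\Gamma X}$. The map is trivially surjective on vertices, so only the metric comparison is needed.

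First, I would observe that any transverse pair is automatically in contact, so every edge of $\Delta X$ is an edge of $\Gamma X$. This immediately gives $d_{\Gamma X}(A,B) \leq d_{\Delta X}(A,B)$ for all hyperplanes $A,B$, so the map is $1$-Lipschitz.

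For the converse, the key step is a local statement: if $A, B$ are two hyperplanes \emph{in contact}, then $d_{\Delta X}(A,B) \leq N-1$. Indeed, by definition of contact, there exists a vertex $x \in N(A) \cap N(B)$; hence $x$ belongs to some edge of $A$ and some edge of $B$, and the cliques $C_A, C_B$ containing those edges lie in $A, B$ respectively. Since $X$ has no cut-vertex, the argument in Claim~\ref{claim:CutVertex} (reused in the proof of Lemma~\ref{lem:CrossingConnected}) tells us that the clique-link of $x$ is connected. This clique-link has at most $N$ vertices by assumption, hence diameter at most $N-1$. An edge in the clique-link corresponds to two cliques through $x$ spanning a prism, i.e.\ contained in two transverse hyperplanes; so a path of length $\ell \leq N-1$ in the clique-link from $C_A$ to $C_B$ produces a path of the same length in $\Delta X$ from $A$ to $B$.

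Now for arbitrary $A, B$, take a geodesic $A = H_0, H_1, \ldots, H_k = B$ in $\Gamma X$ with $k = d_{\Gamma X}(A,B)$. Applying the local estimate to each consecutive pair $(H_i, H_{i+1})$ and concatenating yields
\[
d_{\Delta X}(A,B) \leq (N-1) \cdot d_{\Gamma X}(A,B).
\]
Combined with the $1$-Lipschitz bound, this shows that the canonical map $\Delta X \to \Gamma X$ is an $(N-1,0)$-quasi-isometric embedding, and since it is the identity on vertices it is a quasi-isometry. The only real content is the local estimate in the second paragraph, which is not so much an obstacle as an immediate consequence of the clique-link characterisation of cut-vertices already established.
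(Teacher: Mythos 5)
Your proof is correct, but it takes a more direct route than the paper. The paper deduces the proposition from Lemmas~\ref{lem:DistGamma} and~\ref{lem:DistDelta}, i.e.\ it sandwiches both $d_{\Gamma X}(A,B)$ and $d_{\Delta X}(A,B)$ between $\mathrm{ss}(A,B)$ and linear functions of $\mathrm{ss}(A,B)$, where $\mathrm{ss}(A,B)$ is the maximal number of pairwise strongly separated hyperplanes separating $A$ and $B$; the quasi-isometry then falls out with constants of the form $(2+N)(1+d_{\Gamma X}(A,B))$, and the payoff of that detour is that the comparison with $\mathrm{ss}$ is exactly what is needed later (Proposition~\ref{prop:Loxo}) to characterise strongly contracting isometries. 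You instead bypass strong separation entirely: you note $d_{\Gamma X}\leq d_{\Delta X}$ since transverse implies in contact, prove the local estimate that two hyperplanes in contact are at $\Delta X$-distance at most $N-1$ via connectivity of the clique-link (Claim~\ref{claim:CutVertex}) and the bound of $N$ on its number of vertices, and then subdivide a $\Gamma X$-geodesic into contact pairs to get $d_{\Delta X}\leq (N-1)\,d_{\Gamma X}$. This yields a genuine bi-Lipschitz equivalence with no additive constant, and it sharpens the parenthetical bound ``two hyperplanes in contact lie at distance $\leq N$ in $\Delta X$'' used in the proof of Lemma~\ref{lem:Crossing}; the local ingredient (clique-link connectivity translating contact into a path of transversalities) is the same one the paper invokes in Lemma~\ref{lem:CrossingConnected}, so your argument is really a self-contained quantitative repackaging of that idea, at the cost of not producing the strong-separation estimates the paper wants for its later applications.
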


\noindent
The same statement for median graphs can be found in \cite{MR4057355}. Our proposition follows from Lemmas~\ref{lem:DistGamma} and~\ref{lem:DistDelta} below, but it will not be used in this paper.

\subsection{Hyperbolicity}

\noindent
As already mentioned, the contact graph of a median graph is always a quasi-tree \cite{MR3217625}. The same statement, using the same arguments, can be proved for quasi-median graphs. However, this fact will not be necessary in the rest of the paper. Instead, we show that crossing and contact graphs are hyperbolic and compute explicit hyperbolicity constants. We start with the contact graph:

\begin{lemma}\label{lem:Contact}
Let $X$ be a quasi-median graph and $A,B$ two hyperplanes. If $d_{\Gamma X}(A,B) \geq 2$, then every vertex of every geodesic in $\Gamma X$ between $A$ and $B$ lies at distance $\leq 2$ from a hyperplane separating $A$ and $B$. 
\end{lemma}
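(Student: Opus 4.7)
The plan is, for each vertex $J_i$ on the geodesic $A = J_0,J_1,\ldots,J_n = B$ (with $1 \le i \le n-1$), to produce a hyperplane $H$ separating $A$ and $B$ within $\Gamma X$-distance $2$ of $J_i$, by projecting $N(A)$ and $N(B)$ onto the gated carrier $N(J_i)$. Since $d_{\Gamma X}(A,B) \geq 2$, the carriers $N(A)$ and $N(B)$ are disjoint and gated, so the observation recorded just before Proposition~\ref{prop:separation} guarantees that a hyperplane separating them exists in the first place.

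The core step treats the case where the projections $Y_A := \pi_{N(J_i)}(N(A))$ and $Y_B := \pi_{N(J_i)}(N(B))$, which are gated subgraphs of $N(J_i)$, are disjoint. Applying the same gated-separation principle inside $X$ I would obtain a hyperplane $H$ with $Y_A$ and $Y_B$ in distinct sectors; since $Y_A, Y_B \subseteq N(J_i)$, this $H$ crosses $N(J_i)$ and is therefore equal or transverse to $J_i$, giving $d_{\Gamma X}(H,J_i) \leq 1$. To verify that $H$ genuinely separates $N(A)$ from $N(B)$ I would invoke Proposition~\ref{prop:Projection}: any hyperplane separating $N(A)$ from $Y_A$ would have to separate $N(A)$ from all of $N(J_i)$, and since $H$ crosses $N(J_i)$ it cannot do so, so $N(A)$ lies in the sector of $H$ containing $Y_A$; symmetrically $N(B)$ lies in the sector containing $Y_B$.

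For the case $Y_A \cap Y_B \neq \emptyset$ I would rerun the projection argument using the neighbouring carriers $N(J_{i-1})$ and $N(J_{i+1})$ of the geodesic; if either of these projection pairs is disjoint, the previous step yields a separating hyperplane in contact with $J_{i-1}$ or $J_{i+1}$, and the triangle inequality in $\Gamma X$ gives $d_{\Gamma X}(H,J_i) \leq 2$. The genuine obstacle is the residual configuration in which the projection pairs intersect simultaneously at $J_{i-1}, J_i$ and $J_{i+1}$. To rule this out, I would concatenate geodesics inside the three carriers $N(J_{i-1}), N(J_i), N(J_{i+1})$ (each geodesic staying inside its gated carrier by Theorem~\ref{thm:BigQM}) into a path from $N(J_{i-2})$ to $N(J_{i+2})$ that crosses only hyperplanes in contact with $J_{i-1}, J_i$ or $J_{i+1}$; the triple-intersection hypothesis would force these crossed hyperplanes to be non-separating, so the endpoints would land in a common chamber of the complement of the separating hyperplanes. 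From that chamber I would build a $\Gamma X$-path from $A$ to $B$ of length strictly less than $n$, contradicting the minimality of the given geodesic. Carrying out this final shortcut carefully, and handling the boundary values $i=1, i=n-1$ where one of the neighbouring carriers would coincide with $N(A)$ or $N(B)$, is the technical heart of the argument.
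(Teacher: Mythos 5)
Your first step is fine: when the projections $Y_A,Y_B$ of $N(A),N(B)$ onto the gated carrier $N(J_i)$ are disjoint, the separation argument together with Proposition~\ref{prop:Projection} does produce a hyperplane $H$ crossing $N(J_i)$ (hence equal or transverse to $J_i$) with $N(A)$ and $N(B)$ in distinct sectors, so $d_{\Gamma X}(H,J_i)\leq 1$. But note what this case detects: the projections are disjoint precisely when some hyperplane \emph{crossing} $N(J_i)$ separates $A$ and $B$, i.e.\ when a separating hyperplane is equal or transverse to $J_i$. A separating hyperplane that is merely \emph{tangent} to $J_i$ (or reachable only through an intermediate hyperplane) is invisible to this test, and this is exactly why the lemma's bound is $2$ rather than $1$. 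Consequently, your fallback must prove the following strictly stronger statement: among any three consecutive vertices of a $\Gamma X$-geodesic between $A$ and $B$, at least one is equal or transverse to a separating hyperplane. This is not established, and the sketch does not establish it: from the hypothesis that all three projection pairs meet, you only get that $J_{i-1},J_i,J_{i+1}$ (and their carriers) lie in a single ``chamber'' of the complement of the separating hyperplanes, and that $N(J_{i-2})$, $N(J_{i+2})$ touch that chamber. Landing ``in a common chamber'' gives no control on contact-graph distances, since a chamber can be arbitrarily wide in the contact metric; a path inside it from $N(J_{i-2})$ to $N(J_{i+2})$ may cross many (non-separating) hyperplanes, so no $\Gamma X$-path from $A$ to $B$ shorter than $n$ is produced and no contradiction with geodesicity follows. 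As you yourself note, this is the technical heart, and it is exactly the part that is missing.

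For comparison, the paper's proof avoids this issue by never insisting on transversality: it fixes a maximal family $H_1,\ldots,H_m$ of pairwise non-transverse separating hyperplanes (so that consecutive ones are in \emph{contact}, giving $d_{\Gamma X}(H_j,H_{j+1})=1$), observes that a geodesic vertex $J_i$ not in contact with any $H_j$ lies between some $H_j$ and $H_{j+1}$, and then uses geodesicity: if $J_r$ is the last earlier vertex in contact with $H_j$ and $J_s$ the first later vertex in contact with $H_{j+1}$, then $d_{\Gamma X}(J_r,J_s)\leq 1+d_{\Gamma X}(H_j,H_{j+1})+1=3$, which forces $J_i$ to be within $2$ of $H_j$ or $H_{j+1}$. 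The crucial use of tangency (contact with the chain members, not transversality) is what your projection-based dichotomy cannot reproduce, so as written the proposal has a genuine gap in the case $Y_A\cap Y_B\neq\emptyset$ at three consecutive vertices.
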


\begin{proof}
Let $H_1, \ldots, H_m$ be a maximal collection of pairwise non-transverse hyperplanes separating $A$ and $B$; for convenience, we set $H_0:=A$ and $H_{m+1}:=B$. Up to re-indexing our hyperplanes, we assume that $H_i$ separates $H_{i-1}$ and $H_{i+1}$ for every $1 \leq i \leq m$. Notice that $m \geq 1$ because $d_{\Gamma X} (A,B) \geq 2$. Fix a geodesic $J_1, \ldots, J_n$ from $A$ to $B$ in $\Gamma X$ and an index $1 \leq i \leq n$. We claim that $J_i$ lies at distance $\leq 2$ from $H_j$ for some $1 \leq j \leq m$.

\medskip \noindent
If $J_i$ is in contact with $H_j$ for some $0 \leq j \leq m+1$, then there exists some $1 \leq k \leq m$ such that $d_{\Gamma X}(J_i, H_k) \leq 2$. From now on, we assume that $J_i$ is not in contact with any $H_j$. As a consequence, $J_i$ lies between $H_j$ and $H_{j+1}$ for some $0 \leq j \leq m$. Let $r$ be the first index $\ell < i$ such that $J_\ell$ is in contact with $H_j$ and let $s$ be the last index $\ell>i$ such that $J_\ell$ is in contact with $H_{j+1}$. We have
$$d_{\Gamma X} (J_r,J_s) \leq 2+ d_{\Gamma X}(H_j,H_{j+1}) = 3.$$
Consequently, $J_k$ lies at distance $\leq 3/2 + 1$ from $H_j$ or $H_{j+1}$. 
\end{proof}

\begin{cor}
Let $X$ be a quasi-median graph. Its contact graph $\Gamma X$ is $3$-hyperbolic.
\end{cor}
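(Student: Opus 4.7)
The plan is to verify the thin-triangles condition with constant $3$: in every geodesic triangle $A,B,C$ in $\Gamma X$, each vertex on a side lies within distance $3$ of the union of the other two sides. The strategy is to combine Lemma~\ref{lem:Contact} (each vertex on a geodesic is at distance $\leq 2$ from a separating hyperplane) with the complementary observation that any hyperplane $H$ separating two vertices $P,Q$ of $\Gamma X$ lies at distance $\leq 1$ from every geodesic $[P,Q]$, which gives the total $2+1=3$.

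I fix a vertex $V$ on the side $[A,B]$ and assume $d_{\Gamma X}(A,B) \geq 2$, since otherwise $V \in \{A,B\}$ and there is nothing to prove. By Lemma~\ref{lem:Contact} there is a hyperplane $H$ separating $A$ and $B$ with $d_{\Gamma X}(V,H) \leq 2$. I then distinguish two cases according to the position of $H$ relative to $C$. If $d_{\Gamma X}(H,C) \leq 1$, then $d_{\Gamma X}(V,C) \leq 3$ and the conclusion holds since $C$ is a common endpoint of the two remaining sides. Otherwise $H$ and $C$ are disjoint hyperplanes not in contact, and a sector argument (described below) places $N(C)$ in a single sector of $H$. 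Combined with the fact that $A$ and $B$ already lie in distinct sectors of $H$, this forces $C$ to belong to a sector distinct from at least one of them; without loss of generality, $H$ separates $A$ and $C$.

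The main step is then the claim that $d_{\Gamma X}(H,[A,C]) \leq 1$. Given any path $J_0=A,J_1,\ldots,J_n=C$ in $\Gamma X$, suppose for contradiction that no $J_i$ equals or is in contact with $H$. The sector argument places each $N(J_i)$ in a single sector $\sigma(J_i)$ of $H$; since consecutive $J_i,J_{i+1}$ are in contact, $N(J_i)\cap N(J_{i+1}) \neq \emptyset$ forces $\sigma(J_i)=\sigma(J_{i+1})$. Induction yields $\sigma(A)=\sigma(C)$, contradicting that $H$ separates $A$ and $C$. Applying this to a geodesic $[A,C]$ gives $d_{\Gamma X}(H,[A,C]) \leq 1$, and combining with $d_{\Gamma X}(V,H) \leq 2$ produces $d_{\Gamma X}(V,[A,C]) \leq 3$.

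The technical point I expect to require the most care is the sector argument itself, i.e., showing that a hyperplane $J$ disjoint from $H$ and not in contact with $H$ satisfies $N(J) \subset$ a single sector of $H$. This follows by tracking the triangle/square closure defining $J$ as a hyperplane: any triangle equivalence preserves sectors because its two edges share a vertex, while a square equivalence can only move across $H$ if the perpendicular sides of the square are dual to $H$, which would force $J$ and $H$ to be transverse and hence in contact, contradicting the hypothesis on $J$.
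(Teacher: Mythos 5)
Your proof is correct and follows essentially the same route as the paper: apply Lemma~\ref{lem:Contact} to get within distance $2$ of a hyperplane $H$ separating $A$ and $B$, then argue that $H$ (if not already close to $C$) separates $C$ from $A$ or $B$ and hence lies within distance $1$ of the corresponding side of the triangle. Your sector-connectivity argument just spells out the step the paper states without proof (``necessarily, there exists some hyperplane in $[B,C]$ or $[A,C]$ equal or transverse to $J$''), and it is sound.
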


\begin{proof}
Let $A,B,C$ be three hyperplanes. Fix three geodesics $[A,B]$, $[B,C]$, $[A,C]$ in $\Gamma X$ and a vertex $H \in [A,B]$. If $d_{\Gamma X}(A,B) \leq 1$, then $H$ lies in $[B,C] \cup [A,C]$. Otherwise, we know from Lemma~\ref{lem:Contact} that $d_{\Gamma X}(H,J) \leq 2$ for some hyperplane $J$ separating $A$ and $B$. If $J$ is transverse to $C$, then $A$ lies at distance $\leq 3$ from $[B,C] \cup [A,C]$. Otherwise, $J$ separates $B$ and $C$ or $A$ and $C$ (not exclusively). Necessarily, there exists some hyperplane in $[B,C]$ or $[A,C]$ equal or transverse to $J$, so $H$ lies at distance $\leq 3$ from $[B,C] \cup [A,C]$. Thus, we have proved that $[A,B]$ lies in the $3$-neighbourhood of $[B,C] \cup [A,C]$. 
\end{proof}

\noindent
Next, we turn to the crossing graph. Recall that, in full generality, the crossing graph may be any graph, and in particular it may not be hyperbolic. Nevertheless, it becomes hyperbolic (and in fact a quasi-tree) under the good assumptions:

\begin{lemma}\label{lem:Crossing}
Let $X$ be a quasi-median graph and $A,B$ two hyperplanes. Assume that every $X$ has no cut-vertex and that every vertex belongs to at most $N$ cliques. If $d_{\Delta X}(A,B) \geq N$, then every vertex of every geodesic in $\Delta X$ between $A$ and $B$ lies at distance $\leq 2+N/2$ from a hyperplane separating $A$ and $B$. 
\end{lemma}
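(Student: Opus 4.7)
The plan is to adapt the argument of Lemma~\ref{lem:Contact} to the crossing graph, upgrading contact-graph bounds to crossing-graph bounds at a cost controlled by $N$. The key additional ingredient is the clique-link observation underlying Lemma~\ref{lem:CrossingConnected}: if two hyperplanes $U$ and $V$ are in contact and $x \in N(U) \cap N(V)$, then the clique-link at $x$ is connected (since $X$ has no cut-vertex, by Claim~\ref{claim:CutVertex}) and has at most $N$ vertices, so a geodesic in this clique-link between the cliques corresponding to $U$ and $V$ translates into a path in $\Delta X$ of the same length between $U$ and $V$ (since adjacency in the clique-link amounts to transversality of the corresponding hyperplanes). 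This yields
\[
d_{\Delta X}(U, V) \;\leq\; N - 1 \qquad \text{whenever } U \text{ and } V \text{ are in contact.}
\]
In particular, the hypothesis $d_{\Delta X}(A, B) \geq N$ rules out $A$ and $B$ being in contact, so that at least one hyperplane separates $A$ from $B$.

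Mirroring Lemma~\ref{lem:Contact}, the next step is to fix a maximal collection $H_1, \ldots, H_m$ of pairwise non-transverse hyperplanes separating $A$ and $B$, set $H_0 := A$ and $H_{m+1} := B$, and re-index so that $H_i$ separates $H_{i-1}$ from $H_{i+1}$ for every $1 \leq i \leq m$. As in the contact-graph setting, the maximality of the collection forces consecutive $H_j$ and $H_{j+1}$ to be in contact, so by the clique-link bound $d_{\Delta X}(H_j, H_{j+1}) \leq N - 1$. Fix now a $\Delta X$-geodesic $A = J_0, J_1, \ldots, J_n = B$ and an index $i$. If $J_i$ is in contact with some $H_k$ with $1 \leq k \leq m$, then $d_{\Delta X}(J_i, H_k) \leq N - 1 \leq 2 + N/2$ and we are done. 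Otherwise, $J_i$ lies strictly between some consecutive $H_j$ and $H_{j+1}$, and since consecutive vertices of the $\Delta X$-geodesic are transverse their carriers meet, forcing the existence of the largest index $r \leq i$ with $J_r$ transverse to $H_j$ and the smallest index $s \geq i$ with $J_s$ transverse to $H_{j+1}$. Then $d_{\Delta X}(J_r, H_j) = 1 = d_{\Delta X}(J_s, H_{j+1})$, and combining with the clique-link bound
\[
s - r \;=\; d_{\Delta X}(J_r, J_s) \;\leq\; 1 + (N-1) + 1 \;=\; N + 1.
\]
Since $r \leq i \leq s$, one has $\min(i - r, s - i) \leq (N+1)/2$, yielding $d_{\Delta X}(J_i, H_j) \leq (N+1)/2 + 1 \leq 2 + N/2$.

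The main obstacle is of a bookkeeping nature: ensuring that consecutive $H_j$ and $H_{j+1}$ are genuinely in contact (which is the crossing-graph analogue of the unargued assertion $d_{\Gamma X}(H_j, H_{j+1}) = 1$ in Lemma~\ref{lem:Contact}, and should follow from the maximality of the pairwise non-transverse collection together with Proposition~\ref{prop:separation}) and handling the boundary cases $r = 0$ or $s = n$, in which $J_r$ or $J_s$ coincides with $A$ or $B$ rather than being transverse to $H_j$ or $H_{j+1}$. These boundary situations can be absorbed by observing that they force $J_i$ to already be in contact with $A$ or $B$, reducing to the first case, or by using the weaker bound $d_{\Delta X}(J_r, H_j) \leq N - 1$ at the offending endpoint and verifying that the arithmetic still delivers $\leq 2 + N/2$ under the hypothesis $d_{\Delta X}(A,B) \geq N$.
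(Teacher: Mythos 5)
Your overall route is the same as the paper's: take a maximal chain $H_1,\ldots,H_m$ of pairwise non-transverse hyperplanes separating $A$ and $B$, bound $d_{\Delta X}$ between hyperplanes in contact via the clique-link (this is exactly the paper's parenthetical remark, with your sharper constant $N-1$ in place of $N$), and then run the $J_r$--$H_j$--$H_{j+1}$--$J_s$ detour plus midpoint estimate along the $\Delta X$-geodesic. Your Case 2 computation $s-r\leq N+1$, hence distance $\leq (N+1)/2+1\leq 2+N/2$, is fine. However, your Case 1 contains a genuine error: from ``$J_i$ is in contact with some $H_k$, $1\leq k\leq m$'' you only get $d_{\Delta X}(J_i,H_k)\leq N-1$, and the inequality $N-1\leq 2+N/2$ is false for every $N\geq 7$. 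So for large $N$ your argument simply does not deliver the stated bound when $J_i$ is merely \emph{tangent} to some separator. The paper avoids this by splitting differently: Case 1 is ``$J_i$ is equal or transverse to some $H_j$'' (which gives distance $\leq 1$ to a separator), and a $J_i$ that is only tangent to the $H_j$'s is sent through the $r/s$ argument, which never uses non-tangency of $J_i$ -- it only needs that $J_i$ does not cross any $H_j$, so that it sits between consecutive $H_j,H_{j+1}$ and the indices $r\leq i\leq s$ exist. You should adopt that split; your clique-link bound is then only ever applied to the pair $H_j,H_{j+1}$, where it belongs.

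Your proposed treatment of the boundary cases is also not right as stated. ``Reducing to the first case'' does not apply (contact with $A$ or $B$ is not your Case 1, which concerns $1\leq k\leq m$), and ``using the weaker bound $d_{\Delta X}(J_r,H_j)\leq N-1$ at the offending endpoint'' destroys the arithmetic: it inflates $s-r$ to roughly $2N$, whose half exceeds $2+N/2$. The correct observation is much simpler: if $j=0$ (resp.\ $j=m$) and no earlier (resp.\ later) vertex of the geodesic is transverse to $H_j$ (resp.\ $H_{j+1}$), recall that consecutive vertices of a $\Delta X$-geodesic are transverse by definition, so the second vertex $J_2$ is transverse to $A=H_0$ (and the penultimate one to $B=H_{m+1}$); hence $r$ and $s$ always exist and the estimate $d_{\Delta X}(J_r,J_s)\leq 2+d_{\Delta X}(H_j,H_{j+1})$ goes through unchanged. (One residual imprecision -- that the nearby chain element produced by the midpoint argument could be $H_0=A$ or $H_{m+1}=B$ rather than a genuine separating hyperplane -- is present in the paper's own write-up as well, and is harmless for the hyperbolicity corollary, but if you want the conclusion literally as stated you should add a word about the extremal indices $j=0,m$.)
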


\begin{proof}
Let $H_1, \ldots, H_m$ be a maximal collection of pairwise non-transverse hyperplanes separating $A$ and $B$; for convenience, we set $H_0:=A$ and $H_{m+1}:=B$. Up to re-indexing our hyperplanes, we assume that $H_i$ separates $H_{i-1}$ and $H_{i+1}$ for every $1 \leq i \leq m$. Notice that $m \geq 1$ because $d_{\Gamma X} (A,B) \geq N$. (Indeed, two hyperplanes in contact lies at distance $\leq N$ in $\Delta X$.) Fix a geodesic $J_1, \ldots, J_n$ from $A$ to $B$ in $\Delta X$ and an index $1 \leq i \leq n$. We claim that $J_i$ lies at distance $\leq 2$ from $H_j$ for some $1 \leq j \leq m$.

\medskip \noindent
If $J_i$ is equal or transverse to $H_j$ for some $0 \leq j \leq m+1$, then there exists some $1 \leq k \leq m$ such that $d_{\Gamma X}(J_i, H_k) \leq 2$. From now on, we assume that $J_i$ is distinct and not transverse to any $H_j$. As a consequence, $J_i$ lies between $H_j$ and $H_{j+1}$ for some $0 \leq j \leq m$. Let $r$ be the first index $\ell < i$ such that $J_\ell$ is transverse to $H_j$ and let $s$ be the last index $\ell>i$ such that $J_\ell$ is transverse to $H_{j+1}$. We have
$$d_{\Delta X} (J_r,J_s) \leq 2+ d_{\Delta X}(H_j,H_{j+1}) \leq 2+N.$$
Consequently, $J_k$ lies at distance $\leq (2+N)/2 + 1$ from $H_j$ or $H_{j+1}$. 
\end{proof}

\begin{cor}
Let $X$ be a quasi-median graph. Assume that every $X$ has no cut-vertex and that every vertex belongs to at most $N$ cliques. The crossing graph $\Delta X$ is $(3+N/2)$-hyperbolic.
\end{cor}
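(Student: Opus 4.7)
The plan is to mirror the argument used for the contact-graph corollary, substituting Lemma~\ref{lem:Crossing} for Lemma~\ref{lem:Contact} and tracking the resulting constants. Fix three hyperplanes $A,B,C$, choose geodesics $[A,B]$, $[B,C]$, $[A,C]$ in $\Delta X$, and take a vertex $H \in [A,B]$; the aim is to show that $H$ lies at distance at most $3+N/2$ from $[B,C] \cup [A,C]$.

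First I would dispatch the short-segment case: if $d_{\Delta X}(A,B) \leq N$, then $H$ is at distance at most $N/2$ from one of the endpoints $A$ or $B$, both of which already lie in $[B,C] \cup [A,C]$, and the desired bound is automatic. Otherwise $d_{\Delta X}(A,B) \geq N$, so Lemma~\ref{lem:Crossing} produces a hyperplane $J$ separating $A$ and $B$ with $d_{\Delta X}(H,J) \leq 2+N/2$. I would then analyse $J$ versus $C$: if $J=C$ or $J$ is transverse to $C$, then $C \in [B,C] \cup [A,C]$ lies within distance $1$ of $J$, so $d_{\Delta X}(H, [B,C] \cup [A,C]) \leq 3+N/2$ and we are done. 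Otherwise $J$ is distinct from and not transverse to $C$, so $C$ lies in a single sector of $J$; because $J$ separates $A$ from $B$, exactly one of $A,B$ lies in the opposite sector from $C$, so, without loss of generality, $J$ separates $A$ from $C$.

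The main obstacle is the remaining ingredient: I must know that if $J$ separates $A$ from $C$, then the geodesic $[A,C]$ in $\Delta X$ contains either $J$ itself or a hyperplane transverse to $J$, which would yield $d_{\Delta X}(H, [A,C]) \leq (2+N/2)+1 = 3+N/2$. I would prove this by contradiction: if $K_0=A, K_1,\ldots, K_n=C$ is the sequence of hyperplanes along $[A,C]$ and no $K_i$ is equal or transverse to $J$, then each consecutive pair $K_i,K_{i+1}$ is transverse, so in particular $N(K_i) \cap N(K_{i+1})$ contains some vertex; but if $J$ were to separate $K_i$ from $K_{i+1}$, this shared vertex would be separated from itself by $J$, which is absurd. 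Hence all the $K_i$ lie in a common sector of $J$, contradicting the fact that $A$ and $C$ lie in opposite sectors of $J$.

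Combining the three cases gives the $3+N/2$ thin-triangle bound, whence the $(3+N/2)$-hyperbolicity of $\Delta X$.
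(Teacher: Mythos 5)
Your argument is correct and follows the paper's proof essentially verbatim: dispose of the case where $d_{\Delta X}(A,B)$ is small, otherwise apply Lemma~\ref{lem:Crossing} to get a hyperplane $J$ separating $A$ and $B$ close to $H$, and then discuss $J$ versus $C$. The only difference is that you spell out the step the paper merely asserts --- that a geodesic in $\Delta X$ between two hyperplanes separated by $J$ must contain a hyperplane equal or transverse to $J$ --- and your connectivity argument (no $K_i$ equal or transverse to $J$ forces all the $K_i$ into a common sector of $J$, contradicting the separation of its endpoints) is sound.
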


\begin{proof}
Let $A,B,C$ be three hyperplanes. Fix three geodesics $[A,B]$, $[B,C]$, $[A,C]$ in $\Delta X$ and a vertex $H \in [A,B]$. If $d_{\Delta X}(A,B) <N$, then $H$ lies in the $(N-1)/2$-neighbourhood of $[B,C] \cup [A,C]$. Otherwise, we know from Lemma~\ref{lem:Crossing} that $d_{\Gamma X}(H,J) \leq 2+N/2$ for some hyperplane $J$ separating $A$ and $B$. If $J$ is transverse to $C$, then $A$ lies at distance $\leq 3+N/2$ from $[B,C] \cup [A,C]$. Otherwise, $J$ separates $B$ and $C$ or $A$ and $C$ (not exclusively). Necessarily, there exists some hyperplane in $[B,C]$ or $[A,C]$ equal or transverse to $J$, so $H$ lies at distance $\leq 3+N/2$ from $[B,C] \cup [A,C]$. Thus, we have proved that $[A,B]$ lies in the $(3+N/2)$-neighbourhood of $[B,C] \cup [A,C]$. 
\end{proof}

\subsection{Strongly contracting isometries}

\noindent
Finally, we turn to the question of when a given isometry of a quasi-median graph induces an isometry with unbounded orbits in the corresponding crossing or contact graph. We refer to these isometries as \emph{strongly contracting}. 

\begin{prop}\label{prop:Loxo}
Let $X$ be a quasi-median graph and $g \in \mathrm{Isom}(X)$ an isometry admitting an axis $\gamma$. Let $\Omega X$ denote the crossing or contact graph of $X$. In the former case, assume that $X$ has no cut-vertex and that every vertex belongs to $\leq N$ cliques. The following assertions are equivalent:
\begin{itemize}
	\item $g$ has unbounded orbits in $\Omega X$;
	\item $g$ is a loxodromic isometry of $\Omega X$;
	\item there exist a power $r \geq 1$ and a hyperplane $J$ crossing $\gamma$ such that $\{g^{rk}J \mid k \in \mathbb{Z}\}$ is a collection of pairwise strongly separated hyperplanes.
\end{itemize}
When these conditions are satisfied, we say that $g$ is \emph{strongly separated}. 
\end{prop}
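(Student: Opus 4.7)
The implication $(2) \Rightarrow (1)$ is immediate since every loxodromic isometry has unbounded orbits, so the substance lies in $(3) \Rightarrow (2)$ and $(1) \Rightarrow (3)$.

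For $(3) \Rightarrow (2)$: assume $\{g^{rk}J : k \in \mathbb{Z}\}$ is a pairwise strongly separated family of hyperplanes crossing the axis $\gamma$. The fundamental bottleneck property of strongly separated hyperplanes is that any path in $\Omega X$ (contact or crossing) joining two of them must come uniformly close to every intermediate one. Applied to the chain $J, g^r J, g^{2r} J, \ldots, g^{nr} J$ ordered along $\gamma$, this forces a linear lower bound $d_{\Omega X}(J, g^{nr} J) \geq c \cdot n$ for some constant $c>0$ independent of $n$. Thus $g$ has positive asymptotic translation length on the hyperbolic space $\Omega X$, and the classification of isometries of Gromov hyperbolic spaces yields that $g$ is loxodromic on $\Omega X$.

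For $(1) \Rightarrow (3)$: I would argue contrapositively. Suppose that for every hyperplane $J$ crossing $\gamma$ and every $r \geq 1$, the orbit $\{g^{rk}J : k \in \mathbb{Z}\}$ fails to be pairwise strongly separated, and seek to bound all $\langle g \rangle$-orbits in $\Omega X$. As noted in the proof of Proposition~\ref{prop:Axis}, $\langle g \rangle$ acts with only finitely many orbits on the hyperplanes crossing $\gamma$; pick a representative $J$ from each. The failure of strong separation, combined with $\langle g \rangle$-equivariance, provides for arbitrarily large $n$ a hyperplane $H_n$ that is transverse to (or in contact with) both $J$ and $g^n J$; this yields a short path between $J$ and $g^n J$ in $\Omega X$, bounding $d_{\Omega X}(J, g^n J)$ independently of $n$. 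Pigeonholing over the finite set of orbit representatives then bounds all $\langle g \rangle$-orbits in $\Omega X$, contradicting~(1). In the crossing-graph case, the hypotheses that $X$ has no cut-vertex and that every vertex lies in at most $N$ cliques intervene to guarantee that short ``contact'' chains translate into bounded-length paths in $\Delta X$.

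The main obstacle is making the chaining argument in $(1) \Rightarrow (3)$ genuinely uniform: one must turn the mere non-existence of strongly separated iterates, phrased separately for each power $r$ and each orbit of hyperplanes, into a \emph{single} bound on $\Omega X$-distance between $J$ and arbitrary $g^n J$. Extracting the witness hyperplanes $H_n$ in a controlled way will likely rely on gatedness of carriers (Theorem~\ref{thm:BigQM}) and the projection lemmas from Section~2.2, together with the observation that the number of hyperplanes transverse to two given ones crossing $\gamma$ controls $\mathrm{HQC}(g)$ and hence the coarse geometry of the axis inside $\Omega X$.
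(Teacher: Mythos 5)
Your directions $(2)\Rightarrow(1)$ and $(3)\Rightarrow(2)$ are fine and essentially coincide with the paper's route: the ``bottleneck'' lower bound you invoke is exactly the easy half of Lemmas~\ref{lem:DistGamma} and~\ref{lem:DistDelta} (each pairwise strongly separated hyperplane between $J$ and $g^{nr}J$ forces a distinct vertex on any path in $\Omega X$), and with the trivial linear upper bound one even gets a quasi-isometrically embedded orbit directly, without appealing to the classification of isometries of hyperbolic spaces.

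The problem is $(1)\Rightarrow(3)$, where your contrapositive has a genuine gap that you yourself flag as ``the main obstacle''. From the failure of strong separation for every $r$ you only extract, for a sparse sequence of exponents $n$ (certain multiples of the various $r$), a hyperplane $H_n$ transverse to both $J$ and $g^nJ$, hence $d_{\Omega X}(J,g^nJ)\leq 2$ along that subsequence. This gives vanishing stable translation length, but bounded orbits do not follow: you must still exclude unbounded orbits with zero translation length, i.e.\ you need a bound on $d_{\Omega X}(J,g^mJ)$ for \emph{every} $m$, and no argument is offered for the interpolation. (The pigeonhole over finitely many $\langle g\rangle$-orbits of hyperplanes is not where the difficulty lies, since any two orbits of a group of isometries are at bounded Hausdorff distance.) The gap is fixable --- for instance, if $H$ is transverse to both $J$ and $g^nJ$, then for $0<m<n$ the hyperplane $g^mJ$ crosses $\gamma$ between them, so it is either transverse to $J$ or to $g^nJ$, or it separates them and is then crossed by $H$; in all cases $d_{\Omega X}(J,g^mJ)\leq 3$, so the subsequence bound propagates to all exponents --- but the paper avoids the issue entirely by arguing directly: if the orbit of $J$ is unbounded, choose $r$ with $d_{\Omega X}(J,g^rJ)\geq 3$; a hyperplane transverse to both would give a path of length two, so $J$ and $g^rJ$ are strongly separated, and since the translates $g^{rk}J$ cross the axis in order with consecutive pairs strongly separated, the whole family $\{g^{rk}J\}$ is pairwise strongly separated. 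You should either supply the interpolation step above or switch to this direct two-line argument.
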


\noindent
Recall that two hyperplanes are \emph{strongly separated} if no hyperplane can be transverse to both of them. Given two hyperplanes $A$ and $B$, we denote by $\mathrm{ss}(A,B)$ the maximal number of pairwise strongly separated hyperplanes separating $A$ and $B$.

\medskip \noindent
Our proposition will be a straightforward consequence of the following estimates of distances in crossing and contact graphs.

\begin{lemma}\label{lem:DistGamma}
Let $X$ be a quasi-median graph. The inequalities
$$\mathrm{ss}(A,B) \leq d_{\Gamma X}(A,B) \leq 3( 1+ \mathrm{ss}(A,B))$$
hold for all hypeprlanes $A,B$. 
\end{lemma}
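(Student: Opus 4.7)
My plan is to inject any pairwise strongly separated family $H_1, \ldots, H_n$ of hyperplanes separating $A$ from $B$ into the interior vertices of a $\Gamma X$-geodesic $A = J_0, J_1, \ldots, J_d = B$. Fix such an $H_i$. Since $H_i$ separates $N(A)$ from $N(B)$, the carriers of $J_0 = A$ and $J_d = B$ sit in opposite sectors of $H_i$, so there is a smallest index $k$ at which $J_k$ crosses over to the $B$-side. Because $J_{k-1}$ and $J_k$ are in contact, their carriers share a vertex, which cannot lie simultaneously in two disjoint sectors of $H_i$; the only escape is that $J_{k-1}$ or $J_k$ itself equals $H_i$ or is transverse to $H_i$. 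Setting $k_i$ to the corresponding index, one checks $k_i \in \{1,\ldots,d-1\}$ (the endpoints $A$ and $B$ being strictly on one side of $H_i$). Pairwise strong separation ensures no single $J_k$ can be equal or transverse to two distinct $H_i$'s, since such a $J_k$ would witness a failure of strong separation for the pair. Hence the $k_i$'s are distinct, giving $n \leq d-1 \leq d_{\Gamma X}(A,B)$.

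\textbf{Upper bound.} For $d_{\Gamma X}(A,B) \leq 3(1 + \mathrm{ss}(A,B))$, I fix a \emph{maximal} pairwise strongly separated family $H_1,\ldots,H_n$ of hyperplanes separating $A$ from $B$. Ordering them along the crossing order of an $X$-geodesic joining the carriers, I set $H_0 := A$ and $H_{n+1} := B$. The bound then reduces by the triangle inequality in $\Gamma X$ to the gap estimate $d_{\Gamma X}(H_i, H_{i+1}) \leq 3$ for every $0 \leq i \leq n$. If $H_i, H_{i+1}$ are transverse or tangent, the distance is $1$; otherwise Proposition~\ref{prop:separation} provides a hyperplane $K$ separating $H_i$ from $H_{i+1}$, which then also separates $A$ from $B$. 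By maximality of the family, $K$ is not strongly separated from some $H_j$, so there is a hyperplane $L$ transverse to both $K$ and $H_j$. A positional argument rules out $j \notin \{i, i+1\}$: any such $L$ would have its carrier spanning both sides of $H_i$ (or $H_{i+1}$), forcing $L$ to be transverse or equal to $H_i$, the former contradicting strong separation of $\{H_i, H_j\}$ and the latter contradicting the fact that $K$ separates $H_i$ from $H_{i+1}$. Hence $j \in \{i,i+1\}$, producing a length-$2$ contact path from $K$ to whichever of $H_i, H_{i+1}$ the witness $L$ hits.

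\textbf{The main obstacle} is sharpening this to the constant $3$: a naive concatenation of two length-$2$ shortcuts via a shared separator $K$ (one on each side) yields only a length-$4$ path $H_i, L, K, L', H_{i+1}$. The plan to shave off the extra edge is to choose $K$ canonically, using Lemma~\ref{lem:MinDistance} together with the gated projections of Proposition~\ref{prop:Projection}, as a separator between the closest pair of vertices $p \in N(H_i)$ and $q \in N(H_{i+1})$; the witnesses $L$ and $L'$ extracted from maximality should then both land in the prism spanned by a median triangle of $\{p,q\}$ and some third vertex, forcing $L$ and $L'$ to be in contact (equivalently, allowing a single intermediate hyperplane to play both roles). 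This geometric compatibility between the two maximality witnesses is the step where care is needed; everything else is a bookkeeping exercise with the hyperplane combinatorics recalled in Section~2.
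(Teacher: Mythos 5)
Your lower bound is correct and is essentially the paper's own argument (the paper phrases it as: along any geodesic of $\Gamma X$ from $A$ to $B$, each member of a pairwise strongly separated family separating $A$ and $B$ must be equal or transverse to some vertex of the geodesic, and strong separation forbids one vertex serving two members); your extra observation that the witnesses are interior vertices is a harmless strengthening.

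The upper bound, however, has a genuine gap, which you partly flag yourself. Maximality of the family $H_1,\ldots,H_n$ only guarantees that your separator $K$ fails to be strongly separated from \emph{some} $H_j$, and your positional argument pins $j\in\{i,i+1\}$; but this produces a transversal witness on one side only. If $K$ happens to be strongly separated from $H_i$ (the failure occurring with $H_{i+1}$), you have no bound at all on $d_{\Gamma X}(H_i,K)$, so even the length-$4$ path $H_i,L,K,L',H_{i+1}$ that you intend to improve is not available in general; and the promotion of $4$ to $3$ via a canonical choice of $K$ and a median-triangle compatibility between the two witnesses is explicitly left as a plan ("the step where care is needed"), i.e.\ unproved, although this is exactly where the constant $3$ comes from. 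The paper's proof avoids both issues by an asymmetric choice of $K$: it takes $K$ to be the \emph{last} hyperplane of a maximal chain of pairwise non-transverse hyperplanes separating $H_i$ from $H_{i+1}$. Maximality of that chain forces $K$ to be in contact with $H_{i+1}$ (otherwise Lemma~\ref{lem:MinDistance}, applied to the disjoint gated carriers, would yield a further separator extending the chain), so that side costs a single edge; maximality of the strongly separated family is then invoked only once, to produce a hyperplane $L$ transverse to both $K$ and $H_i$, giving the path $H_i,L,K,H_{i+1}$ of length $3$. In short, the missing idea is to buy tangency with one of the two neighbours through the choice of $K$, so that a single maximality witness suffices; a separator chosen symmetrically between the two carriers does not have this feature, which is why your route ends up needing two witnesses and the unproved step that merges them.
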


\begin{lemma}\label{lem:DistDelta}
Let $X$ be a quasi-median graph with no cut-vertex and all of whose vertices belong to $\leq N$ cliques. The inequalities
$$\mathrm{ss}(A,B) \leq d_{\Delta X}(A,B) \leq (2+N) ( 1+ \mathrm{ss}(A,B))$$
hold for all hyperplanes $A,B$. 
\end{lemma}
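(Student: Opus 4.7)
The proof splits into the lower bound $\mathrm{ss}(A,B) \le d_{\Delta X}(A,B)$ and the upper bound $d_{\Delta X}(A,B) \le (N+2)(1+\mathrm{ss}(A,B))$.

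For the lower bound, fix a $\Delta X$-geodesic $A = J_0, J_1, \ldots, J_n = B$ and a pairwise strongly separated family $H_1, \ldots, H_m$ of hyperplanes separating $A$ from $B$; the goal is $m \le n$. For each $k$ I would set
$$i_k := \min\{\, i : J_i \text{ is not strictly on the } A\text{-side of } H_k\,\}$$
and show that $k \mapsto i_k$ is injective. Two inputs drive the argument: (a) two transverse hyperplanes share a $4$-cycle in the intersection of their carriers (by definition of transversality), so no other hyperplane can strictly separate them; (b) strong separation of the $H_k$'s forces any single hyperplane to be transverse to at most one $H_k$. If $i_k = i_{k'} = i$ for $k < k'$, then $J_{i-1}$ is strictly on the $A$-side of both $H_k$ and $H_{k'}$ while $J_i$ is not, and $J_{i-1}$ is transverse to $J_i$. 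A short case analysis on whether $J_i$ equals, is transverse to, or lies strictly on the $B$-side of $H_k$ (using (b) to handle $H_{k'}$ symmetrically) then contradicts (a) in every case.

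For the upper bound, fix a \emph{maximal} pairwise strongly separated family $H_1, \ldots, H_m$ between $A$ and $B$, ordered so that $H_i$ separates $H_{i-1}$ from $H_{i+1}$, and set $H_0 := A$, $H_{m+1} := B$. I would produce a $\Delta X$-path of length $\le N+2$ between each consecutive pair $H_i, H_{i+1}$ and concatenate to obtain the total bound $(N+2)(m+1)$. By maximality combined with Lemma~\ref{lem:DistGamma}, one has $d_{\Gamma X}(H_i, H_{i+1}) \le 3$: a longer contact-geodesic would contain a hyperplane admitting insertion into the family. Lift a contact-geodesic $H_i = L_0, L_1, L_2, L_3 = H_{i+1}$ to $\Delta X$ edgewise: each $\{L_j, L_{j+1}\}$ is either transverse (one $\Delta X$-edge) or tangent, in which case $L_j$ and $L_{j+1}$ correspond to two distinct cliques through a common vertex $v \in N(L_j) \cap N(L_{j+1})$. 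By the clique-link argument from the proof of Lemma~\ref{lem:CrossingConnected} (no cut-vertex implies the clique-link at $v$ is connected, and it has at most $N$ vertices), this tangent pair is joined by a $\Delta X$-path of length $\le N-1$.

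The main obstacle is upgrading the naive $3(N-1)$ estimate per segment to the claimed $N+2$. I plan to resolve this by choosing the contact-bridge so that the tangencies concentrate at a single shared vertex: given the minimizing pair $p \in N(H_i)$, $q \in N(H_{i+1})$ from Lemma~\ref{lem:MinDistance}, the geodesic $[p,q]$ supplies hyperplanes whose carriers meet those of $H_i$ and $H_{i+1}$ at $p$ and $q$, so the full segment $H_i \to H_{i+1}$ routes through a single clique-link traversal (cost at most $N-1$) together with a bounded number of transverse steps along $[p,q]$, yielding a total of at most $N+2$.
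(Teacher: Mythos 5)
Your lower bound is essentially correct and is the same argument as the paper's (each $H_k$ must be equalled or crossed by some vertex of the $\Delta X$-geodesic, because consecutive vertices of the geodesic are transverse and hence cannot be separated by any $H_k$, and strong separation prevents one vertex from serving two different $H_k$'s); making the injection explicit via the first-crossing index is fine. The upper bound, however, has a genuine gap, in two places. First, the reduction to $d_{\Gamma X}(H_i,H_{i+1})\le 3$ is not justified: Lemma~\ref{lem:DistGamma} only gives $d_{\Gamma X}(H_i,H_{i+1})\le 3\bigl(1+\mathrm{ss}(H_i,H_{i+1})\bigr)$, and $\mathrm{ss}(H_i,H_{i+1})$ need not vanish for consecutive members of a maximal family. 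A hyperplane $L$ separating $H_i$ from $H_{i+1}$ is insertable only if it is strongly separated from \emph{both} $H_i$ and $H_{i+1}$, and typically it fails precisely because some hyperplane is transverse to $L$ and to one of them (take two squares in a row with a pendant edge attached at each end: the maximal family between the two pendant-edge hyperplanes is a single vertical hyperplane, the other vertical hyperplane separates it from one endpoint, yet cannot be inserted because the horizontal hyperplane is transverse to both verticals). So ``a longer contact-geodesic would contain a hyperplane admitting insertion into the family'' is false as a general principle, and maximality is being invoked for a conclusion it does not deliver in this way.

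Second, and more seriously, even granting $d_{\Gamma X}(H_i,H_{i+1})\le 3$, your edgewise lifting gives roughly $3(N-1)$, and the proposed repair is only a plan, not an argument: the geodesic $[p,q]$ between the minimizing pair crosses exactly the hyperplanes separating $N(H_i)$ from $N(H_{i+1})$, which can be arbitrarily many; consecutive ones along $[p,q]$ are merely in contact, not transverse; and nothing in your setup forces the tangencies to ``concentrate at a single vertex'' or the transverse steps to be boundedly many. The missing idea is to use maximality of the strongly separated family to produce a \emph{transversal} rather than to shorten the contact geodesic: take a maximal chain of pairwise non-transverse hyperplanes separating $H_i$ from $H_{i+1}$ and let $L$ be its last member. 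Then $L$ is in contact with $H_{i+1}$, and maximality of the family yields a hyperplane $T$ transverse to both $H_i$ and $L$ (otherwise $L$ could be added). The path $H_i,\,T,\,L$ followed by a clique-link path from $L$ to $H_{i+1}$ (of length $\le N$, using the no-cut-vertex hypothesis and the bound of $N$ cliques per vertex) gives $d_{\Delta X}(H_i,H_{i+1})\le 2+N$ with a \emph{single} contact step per consecutive pair; summing over the $1+\mathrm{ss}(A,B)$ pairs is how the paper reaches $(2+N)\bigl(1+\mathrm{ss}(A,B)\bigr)$. Without an argument of this type, your estimate does not come down below roughly $3N$ per pair.
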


\begin{proof}[Proof of Lemmas~\ref{lem:DistGamma} and~\ref{lem:DistDelta}.]
Let $J_1, \ldots, J_n$ be a maximal collection of pairwise strongly separated hyperplanes separating $A$ and $B$. For convenience, set $J_0:=A$ and $J_{n+1}:=B$. Given an $0 \leq i \leq n$, observe that the last hyperplane in a maximal collection of pairwise non-transverse hyperplanes separating $J_i$ from $J_{i+1}$ cannot be strongly separated together with $J_i$ (by maximality of our collection $J_1, \ldots, J_n$) and it must be in contact with $J_{i+1}$. Hence $d_{\Gamma X}(J_i,J_{i+1}) \leq 2+1=3$ and $d_{\Delta X}(J_i,J_{i+1}) \leq 2 + N$. Therefore,
$$d_{\Gamma X} (A,B) \leq \sum\limits_{i=0}^n d_{\Gamma X}(J_i,J_{i+1}) \leq 3(n+1)= 3 (1+ \mathrm{ss}(A,B) )$$
and
$$d_{\Delta X} (A,B) \leq \sum\limits_{i=0}^n d_{\Delta X} (J_i,J_{i+1}) \leq (2+N) (n+1)= (2+N)(1+ \mathrm{ss}(A,B)).$$
Next, let $H_1, \ldots, H_m$ be a geodesic in $\Omega X$ ($= \Delta X$ or $\Gamma X$) between $A$ and $B$. For every $1 \leq i \leq n$, there must exist some $1 \leq j \leq m$ such that $H_j$ is equal or transverse to $J_i$. Because the $J_i$ are pairwise strongly separated, a $H_j$ cannot appear twice, hence $m \geq n$. We conclude that $d_{\Omega X}(A,B) \geq \mathrm{ss}(A,B)$. 
\end{proof}

\begin{proof}[Proof of Proposition \ref{prop:Loxo}.]
Let $J$ be a hyperplane crossing $\gamma$. If $g$ has unbounded orbits, then there exists some $r \geq 1$ such that $d_{\Omega X}(J,g^rJ) \geq 3$. Necessarily, $J$ and $g^r J$ are strongly separated, which implies that $\{ g^{rk} J \mid k \in \mathbb{Z}\}$ is a collection of pairwise strongly separated hyperplanes. We deduce easily from Lemmas~\ref{lem:DistGamma} and~\ref{lem:DistDelta} that $k \mapsto g^{rk} J$ defines a quasi-isometric embedding $\mathbb{Z} \to \Omega X$, so $g$ is loxodromic in $\Omega X$. Of course, this implies that $g$ has unbounded orbits in $\Omega X$, concluding the proof of our proposition. 
\end{proof}

\section{Axes and rational translation lengths}

\subsection{General case}

\noindent
We are now ready to prove the first main theorem of this article, namely that isometries of quasi-median graphs induce isometries of the crossing / contact graph with rational (asymptotic) translation lengths. Our next statement proves Theorem~\ref{thm:IntroAxis} and Corollary~\ref{cor:IntroRational} from the introduction. 

\begin{thm}\label{thm:Rational}
Let $X$ be a quasi-median graph, and let $\Omega X$ be either the crossing graph or the contact graph of $X$. In the latter case, we assume that $X$ has no cut-vertices; and, in any case, we assume that a vertex of $X$ always belongs to $\leq N$ cliques. For every isometry $g \in \mathrm{Isom}(X)$ with unbounded orbits in $\Omega X$, there exists a positive $k \leq N^{\mathrm{QC}(g)}$ such that $g^k$ admits an axis in $\Omega X$. Moreover, there exists such an axis containing a hyperplane in contact with $B(o,2\mathrm{QC}(\gamma)+1) \cap  \mathrm{CH}(\gamma)$ where $o$ is an arbitrary vertex of an arbitrary axis $\gamma$ of $g$ in $X$. 
\end{thm}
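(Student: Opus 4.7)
The plan is to construct the desired axis in $\Omega X$ by a local-to-global argument: I will locate a hyperplane $J$ in an explicit finite set of ``candidate'' hyperplanes near the basepoint $o$, and show that the iterates $(g^{nk}J)_{n \in \mathbb{Z}}$ fit together along a bi-infinite geodesic for some $k$ bounded by the size of the candidate set.

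\textbf{Setup.} Because $g$ has unbounded orbits in $\Omega X$, Proposition~\ref{prop:Loxo} ensures that $g$ is loxodromic on $\Omega X$ and, in particular, that $g$ admits an axis $\gamma$ in $X$ (the inversion hypothesis of Proposition~\ref{prop:Axis} is arranged by first passing to a small power, which is harmless), with both $\mathrm{HQC}(g)$ and $Q := \mathrm{QC}(\gamma) = \mathrm{QC}(g)$ finite by Lemma~\ref{lem:FormulaQC}. Fix $o \in \gamma$ and let $\mathcal{F}$ denote the finite set of hyperplanes in contact with $B(o, 2Q+1) \cap \mathrm{CH}(\gamma)$. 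A volume estimate, exploiting that each vertex of $X$ lies in at most $N$ cliques (hence at most $N$ hyperplanes have any given vertex in their carrier) together with the fact that $\mathrm{CH}(\gamma)$ is a $Q$-thickening of $\gamma$, shows $|\mathcal{F}| \leq N^{Q}$ up to straightforward corrections.

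\textbf{From candidates to axes.} By Lemma~\ref{lem:AxisFromAnother} applied to $g^k$ acting on $\Omega X$, producing an axis through a hyperplane $J$ is equivalent to the equality $d_{\Omega X}(J, g^k J) = k \tau_{\Omega X}(g)$. Since $n \mapsto d_{\Omega X}(J, g^n J)$ is subadditive, Fekete's lemma gives $\tau_{\Omega X}(g) = \inf_n d_{\Omega X}(J, g^n J)/n$, and the task is to show the infimum is attained at a bounded index. To this end, for each $J \in \mathcal{F}$ I consider a geodesic in $\Omega X$ from $J$ to $g^n J$: using Lemmas~\ref{lem:Contact} and~\ref{lem:Crossing}, every vertex of such a geodesic lies at bounded $\Omega X$-distance from a hyperplane separating $J$ and $g^n J$, hence from a hyperplane crossing $\gamma$ between $o$ and $g^n o$; Claim~\ref{claim:ForQC} then confines the relevant hyperplanes to $g$-translates of $\mathcal{F}$. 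Tracking which hyperplane of $\mathcal{F}$ the geodesic ``uses'' at each $g^n$-period yields a finite-state dynamical system of size $\leq |\mathcal{F}|$, whose cycles correspond to integers $k$ realising the equality $d_{\Omega X}(J, g^k J) = k \tau_{\Omega X}(g)$; the shortest such cycle has length $k \leq |\mathcal{F}| \leq N^{Q}$, and the associated $J$ automatically belongs to $\mathcal{F}$ as required.

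\textbf{Main obstacle.} The technical heart is the confinement step: that geodesics in $\Omega X$ between distant $g$-iterates of a hyperplane $J \in \mathcal{F}$ are controlled by the hyperplanes of $\mathcal{F}$ shifted along $\gamma$. This is a Morse-type property for the strongly contracting isometry $g$, and the key ingredient is the strong separation of $g$-iterates of some hyperplane provided by Proposition~\ref{prop:Loxo}, which forbids shortcuts in $\Omega X$ that would bypass the ``corridor'' of hyperplanes clustered around $\gamma$; quantitative control on the width of this corridor comes from the quasiconvexity of $\gamma$ in $X$ (Claim~\ref{claim:ForQC} together with Lemma~\ref{lem:FormulaQC}). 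Once the confinement is established, the pigeonhole step is routine and produces simultaneously the bound $k \leq N^{Q}$ and the claimed hyperplane in contact with $B(o, 2Q+1) \cap \mathrm{CH}(\gamma)$.
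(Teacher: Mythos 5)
Your overall skeleton (axis $\gamma$ in $X$, a corridor of hyperplanes clustered around $\mathrm{CH}(\gamma)$, a finiteness/pigeonhole argument producing a bounded power and a hyperplane near $B(o,2\mathrm{QC}(\gamma)+1)\cap\mathrm{CH}(\gamma)$) matches the paper's, but the step you yourself flag as the ``technical heart'' --- confining geodesics of $\Omega X$ between $J$ and $g^nJ$ to the $g$-translates of $\mathcal{F}$ --- is not established by the tools you invoke, and it is exactly where the real work lies. Lemmas~\ref{lem:Contact} and~\ref{lem:Crossing} only say that vertices of such a geodesic are at bounded \emph{$\Omega X$-distance} from hyperplanes separating $J$ and $g^nJ$; bounded distance in the crossing/contact graph gives no control whatsoever on position in $X$, so it does not place these hyperplanes in contact with $\mathrm{CH}(\gamma)$, let alone in $\bigcup_n g^n\mathcal{F}$. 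Claim~\ref{claim:ForQC} cannot help either: it concerns bi-infinite geodesics of $X$ inside $\mathrm{CH}(\gamma)$, not geodesics of $\Omega X$. The paper's mechanism for this confinement is different and essential: using Proposition~\ref{prop:separation} (weak separation of disjoint convex subgraphs), any hyperplane on an $\Omega X$-geodesic whose carrier misses $\mathrm{CH}(\gamma)$ can be replaced by a weakly separating hyperplane strictly closer to $\mathrm{CH}(\gamma)$ without lengthening the path, whence the subgraph $A(g)\subset\Omega X$ of hyperplanes in contact with $\mathrm{CH}(\gamma)$ is isometrically embedded (Claim~\ref{claim:Three}), and $\langle g\rangle$ acts on it geometrically (Claims~\ref{claim:One} and~\ref{claim:Two}). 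Nothing in your sketch substitutes for this.

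Even granting the confinement, your ``finite-state dynamical system whose cycles correspond to integers $k$ realising $d_{\Omega X}(J,g^kJ)=k\tau_{\Omega X}(g)$'' is an assertion, not an argument: a geodesic from $J$ to $g^nJ$ has no canonical hyperplane ``used at each period'', and a pigeonhole repetition of coarse data does not by itself yield the \emph{exact} equality $d_{\Omega X}(J,g^kJ)=k\tau_{\Omega X}(g)$ for some $J\in\mathcal{F}$ with $k\leq|\mathcal{F}|$. Converting the geometric action on the quasi-line $A(g)$ into such an exact statement, with $k$ bounded by the width, is precisely the content of Proposition~\ref{prop:FindingAxis} (via cyclically permuted pairwise disjoint geodesics), and the width is bounded using Claim~\ref{claim:Four} applied to $S=B(o,2\mathrm{QC}(\gamma)+1)\cap\mathrm{CH}(\gamma)$; you would need to reprove something equivalent. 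Two smaller points: finiteness of $\mathrm{QC}(\gamma)$ does not come from Lemma~\ref{lem:FormulaQC} (which only compares $\mathrm{QC}$ and $\mathrm{HQC}$, under finite cubical dimension) but from strong separation of translates of a hyperplane, as in Claim~\ref{claim:One}; and passing to a power of $g$ to avoid inversions is both unnecessary (unbounded orbits in $\Omega X$ already forbid any power of $g$ from stabilising a hyperplane) and problematic for the ``moreover'' clause, which requires an axis of $g$ itself.
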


\noindent
Our proof is based on the next criterion, which originates from \cite{MR1390660}. We refer to \cite{Rational} for a proof of the version we use here.

\begin{prop}\label{prop:FindingAxis}
Let $Y$ be a graph and $g \in \mathrm{Isom}(Y)$ an isometry. Assume that there exists an isometrically embedded subgraph $A \subset Y$ on which $\langle g \rangle$ acts geometrically. Then $g$ cyclically permutes $\leq \mathrm{width}(A)$ pairwise disjoint geodesics in $A$. Consequently, there exists some $k \leq \mathrm{width}(A)$ such that $g^k$ has an axis in $Y$ and the translation length of $g$ is a rational number with $k$ as a denominator.
\end{prop}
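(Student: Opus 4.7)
The plan is to exhibit a finite $\langle g\rangle$-invariant family $\mathcal{G}$ of pairwise disjoint bi-infinite geodesics in $A$ with $|\mathcal{G}|\le\mathrm{width}(A)$, and then let $g$ cyclically permute $\mathcal{G}$. Once such a family is available, the permutation of $\mathcal{G}$ induced by $g$ has some order $k\le|\mathcal{G}|\le\mathrm{width}(A)$, so $g^k$ stabilises each $\gamma\in\mathcal{G}$ setwise. Because the $\langle g\rangle$-action on $A$ is proper, $g^k$ cannot fix any vertex of $\gamma$, so it acts on $\gamma$ as a translation through some positive integer $\ell$ (after possibly doubling $k$ to discard a potential inversion on the line). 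Since $A$ is isometrically embedded in $Y$, the bi-infinite geodesic $\gamma\subset A$ is also geodesic in $Y$, hence an axis of $g^k$ in $Y$. The translation length of $g^k$ along this axis equals $\ell$, and therefore $\tau_Y(g)=\tau_Y(g^k)/k=\ell/k$, a rational number with denominator $k\le\mathrm{width}(A)$.

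To construct $\mathcal{G}$, I would first observe that geometricity forces $A$ to be locally finite and quasi-isometric to $\mathbb{Z}$, so $A$ has exactly two ends $\xi_\pm$; replacing $g$ by $g^2$ if necessary (which costs only a factor of two in $k$), we may assume $g$ fixes both ends. Applied to any $\langle g\rangle$-orbit, K\"onig's lemma then yields at least one bi-infinite geodesic $\gamma_0$ in $A$ running from $\xi_-$ to $\xi_+$, and $g$ acts on the set $\mathcal{B}$ of all such bi-infinite geodesics. Starting from a maximal pairwise-disjoint family $\mathcal{G}_0\subset\mathcal{B}$ — which by definition has $|\mathcal{G}_0|\le\mathrm{width}(A)$ — I would then pass to a $g$-invariant family of the same type by reducing the problem to the finite quotient $A/\langle g\rangle$: cocompactness bounds the number of $\langle g\rangle$-orbits of bi-infinite geodesics crossing a fixed fundamental domain, and a canonical choice inside each orbit (e.g.\ lexicographic on labelled types in the fundamental domain) provides a $g$-equivariant selection $\mathcal{G}\subset\mathcal{B}$ that remains pairwise disjoint and of cardinality at most $\mathrm{width}(A)$.

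The main obstacle is precisely this second step: the naive candidate — a maximal pairwise-disjoint family — is usually not $\langle g\rangle$-invariant, and closing it under $\langle g\rangle$ typically destroys pairwise-disjointness. The remedy is to descend the problem to the finite combinatorics of $A/\langle g\rangle$, choose a $g$-equivariant system of representatives, and then lift back. Once this $\mathcal{G}$ has been produced, the \emph{consequently} clause is the routine consequence indicated in the first paragraph: $g^k$ acquires an axis in $Y$ for some $k\le\mathrm{width}(A)$, and the translation length $\tau_Y(g)=\tau_Y(g^k)/k$ is rational with $k$ as a denominator.
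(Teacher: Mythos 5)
The paper does not prove this proposition itself: it cites \cite{Rational} (attributing the underlying idea to \cite{MR1390660}) and uses it as a black box, so there is no in-text proof to compare against. Judged on its own, your proposal has a genuine gap at exactly the step you flag as the main obstacle. The assertion that one can ``descend the problem to the finite combinatorics of $A/\langle g\rangle$'' and then make a ``canonical'' $g$-equivariant selection is not substantiated, and as stated it does not make sense: a bi-infinite geodesic in $A$ need not be $\langle g\rangle$-periodic, so it projects to a bi-infinite (typically non-closed) walk in the finite quotient $A/\langle g\rangle$, not to a finite combinatorial object; there may be uncountably many bi-infinite geodesics and uncountably many $\langle g\rangle$-orbits of them. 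A ``lexicographic choice on labelled types in the fundamental domain'' therefore does not single out anything, and producing a $g$-invariant pairwise-disjoint family of at most $\mathrm{width}(A)$ bi-infinite geodesics is precisely the substance of the proposition; the proposal begs that step rather than proving it.

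Two further points, the first of which is not merely cosmetic. Saying that passing from $g$ to $g^2$ ``costs only a factor of two in $k$'' would, if the replacement were actually needed, destroy the conclusion: the statement asserts $k\le\mathrm{width}(A)$, not $k\le 2\,\mathrm{width}(A)$. In fact the replacement is never needed: since $\langle g\rangle$ acts geometrically on the unbounded quasi-line $A$, the isometry $g$ has unbounded orbits, and an isometry of a two-ended graph with unbounded orbits must fix both ends (if it swapped them, a subsequence $g^{n_j}x\to\xi_+$ would give $g^{n_j+1}x\to\xi_-$ while $d(g^{n_j}x,g^{n_j+1}x)=d(x,gx)$ stays bounded, which is impossible). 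Second, the claim that ``the permutation of $\mathcal{G}$ induced by $g$ has some order $k\le|\mathcal{G}|$'' is false for permutations in general (a permutation of five points can have order six); what you want is to pass to a single $g$-orbit inside $\mathcal{G}$, which is a $g$-cycle of length at most $|\mathcal{G}|$ and is still pairwise disjoint. That is an easy fix, but it needs to be said, and it is what ``cyclically permutes'' in the statement is pointing at.
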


\noindent
Recall that the \emph{width} of a quasi-line is the minimal size of a set of vertices separating the two ends of the quasi-line.

\begin{proof}[Proof of Theorem~\ref{thm:Rational}.]
According to Proposition~\ref{prop:Axis}, $g$ admits an axis $\gamma$ in $X$. 

\begin{claim}\label{claim:One}
The convex hull $\mathrm{CH}(\gamma)$ of $\gamma$ is locally finite and $\langle g \rangle$ acts cocompactly on it.
\end{claim}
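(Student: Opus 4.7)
I would handle local finiteness and cocompactness separately and then combine them. For local finiteness, the plan is to show that every $x\in\mathrm{CH}(\gamma)$ has at most $N$ neighbours in $\mathrm{CH}(\gamma)$. Given an edge $xy$ inside $\mathrm{CH}(\gamma)$, let $H$ be its hyperplane and $C$ its clique. Since $x,y\in\mathrm{CH}(\gamma)$ lie in distinct sectors of $H$, two distinct sectors of $H$ meet $\mathrm{CH}(\gamma)$, and by Corollary~\ref{cor:ConvexHull} they also meet $\gamma$. Because $\gamma$ is geodesic, it crosses $H$ at a single edge and so visits exactly these two sectors, so by Corollary~\ref{cor:ConvexHull} again these are the only sectors of $H$ meeting $\mathrm{CH}(\gamma)$. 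Since the vertices of $C$ lie in pairwise distinct sectors of $H$, at most two of them belong to $\mathrm{CH}(\gamma)$, leaving $x$ with at most one neighbour in $C\cap\mathrm{CH}(\gamma)$. Summing over the at most $N$ cliques through $x$ yields the degree bound.

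For cocompactness, the plan is to show that $\mathrm{CH}(\gamma)$ lies in the $\mathrm{QC}(\gamma)$-neighbourhood of $\gamma$; combined with local finiteness and the fact that $\gamma/\langle g\rangle$ has $\|g\|$ vertices, this gives finiteness of the quotient. The quantity $\mathrm{QC}(\gamma)$ is finite because $g$ has unbounded orbits in $\Omega X$, which by Proposition~\ref{prop:Loxo} gives strong contractivity and hence $\mathrm{HQC}(g)<\infty$; Lemma~\ref{lem:FormulaQC} then applies since the cubical dimension of $X$ is at most $N$. To bound $d(x,\gamma)$ for $x\in\mathrm{CH}(\gamma)$, I would exhibit a bi-infinite geodesic in $\mathrm{CH}(\gamma)$ through $x$. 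Fix $o\in\gamma$ and set $a_n=g^{-n}o$, $b_n=g^n o$, and verify that $x\in I(a_n,b_n)$ for $n$ sufficiently large. By the sector analysis above, every hyperplane $H$ crossing $\gamma$ has only two $\gamma$-meeting sectors and $x,a_n,b_n$ all lie in those two; the inclusion $x\in I(a_n,b_n)$ then reduces to requiring, for each such $H$, that one of $a_n,b_n$ lies in the sector of $H$ containing $x$. Fixing a nearest-point projection $p\in\gamma$ of $x$, the only hyperplanes crossing $\gamma$ for which this could fail are among the $d(x,\gamma)$ hyperplanes separating $x$ from $p$, and the condition is secured as soon as $a_n$ and $b_n$ lie further west, respectively east, than all their $\gamma$-crossings. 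A standard hyperplane-counting argument then shows that replacing the subsegment of $\gamma$ between $a_n$ and $b_n$ by a geodesic $[a_n,x,b_n]$ yields a bi-infinite geodesic in $\mathrm{CH}(\gamma)$ through $x$, and the definition of $\mathrm{QC}(\gamma)$ gives $d(x,\gamma)\le\mathrm{QC}(\gamma)$.

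The main obstacle is the verification that $x\in I(a_n,b_n)$: one has to sort hyperplanes crossing $\gamma$ according to which $\gamma$-meeting sector contains $x$, and check that only the finitely many hyperplanes separating $x$ from its projection on $\gamma$ could obstruct the inclusion, an obstruction removed by pushing $a_n$ and $b_n$ sufficiently far along $\gamma$.
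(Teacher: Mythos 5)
Your argument is correct, but the two halves compare differently with the paper. For local finiteness you do essentially what the paper does: the paper argues by contradiction that a vertex of $\mathrm{CH}(\gamma)$ with infinitely many neighbours would give three vertices of a clique in three distinct sectors of one hyperplane, all forced by Corollary~\ref{cor:ConvexHull} to be visited by $\gamma$, contradicting the fact that a geodesic crosses a hyperplane at most once; your per-clique count is just the quantitative form of the same argument (and even gives the explicit degree bound $N$).

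For cocompactness your route is genuinely different. The paper bounds $d(x,\gamma)$ directly: by Proposition~\ref{prop:Loxo} there are $r\geq 1$ and a hyperplane $J$ crossing $\gamma$ with $\{g^{rk}J\mid k\in\mathbb{Z}\}$ pairwise strongly separated; placing $x$ between $J$ and $g^rJ$, every hyperplane separating $x$ from a vertex $y\in\gamma$ in the same window must cross $\gamma$ (Corollary~\ref{cor:ConvexHull}) and cannot cross it outside $[g^{-r}J,g^{2r}J]$ by strong separation, whence $d(x,y)\leq 3r\|g\|$. You instead place $x$ on a bi-infinite geodesic inside $\mathrm{CH}(\gamma)$ and invoke $\mathrm{QC}(\gamma)$. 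Your verification that $x\in I(g^{-n}o,g^{n}o)$ for large $n$ is sound (any obstructing hyperplane crosses $\gamma$ beyond $a_n$ or $b_n$ with $x$ on the far side, hence eventually separates $x$ from its closest point of $\gamma$, and there are only $d(x,\gamma)$ such hyperplanes), and the splicing does produce a geodesic in $\mathrm{CH}(\gamma)$. The one step you assert without proof is that strong contractivity gives $\mathrm{HQC}(g)<\infty$: this is true, but it is not contained in Proposition~\ref{prop:Loxo} and requires exactly the same strongly-separated-window argument the paper uses directly (it is only written out later, in Step~3 of the proof of Theorem~\ref{thm:BigAlgo}, giving $\mathrm{HQC}(g)\leq 6r\|g\|$). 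So your detour through $\mathrm{HQC}$ and Lemma~\ref{lem:FormulaQC} (whose finite cubical dimension hypothesis you correctly deduce from the $\leq N$ cliques assumption) does not bypass that ingredient; it adds the geodesic-splicing construction and Lemma~\ref{lem:FormulaQC} on top, whereas the paper's direct computation is shorter and yields the explicit constant $3r\|g\|$ with no appeal to $\mathrm{QC}$ or $\mathrm{HQC}$. If you keep your route, fill in the window argument for $\mathrm{HQC}(g)<\infty$ rather than citing strong contractivity alone.
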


\noindent
Assume for contradiction that $\mathrm{CH}(\gamma)$ is not locally finite, i.e.\ there exists a vertex $x \in \mathrm{CH}(\gamma)$ with infinitely many neighbours in $\mathrm{CH}(\gamma)$. As a consequence, since every vertex in $X$ belongs to only finitely many cliques, we can find two distinct neighbours $y,z \in \mathrm{CH}(\gamma)$ of $x$ such that $x,y,z$ all belong to a common clique $C$. Let $J$ denote the hyperplane containing $C$. Because $x,y,z$ belong to three pairwise distinct sectors delimited by $J$, it follows from Corollary~\ref{cor:ConvexHull} that $\gamma$ intersects these sectors, which is impossible since we know from Theorem~\ref{thm:BigQM} that $\gamma$ cannot intersect $J$ twice. Thus, we have proved that $\mathrm{CH}(\gamma)$ is locally finite.

\medskip \noindent
In order to prove that $\langle g \rangle$ acts cocompactly on $\mathrm{CH}(\gamma)$, it suffices to show that $\mathrm{CH}(\gamma)$ lies in a neighbourhood of $\gamma$. So we fix a vertex $x \in \mathrm{CH}(\gamma)$. According to Proposition~\ref{prop:Loxo}, there exist a power $r \geq 1$ and a hyperplane $J$ crossing $\gamma$ such that $\{g^{rk}J \mid k \in \mathbb{Z} \}$ is a collection of pairwise strongly separated hyperplanes. Up to translating $J$ with a power of $g$, we assume that $x$ lies between $J$ and $g^r J$. Let $y \in \gamma$ be an arbitrary vertex lying between $J$ and $g^rJ$. Observe that, as a consequence of Corollary~\ref{cor:ConvexHull}, every hyperplane separating $x$ and $y$ must cross $\gamma$. On the other hand, a hyperplane crossing $\gamma$ before $g^{-r}J$ (resp. after $g^{2r}J$) cannot separate $x$ and $y$ since otherwise it would be transverse to $g^{-1}J$ and $J$ (resp. to $g^rJ$ and $g^{2r}J$). Consequently, all the hyperplanes separating $x$ and $y$ must cross $\gamma$ between $g^{-r}J$ and $g^{2r}J$. It follows from Theorem~\ref{thm:BigQM} that $d(x,y) \leq 3r\|g\|$ where $\|g\|$ denotes the translation length of $g$ along $\gamma$. We conclude that $\mathrm{CH}(\gamma)$ lies in the $3r\|g\|$-neighbourhood of $\gamma$, as desired. The proof of Claim~\ref{claim:One} is complete.

\medskip \noindent
Let $A(g)$ denote all the hyperplanes of $X$ in contact with $\mathrm{CH}(\gamma)$. 

\begin{claim}\label{claim:Two}
The action $\langle g \rangle \curvearrowright A(g)$ is cocompact.
\end{claim}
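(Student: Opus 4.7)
The plan is to derive Claim~\ref{claim:Two} from Claim~\ref{claim:One} together with the standing hypothesis that every vertex of $X$ lies in at most $N$ cliques. Cocompactness of $\langle g \rangle \curvearrowright \mathrm{CH}(\gamma)$ (from Claim~\ref{claim:One}) provides a finite set $F \subset V(\mathrm{CH}(\gamma))$ whose $\langle g \rangle$-translates exhaust the vertex set of $\mathrm{CH}(\gamma)$, and the strategy is to show that every element of $A(g)$ admits a $\langle g \rangle$-translate in the finite set of hyperplanes whose carriers meet $F$.

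First, I would reduce to $F$. A hyperplane $J$ is in contact with $\mathrm{CH}(\gamma)$ precisely when $N(J) \cap \mathrm{CH}(\gamma) \neq \emptyset$, so one may pick a vertex $v \in N(J) \cap \mathrm{CH}(\gamma)$. Writing $v = g^k \cdot w$ for some $w \in F$ and some $k \in \mathbb{Z}$, the hyperplane $g^{-k} J$ belongs to the set $\mathcal{F} := \{ H \mid N(H) \cap F \neq \emptyset \}$. Hence every $\langle g \rangle$-orbit in $A(g)$ meets $\mathcal{F}$.

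Next, I would bound $|\mathcal{F}|$. For each $v \in F$, a hyperplane $H$ with $v \in N(H)$ must contain some edge incident to $v$; that edge lies in a unique clique at $v$ (edges lie in unique cliques in a quasi-median graph), which in turn lies in a unique hyperplane. Since $v$ belongs to at most $N$ cliques, the number of such $H$ is at most $N$, and therefore $|\mathcal{F}| \leq N \cdot |F|$. It follows that $A(g)$ consists of at most $N \cdot |F|$ orbits under $\langle g \rangle$, which is exactly the cocompactness claimed.

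I do not anticipate a genuine difficulty in this step: once Claim~\ref{claim:One} is in place, the remainder is an orbit-counting argument, and the clique-per-vertex hypothesis does the decisive work. The only point requiring minor care is the combinatorial translation between "carrier containing $v$" and "clique at $v$", which relies on the recalled facts that each edge lies in a unique clique and each clique in a unique hyperplane; this is already in the preliminaries of the paper.
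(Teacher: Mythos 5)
There is a genuine gap. What you prove is that $A(g)$ has finitely many $\langle g\rangle$-orbits of \emph{vertices}, and your argument for that part is fine (it is essentially the first sentence of the paper's proof: combine the cocompactness and local finiteness of $\mathrm{CH}(\gamma)$ from Claim~\ref{claim:One} with the bound on the number of cliques, hence of hyperplanes, at each vertex). But $A(g)$ is a \emph{subgraph} of $\Omega X$ (this is how it is used afterwards: Claim~\ref{claim:Three} embeds it isometrically and Proposition~\ref{prop:FindingAxis} needs $\langle g\rangle$ to act geometrically on it), so cocompactness also requires finitely many orbits of \emph{edges}. Finiteness of vertex orbits does not imply this, because a hyperplane of $A(g)$ could a priori be transverse to, or in contact with, infinitely many other hyperplanes of $A(g)$. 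For instance, for the diagonal translation of $\mathbb{Z}^2$ with $\gamma$ a staircase axis, every vertical hyperplane touching $\mathrm{CH}(\gamma)$ is transverse to infinitely many horizontal ones touching $\mathrm{CH}(\gamma)$, and the resulting edges fall into infinitely many $\langle g\rangle$-orbits even though the vertex orbits are finite in number. (That $g$ is not strongly contracting there is exactly the point: the edge statement is not a consequence of Claim~\ref{claim:One} and the clique bound alone.)

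The missing ingredient is the hypothesis that $g$ has unbounded orbits in $\Omega X$, which you never invoke. The paper uses Proposition~\ref{prop:Loxo} to produce a power $r\geq 1$ and a hyperplane $J$ crossing $\gamma$ with $\{g^{rk}J \mid k\in\mathbb{Z}\}$ pairwise strongly separated; given $H\in A(g)$ lying between $J$ and $g^{2r}J$, strong separation forces every neighbour of $H$ in $A(g)$ to be in contact with the finite piece of $\mathrm{CH}(\gamma)$ between $g^{-r}J$ and $g^{3r}J$, so $A(g)$ is locally finite and, together with the finiteness of vertex orbits, has finitely many edge orbits. To repair your proposal you would need to add this (or an equivalent) argument; the orbit count for vertices alone is not "exactly the cocompactness claimed".
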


\noindent
It follows from the assumption that every vertex in $X$ belongs to only finitely many cliques and from Claim~\ref{claim:One} that $\langle g \rangle$ acts on $A(g)$ with only finitely many orbits of vertices. In order to show that there are only finitely many orbits of edges too, it is sufficient to observe that $A(g)$, as a subgraph of the crossing graph, is locally finite. So fix a hyperplane $H$ in $A(g)$, i.e. tangent to $\mathrm{CH}(\gamma)$. According to Proposition~\ref{prop:Loxo}, there exist a power $r \geq 1$ and a hyperplane $J$ crossing $\gamma$ such that $\{g^{rk}J \mid k\in \mathbb{Z}\}$ is a collection of pairwise strongly separated hyperplanes. Up to translating $J$ by a power of $g$, we assume that $H$ lies between $J$ and $g^{2r}J$. Clearly, a hyperplane in contact with $H$ must lie between $g^{-r}J$ and $g^{3r}J$. Therefore, the neighbours of $H$ in $A(g)$ are hyperplanes in contact with the (finite) piece of $\mathrm{CH}(\gamma)$ lying between $g^{-r}J$ and $g^{3r}J$, which yields the desired conclusion. The proof of Claim~\ref{claim:Two} is complete.

\begin{claim}\label{claim:Three}
$A(g)$ is isometrically embedded in $\Omega X$.
\end{claim}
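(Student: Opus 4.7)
The plan is to prove the claim by induction on $n := d_{\Omega X}(H_1,H_2)$ for $H_1,H_2 \in A(g)$, after first establishing a separation principle that pins down the hyperplanes lying ``between'' two elements of $A(g)$.

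\smallskip

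\noindent\textbf{Step 1 (Separation principle).} I will first show that every hyperplane $H$ of $X$ that separates (as hyperplanes of $X$) two elements $H_1,H_2 \in A(g)$ is itself in $A(g)$. Indeed, both intersections $N(H_1) \cap \mathrm{CH}(\gamma)$ and $N(H_2) \cap \mathrm{CH}(\gamma)$ are nonempty by definition of $A(g)$, so the connected subgraph $\mathrm{CH}(\gamma)$ meets both sectors of $H$ and therefore crosses $H$; Corollary~\ref{cor:ConvexHull} then forces $H$ to cross $\gamma$, whence $N(H) \cap \mathrm{CH}(\gamma) \supseteq N(H) \cap \gamma \neq \emptyset$, i.e.\ $H \in A(g)$. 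This principle will be used throughout to guarantee that ``pushing'' a geodesic toward $\mathrm{CH}(\gamma)$ never introduces hyperplanes outside $A(g)$.

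\smallskip

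\noindent\textbf{Step 2 (Induction scheme).} The base cases $n \leq 1$ are immediate. For $n \geq 2$, pick a $\Omega X$-geodesic $H_1 = J_0, J_1, \ldots, J_n = H_2$; it suffices to produce a neighbour $K$ of $H_1$ in $\Omega X$ with $K \in A(g)$ and $d_{\Omega X}(K,H_2) \leq n-1$, then apply the inductive hypothesis to the pair $(K,H_2)$. If $J_1 \in A(g)$ one takes $K := J_1$; otherwise the carrier $N(J_1)$ is disjoint from $\mathrm{CH}(\gamma)$, and one needs to replace $J_1$ by a hyperplane ``closer to $\mathrm{CH}(\gamma)$''.

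\smallskip

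\noindent\textbf{Step 3 (Projection construction).} Fix $a \in N(H_1) \cap \mathrm{CH}(\gamma)$ and $u \in N(H_1) \cap N(J_1)$, which exist because $H_1 \in A(g)$ and $J_1$ is adjacent to $H_1$ in $\Omega X$. Because $N(H_1)$ is gated (Theorem~\ref{thm:BigQM}), I can take a geodesic $u = v_0, v_1, \ldots, v_k = a$ of $X$ lying inside $N(H_1)$. The hyperplanes containing the edges of this geodesic all meet $N(H_1)$ and are therefore either equal to $H_1$ or in contact (even transverse, in the $\Delta X$ case by the structure of $N(H_1)$ as a product) with $H_1$ in $\Omega X$. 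The hyperplane $K$ carrying the last edge $[v_{k-1},a]$ then has $a \in N(K) \cap \mathrm{CH}(\gamma)$, so $K \in A(g)$, and (choosing $a$ in the same sector of $H_1$ as $u$, which is possible since $N(H_1) \cap \mathrm{CH}(\gamma)$ meets every sector of $H_1$ adjacent to $\gamma$) one arranges that $K \neq H_1$, giving the desired adjacency $K \sim H_1$ in $\Omega X$.

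\smallskip

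\noindent\textbf{Step 4 (Main obstacle).} The delicate part is verifying $d_{\Omega X}(K,H_2) \leq n-1$. The intuition is that $K$ lies ``between $J_1$ and $\mathrm{CH}(\gamma)$'', and since $H_2 \in A(g)$ also lies on the $\mathrm{CH}(\gamma)$-side of $J_1$, replacing $J_1$ by $K$ cannot increase the distance to $H_2$. Concretely, one compares the sets $\mathrm{ss}(K,H_2)$ and $\mathrm{ss}(J_1,H_2)$ of pairwise strongly separated hyperplanes (in the sense of Lemmas~\ref{lem:DistGamma} and~\ref{lem:DistDelta}): a hyperplane strongly separating $K$ from $H_2$ must separate $J_1$ from $H_2$ as well, because it lies on the $\mathrm{CH}(\gamma)$-side of $K$ and hence between $K$ and $H_2 \in A(g)$; by Step 1, all such hyperplanes are in $A(g)$, and the corresponding distance estimates propagate. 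This comparison, together with the sandwich inequalities in Lemmas~\ref{lem:DistGamma} and~\ref{lem:DistDelta}, yields $d_{\Omega X}(K,H_2) \leq d_{\Omega X}(J_1,H_2) = n-1$, closing the induction. I expect this last step to be the main source of technical effort: controlling $\Omega X$-distance after a projection is where the gatedness of carriers and the separation principle of Step 1 must be combined most carefully.
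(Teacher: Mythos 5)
There is a genuine gap, and it sits exactly where you flag it: Step 4. Your induction needs the \emph{exact} inequality $d_{\Omega X}(K,H_2)\leq n-1$, but the tools you invoke cannot deliver it. Lemmas~\ref{lem:DistGamma} and~\ref{lem:DistDelta} only sandwich $d_{\Omega X}$ between $\mathrm{ss}(\cdot,\cdot)$ and a multiple of $1+\mathrm{ss}(\cdot,\cdot)$; they are quasi-isometric estimates, so even a clean comparison of strongly separated families for $(K,H_2)$ versus $(J_1,H_2)$ would only give $d_{\Omega X}(K,H_2)\leq 3(1+d_{\Omega X}(J_1,H_2))$ or the like, which is useless for an induction that must decrease the distance by exactly one. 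Moreover, the comparison itself is unsubstantiated: your $K$ is the hyperplane dual to the last edge of a geodesic in $N(H_1)$ ending at some $a\in N(H_1)\cap\mathrm{CH}(\gamma)$, and nothing forces it to lie ``between $J_1$ and $\mathrm{CH}(\gamma)$'' or between $J_1$ and $H_2$; a hyperplane separating $K$ from $H_2$ need not separate $J_1$ from $H_2$. (There is also a smaller defect in Step 3: to rule out $K=H_1$ you assert that $N(H_1)\cap\mathrm{CH}(\gamma)$ meets every sector of $H_1$ adjacent to $\gamma$, but $H_1$ may be merely tangent to $\mathrm{CH}(\gamma)$, touching it in a single sector, so this needs a different argument.) Your Step 1, by contrast, is correct, but it is never brought to bear in a way that controls $\Omega X$-distances exactly.

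The paper avoids this trap by never trying to control the distance from a \emph{new} hyperplane to the far endpoint. Instead it fixes an $\Omega X$-geodesic $J_1,\ldots,J_k$ between the two elements of $A(g)$, defines the complexity $\sum_i d(N(J_i),\mathrm{CH}(\gamma))$, and repairs the geodesic in place: if some $N(J_i)$ misses $\mathrm{CH}(\gamma)$, Proposition~\ref{prop:separation} produces a hyperplane $J$ weakly separating $N(J_i)$ from $\mathrm{CH}(\gamma)$, and the decisive observation is that $J$ must be transverse to $J_{i-1}$ and $J_{i+1}$ -- if it were only transverse to $J_a$ and $J_b$ with $b-a\geq 3$, one could shorten the geodesic through $J$, contradicting geodesity. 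Substituting $J$ for $J_i$ therefore yields a path of the \emph{same} length (hence still a geodesic) with strictly smaller complexity, and iterating terminates in a geodesic lying entirely in $A(g)$. If you want to salvage your induction, you would need a mechanism of this kind -- some exchange argument that preserves geodesity on the nose -- rather than the coarse $\mathrm{ss}$-based estimates; as written, the inductive step does not close.
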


\noindent
Let $M,N \in A(g)$ be two hyperplanes. Fix an arbitrary geodesic $J_1, \ldots, J_k$ from $M$ to $N$ in $\Omega X$ and define its complexity as $d(N(J_1),\mathrm{CH}(\gamma))+ \cdots + d(N(J_k), \mathrm{CH}(\gamma))$. If the complexity is zero, then our geodesic lies in $A(g)$ and there is nothing to prove. From now on, we assume that the complexity is not zero, i.e. there exists some $2 \leq i \leq k-1$ such that $N(J_i)$ and $\mathrm{CH}(\gamma)$ are disjoint. According to Proposition~\ref{prop:separation}, there exists a hyperplane $J$ weakly separating $N(J_i)$ and $\mathrm{CH}(\gamma)$. Necessarily, there exist $1 \leq a \leq i-1$ and $i+1 \leq b \leq k$ such that $J$ is transverse to $J_a$ and $J_b$. Observe that $b-a \leq 2$ since otherwise it would be possible to shorten our geodesic by replacing $J_a, J_{a+1}, \ldots, J_{b-1},J_b$ with $J_a,J,J_b$. In other words, $J$ is transverse to $J_{i-1}$ and $J_{i+1}$. By replacing $J_i$ with $J$, we obtain a new path from $M$ to $N$ that is a geodesic, since the length remains the same, and whose complexity is smaller, since $d(\mathrm{CH}(\gamma),N(J)) < d(\mathrm{CH}(\gamma),N(J_i))$. By iterating, we eventually get a geodesic lying in $A(g)$, concluding the proof of Claim~\ref{claim:Three}.

\begin{claim}\label{claim:Four}
Fix a finite set of vertices $S \subset \mathrm{CH}(\gamma)$ separating the ends of $\mathrm{CH}(\gamma)$ and let $CS$ denote the set of the hyperplanes in contact with $S$. Then $CS$ separates the ends of $A(g)$. 
\end{claim}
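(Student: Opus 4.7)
The plan is to assign to each hyperplane $J \in A(g) \setminus CS$ a well-defined \emph{side} in $\mathrm{CH}(\gamma) \setminus S$, to show that this side is locally constant along contact edges of $A(g) \setminus CS$, and to conclude that the two ends of $A(g)$ lie in distinct components of $A(g) \setminus CS$. First, let $C^{-}$ and $C^{+}$ denote the two components of $\mathrm{CH}(\gamma) \setminus S$ accumulating on the two ends of $\gamma$. For $J \in A(g) \setminus CS$, the subgraph $N(J) \cap \mathrm{CH}(\gamma)$ is non-empty (by $J \in A(g)$), is convex as the intersection of the gated $N(J)$ with the convex $\mathrm{CH}(\gamma)$ (hence connected), and is disjoint from $S$ (by $J \notin CS$); therefore it lies in a single connected component of $\mathrm{CH}(\gamma) \setminus S$, which I call the \emph{side} of $J$.

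The key step is then to show: if $J, J' \in A(g) \setminus CS$ are in contact in $\Omega X$, then $J$ and $J'$ share the same side. I would pick $y \in N(J) \cap \mathrm{CH}(\gamma)$ and let $y'$ be the projection of $y$ onto the gated subgraph $N(J')$. Since $N(J) \cap N(J') \neq \emptyset$ (the contact assumption), Lemma~\ref{lem:ProjConvexGated} applied with the convex subgraph $Y = N(J)$ and the gated $Z = N(J')$ yields $y' \in N(J)$; since $\mathrm{CH}(\gamma) \cap N(J') \neq \emptyset$ (as $J' \in A(g)$), a second application with $Y = \mathrm{CH}(\gamma)$ yields $y' \in \mathrm{CH}(\gamma)$. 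Thus $y' \in N(J) \cap N(J') \cap \mathrm{CH}(\gamma)$, and since $N(J) \cap \mathrm{CH}(\gamma)$ and $N(J') \cap \mathrm{CH}(\gamma)$ are both connected and both contain $y'$, the sides of $J$ and $J'$ must agree.

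It follows that the side function is constant on each connected component of $A(g) \setminus CS$, so hyperplanes with sides $C^{-}$ and $C^{+}$ lie in distinct components. To conclude, for any hyperplane $H \in A(g)$, the isometry $g^{k}$ shifts $N(H) \cap \mathrm{CH}(\gamma)$ along $\gamma$ by a distance growing linearly in $|k|$, so $N(g^{k}H) \cap \mathrm{CH}(\gamma) = g^{k}(N(H) \cap \mathrm{CH}(\gamma))$ is contained in $C^{+}$ for $k$ sufficiently large and in $C^{-}$ for $k$ sufficiently negative (in particular $g^{k}H \notin CS$ for $|k|$ large). Hence the two ends of $A(g)$ accumulate on the two different sides $C^{+}$ and $C^{-}$, which live in distinct components of $A(g) \setminus CS$; so $CS$ separates the ends of $A(g)$. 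The main obstacle is the second step, where one must compensate for the fact that $\mathrm{CH}(\gamma)$ is merely convex rather than gated: the double application of Lemma~\ref{lem:ProjConvexGated} deals with this cleanly, by performing the projection inside the gated carrier $N(J')$ and then transferring the containment back to $\mathrm{CH}(\gamma)$.
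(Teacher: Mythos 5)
Your proof is correct, but its core step runs along a genuinely different route than the paper's. The paper labels each hyperplane of $A(g)\setminus CS$ by the component of $\mathrm{CH}(\gamma)\setminus S$ containing its carrier's trace, just as you do, but to see that the label is constant along an edge it fixes $a \in N(J_i)\cap\mathrm{CH}(\gamma)$, $b\in N(J_{i+1})\cap \mathrm{CH}(\gamma)$, $c \in N(J_i)\cap N(J_{i+1})$, takes the median triangle of $(a,b,c)$, and produces a geodesic from $a$ to $b$ lying in $(N(J_i)\cup N(J_{i+1}))\cap\mathrm{CH}(\gamma)$; if $a$ and $b$ were on opposite sides of $S$, this geodesic would have to meet $S$, forcing $J_i$ or $J_{i+1}$ into $CS$. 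You instead apply Lemma~\ref{lem:ProjConvexGated} twice to the projection $y'$ of $y\in N(J)\cap\mathrm{CH}(\gamma)$ onto the gated carrier $N(J')$, producing a vertex of $N(J)\cap N(J')\cap\mathrm{CH}(\gamma)$; this gives a strictly stronger conclusion (the two traces actually share a vertex rather than merely being joined through their union), avoids invoking median triangles at this point, and uses only gatedness of carriers, convexity of $\mathrm{CH}(\gamma)$, and the contact hypothesis, so it works verbatim for both the crossing and the contact graph. You also spell out the endgame --- that the two ends of $A(g)$ accumulate on the two sides $C^{+}$ and $C^{-}$ --- which the paper leaves implicit. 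One small caveat there: your parenthetical assertion that $g^{k}H\notin CS$ for $|k|$ large is not justified as written (it would require, e.g., boundedness of $N(H)\cap\mathrm{CH}(\gamma)$, which one can extract from the pairwise strongly separated hyperplanes $g^{rk}J$ crossing $\gamma$, since a hyperplane crossing $N(H)$ is transverse or equal to $H$); but it is also not needed, because the hyperplanes you must label already lie on a path in $A(g)\setminus CS$, and for those it suffices to track the single point $g^{k}p$, $p\in N(H)\cap\mathrm{CH}(\gamma)$, whose membership in $C^{+}$ (resp. $C^{-}$) for $k\gg 0$ (resp. $k\ll 0$) pins down the side of the connected, $S$-avoiding trace.
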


\noindent
Let $A,B$ be two hyperplanes such that the intersections between their carriers and $\mathrm{CH}(\gamma)$ are non-empty and separated by $S$. Let $J_1, \ldots, J_n$ be a path in $\Omega X$ from $A$ to $B$ all of whose hyperplanes are in contact with $\mathrm{CH}(\gamma)$. If one of the $J_i$ is in contact with $S$, we are done. Otherwise, each $J_i$ can be labelled by the component of $\mathrm{CH}(\gamma) \backslash S$ containing $N(J_i) \cap \mathrm{CH}(\gamma)$. Because $A=J_1$ and $B=J_n$ have different labels, there must exist some $1 \leq i \leq n-1$ such that $J_i$ and $J_{i-1}$ have different labels. Fix three vertices $a \in N(J_i) \cap \mathrm{CH}(\gamma)$, $b \in N(J_{i+1}) \cap \mathrm{CH}(\gamma)$, and $c \in N(J_i) \cap N(J_{i+1})$. Let $(x,y,z)$ be the median triangle of $(a,b,c)$. Because $N(J_i)$ is gated, $a,x,z,c,b$ all belong to $N(J_i)$. Similarly, $b,y,z,c,a$ all belong to $N(J_{i+1})$. Thus, there exists a geodesic from $a$ to $b$ passing through $x$ and $y$ lying in $(N(J_i) \cup N(J_{i+1})) \cap \mathrm{CH}(\gamma)$. Because $a$ and $b$ are separated by $S$, it follows that either $J_i$ or $J_{i+1}$ is in contact with $S$. This concludes the proof of Claim~\ref{claim:Four}. 

\medskip \noindent
In order to conclude the proof of our theorem thanks to Proposition~\ref{prop:FindingAxis} and our previous claims, it suffices to notice that, given an arbitrary vertex $o \in \gamma$, $B(o,2\mathrm{QC}(\gamma)+1) \cap \mathrm{CH}(\gamma)$ separates the ends of $\gamma$, which follows from our next general observation:

\begin{claim}
Let $Y$ be a graph, $Z \subset Y$ an isometrically embedded subgraph, and $\xi \subset Z$ a bi-infinite geodesic. Assume that the Hausdorff distance $D$ between $\xi$ and $Z$ is finite. Given a vertex $o \in \xi$, the ball $B(o,2D+1) \cap Z$ separates the ends of $Z$. 
\end{claim}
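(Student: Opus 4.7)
The plan is to use the geodesic $\xi$ itself as an approximate coordinate on $Z$. Parametrise $\xi$ as an isometric embedding $\xi \colon \mathbb{Z} \to Z$ with $\xi(0)=o$. Since every vertex of $Z$ lies within distance $D$ of $\xi$, for each $z \in Z$ one can pick some $\phi(z) \in \mathbb{Z}$ with $d(z,\xi(\phi(z))) \leq D$; and for $z = \xi(t)$ we simply take $\phi(z)=t$.

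The first step is to observe that $\phi$ is coarsely Lipschitz along edges. If $zz'$ is an edge of $Z$, then the fact that $\xi$ is a geodesic of $Y$ (and hence of $Z$, since $Z$ is isometrically embedded) together with the triangle inequality yields
\[ |\phi(z)-\phi(z')| \;=\; d\bigl(\xi(\phi(z)),\xi(\phi(z'))\bigr) \;\leq\; D+1+D \;=\; 2D+1. \]
Now consider any path $z_0,\ldots,z_n$ in $Z$ with $\phi(z_0)>0$ and $\phi(z_n)<0$, and let $i$ be the largest index for which $\phi(z_i)\geq 0$. Setting $a:=\phi(z_i)\geq 0$ and $b:=\phi(z_{i+1})<0$, the Lipschitz bound above gives $a+|b|=a-b\leq 2D+1$, hence $\min(a,|b|)\leq D$. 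Consequently
\[ \min\bigl(d(z_i,o),\,d(z_{i+1},o)\bigr) \;\leq\; D+\min(a,|b|) \;\leq\; 2D \;<\; 2D+1, \]
so the path must visit $B(o,2D+1)\cap Z$.

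To conclude, the two rays $\xi|_{[0,\infty)}$ and $\xi|_{(-\infty,0]}$ are proper (being geodesic rays from $o$), so they eventually leave $B(o,2D+1)$, and the previous step prevents any path in $Z$ from joining their far ends without crossing $B(o,2D+1)\cap Z$. Hence $B(o,2D+1)\cap Z$ separates the two ends of $Z$ defined by $\xi$, as required. The only genuinely delicate point, which I would flag as the key thing to get right, is the balancing inequality $\min(a,|b|)\leq D$: without exploiting that $a$ and $|b|$ sum to at most $2D+1$, one would naively obtain a $3D+1$ bound rather than the $2D+1$ promised by the statement.
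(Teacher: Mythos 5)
Your proof is correct and takes essentially the same route as the paper's: both coarsely project a connecting path onto $\xi$ and locate, at the edge where the projection changes sides, a vertex of the path that must lie close to $o$. The only difference is bookkeeping --- the paper produces an intermediate vertex that is close to neither ray of $\xi \setminus B(o,D+1)$ and hence within $2D+1$ of $o$, whereas your balancing inequality $\min(a,|b|)\leq D$ at the sign-change edge even yields the slightly sharper radius $2D$.
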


\noindent
Let $p,q \in Z$ be two vertices separated by a large ball centred at $o$. Fix a geodesic $\zeta$ from $p$ to $q$. Every vertex of $\zeta$ lies at distance $\leq D$ from a vertex of $\xi$. If such a vertex lies in the same component $\xi_\ell$ (resp. $\xi_r$) of $\xi \backslash B(o,D+1)$ as $p$ (resp. $q$), we say that it is a left (resp. right) vertex. Let $a \in \zeta$ denote the last left vertex along $\zeta$ and let $b$ denote the first right vertex following $a$. If $x \in \xi_\ell$ (resp. $y \in \xi_r$) is vertex of $\xi$ at distance $\leq D$ from $a$ (resp. $b$), then
$$2(D+1) \leq d(x,y) \leq d(x,a)+d(a,b) + d(b,y) \leq d(a,b)+2D,$$
hence $d(a,b) \geq 2$. Consequently, there exists some vertex $c \in \zeta$ between $a$ and $b$. By construction, $c$ is neither left nor right, so there exists some $z \in \xi \cap B(o,D+1)$ such that $d(c,z) \leq D$. Then
$$d(c,o) \leq d(c,z)+d(z,o) \leq D+D+1=2D+1,$$
proving that $\zeta$ intersects $B(o,2D+1)$, as desired. 
\end{proof}

\subsection{Hyperbolic case}

\noindent
Given an isometry $g$ of some quasi-median graph $X$, the upper bound given by Theorem~\ref{thm:Rational} depends only on the local structure of $X$ and the quasiconvexity constant $\mathrm{QC}(g)$ of $g$. In full generality, the quantity $\mathrm{QC}(g)$ can take arbitrarily large finite values. But there is a worth mentioning situation where the quasiconvexity constants are uniformly bounded: in hyperbolic graphs. Indeed, if $X$ is $\delta$-hyperbolic, then the Hausdorff distance between any two bi-infinite geodesics having the same endpoints at infinity is at most $8\delta$ (see for instance \cite[Proposition~2.2.2]{MR1075994}), hence $\mathrm{QC}(g) \leq 8 \delta$ for every loxodromic isometry $g \in \mathrm{Isom}(X)$. Thus, Theorem~\ref{thm:Rational} immediately implies:

\begin{cor}\label{cor:HyperbolicCase}
Let $X$ be a $\delta$-hyperbolic quasi-median graph, and let $\Omega X$ be either the crossing graph or the contact graph of $X$. In the latter case, we assume that $X$ has no cut-vertices; and, in any case, we assume that a vertex of $X$ always belongs to $\leq N$ cliques. For every isometry $g \in \mathrm{Isom}(X)$ with unbounded orbits in $\Omega X$, there exists a positive $k \leq N^{8\delta}$ such that $g^k$ admits an axis in $\Omega X$. As a consequence, the translation length of $g$ in $\Omega X$ is a rational with a denominator $\leq N^{8\delta}$. 
\end{cor}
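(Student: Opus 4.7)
The plan is to derive this as a direct consequence of Theorem~\ref{thm:Rational}, for which it suffices to bound the quantity $\mathrm{QC}(g)$ uniformly in terms of the hyperbolicity constant $\delta$. Recall that $\mathrm{QC}(g) = \mathrm{QC}(\gamma)$ is defined as the largest Hausdorff distance between two bi-infinite geodesics lying in the convex hull $\mathrm{CH}(\gamma)$ of an axis $\gamma$ of $g$.

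First I would record the classical hyperbolic geometry input: in any $\delta$-hyperbolic geodesic metric space, two bi-infinite geodesics that share both endpoints at infinity have Hausdorff distance at most $8\delta$. This is standard (see e.g.\ \cite[Proposition~2.2.2]{MR1075994}) and can be quoted directly. Next I would verify that any two bi-infinite geodesics $\alpha, \beta \subset \mathrm{CH}(\gamma)$ share their pair of endpoints at infinity with $\gamma$. Because $\gamma$ is an axis of $g$, Claim~\ref{claim:One} (applied inside the proof of Theorem~\ref{thm:Rational}) shows that $\mathrm{CH}(\gamma)$ lies in a uniform neighbourhood of $\gamma$; in particular, any bi-infinite geodesic inside $\mathrm{CH}(\gamma)$ has finite Hausdorff distance from $\gamma$, and consequently the same pair of endpoints at infinity in the hyperbolic boundary of $X$. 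Applying the $8\delta$-bound then yields $\mathrm{QC}(g) \leq 8\delta$ for every isometry $g$ with unbounded orbits in $\Omega X$.

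With this uniform bound in hand, Theorem~\ref{thm:Rational} gives $k \leq N^{\mathrm{QC}(g)} \leq N^{8\delta}$ such that $g^k$ admits an axis in $\Omega X$. To conclude the statement about translation lengths, I would observe that if $g^k$ acts as a translation on a bi-infinite geodesic of $\Omega X$, then $\tau_{\Omega X}(g^k)$ is the integer translation length of $g^k$ along this axis, and therefore
\[
\tau_{\Omega X}(g) \;=\; \frac{\tau_{\Omega X}(g^k)}{k} \;\in\; \frac{1}{k}\mathbb{Z} \;\subset\; \frac{1}{N^{8\delta}}\mathbb{Z}.
\]
This proves that $\tau_{\Omega X}(g)$ is rational with a denominator bounded by $N^{8\delta}$.

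The only real content beyond Theorem~\ref{thm:Rational} is the hyperbolic fellow-travelling lemma, so there is essentially no obstacle to overcome; the main point is to ensure that the two bi-infinite geodesics we want to compare really do live in a $\delta$-hyperbolic ambient space and share endpoints at infinity, which is automatic once $\mathrm{CH}(\gamma)$ is seen to be a finite neighbourhood of $\gamma$.
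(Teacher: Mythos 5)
Your proposal is correct and follows essentially the same route as the paper: the paper derives the corollary from Theorem~\ref{thm:Rational} by quoting the same $8\delta$-fellow-travelling fact for bi-infinite geodesics with common endpoints at infinity to get $\mathrm{QC}(g)\leq 8\delta$. The only difference is that you make explicit the (correct) intermediate step that geodesics in $\mathrm{CH}(\gamma)$ share endpoints at infinity with $\gamma$ because $\mathrm{CH}(\gamma)$ stays in a finite neighbourhood of $\gamma$, a point the paper leaves implicit.
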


\subsection{Some examples}\label{section:Examples}

\noindent
In this section, we describe examples where isometries of (quasi-)median graphs can have arbitrarily small translation lengths in the corresponding crossing graphs, thus contrasting with Corollary~\ref{cor:HyperbolicCase}.

\medskip \noindent
\begin{minipage}{0.38\linewidth}
\includegraphics[width=0.9\linewidth]{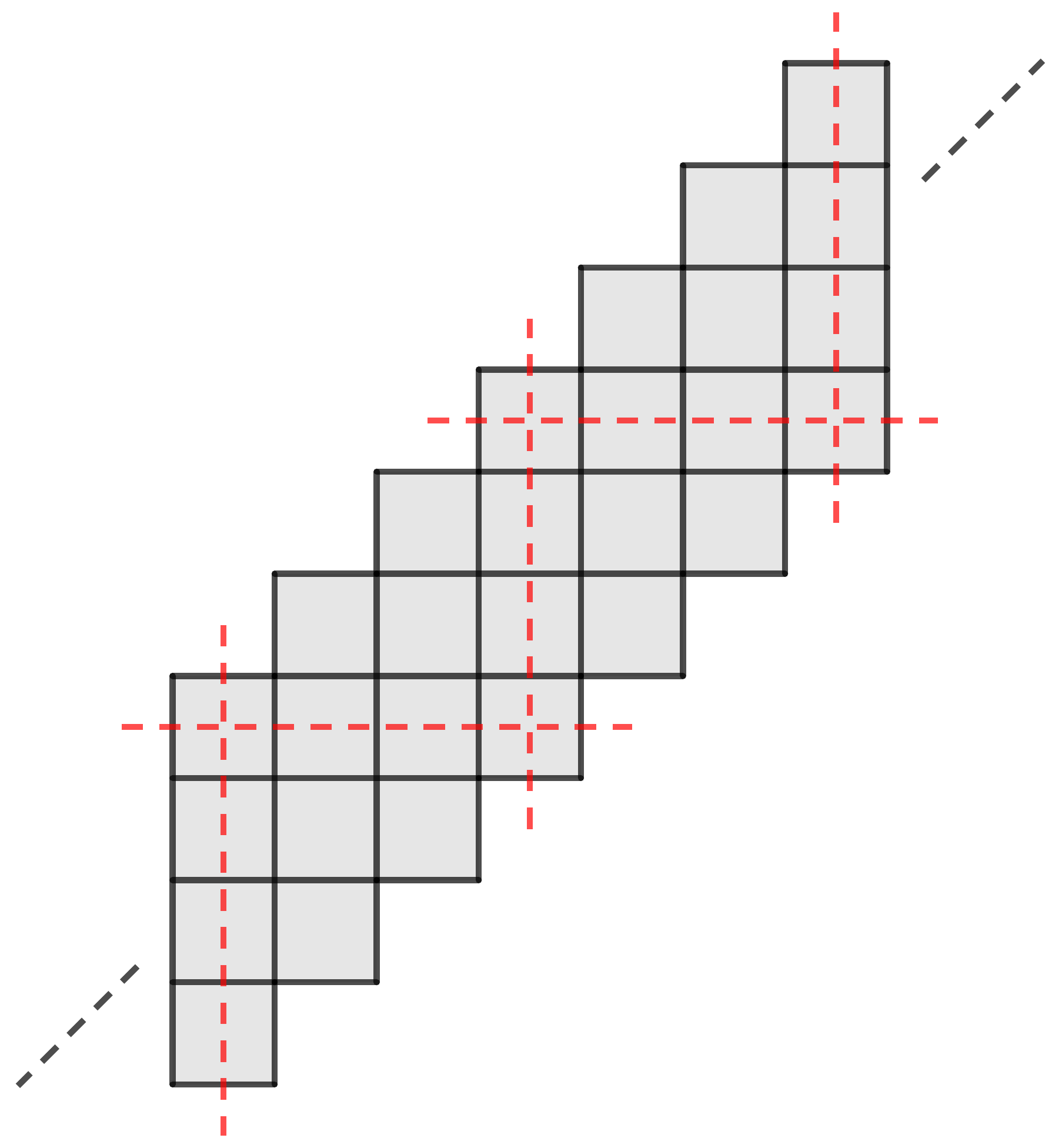}
\end{minipage}
\begin{minipage}{0.6\linewidth}
Fix an $n \geq 1$ and let $X_n$ denote the median graph given by the subgraph in $\mathbb{Z}^2$ delimited by a regular staircase and its translate under $(0,n)$. There is an obvious isometry $g \in \mathrm{Isom}(X_n)$ of translation length two. The figure on the left represents $X_3$. If $J$ denotes a vertical hyperplane in $X_n$, then $J$ belongs to an axis of $g^n$ in the crossing graph $\Delta X_n$ on which $g^n$ acts as a translation of length two. Consequently, $g$ has translation length $2/n$ in $\Delta X_n$. 
\end{minipage}

\medskip \noindent
From this example, we can artificially create a median graph admitting isometries with arbitrarily small but positive translation lengths in the crossing graph. For instance, for every $n \geq 1$ let $Z_n$ be an infinite cyclic group. The free product $G:= Z_1 \ast Z_2 \ast \cdots$ acts on its Bass-Serre tree $T$. We can blow up each vertex of $T$ stabilised by a conjugate of $Z_n$ and replace it with a copy of $X_n$ on which $Z_n$ acts. Then $G$ acts on the median graph $X$ thus obtained, and, for every $n \geq 1$, a generator of $Z_n$ acts on the crossing graph of $X$ with a translation length equal to $2/n$. 

\medskip \noindent
More interestingly, the example constructed in \cite{NoFactorSystem} produces a group acting properly and cocompactly on a median graph and containing elements with arbitrarily small translation length in the crossing graph. This observation is a straightforward consequence of the arguments in \cite{NoFactorSystem}. We describe the construction for the reader's convenience. 

\medskip \noindent
Let $H$ be a group acting properly and cocompactly on a median graph $Y$. We assume that there exist elements $g \in H$ and $h_1,h_2, \ldots \in H$ such that:
\begin{itemize}
	\item $g$ admits a convex axis $\gamma$;
	\item for every $n \geq 1$, the projection of $h_n \gamma$ on $\gamma$ has finite length $\ell_n \geq n$. 
\end{itemize}
Such examples can be found, for instance, in uniform lattices of products of trees. Given an $n \geq 1$, up to conjugating $h_n$ with a power of $g$, we can assume that $k_n \cdot \mathrm{proj}_{\gamma}( h_n \gamma)$ intersects $\mathrm{proj}_{h_n \gamma}(\gamma)$ along a proper subsegment of length $s_n> \ell_n-\|g\|$. 

\medskip \noindent
Consider the HNN extension $G := \langle H,t \mid tgt^{-1}=t \rangle$ acting on the median graph $X$ obtained from the tree of spaces modelled on the Bass-Serre tree whose vertex-spaces are copies of $Y$ and whose edge-spaces are copies of $\gamma \times [0,1]$. For every $n \geq 1$, set $k_n:=th_n$. The configuration to keep in mind is illustrated by Figure~\ref{Sam}.

\begin{figure}[h!]
\begin{center}
\includegraphics[width=0.7\linewidth]{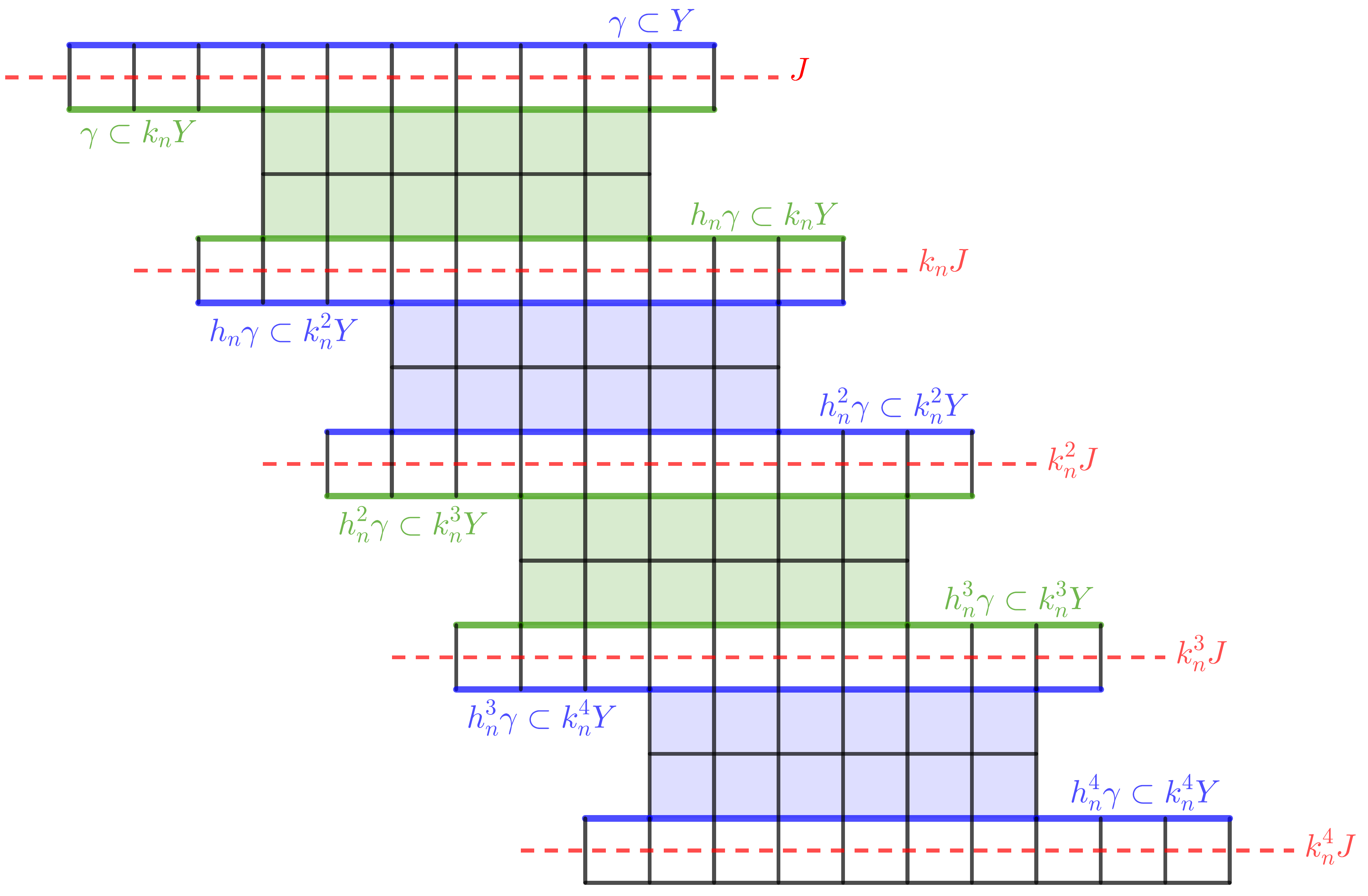}
\caption{The isometry $k_n$ has a small translation length in the crossing graph.}
\label{Sam}
\end{center}
\end{figure}

\noindent
Set $r:= \left\lfloor \frac{\ell_n}{ \ell_n-s_n} \right\rfloor$. One easily checks that $d_{\Delta X}(J, k_n^rJ)=2$ and that $k_n^{-r}J, J,k_n^rJ,k_n^{2r}J, \ldots$ all lie on a bi-infinite geodesic in $\Delta X$. We have
$$r = \left\lfloor \frac{\ell_n}{ \ell_n-s_n} \right\rfloor > \left\lfloor \frac{n}{\|g\|} \right\rfloor, \text{ hence } \|k_n \|_{\Delta X} = \frac{2}{r} \leq \frac{2\|g\|}{n}.$$
Thus, the element $k_n$ of $G$ has smaller and smaller translation length in $\Delta X$ as $n$ goes to infinity.

\section{Constructible quasi-median graphs}\label{section:Constructible}

\noindent
In this section, our goal is to show that translation lengths in crossing and contact graphs are algorithmically computable. Of course, quasi-median graphs and isometries have to be given algorithmically in some sense. In order to clarify such an assumption, we introduce \emph{constructible} quasi-median graphs and \emph{computable} isometries.

\begin{definition}
Let $X$ be a quasi-median graph. 
\begin{itemize}
	\item $X$ is \emph{geodesically constructible} if there exists an algorithm that determines, given two vertices $x,y \in X$, the interval $I(x,y)$.
	\item $X$ is \emph{locally constructible} if there exists an algorithm that yields, given a vertex $x \in X$, a set of neighbours $\mathcal{N}(x)$ such that every clique containing $x$ intersects $\mathcal{N}(x)$ along exactly one vertex. 
\end{itemize}
A quasi-median graph is \emph{constructible} if it is geodesically and locally constructible. 
\end{definition}

\noindent
It could be tempted to define a graph $X$ as being \emph{metrically constructible} if there exists an algorithm that constructs, given a vertex $x$ and an integer $R \geq 0$, the ball $B(x,R)$. However, despite the fact that metrically constructible are constructible, this notion only deals with locally finite graphs: since an algorithm has to stop after a finite amount of time, its output must contain only a finite amount of data. Because we are also interested in quasi-median graphs that are not locally finite (e.g.\ quasi-median graphs associated to right-angled Artin groups), metrically constructible graphs are not relevant for us. Nevertheless, our quasi-median graphs do satisfy some weak property of local finiteness: every vertex belongs to only finitely many cliques. This is the property exploited by our definition of locally constructible quasi-median graphs. It is worth noticing, however, that being constructible and being metrically constructible coincide for median graphs since their cliques are edges. 

\begin{definition}
Let $X$ be a graph. An isometry $g \in \mathrm{Isom}(X)$ is \emph{computable} if there exists an algorithm that determines, given a vertex $x \in X$, the translate $g \cdot x$. 
\end{definition}

\noindent
As a justification of the well-founded of our definitions, let us mention that, given a compact \emph{locally quasi-median prism complex} $Q$ as defined in \cite{QM} (e.g.\ a compact nonpositively curved cube complex), the one-skeleton $X$ of the universal cover of $Q$ is a constructible quasi-median graph and every $g \in \pi_1(Q)$ induces on $X$ a computable isometry. This observation, which will not be used in our article, essentially follows from \cite[Lemma~2.2]{QM} (already used during the proof of Lemma~\ref{lem:CrossingConnected}), which shows that any two homotopic paths in the one-skeleton of $Q$ can be transformed into each other by a sequence of elementary transformations.

\subsection{Basic algorithmic problems}

\noindent
In this section, we record a few elementary problems in constructible quasi-median graphs that can be solved algorithmically. These statements will be frequently used in the next sections. 

\begin{lemma}\label{lem:AlgoProj}
Let $X$ be a geodesically constructible quasi-median graph. There exists an algorithm that determines, given a vertex $x \in X$ and an edge $e \subset X$, the projection of $x$ on the clique containing $e$.
\end{lemma}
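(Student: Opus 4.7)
The plan is to characterize the projection $p := \mathrm{proj}_C(x)$ of $x$ onto the clique $C$ containing $e = \{a,b\}$ purely in terms of the distances $d(x,a)$ and $d(x,b)$, and to reduce every computation to interval queries. Note that $p$ is well defined because cliques in quasi-median graphs are gated.

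First, I would use geodesic constructibility to compute the intervals $I(x,a)$ and $I(x,b)$; both are finite by Lemma~\ref{lem:IntervalFinite}. From each interval, I would extract the corresponding distance as the $x$-eccentricity of the interval by a standard breadth-first search on this finite subgraph.

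Next, I would perform a three-way case analysis using the gate identity $d(x,p)+d(p,q)=d(x,q)$ for every $q \in C$, applied to $q=a$ and $q=b$. Since $a$ and $b$ are adjacent, this forces exactly one of the following: either $d(x,b)=d(x,a)+1$, in which case $p=a$; or $d(x,a)=d(x,b)+1$, in which case $p=b$; or $d(x,a)=d(x,b)$, in which case $p \notin \{a,b\}$ and $d(x,p)=d(x,a)-1$. The first two cases are resolved immediately by comparing the distances computed in the previous step. In the third case, the triangle condition applied to $(x,a,b)$ guarantees the existence of a common neighbour $z$ of $a$ and $b$ with $d(x,z)=d(x,a)-1$; such a $z$ lies in $C$ because the absence of induced $K_4^-$ forces every triangle in a quasi-median graph to be contained in a single clique, and uniqueness of the gate yields $p=z$. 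Algorithmically, I would then compute $I(x,a) \cap I(x,b)$ and return its unique vertex $v$ at distance $d(x,a)-1$ from $x$: any such $v$ satisfies $d(v,a)=d(v,b)=1$, hence is a common neighbour of $a$ and $b$, hence lies in $C$, and is therefore equal to $p$.

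The only potentially delicate step is the extraction of distances from an interval, but since the interval is a finite subgraph produced by the constructibility oracle, this is a routine breadth-first computation. Conceptually, this lemma serves as a basic building block, showing that projection onto the simplest gated subgraph of a quasi-median graph is expressible purely in terms of interval queries; it will be reused systematically in the algorithmic arguments that follow.
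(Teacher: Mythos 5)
Your argument is correct and takes essentially the same route as the paper: compute the finite intervals $I(x,a)$ and $I(x,b)$ via the geodesic-constructibility oracle and locate the gate inside their intersection, using the fact (no induced $K_4^-$) that a common neighbour of $a$ and $b$ lies in the clique $C$ containing $e$. The paper merely shortcuts your three-way case analysis by observing directly that the projection is the unique vertex of $I(x,a) \cap I(x,b) \cap C$, with membership in $C$ tested by being at distance $\leq 1$ from both endpoints of $e$.
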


\begin{proof}
Let $C$ denote the clique containing $e$ and $p,q$ the endpoints of $e$. The projection of $x$ on $C$ is the unique vertex in $I(x,p) \cap I(x,q) \cap C$. By assumption, $I(x,p)$ and $I(x,q)$ are algorithmically constructible. And a vertex belongs to $C$ if and only if it lies at distance $\leq 1$ from both $p$ and $q$. 
\end{proof}

\begin{cor}\label{cor:AlgoHyp}
Let $X$ be a geodesically constructible quasi-median graph. There exists an algorithm that determines whether two given edges in $X$ belong to the same hyperplane.
\end{cor}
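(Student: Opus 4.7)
The plan is to reduce the question to computing projections onto cliques, which is already available through Lemma~\ref{lem:AlgoProj}. Given two edges $e_1 = \{a,b\}$ and $e_2 = \{c,d\}$, let $C_1$ denote the (unique) clique containing $e_1$ and $J$ the hyperplane containing $e_1$. Recall that any edge of $C_1$ lies in $J$, so the hyperplane of $e_2$ equals the hyperplane of $e_1$ precisely when $e_2 \subset J$, which (since $e_2$ is an edge and by Theorem~\ref{thm:BigQM}) happens exactly when $J$ separates $c$ from $d$.

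The key observation is that separation by $J$ can be detected by projecting to $C_1$. Because $C_1$ is gated and $J$ crosses $C_1$, Proposition~\ref{prop:Projection} implies that $J$ cannot separate a vertex $x$ from its projection $x' = \mathrm{proj}_{C_1}(x)$; in other words, $x$ and $x'$ lie in the same sector of $J$. On the other hand, any two distinct vertices of $C_1$ are joined by an edge of $J$, hence lie in distinct sectors. Therefore, two vertices $c,d \in X$ are separated by $J$ if and only if their projections $c', d' \in C_1$ are distinct.

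The algorithm is then transparent: apply Lemma~\ref{lem:AlgoProj} twice, with inputs $(c,e_1)$ and $(d,e_1)$, to obtain $c'$ and $d'$; return YES if $c' \neq d'$ and NO otherwise. Correctness follows from the discussion above, termination follows from termination of the algorithm of Lemma~\ref{lem:AlgoProj}, and no new geometric input is needed.

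There is no real obstacle, the only point requiring care being the justification that $\{c', d'\}$ is distinct if and only if $e_2 \in J$ (rather than, say, only if $e_2$ is transverse to $J$); this is handled cleanly by separating the two directions through the sector characterisation above, and the fact that cliques in quasi-median graphs are gated (so that Proposition~\ref{prop:Projection} applies).
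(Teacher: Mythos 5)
Your proposal is correct and follows essentially the same route as the paper: both reduce the question to checking, via Lemma~\ref{lem:AlgoProj}, whether the two endpoints of one edge have distinct projections onto the clique containing the other edge, with Proposition~\ref{prop:Projection} (plus the separation facts from Theorem~\ref{thm:BigQM}) justifying the equivalence. Your write-up merely spells out the sector argument in more detail than the paper does.
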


\begin{proof}
Let $e,e' \subset X$ be two edges. As a a consequence of Proposition~\ref{prop:Projection}, $e$ belongs to the same hyperplane as $e'$ if and only if the endpoints of $e$ have distinct projections on the clique containing $e'$. Thus, our corollary follows from Lemma~\ref{lem:AlgoProj}. 
\end{proof}

\begin{lemma}\label{lem:AlgoTransverse}
Let $X$ be a geodesically constructible quasi-median graph. There exists an algorithm that determines, given two edges $e,e' \subset X$, whether or not the hyperplanes containing $e$ and $e'$ are transverse.
\end{lemma}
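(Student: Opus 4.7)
The plan is to reduce transversality testing to a search inside an algorithmically computable finite region of $X$. First I would apply Corollary~\ref{cor:AlgoHyp} to decide whether $e$ and $e'$ already lie in a common hyperplane; if so, the answer is ``not transverse''. Otherwise, writing $p,q$ for the endpoints of $e$ and $p',q'$ for those of $e'$, I would assemble the convex hull $H := \mathrm{CH}(\{p,q,p',q'\})$ iteratively, starting with the four endpoints and, at each step, adjoining $I(u,v)$ for every pair of already-produced vertices. Each interval is available by geodesic constructibility, and the procedure terminates because the convex hull of a finite vertex set in a quasi-median graph is itself finite (\cite[Corollary~2.107]{QM}). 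The induced subgraph structure of $H$ is then recovered by the adjacency test $|I(u,v)|=2$.

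With $H$ in hand as a finite graph, I would enumerate all $4$-cycles of $H$ and, for each, use Corollary~\ref{cor:AlgoHyp} (once against $e$ and once against $e'$) to check whether one pair of opposite sides lies in $J_1$ and the other pair lies in $J_2$. The algorithm returns ``transverse'' precisely when such a $4$-cycle is found. The ``if'' direction of correctness is immediate from the definition of transversality. For the ``only if'' direction, one must show that whenever $J_1$ and $J_2$ are transverse, a witness $4$-cycle already lies inside $H$. The idea is to consider the gates $p_N, q_N$ of $p,q$ on the carrier $N(J_2)$: transversality forces $p_N \neq q_N$, with $\{p_N,q_N\}$ an edge of $J_1$ contained in $N(J_2)$, and the two gates belong to $H$ because each lies on a geodesic from $\{p,q\}$ to $\{p',q'\}$. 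Using the product-like structure of $N(J_2)$ over $J_2$, one selects, inside the $J_2$-clique $C_p$ containing $p_N$, the projection $p_N^\ast := \mathrm{proj}_{C_p}(q')$, and likewise $q_N^\ast := \mathrm{proj}_{C_q}(q')$ in the $J_2$-clique $C_q$ containing $q_N$; these complete the desired $4$-cycle $\{p_N,q_N,q_N^\ast,p_N^\ast\}$, and direct interval computations place $p_N^\ast \in I(p,q')$ and $q_N^\ast \in I(q,q')$, hence both inside $H$.

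The main obstacle is precisely this last geometric step: lining up all four corners of some witness $4$-cycle inside $H$, rather than just the two corners provided by the gates on $N(J_2)$. All other ingredients are direct applications of the primitives already established in the paper, namely iterated interval computation via geodesic constructibility, adjacency detection via interval size, and same-hyperplane testing via Corollary~\ref{cor:AlgoHyp}.
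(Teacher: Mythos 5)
Your overall strategy --- reduce the question to a search for a witness $4$-cycle inside a finite, algorithmically computable region, here the convex hull $H$ of the four endpoints --- is viable and genuinely different from the paper's argument. The paper instead chooses endpoints $p$ of $e$ and $p'$ of $e'$ separated by both hyperplanes and shows that a geodesic from $p$ to $p'$ can be modified by flipping squares until the edge in $J_1$ and the edge in $J_2$ become consecutive and span a $4$-cycle; the algorithm then merely enumerates the geodesics contained in $I(p,p')$ (available by geodesic constructibility) and tests consecutive pairs of edges with Corollary~\ref{cor:AlgoHyp}, so no convex hulls, carrier decompositions or gate maps are needed. Note also that, since the fourth corner of the flipped square again lies on a geodesic from $p$ to $p'$, the paper's argument already delivers exactly the statement you single out as the main obstacle: when $J_1$ and $J_2$ are transverse, a witness square sits inside $I(p,p') \subset H$. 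So you could replace your carrier-based argument by this square-flipping normalisation and keep the rest of your algorithm unchanged.

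As for your own argument for the ``only if'' direction, there is one genuine (though reparable) gap: you project the fixed endpoint $q'$ onto both cliques $C_p$ and $C_q$. The gates $p_N,q_N$ lie in a common sector of $J_2$ (their edge is in $J_1$, hence does not cross $J_2$), and if $q'$ happens to lie in that same sector, then $\mathrm{proj}_{C_p}(q')=p_N$ and $\mathrm{proj}_{C_q}(q')=q_N$, so no square is produced (this already happens for a single square with $e,e'$ sharing the vertex $q'$). You must instead use the endpoint of $e'$ that $J_2$ separates from $p_N$; such an endpoint exists because $p'$ and $q'$ lie in distinct sectors of $J_2$. With that choice the construction works, but it also leans on structural facts about carriers that the present paper never states (each vertex of $N(J_2)$ lies in a unique clique dual to $J_2$, carriers split as the product of such a clique with a fiber, gate maps are nonexpansive, and the edge $\{p_N,q_N\}$ indeed lies in $J_1$); these are available in \cite{QM} and can be justified via Proposition~\ref{prop:Projection}, but they deserve explicit proof or citation. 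The remaining ingredients --- termination of the iterated-interval construction of $H$ (finiteness of convex hulls, cf.\ Lemma~\ref{lem:IntervalFinite}), adjacency testing via $|I(u,v)|=2$, membership of the gates in $H$, and the ``if'' direction of correctness --- are fine.
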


\begin{proof}
The key observation is that $J$ and $J'$ are transverse if and only if there exists a geodesic between two endpoints of $e$ and $e'$ with two consecutive edges spanning a $4$-cyclic and belonging to $J$ and $J'$. 

\medskip \noindent
Indeed, let $p$ and $p'$ be two endpoints of $e$ and $e'$ that are separated by both $J$ and $J'$. Fix a geodesic $\gamma_0$ from $p$ to $p'$ and let $\epsilon,\epsilon' \subset \gamma_0$ denote the two edges in $J,J'$. If $J$ and $J'$ are transverse, then every hyperplane crossing $\gamma_0$ between $\epsilon$ and $\epsilon'$ are transverse to $J$ or $J'$. If $\epsilon$ and $\epsilon'$ are adjacent, we are done. Otherwise, let $\eta \subset \gamma_0$ denote the closest edge from $\epsilon$ that lies between $\epsilon$ and $\epsilon'$ and whose hyperplane $J(\eta)$ is transverse to $J$. If $\epsilon$ and $\eta$ are not adjacent, then the hyperplane containing the edge $\eta'$ between $\epsilon$ and $\eta$ adjacent to $\eta$ must be transverse to $J(\eta)$, so $\eta$ and $\eta'$ span a $4$-cycle and we can modify $\gamma_0$ by flipping this $4$-cycle in order to decrease the distance between $\eta$ and $\epsilon$. Thus, up to modifying the geodesic $\gamma_0$, we can assume that $\epsilon$ and $\eta$ are adjacent. Necessarily, these two edges span a $4$-cycle, because their hyperplanes are transverse, and we can again modify $\gamma_0$ by flipping this $4$-cycle in order to decrease the distance between $\epsilon$ and $\epsilon'$. After a few iterations, we get a geodesic with $\epsilon$ and $\epsilon'$ adjacent, providing the desired geodesic. The converse of our claim is clear.

\medskip \noindent
Because geodesics are constructible, that spanning a $4$-cycle is algorithmically decidable as a particular, and that belonging to a given hyperplane is algorithmically decidable according to Corollary~\ref{cor:AlgoHyp}, our lemma follows from the characterisation just proved. 
\end{proof}

\begin{lemma}\label{lem:AlgoCarrier}
Let $X$ be a constructible quasi-median graph. There exists an algorithm that decides, given a vertex $x \in X$ and an edge $e \subset X$, whether or not $x$ belongs to the carrier of the hyperplane containing $e$.
\end{lemma}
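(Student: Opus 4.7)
The plan is to reduce membership in $N(J)$ to a finite check over the cliques containing $x$. The starting observation is that a vertex $y$ belongs to $N(J)$ if and only if some edge of $J$ contains $y$ as an endpoint, and that every edge is contained in a unique clique (since $X$ has no induced $K_4^-$). Moreover, all edges of a single clique belong to a common hyperplane: any two of them either share a vertex and lie in a common triangle, or (if the clique has at least three vertices and the edges are disjoint) can be connected through a third edge via triangles. Consequently, $x \in N(J)$ is equivalent to the existence of a clique $C'$ containing $x$ whose hyperplane is $J$, and this in turn is equivalent to the existence of a neighbour $z$ of $x$ such that the edge $\{x,z\}$ lies in $J$.

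Next I would use local constructibility to compute the finite set $\mathcal{N}(x)$, which by definition contains exactly one vertex from each clique through $x$. Since all edges in one clique share the same hyperplane, the edges $\{x,y\}$ for $y \in \mathcal{N}(x)$ are representatives of \emph{all} hyperplanes containing $x$ in their carrier. In particular, $x$ belongs to $N(J)$ if and only if there exists $y \in \mathcal{N}(x)$ such that $\{x,y\}$ and $e$ lie in the same hyperplane.

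Finally, I would invoke Corollary~\ref{cor:AlgoHyp}: for each of the finitely many $y \in \mathcal{N}(x)$, one can algorithmically test whether $\{x,y\}$ and $e$ are equivalent under the hyperplane relation. The algorithm returns ``yes'' as soon as one such $y$ is found, and ``no'' once all elements of $\mathcal{N}(x)$ have been exhausted. This procedure terminates because $\mathcal{N}(x)$ is finite (it is produced by an algorithm) and each individual test terminates.

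The only potential obstacle is making sure that $\mathcal{N}(x)$ really sees every hyperplane whose carrier contains $x$; this is where the phrase ``intersects $\mathcal{N}(x)$ along exactly one vertex'' in the definition of local constructibility is crucial, and where one uses the fact that a clique and its hyperplane determine each other. Once this is settled, the proof is a straightforward composition of local constructibility with Corollary~\ref{cor:AlgoHyp}, without any need for geodesic computations beyond those already encapsulated there.
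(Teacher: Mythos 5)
Your proposal is correct and follows essentially the same route as the paper: use local constructibility to obtain one edge at $x$ per clique, note that these edges represent all hyperplanes whose carrier contains $x$, and then test each against $e$ via Corollary~\ref{cor:AlgoHyp}. The only difference is that you spell out the (correct) justification that all edges of a clique lie in the same hyperplane, which the paper treats as immediate.
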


\begin{proof}
Because $X$ is locally constructible, we get algorithmically a collection of edges $e_1, \ldots, e_n$ containing $x$ such that every hyperplane containing $x$ in its carrier contains one of the $e_i$. So just need to check whether $e$ belongs to the same hyperplane as one of the $e_i$, which is possible algorithmically according to Corollary~\ref{cor:AlgoHyp}.
\end{proof}

\begin{lemma}\label{lem:AlgoMinDist}
Let $X$ be a constructible quasi-median graph. There exists an algorithm that gives, given two edges $e,e' \subset X$, two vertices $x,x' \in X$ minimising the distance between the carriers of the hyperplanes containing $e,e'$.
\end{lemma}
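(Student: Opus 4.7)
The plan is to reduce the problem to a single subroutine, namely projecting an arbitrary vertex onto the carrier of a hyperplane, and then combine two applications of this subroutine via Lemma~\ref{lem:MinDistance}. Let $J, J'$ denote the hyperplanes containing $e, e'$, and pick any endpoint $y$ of $e$, so that $y \in N(J)$. Setting $y' := \mathrm{proj}_{N(J')}(y)$ and $x := \mathrm{proj}_{N(J)}(y')$, Lemma~\ref{lem:MinDistance} guarantees that $(x, y')$ achieves the minimum distance between $N(J)$ and $N(J')$. So everything hinges on producing an algorithm for the projection of a vertex onto a carrier.

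For the subroutine, given an input vertex $z$ and an edge $e$ of $J$ with endpoints $a, b$, I would exploit the fact that $N(J)$ is gated (Theorem~\ref{thm:BigQM}) and contains both $a$ and $b$. The projection $\pi := \mathrm{proj}_{N(J)}(z)$ therefore lies on some geodesic from $z$ to $a$ and on some geodesic from $z$ to $b$, so $\pi \in I(z,a) \cap I(z,b)$. Both intervals are finite by Lemma~\ref{lem:IntervalFinite} and constructible because $X$ is geodesically constructible; hence the intersection is a finite, explicitly enumerable set of vertices. I would then run through this intersection, use Lemma~\ref{lem:AlgoCarrier} to keep only the vertices lying in $N(J)$, and return the unique candidate closest to $z$. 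By the gate property, this minimiser is exactly $\pi$. To identify it, note that distances in $X$ from $z$ to any vertex of $I(z,a)$ coincide with distances inside the finite subgraph $I(z,a)$, whose adjacency relation is decidable (two vertices $v,w$ of $I(z,a)$ are adjacent in $X$ iff $|I(v,w)| = 2$); a breadth-first search from $z$ inside $I(z,a)$ then provides the required distances.

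With the projection subroutine available, the final algorithm is to apply it twice: compute $y' := \mathrm{proj}_{N(J')}(y)$, then $x := \mathrm{proj}_{N(J)}(y')$, and output the pair $(x, y')$. Correctness is immediate from Lemma~\ref{lem:MinDistance}. The main obstacle is really the projection-onto-carrier subroutine, since the preceding algorithmic lemmas give projection onto cliques (Lemma~\ref{lem:AlgoProj}) but not directly onto carriers; once this is set up using the interval characterisation of gates together with Lemma~\ref{lem:AlgoCarrier}, the statement of the lemma follows without further work.
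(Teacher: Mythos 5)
Your proposal is correct, and its correctness backbone is the same as the paper's: both arguments take an endpoint $p$ of $e$, consider $q:=\mathrm{proj}_{N(J')}(p)$ and $q':=\mathrm{proj}_{N(J)}(q)$, and invoke Lemma~\ref{lem:MinDistance} to see that this pair realises $d(N(J),N(J'))$. Where you diverge is in how the algorithm actually finds this pair. The paper never computes projections onto carriers: it observes that $q$ and $q'$ lie in $I(p,p')$ for an endpoint $p'$ of $e'$, so some geodesic from $p$ to $p'$ passes through them, and it then brute-forces over all geodesics between $p$ and $p'$, recording for each the last vertex in $N(J)$ and the first in $N(J')$ (via Lemma~\ref{lem:AlgoCarrier}) and keeping a pair of minimal distance. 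You instead isolate a genuine subroutine computing the gate of an arbitrary vertex $z$ in a carrier $N(J)$: the gate lies in $I(z,a)\cap I(z,b)$ for the endpoints $a,b$ of an edge of $J$, this set is finite and constructible, membership in $N(J)$ is decidable by Lemma~\ref{lem:AlgoCarrier}, and the gate is the unique distance-minimiser to $z$ among the surviving candidates (your remark that distances and adjacency can be recovered inside the finite convex subgraph $I(z,a)$ is the right way to make this effective, and is the same implicit reconstruction the paper uses when it ``enumerates geodesics''). Your route is slightly longer but buys a reusable tool --- computable projections onto carriers, and in fact onto any gated subgraph containing a known vertex and with decidable membership --- which the paper's geodesic-enumeration trick does not provide; the paper's version is shorter given that enumeration of geodesics is already part of its algorithmic toolkit.
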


\begin{proof}
Let $N$ and $N'$ denote the carriers of the hyperplanes containing $e$ and $e'$. Fix two endpoints $p$ and $p'$ of $e$ and $e'$. Let $q$ denote the projection of $p$ on $N'$ and $q'$ the projection of $q$ on $N$. We know that $q$ belongs to $I(p,p')$ and that $q'$ belongs to $I(q,p)$, so there exists a geodesic between $p$ and $p'$ passing through $q$ and $q'$. But it follows from Lemma~\ref{lem:MinDistance} that $p$ and $p'$ are two vertices minimising the distance between $N$ and $N'$. 

\medskip \noindent
Therefore, in order to get two vertices minimising the distance between $N$ and $N'$, it suffices, for every geodesic $\gamma$ between $p$ and $p'$, to find the last vertex $a_\gamma$ in $\gamma$ belonging to $N$ and the first vertex $a_\gamma'$ in $N'$ (algorithmically thanks to Lemma~\ref{lem:AlgoCarrier}), and to choose a pair $a_\gamma,a_\gamma'$ with $d(a_\gamma,a_\gamma')$ minimal. Because geodesics are constructible, the whole process is algorithmic. 
\end{proof}

\begin{cor}\label{cor:AlgoContact}
Let $X$ be a constructible quasi-median graph. There exists an algorithm that determines, given two edges $e,e' \subset X$, whether or not the hyperplanes containing $e$ and $e'$ are in contact.
\end{cor}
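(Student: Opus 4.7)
The plan is to reduce the question to a minimum-distance computation and invoke Lemma~\ref{lem:AlgoMinDist}. By definition, two hyperplanes $J, J'$ are in contact precisely when their carriers $N(J), N(J')$ satisfy $N(J) \cap N(J') \neq \emptyset$, which is equivalent to the distance $d(N(J), N(J'))$ being zero.

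Given the two edges $e, e'$, I would first run the algorithm of Lemma~\ref{lem:AlgoMinDist} to produce a pair of vertices $x \in N(J)$ and $x' \in N(J')$ realising the minimum distance between the two carriers. Then I would simply test whether $x = x'$ (equivalently, whether $d(x, x') = 0$): if so, the two carriers intersect and $J, J'$ are in contact; if not, they are disjoint and $J, J'$ are not in contact. Each of these steps is algorithmic: the first by Lemma~\ref{lem:AlgoMinDist}, and the second is a trivial equality check on vertices.

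There is essentially no obstacle, since all the substantive work has already been packaged into Lemma~\ref{lem:AlgoMinDist} (which itself relies on Lemma~\ref{lem:AlgoCarrier} for recognising membership in a carrier, on Lemma~\ref{lem:AlgoProj} for projections, and on the geodesic/local constructibility of $X$). The only thing worth double-checking is that the characterisation ``carriers meet iff minimum distance is zero'' is literally tautological, so the corollary follows at once from Lemma~\ref{lem:AlgoMinDist}.
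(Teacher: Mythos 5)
Your proposal is correct and follows exactly the paper's argument: being in contact is equivalent to the carriers being at distance zero, and this is checked by running the algorithm of Lemma~\ref{lem:AlgoMinDist} on the two edges. Nothing further is needed.
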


\begin{proof}
Two hyperplanes are in contact if and only if the distance between their carriers is zero, so the desired conclusion follows from Lemma~\ref{lem:AlgoMinDist}.
\end{proof}

\begin{lemma}\label{lem:AlgoStronglySeparated}
Let $X$ be a constructible quasi-median graph. There exists an algorithm that determines, given two edges $e,e' \subset X$, whether or not the hyperplanes containing $e$ and $e'$ are strongly separated.
\end{lemma}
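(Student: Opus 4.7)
The plan is to reduce strong separation to a finite local test at a single vertex. I first dispose of the trivial cases: the hyperplanes $J,J'$ defined by $e,e'$ are not strongly separated whenever they coincide (decidable by Corollary~\ref{cor:AlgoHyp}) or are transverse (decidable by Lemma~\ref{lem:AlgoTransverse}).

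Assume neither holds. Using Lemma~\ref{lem:AlgoMinDist}, I produce vertices $x\in N(J)$ and $x'\in N(J')$ realising the distance $d:=d(N(J),N(J'))$. The key geometric ingredient is a bridge-type statement: letting $M:=\mathrm{proj}_{N(J)}(N(J'))$, a hyperplane $H\neq J,J'$ is transverse to both $J$ and $J'$ if and only if $H$ crosses $M$. This follows from Lemma~\ref{lem:MinDistance} combined with the prism structure of median triangles (Proposition~\ref{prop:MedianTriangle}). Crucially, $M$ is gated and locally looks like a prism at every one of its vertices, so $M$ contains an edge outside $J$ if and only if it contains such an edge incident to $x$. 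Consequently, $J,J'$ are strongly separated precisely when no neighbour $y$ of $x$ inside $N(J)$ admits the hyperplane of $xy$ to be both distinct from $J$ and transverse to $J'$.

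The algorithm then proceeds as follows. By local constructibility, enumerate the finite set $\mathcal{N}(x)$; for each $y\in\mathcal{N}(x)$ in turn, verify that $y\in N(J)$ via Lemma~\ref{lem:AlgoCarrier}, that $xy\not\subset J$ via Corollary~\ref{cor:AlgoHyp}, and that the hyperplane of $xy$ is transverse to $J'$ via Lemma~\ref{lem:AlgoTransverse}. Return \emph{not strongly separated} if some $y$ passes all three tests (the hyperplane of $xy$ is then a valid witness, being distinct from $J$ by the second test and from $J'$ because the third test would otherwise fail); return \emph{strongly separated} otherwise. All three subroutines have already been shown to be algorithmic, and the loop is finite because $\mathcal{N}(x)$ is finite.

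The main obstacle is justifying the geometric reduction, namely that a hyperplane transverse to both $J$ and $J'$ can always be chosen with an edge of $M$ incident to $x$. It is not true that every hyperplane crossing $M$ has such an edge -- in $\mathbb{Z}^2$, the horizontal hyperplanes crossing a vertical bridge line $M$ are mostly far from $x$ -- but a representative in each ``direction class'' of $M$ does. Concretely, since $M$ is gated in $X$, an induction on the distance inside $M$ should allow one to propagate any non-$J$ edge of $M$ to a non-$J$ edge at $x$: given a non-$J$ edge in $M$ at distance $n>0$ from $x$, a suitable application of the quadrangle condition, together with Proposition~\ref{prop:MedianTriangle} and the gatedness of $M$, produces a parallel non-$J$ edge at distance $n-1$. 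Iterating reaches $x$ and closes the argument.
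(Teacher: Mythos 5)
Your algorithm and its justification follow the same route as the paper's proof: produce, via Lemma~\ref{lem:AlgoMinDist}, vertices $x\in N(J)$, $x'\in N(J')$ realising $d(N(J),N(J'))$, and decide strong separation by a finite test on the edges at $x$ supplied by local constructibility, using Corollary~\ref{cor:AlgoHyp} and Lemma~\ref{lem:AlgoTransverse}. Two remarks. First, the step you single out as the main obstacle --- propagating a non-$J$ edge of $M:=\mathrm{proj}_{N(J)}(N(J'))$ to one incident to $x$ --- is the only part you leave unproven (``a suitable application of the quadrangle condition should\ldots''), and it can be closed much more simply than by the induction you sketch. Once $J\neq J'$ are non-transverse, $M$ contains no edge dual to $J$ at all: a hyperplane separating two vertices of $M$ must separate the corresponding vertices of $N(J')$ (by Proposition~\ref{prop:Projection}, a hyperplane separating a vertex from its gate separates it from the whole gated subgraph, so it cannot separate two gates without separating the points they project), hence a $J$-edge of $M$ would force $J$ to cross $N(J')$, which is excluded. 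Since $M$ is gated, hence connected, and $x\in M$ (being the closest point of $N(J)$ to $x'$, it is the gate of $x'$), the statement ``$M$ is not a single vertex'' already yields an edge of $M$ at $x$; no quadrangle-condition argument is needed.

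Second, the soundness of your three-part test is not quite free: from ``$y\in N(J)$, the hyperplane $H$ of $xy$ is distinct from $J$ and transverse to $J'$'' you still need $H$ transverse to $J$ in order to exhibit a hyperplane transverse to both, and this rests on the product structure of carriers in quasi-median graphs (an edge of $N(J)$ not dual to $J$ spans a square with the $J$-edges at its endpoints), a fact from \cite{QM} that you invoke only implicitly (``locally looks like a prism''). The paper sidesteps this by running the simpler test ``some edge at $x$ has its hyperplane transverse to both $J$ and $J'$'', i.e.\ Lemma~\ref{lem:AlgoTransverse} applied twice, which is trivially sound; you could do the same and drop the membership test of Lemma~\ref{lem:AlgoCarrier}. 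With these two points repaired, your argument is correct and is essentially the paper's proof.
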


\begin{proof}
Let $J$ and $J'$ denote the hyperplanes containing $e$ and $e'$, and $N$ and $N'$ their carriers. According to Lemma~\ref{lem:AlgoMinDist}, we can find algorithmically two vertices $x \in N$ and $x' \in N'$ minimising the distance between $N$ and $N'$. If $J$ and $J'$ are not strongly separated, then the projection of $N'$ on $N$ is not reduced to single vertex, so $x$ must have a neighbour such that the hyperplane $H$ separating these two vertices is transverse t both $J$ and $J'$. But $X$ is locally constructible, so we can get algorithmically edges $e_1, \ldots, e_n$ containing $x$ such that $H$ contains one of the $e_i$ and we can check thanks to Lemma~\ref{lem:AlgoTransverse} that $H$ is transverse to both $J$ and $J'$. Thus, the existence of a hyperplane transverse to both $J$ and $J'$ is decidable algorithmically. 
\end{proof}

\subsection{Strongly contracting isometries}

\noindent
This section is dedicated to the proof of the following theorem. As a consequence of Proposition~\ref{prop:Loxo}, it can be thought of as a particular case of Theorem~\ref{thm:AsymptoticLength} since it recognises algorithmically the isometries with zero translation length in the crossing and contact graphs.

\begin{thm}\label{thm:WhenStronglyContracting}
Let $X$ be a constructible quasi-median graph. Let $\Omega X$ denote the crossing or contact graph of $X$. In the former case, we assume that $X$ has no cut-vertex; and, in any case, we assume that every vertex of $X$ belongs to $\leq N$ cliques. There exists an algorithm that determines, given a computable isometry $g \in \mathrm{Isom}(X)$, whether or not $g$ is strongly contracting.
\end{thm}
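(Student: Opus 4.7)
The plan is to decide the question by dovetailing two semi-decision procedures, one for each possible outcome.

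By Proposition~\ref{prop:Loxo}, $g$ is strongly contracting if and only if there exist an integer $r \geq 1$ and a hyperplane $J$ of $X$ such that $J$ and $g^r J$ are strongly separated: one implication is part of the proposition, while conversely the orbit $\{g^{rk}J\}_{k \in \mathbb{Z}}$ is automatically a pairwise strongly separated family (as $\langle g^r \rangle$ preserves strong separation), and by Lemmas~\ref{lem:DistGamma} and~\ref{lem:DistDelta} this yields a bi-infinite quasi-geodesic in $\Omega X$ on which $g$ acts non-trivially.

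The first procedure $\mathcal{A}^+$ enumerates pairs $(r, e)$ where $r \in \mathbb{Z}_{\geq 1}$ and $e$ ranges over edges of $X$ with an endpoint in the ball $B(o, R)$ around a fixed base vertex $o$, with $R$ growing through the enumeration; such edges are accessible from the local constructibility of $X$. For each pair we compute the image $g^r \cdot e$ using the computability of $g$, and apply Lemma~\ref{lem:AlgoStronglySeparated} to test whether the hyperplanes $J(e)$ and $g^r J(e) = J(g^r e)$ are strongly separated. If so, $\mathcal{A}^+$ halts with output ``strongly contracting''. By the characterisation above, $\mathcal{A}^+$ halts precisely when $g$ is strongly contracting, since any witness is at finite distance from $o$ and will eventually be enumerated.

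The second procedure $\mathcal{A}^-$ searches for a negative certificate: a hyperplane $H$ of $X$ stabilised setwise by some positive power of $g$. It enumerates pairs $(e, p)$ with $e$ in growing balls around $o$ and $p \in \mathbb{Z}_{\geq 1}$, and uses Corollary~\ref{cor:AlgoHyp} to test whether $e$ and $g^p \cdot e$ belong to the same hyperplane. When such a pair is found, the $\langle g \rangle$-orbit of $H := J(e)$ in $\Omega X$ is finite (of cardinality dividing $p$), hence bounded; since $\Omega X$ is hyperbolic and bounded orbits characterise elliptic isometries, $g$ is elliptic on $\Omega X$ and in particular not strongly contracting.

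The hard part is proving that $\mathcal{A}^-$ halts whenever $g$ is not strongly contracting, that is, showing that any isometry of $X$ with bounded orbits in $\Omega X$ setwise stabilises some hyperplane of $X$ under a suitable power. The plan for this structural claim is a case analysis: if $g$ itself has bounded orbits in $X$, then a Helly-type argument on gated carriers produces a fixed vertex of some $g^p$ whose finitely many cliques (by the hypothesis on $X$) yield stabilised hyperplanes; if instead $g$ has unbounded orbits in $X$, then $g$ admits an axis $\gamma$ by Proposition~\ref{prop:Axis}, the $\langle g \rangle$-action on $\mathrm{CH}(\gamma)$ is cocompact (as in the proof of Theorem~\ref{thm:Rational}), and a pigeonhole argument applied to the hyperplane witnesses of the non-strong-separation of $(J, g^r J)$ as $r$ varies produces, after passing to a suitable power of $g$, a hyperplane with the required $g$-periodicity. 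Running $\mathcal{A}^+$ and $\mathcal{A}^-$ in parallel then yields the desired decision algorithm.
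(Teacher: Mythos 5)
Your scheme has two genuine gaps, and the second one is fatal. First, the positive certificate is unsound: the existence of $r\geq 1$ and a hyperplane $J$ with $J$ and $g^rJ$ strongly separated does \emph{not} imply that $g$ is strongly contracting. Strong separation of the consecutive translates $g^{rk}J, g^{r(k+1)}J$ is all that ``$\langle g^r\rangle$ preserves strong separation'' gives you; it does not make the family $\{g^{rk}J\}_{k\in\mathbb{Z}}$ pairwise strongly separated, nor even pairwise distinct, and Lemmas~\ref{lem:DistGamma} and~\ref{lem:DistDelta} only produce large distances in $\Omega X$ when many pairwise strongly separated hyperplanes \emph{separate} $J$ from $g^{rk}J$, i.e.\ when the translates are nested. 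Concretely, for $X=\mathbb{Z}$ (the line) and $g\colon x\mapsto -x$, the hyperplanes $J$ dual to $\{0,1\}$ and $gJ$ dual to $\{-1,0\}$ are strongly separated, yet $g$ is elliptic on the contact graph; so $\mathcal{A}^+$ halts with the wrong answer. In Proposition~\ref{prop:Loxo} the hypothesis that $J$ crosses an axis of $g$ is what rescues the implication, and this is exactly why the paper introduces the skewering condition (Lemma~\ref{lem:Skewers}, Corollary~\ref{cor:SkewersAlgo}, Lemma~\ref{lem:FindSkewers}) when it needs an effective positive certificate. (A smaller technical point: you cannot enumerate ``edges with an endpoint in $B(o,R)$'', since $X$ need not be locally finite; the paper works instead with the finite intervals $I(o,g^ko)$ from Lemma~\ref{lem:IntervalFinite} together with local constructibility.)

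Second, and decisively, the negative certificate does not exist in general: a non-strongly-contracting isometry need not have any power stabilising a hyperplane. Take $X=\mathbb{Z}^2$ (the grid), which is a constructible median graph with no cut-vertex and $N=4$, and let $g$ be the diagonal translation $(x,y)\mapsto(x+1,y+1)$. Its crossing and contact graphs have diameter at most $3$, so $g$ has bounded orbits there, yet $g^p$ sends each vertical (resp.\ horizontal) hyperplane to a different one for every $p\neq 0$: no power stabilises any hyperplane, the transverse ``witnesses'' you hope to pigeonhole change with $r$ and are never periodic, and $\mathcal{A}^-$ runs forever. Since $\mathcal{A}^+$ also never halts on this input, your dovetailed procedure is not a decision algorithm. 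This is why the paper takes a completely different route: it shows $\Omega X$ admits a constructible geodesic combing (Proposition~\ref{prop:AlgoDistCrossing}) and is hyperbolic with explicit constants, excludes parabolics via Proposition~\ref{prop:Loxo}, and then decides elliptic versus loxodromic by a \emph{bounded} computation along a single combing geodesic $[o,g^{17\delta}o]$ using the quantitative Lemmas~\ref{lem:WhenLoxo} and~\ref{lem:WhenEll}, with no search for certificates in $X$ at all.
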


\noindent
Our argument has two steps. First, we show that crossing and contact graphs of constructible quasi-median graphs inherit weak algorithmic properties. More precisely, Proposition~\ref{prop:AlgoDistCrossing} below shows geodesics in crossing and contact graphs can be algorithmically constructed in the following sense:

\begin{definition}
Let $X$ be a graph. A \emph{combing} is a map $\gamma$ that associates to every pair of vertices $(x,y) \in X^2$ a path $\gamma(x,y)$ from $x$ to $y$. It is \emph{geodesic} if $\gamma(x,y)$ is a geodesic for all $x,y \in X$, and \emph{constructible} if there exists an algorithm that, given two vertices $x,y \in X$, yields $\gamma(x,y)$. 
\end{definition}

\noindent
Our second step is to show that, in a hyperbolic graph admitting a constructible geodesic combing, it can be decided whether a (non-parabolic) isometry is elliptic or loxodromic. This is Proposition~\ref{prop:EllOrLoxo}. 

\medskip \noindent
Before turning to the proofs of our proposition, let us mention that, from the algorithmic point of view, we think about crossing and contact graphs slightly differently as before. Indeed, a vertex in such a graph is, by definition, a hyperplane. But a hyperplane, as an infinite collection of edges, is not algorithmic-friendly. Instead, we think of a vertex in the crossing or contact graph as an edge of the quasi-median graph under consideration, and we define two edges as representing the same vertex if they belong to the same hyperplane (which can be decided algorithmically according to Lemma~\ref{lem:AlgoTransverse}); two edges represent two adjacent vertices if the hyperplanes containing them are transverse or in contact (which can be decided algorithmically according to Corollary~\ref{cor:AlgoContact}). Formally, this amounts to working with a pseudo-metric on the set of the edges of our quasi-median graph. 

\begin{prop}\label{prop:AlgoDistCrossing}
Let $X$ be a constructible quasi-median graph. Let $\Omega X$ denote the crossing or contact graph of $X$; in the former case, we assume that $X$ has no cut-vertex. Then $\Omega X$ admits a constructible geodesic combing.
\end{prop}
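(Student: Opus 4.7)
The plan is to reduce distance computation in $\Omega X$ to a finite breadth-first search by restricting attention to the hyperplanes in contact with a computable convex subgraph of $X$. Given two edges $e$ and $e'$ representing hyperplanes $A$ and $B$, I would pick endpoints $p$ of $e$ and $p'$ of $e'$ and invoke geodesic constructibility to compute the interval $I(p,p')$. This interval is finite by Lemma~\ref{lem:IntervalFinite}, and local constructibility together with Corollary~\ref{cor:AlgoHyp} allows one to enumerate, and deduplicate, representative edges for the finite set $\mathcal{H}$ of hyperplanes whose carriers meet $I(p,p')$: at every vertex $x \in I(p,p')$ the neighbour set $\mathcal{N}(x)$ provides one edge for each clique at $x$, and each hyperplane with $x$ in its carrier contains exactly one such edge. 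In particular $A,B \in \mathcal{H}$ since $p \in N(A) \cap I(p,p')$ and $p' \in N(B) \cap I(p,p')$.

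I would then build the finite auxiliary graph on $\mathcal{H}$ whose adjacencies are decided by Lemma~\ref{lem:AlgoTransverse} (for $\Omega X = \Delta X$) or Corollary~\ref{cor:AlgoContact} (for $\Omega X = \Gamma X$), run breadth-first search from $A$ to $B$, and return the resulting path as $\gamma(e,e')$. Because $\mathcal{H}$ is finite and all pairwise adjacency checks are algorithmic, the whole procedure terminates and produces a path in $\Omega X$.

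The decisive step, and the only nontrivial ingredient, is verifying that the subgraph of $\Omega X$ induced on $\mathcal{H}$ is isometrically embedded, so that the BFS output inside $\mathcal{H}$ is actually a geodesic of $\Omega X$. This is a transcription of Claim~\ref{claim:Three} in the proof of Theorem~\ref{thm:Rational}, replacing $\mathrm{CH}(\gamma)$ by the convex subgraph $I(p,p')$: suppose $J_1,\dots,J_k$ is a geodesic in $\Omega X$ from $A$ to $B$ and some interior $J_i$ satisfies $N(J_i) \cap I(p,p') = \emptyset$. Since $N(J_i)$ is gated, hence convex, Proposition~\ref{prop:separation} produces a hyperplane $J$ weakly separating $N(J_i)$ from $I(p,p')$; the geodesic property of $J_1,\dots,J_k$ forces $J$ to be transverse to $J_{i-1}$ and $J_{i+1}$, so swapping $J_i$ for $J$ yields a new geodesic of the same length but with strictly smaller total quantity $\sum_j d(N(J_j), I(p,p'))$. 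Finitely many iterations drive every intermediate hyperplane into contact with $I(p,p')$, i.e.\ into $\mathcal{H}$.

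The main potential obstacle is therefore isolated to this isometric-embedding step; the rest of the algorithm is a routine assembly of the subroutines developed in Lemmas~\ref{lem:AlgoProj}--\ref{lem:AlgoStronglySeparated}. Once one observes that the rerouting argument of Claim~\ref{claim:Three} uses $\mathrm{CH}(\gamma)$ only through its convexity (and uses the carriers' gatedness through Proposition~\ref{prop:separation}), the substitution of $I(p,p')$ is automatic, and the constructible geodesic combing falls out.
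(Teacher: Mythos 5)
Your proposal is correct, but it takes a route that differs from the paper's in its key ingredient. The paper first proves a fellow-travelling statement (Lemma~\ref{lem:CrossingFellowTravel}): starting from an $\Omega X$-geodesic minimising $\sum_i d(a,N(H_i))$ and concatenating successive gate projections, it produces an $\Omega X$-geodesic from $A$ to $B$ all of whose carriers meet a single geodesic of $X$ from $p$ to $p'$; the algorithm then enumerates geodesics of $X$ between $p$ and $p'$, vertices along them, and hyperplanes through those vertices (local constructibility), and takes a shortest admissible sequence. You instead enumerate the finitely many hyperplanes whose carriers meet the finite interval $I(p,p')$ and run a breadth-first search, certifying correctness by rerunning the rerouting argument of Claim~\ref{claim:Three} with $\mathrm{CH}(\gamma)$ replaced by $I(p,p')$ -- and your observation that this argument uses the ambient subgraph only through its convexity (the gatedness input being the carriers, via Proposition~\ref{prop:separation}) is exactly right, as is your check that $A,B$ themselves lie in your hyperplane set because $p\in N(A)$ and $p'\in N(B)$, which is what guarantees the weakly separating hyperplane is crossed on both sides of $J_i$ so that it is transverse to $J_{i-1}$ and $J_{i+1}$. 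What each approach buys: the paper's lemma is tailored to a single $X$-geodesic and feeds directly into a minimal-length search over labelled sequences, whereas your version isolates a slightly more general and reusable statement (the hyperplanes in contact with any convex subgraph induce an isometrically embedded subgraph of $\Omega X$), reduces the search to a plain BFS in a finite graph, and avoids proving Lemma~\ref{lem:CrossingFellowTravel} altogether, at the modest cost of searching over all hyperplanes touching the whole interval rather than along individual geodesics. Both versions share the same representative-edge convention (the pseudo-metric point the paper makes before the proposition), so the dependence of the output on the chosen edges representing $A$ and $B$ is not an issue specific to your argument.
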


\noindent
The proposition will essentially follow from the combination of our next lemma with the basic results of the previous section.

\begin{lemma}\label{lem:CrossingFellowTravel}
Let $X$ be a quasi-median graph, $A,B$ two hyperplanes, and $a \in N(A), b \in N(B)$ two vertices. There exist a geodesic $H_1, \ldots, H_r$ between $A$ and $B$ in the crossing (resp. contact) graph and a geodesic $\gamma$ between $a$ and $b$ in $X$ such that $\gamma \cap N(H_i) \neq \emptyset$ for every $1 \leq i \leq r$. 
\end{lemma}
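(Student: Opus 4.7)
The natural strategy is to produce the two geodesics from a single object, namely a geodesic $\sigma$ from $a$ to $b$ in $X$. If $J_1,\ldots,J_m$ are the hyperplanes crossed by $\sigma$ in order, then consecutive $J_i,J_{i+1}$ share the common endpoint of the corresponding consecutive edges of $\sigma$, so they are in contact; together with $a\in N(A)\cap N(J_1)$ and $b\in N(J_m)\cap N(B)$, the sequence $(A,J_1,\ldots,J_m,B)$ is a walk in $\Gamma X$ all of whose vertices have carriers meeting $\sigma$. For the crossing graph, consecutive hyperplanes along $\sigma$ are only tangent; here the idea is to invoke the proof of Lemma~\ref{lem:CrossingConnected} (specifically Claim~\ref{claim:CutVertex}), which shows that in the absence of cut-vertices any two hyperplanes containing a common vertex $v$ are connected in $\Delta X$ by a path of hyperplanes still containing $v$ in their carriers. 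Splicing such local detours at each transition produces a walk in $\Delta X$ retaining the property that every vertex has carrier meeting $\sigma$.

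The plan is then to minimize: consider all pairs $(\sigma,U)$ where $\sigma$ is a geodesic in $X$ from $a$ to $b$ and $U$ is a walk in $\Omega X$ from $A$ to $B$ with $N(H)\cap \sigma \neq \emptyset$ for every vertex $H$ of $U$. The construction above shows this set is non-empty, so we may choose such a pair minimising $|U|$; set $r:=|U|$. Trivially $r\geq d_{\Omega X}(A,B)$, and the claim is that this is an equality, which will give both required geodesics at once.

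To prove equality, I would argue by contradiction: if $r>d_{\Omega X}(A,B)$, let $U=(H_1,\ldots,H_r)$ and pick an $\Omega X$-geodesic $V=(K_1,\ldots,K_s)$ with $s<r$. I would locate the first index where $V$ departs from $U$ and identify a hyperplane $K$ appearing in $V$ allowing a shortcut in $U$; the technical heart is then to show that one can replace $\sigma$ by a new geodesic $\sigma'$ from $a$ to $b$ passing through $N(K)$ while preserving the intersection condition for the other hyperplanes needed after the shortcut. The key tool here is Lemma~\ref{lem:HellyBis} applied to the convex subgraph $I(a,b)$ and the gated carriers $N(H_{i-1})$, $N(K)$, combined with Lemma~\ref{lem:ProjConvexGated} to realise the requisite vertices inside $I(a,b)$. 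Once this is done, the shortened walk contradicts the minimality of $|U|$, so $r=d_{\Omega X}(A,B)$ and $U$ is the desired geodesic in $\Omega X$.

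The main obstacle is precisely the modification step producing $(\sigma',U')$ from $(\sigma,U)$: one must ensure, via Helly-type intersection arguments, that $N(K)\cap I(a,b)\neq \emptyset$ and that the new geodesic $\sigma'$ can be chosen to also meet the carriers of the $H_i$ that remain after the shortcut. For the crossing graph, the extra layer of converting tangencies into transverse steps via connected clique-links adds bookkeeping but reuses the same ideas, so the contact case is the one where the argument is cleanest and essentially determines the shape of the proof.
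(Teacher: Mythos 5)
Your overall direction (start from an $X$-geodesic $\sigma$, build a walk in $\Omega X$ whose carriers meet $\sigma$, then minimise the walk length over pairs $(\sigma,U)$ and argue the minimum is $d_{\Omega X}(A,B)$) leaves the decisive step unproved, and the specific route you sketch for it does not work. Since a minimal walk that is not a geodesic need not admit a chord, you are forced, as you say, to compare $U$ with a genuine $\Omega X$-geodesic $V=(K_1,\dots,K_s)$ and splice some $K$ from $V$ into $U$. But the hyperplanes of an arbitrary geodesic $V$ have no reason to have carriers anywhere near $I(a,b)$. Already in the grid $\mathbb{Z}^2$: take $A,B$ two vertical hyperplanes far apart, $a\in N(A)$, $b\in N(B)$ on a common horizontal line; then $(A,H,B)$ is a geodesic in the contact graph for \emph{every} horizontal hyperplane $H$, including those whose carriers are disjoint from $I(a,b)$. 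For such a $K$ no geodesic $\sigma'$ from $a$ to $b$ (necessarily contained in $I(a,b)$) can meet $N(K)$, so no modification of $\sigma$ can restore the defining property, and Lemma~\ref{lem:HellyBis} is of no help because the pairwise intersection $N(K)\cap I(a,b)\neq\emptyset$ is exactly the hypothesis that fails. In other words, "identify a hyperplane $K$ appearing in $V$ allowing a shortcut" compatible with some $\sigma'$ is essentially the content of the lemma itself, and the cited tools (\ref{lem:HellyBis}, \ref{lem:ProjConvexGated}) do not produce it.

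The paper avoids this by running the argument in the opposite direction: it fixes an $\Omega X$-geodesic $H_0,\dots,H_r$ minimising the auxiliary quantity $d(a,N(H_0))+\cdots+d(a,N(H_r))$, builds the candidate path $\gamma$ by successive gated projections $x_i:=\mathrm{proj}_{N(H_i)}(x_{i-1})$, and shows $\gamma$ is geodesic: a hyperplane $J$ crossing $\gamma$ twice must cross $[x_i,x_{i+1}]$ and $[x_{i+2},x_{i+3}]$, is then transverse to $H_i$ and $H_{i+2}$, and replacing $H_{i+1}$ by $J$ keeps a geodesic in $\Omega X$ while strictly decreasing the minimised quantity. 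The crucial difference is that the shortcut hyperplane is found \emph{on the constructed path}, so its carrier is automatically in the right place; this is the ingredient your contradiction step is missing. Two further remarks: your non-emptiness construction in the crossing-graph case relies on connectivity of clique-links (Claim~\ref{claim:CutVertex}), i.e.\ on the absence of cut-vertices, which is not a hypothesis of the lemma and is not needed in the paper's proof (though it does hold where the lemma is applied); and even granting everything, showing that the minimum equals $d_{\Omega X}(A,B)$ rather than merely being finite is the whole lemma, so as written the proposal records the statement's difficulty rather than resolving it.
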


\begin{proof}
Let $H_0, \ldots, H_r$ be a geodesic in the crossing (resp. contact) graph such that the quantity $d(a,N(H_0))+ \cdots + d(a,N(H_r))$ is as small as possible. Define the sequence $(x_i)_{0 \leq i \leq r+1}$ as follows:
\begin{itemize}
	\item $x_0:=a$ and $x_{r+1}:=b$;
	\item for every $1 \leq i \leq r$, $x_i$ is the projection of $x_{i-1}$ on $N(H_i)$.
\end{itemize}
For every $0 \leq i \leq r$, fix a geodesic $[x_i,x_{i+1}]$ between $x_i$ and $x_{i+1}$, and let $\gamma$ denote the concatenation $[x_0, x_1] \cup \cdots \cup [x_r, x_{r+1}]$. By construction, $x_i \in \gamma \cap N(H_i)$ for every $1 \leq i \leq r$. In order to conclude our lemma, it suffices to show that $\gamma$ is a geodesic.

\medskip \noindent
If $\gamma$ is not a geodesic, then it crosses twice some hyperplane. Observe that, for every $0 \leq i \leq r-1$, no hyperplane crosses both $[x_i,x_{i+1}]$ and $[x_{i+1},x_{i+2}]$ because every hyperplane separating $x_i$ from its projection $x_{i+1}$ has to separate $x_i$ from $N(H_{i+1})$. Also, a hyperplane $J$ cannot cross $[x_i,x_{i+1}]$ and $[x_j,x_{j+1}]$ for some $0 \leq i,j \leq r-1$ satisfying $|i-j|>2$ since otherwise we could shorten $H_0, \ldots, H_r$ by replacing $H_i, \ldots, H_j$ with $H_i,J,H_j$. Therefore, if $\gamma$ is not a geodesic, there must exist some $0 \leq i \leq r-2$ such that $[x_i,x_{i+1}]$ and $[x_{i+2},x_{i+3}]$ are crossed by a common hyperplane $J$. As a consequence of what we already know, $J$ cannot cross $\gamma$ a third time, so it separates $a$ from $N(H_{i+1})$. Because $J$ must be transverse to both $H_i$ and $H_{i+2}$, $H_0, \ldots, H_i,J,H_{i+2}, \ldots, H_r$ is also a geodesic in the crossing (resp. contact) graph of $X$ with
$$d(a,N(H_0)) + \cdots d(a,N(H_i))+d(a,N(J))+d(a,N(H_{i+2}))+ \cdots d(a,N(H_r))$$
$$< d(a,N(H_0)) + \cdots + d(a,N(H_r)),$$
contradicting the minimality satisfied by $H_0, \ldots, H_r$. 
\end{proof}

\begin{proof}[Proof of Proposition~\ref{prop:AlgoDistCrossing}.]
Let $e,e' \subset X$ be two edges. Fix to endpoints $p$ and $p'$ of $e$ and $e'$. For all geodesic $\gamma$ from $p$ to $p'$ (which can be enumerated algorithmically because $X$ is geodesically constructible), integer $1 \leq k \leq d(p,p')$, vertices $x_0:=p,x_1, \ldots, x_{k-1}, x_k:=p'$ of $\gamma$, and hyperplanes $J_1, \ldots,J_{k-1}$ containing respectively $x_1, \ldots, x_{k-1}$ in their carriers (which can be enumerated algorithmically because $X$ is locally constructible), check whether $A,J_1, \ldots, J_{k-1}, B$ defines a path in the crossing (resp. contact) graph (which can be done algorithmically according to Lemma~\ref{lem:AlgoTransverse} (resp. Corollary~\ref{cor:AlgoContact})). As a consequence of Lemma~\ref{lem:CrossingFellowTravel}, the path in the crossing (resp. contact) graph with the smallest $k$ yields a geodesic between $A$ and $B$ in the crossing (resp. contact) graph of $X$. We choose such a geodesic in order to define our constructible geodesic combing.
\end{proof}

\noindent
Next, we turn to the second step of the proof of our theorem.

\begin{prop}\label{prop:EllOrLoxo}
Let $X$ be a $\delta$-hyperbolic graph admitting a constructible geodesic combing. There exists an algorithm that determines, given a non-parabolic computable isometry $g \in \mathrm{Isom}(X)$, whether $g$ is elliptic or loxodromic.
\end{prop}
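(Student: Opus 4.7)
The plan is as follows. Thanks to the constructible combing on $X$ and the computability of $g$, we can compute $D_n := d(x, g^n x)$ for any vertex $x$ and any integer $n$, simply by computing $g^n x$ iteratively and then calling the combing to get the distance. Because $g$ is non-parabolic, the classification of isometries of a $\delta$-hyperbolic space tells us it is either elliptic (bounded orbit, $\tau(g)=0$) or loxodromic ($\tau(g)>0$, with $D_n$ growing linearly in $n$), and the goal is to distinguish these two cases algorithmically from the data $\{D_n\}$.

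I would run two complementary tests in parallel over $n=1,2,3,\ldots$. The \emph{loxodromic test} checks, at each step, a quantitative ``spreading'' condition on the orbit of the form $D_n \geq T$ together with $2D_n - D_{2n} \leq T'$, where $T, T'$ are explicit functions of $\delta$; this is a standard ping-pong / bottleneck criterion in $\delta$-hyperbolic geometry, and once satisfied it forces $g$ to fix two distinct points at infinity and hence to be loxodromic. The \emph{ellipticity test} scans, at each step, candidate ``quasi-centers'' $y$ found among vertices of the combing geodesics $\gamma(g^i x, g^j x)$ between orbit points, and tests whether $d(y, gy) \leq C\delta$ for a $\delta$-dependent constant $C$; the existence of such a $y$ forces the orbit to be bounded and hence $g$ to be elliptic.

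The algorithm outputs the verdict of whichever of the two tests succeeds first. What must be proved is that, on every non-parabolic input, exactly one of the tests eventually terminates. For a loxodromic $g$, stability of quasi-geodesics in $\delta$-hyperbolic spaces ensures that the orbit $(g^n x)$ is a quasi-geodesic with Gromov products uniformly bounded in terms of $\delta$, so for $n$ large the inequality $2D_n - D_{2n}\leq T'$ holds while $D_n\geq T$ grows linearly, triggering the loxodromic test. For an elliptic $g$, the bounded orbit admits a coarse centroid $y$ at bounded displacement $d(y, gy) \leq C\delta$ (the classical hyperbolic quasi-center construction), and this $y$ lies at controlled distance from some combing geodesic between two orbit points, so will be located by the ellipticity search.

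The main obstacle is making both of these criteria simultaneously sound (no false positives) and complete (eventual detection), which reduces to careful bookkeeping of the constants $T$, $T'$, $C$ in terms of $\delta$ using the thin-triangle axiom, together with effectiveness of the quasi-center search, for which the constructibility of the combing is crucial since $X$ need not be locally finite.
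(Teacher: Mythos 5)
Your scheme has a genuine gap, and it is in the soundness of the ellipticity test, not in the bookkeeping of constants. The existence of a vertex $y$ with $d(y,gy)\leq C\delta$ does not force $g$ to be elliptic: a loxodromic isometry whose translation length is small compared to $\delta$ (already translation length $1$ on a graph that is $\delta$-hyperbolic with $\delta\geq 1$, e.g.\ the horizontal shift on the grid $\mathbb{Z}\times\{0,\ldots,k\}$) moves every point of its axis by far less than any threshold of the form $C\delta$, and such a point will typically be found by your search long before the loxodromic test can trigger, so the parallel procedure can answer ``elliptic'' on a loxodromic input. No choice of $C$, $T$, $T'$ as functions of $\delta$ alone repairs this. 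The missing idea --- and the device the paper uses --- is to test the displacement not of $g$ but of a definite power $g^k$ with $k$ comparable to $\delta$ (the paper takes $k=17\delta$): for a loxodromic isometry the displacement of $g^k$ is bounded below linearly in $k$ at every vertex (Lemma~\ref{lem:WhenLoxo}), while for an elliptic isometry every geodesic $[o,g^ko]$ contains a vertex $y$ with $d(y,g^ky)\leq 32\delta$ (Lemma~\ref{lem:WhenEll}, proved via the coarse fixed-point set $F=\{z\mid \mathrm{diam}(\langle g\rangle\cdot z)\leq 2\delta\}$, which is the quasi-centre construction you allude to). With these two facts the algorithm is a \emph{single finite test}: build the combing geodesic $[o,g^{17\delta}o]$ and compute $d(x,g^{17\delta}x)$ for its finitely many vertices; no parallel semi-decision procedures are needed and termination is automatic.

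A secondary problem is the completeness of your loxodromic test. With the displacements $D_n$ computed only at a fixed basepoint $x$, the quantity $2D_n-D_{2n}$ stabilises (already in a tree, where it equals exactly twice the distance from $x$ to the axis) at a value that is not bounded by any function of $\delta$; so if the axis of a loxodromic $g$ is far from $x$, the criterion ``$D_n\geq T$ and $2D_n-D_{2n}\leq T'$'' with $\delta$-dependent thresholds may never be met, and on such inputs (where, as above, the ellipticity test also never fires) your algorithm does not halt. You would have to run the spreading test over auxiliary basepoints drawn from the combing geodesics between orbit points, as you propose only for the ellipticity search. Both difficulties disappear once one adopts the power trick and the two displacement bounds above.
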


\noindent
Our proposition follows from our next two lemmas, which allow us to distinguish elliptic and loxodromic isometries in hyperbolic graphs.

\begin{lemma}\label{lem:WhenLoxo}
Let $X$ be a $\delta$-hyperbolic graph and $g \in \mathrm{Isom}(X)$ a loxodromic isometry. For every $x \in X$ and every $k \in \mathbb{Z}$, $d(x,g^kx) \geq |k| +16 \delta$.
\end{lemma}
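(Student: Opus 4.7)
The plan is to use the classical principle that a loxodromic isometry of a $\delta$-hyperbolic space admits a bi-infinite quasi-axis along which it translates, and that iterate-orbits fellow-travel this axis with error controlled by $\delta$. From this, one reads off a linear lower bound on $d(x,g^kx)$.

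First I would produce a $g$-invariant bi-infinite quasi-geodesic $\gamma \subset X$ on which $g$ acts as a translation of length $\tau:=\tau(g)>0$; this is the standard axis lemma for loxodromic isometries in a $\delta$-hyperbolic space, obtained for instance as a subsequential limit of the geodesic segments $[x,g^nx]$ in the Gromov boundary. The fact that $X$ is a graph (so distances are integers) together with loxodromicity should moreover force $\tau\geq 1$, via the Fekete identity $\tau=\inf_{n\geq 1} d(x,g^nx)/n$ and the integrality of each $d(x,g^nx)$, after possibly passing to sufficiently large $k$ so that the infimum is saturated.

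Next, I would fix $x\in X$ and let $p\in\gamma$ be a nearest-point projection of $x$ onto $\gamma$, writing $D:=d(x,p)$. By $g$-equivariance, $g^kp$ is a nearest projection of $g^kx$ onto $\gamma$, $d(g^kx,g^kp)=D$, and $d_\gamma(p,g^kp)=|k|\tau$. Then I would apply the Morse property of $\delta$-hyperbolic spaces: the broken path $[x,p]\cup[p,g^kp]\cup[g^kp,g^kx]$ is a uniform quasi-geodesic, and any geodesic from $x$ to $g^kx$ passes within $O(\delta)$ of both $p$ and $g^kp$. Nearest-point projection estimates in a $\delta$-hyperbolic graph then give
\[
d(x,g^kx) \;\geq\; 2D + |k|\tau - c\delta
\]
for an explicit universal constant $c$. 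Using $\tau\geq 1$ and $D\geq 0$, and tracking the constants through the Morse lemma carefully, should yield the claimed inequality with the stated constant $16\delta$.

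The main obstacle I foresee is quantitative: the strategy above produces \emph{some} linear lower bound of the form $|k|\tau - O(\delta)$, but obtaining the precise constant $16\delta$ requires sharp bookkeeping in the Morse/tripod-approximation estimate for $\delta$-hyperbolic spaces. A secondary, more conceptual issue is ensuring $\tau(g)\geq 1$: this is where the graph hypothesis (integer distances) is used, and one may need to absorb a passage to a large power of $g$, or argue directly that in a graph a loxodromic isometry with $\tau<1$ would force $d(x,g^nx)=\lceil n\tau\rceil$-type behaviour incompatible with the Fekete identity for sufficiently large $n$.
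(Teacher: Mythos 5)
Your proposal stalls exactly at the point that constitutes the whole content of the lemma, and the stall is not a matter of bookkeeping. The nearest-point-projection/Morse argument you outline can only ever produce a lower bound of the form $d(x,g^kx)\geq |k|\,\|g\| - c\delta$ with a \emph{negative} error term: in this paper ``loxodromic'' means that $g$ acts as a translation on an honest bi-infinite geodesic, and for $x$ on that axis one has $d(x,g^kx)=|k|\,\|g\|$ exactly, so no sharpening of the tripod estimates can convert a $-c\delta$ into the $+16\delta$ required by the statement (indeed, when $\|g\|=1$ and $x$ lies on the axis, the claimed inequality is only compatible with $\delta=0$, which shows the additive $16\delta$ cannot come from axis geometry at all). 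The paper's proof is of a different nature and is one line: $d(x,g^kx)\geq [g^k]$ trivially, where $[h]:=\min_z d(z,hz)$, and the $16\delta$ is imported wholesale from the comparison between the minimal displacement $[g^k]$ and the stable length $\|g^k\|=|k|\,\|g\|$ cited from \cite[Proposition~10.6.4]{MR1075994}, combined with $\|g\|\geq 1$. So the mechanism producing the $\delta$-term in the paper is the displacement-versus-stable-length comparison for isometries of hyperbolic spaces, not fellow-travelling along a quasi-axis, and your route as described cannot supply it.

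On your secondary concern: under the paper's conventions the inequality $\|g\|\geq 1$ needs no Fekete/integrality argument and no passage to powers. An axis is a bi-infinite geodesic in a graph on which $g$ translates, so the translation distance along it is a positive integer and equals $\|g\|$; note that your proposed argument would in fact be delicate for the weaker notion of loxodromic (positive stable length), since graph isometries can have stable translation length strictly between $0$ and $1$ --- this very paper exhibits crossing-graph actions with translation length $2/n$. Likewise, constructing a quasi-axis as a limit of the segments $[x,g^nx]$ duplicates data already given by hypothesis. Finally, observe that the bound $d(x,g^kx)\geq |k|$ needs no hyperbolicity whatsoever: subadditivity gives $d(x,g^kx)\geq \|g^k\|=|k|\,\|g\|\geq |k|$ directly, so the only non-trivial content of the lemma is the additive $16\delta$, which is precisely what your approach leaves unproved.
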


\begin{proof}
According to \cite[Proposition~10.6.4]{MR1075994}, we have
$$d(x,g^kx) \geq \left[ g^k \right] \geq \left\| g^k \right\| +16 \delta = |k| \cdot \|g \| +16 \delta,$$
where, for every isometry $h$, $[h]:= \min \{ d(z,hz) \mid z \in X\}$ and $\|h\|:= \lim_{n \to + \infty} d(o,h^no)/n$ for an arbitrary basepoint $o \in X$. But $X$ is a graph, so we must have $\|g\| \geq 1$, concluding the proof of our lemma. 
\end{proof}

\begin{lemma}\label{lem:WhenEll}
Let $X$ be a $\delta$-hyperbolic graph and $g \in \mathrm{Isom}(X)$ an elliptic isometry. For every vertex $x \in X$, every integer $k \in \mathbb{Z}$, and every geodesic $[x,g^kx]$ between $x$ and $g^kx$, there exists a vertex $y \in [x,g^kx]$ satisfying $d(y,g^ky) \leq 32\delta$.
\end{lemma}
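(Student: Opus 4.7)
The plan is to exploit that $g^k$ is elliptic, being a power of the elliptic isometry $g$, in order to extract a ``quasi-fixed point'' in $X$ and then transport it onto the given geodesic $[x, g^k x]$.

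First I would invoke the standard fact that in a $\delta$-hyperbolic graph, any isometry with a bounded orbit admits a quasi-fixed point: there exists $c \in X$ with $d(c, g^k c) \leq K \delta$ for some universal constant $K$ depending only on $\delta$-hyperbolicity. One proves this by choosing $c$ to be an approximate circumcenter of the bounded orbit $\{g^{jk} x : j \in \mathbb{Z}\}$; slimness of geodesic triangles then forces $c$ to be displaced by $g^k$ by only a bounded multiple of $\delta$. Next, let $\alpha = [x, g^k x]$ and let $y \in \alpha$ be a closest vertex to $c$. By equivariance, $g^k y$ is a closest vertex on $g^k \alpha = [g^k x, g^{2k} x]$ to $g^k c$. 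The two geodesics $\alpha$ and $g^k \alpha$ share the endpoint $g^k x$; using the standard hyperbolic estimate that projections onto a geodesic are coarsely $1$-Lipschitz with additive error $O(\delta)$, one bounds $d(y, g^k y)$ in terms of $d(c, g^k c)$ plus a bounded hyperbolic correction, yielding the desired $32\delta$ after the constants are tracked.

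An alternative, more direct route avoids invoking the quasi-center. One analyses the geodesic triangle with vertices $x, g^k x, g^{2k} x$, whose sides $\alpha$ and $g^k \alpha$ have common length $L = d(x, g^k x)$ and whose third side has length $D = d(x, g^{2k} x)$. By $\delta$-slimness, the midpoint $m$ of $\alpha$ lies within $\delta$ of a point on $g^k \alpha \cup [x, g^{2k} x]$. If it lands on $g^k \alpha$, it is within $\delta$ of the midpoint $g^k m$ (both are at distance $L/2 \pm \delta$ from $g^k x$), so $d(m, g^k m) \leq 2\delta$. Otherwise it lands on $[x, g^{2k} x]$, and a symmetric argument applied to $g^k m$ gives $d(m, g^k m) \leq |L - D| + O(\delta)$. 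One then iterates on the triangles with vertices $x$, $g^{2^i k} x$, $g^{2^{i+1} k} x$ and uses the bounded orbit (from ellipticity of $g$) to halve the residual discrepancy at each step, so that after boundedly many iterations the displacement is at most $32\delta$.

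The main obstacle is the quantitative hyperbolic bookkeeping needed to land on exactly the constant $32\delta$. Conceptually the argument combines standard facts (slim-triangle and tripod estimates, coarsely contracting projections, quasi-center constructions), but each application contributes a constant of order $\delta$ that must be tracked carefully. Ellipticity enters through the boundedness of the orbit, which prevents the sequence $(g^{jk} y)_j$ from escaping every ball and ultimately forces some $y \in \alpha$ to be displaced by only $O(\delta)$.
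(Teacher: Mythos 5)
Your first route follows the same basic strategy as the paper's proof (produce a quasi-fixed point and transport it to the given geodesic via hyperbolic thinness), but the transport step as you justify it does not work, and this is the crux of the argument. Coarse $1$-Lipschitzness of nearest-point projection controls $d(\mathrm{proj}_{g^k\alpha}(c),\mathrm{proj}_{g^k\alpha}(g^kc))$, i.e.\ projections of two nearby points onto the \emph{same} geodesic; what you actually need is to compare $y=\mathrm{proj}_{\alpha}(c)$ with $\mathrm{proj}_{g^k\alpha}(c)$, i.e.\ projections of one point onto two \emph{different} geodesics, and sharing the endpoint $g^kx$ is not enough: already in a tree, if $\alpha=[x,m]$ and $\beta=[m,z]$ branch at a vertex $w$, a point $c$ lying on $[x,w]$ far from $w$ projects to itself on $\alpha$ but to $w$ on $\beta$, so such projections can be arbitrarily far apart. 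In your situation the comparison can be rescued, but only by invoking the near-$g^k$-invariance of $c$ a second time (for instance, both projections lie $O(\delta)$-close to points of $[g^kx,c]$ whose positions are given by the Gromov products $(x\cdot g^kx)_c$ and $(g^{2k}x\cdot g^kx)_c$, and these agree up to $d(c,g^kc)+O(\delta)$); this missing argument, with explicit constants, is precisely the content of the proof. The constant is not cosmetic here: the lemma is played off against the $33\delta$ threshold coming from Lemma~\ref{lem:WhenLoxo}, so an unquantified $O(\delta)$ does not suffice, and it is not clear your accumulated constants land below $32\delta$. For comparison, the paper takes a point $p$ whose whole $\langle g\rangle$-orbit has diameter $\leq 2\delta$, proves that the set $F$ of such points is $12\delta$-quasiconvex, and applies thinness to the two triangles $(x,g^kx,p)$ and $(p,g^kp,g^kx)$, getting $d(y,g^ky)\leq 32\delta$ in one case and $\leq 10\delta$ in the other.

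Your alternative route is not a proof as stated. The discrepancy $|L-D|$, with $L=d(x,g^kx)$ and $D=d(x,g^{2k}x)$, need not be small for an elliptic isometry (for a half-turn about a far-away centre, $D$ can vanish while $L$ is huge), and the proposed iteration over the triangles with vertices $x$, $g^{2^ik}x$, $g^{2^{i+1}k}x$ changes both the geodesic and the power of $g$ under consideration, whereas the lemma demands a vertex on the originally given geodesic $[x,g^kx]$ moved a bounded amount by $g^k$ itself; the assertion that boundedness of the orbit ``halves the residual discrepancy at each step'' is not substantiated.
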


\begin{proof}
We begin by proving the following observation:

\begin{fact}
The subgraph $F:= \{ z \in X \mid \mathrm{diam}( \langle g \rangle \cdot z \leq 2 \delta\}$ is non-empty and $12\delta$-quasiconvex.
\end{fact}

\noindent
The fact that $F$ is non-empty follows from \cite{MR1776048}. To be precise, given a bounded subgraph $B \subset X$, define its radius $\mathrm{rad}(B)$ as $\min \{ r \geq 0 \mid \exists c \in X, S \subset B(c,r) \}$. A vertex $c \in X$ is a centre of $B$ if $B \subset B(c, \mathrm{rad}(B))$. According to \cite[Lemma~2.1]{MR1776048}, the set of the centres of a bounded subgraph has diameter at most $2\delta$, so $F$ contains the centre of every $\langle g \rangle$-orbit. (It is worth mentioning that \cite[Lemma~2.1]{MR1776048} assumes that the hyperbolic space is proper, but only in order to assure the existence of a centre, which is not needed here since our metric is discrete.) Now, let $a,b \in F$ be two vertices and $z \in I(a,b)$ a third vertex. Because the distance in $X$ is $8\delta$-convex \cite[Corollaire~10.5.3]{MR1075994}, we know that $\langle g \rangle \cdot z$ has diameter at most $12 \delta$. Taking a centre $c$ of this orbit, we have 
$$d(z,F) \leq d(z,c) \leq \mathrm{rad}(\langle g \rangle  \cdot z) \leq \mathrm{diam}(\langle g \rangle \cdot z) \leq 12\delta,$$
concluding the proof of our fact.

\medskip \noindent
\begin{minipage}{0.4\linewidth}
\includegraphics[width=0.9\linewidth]{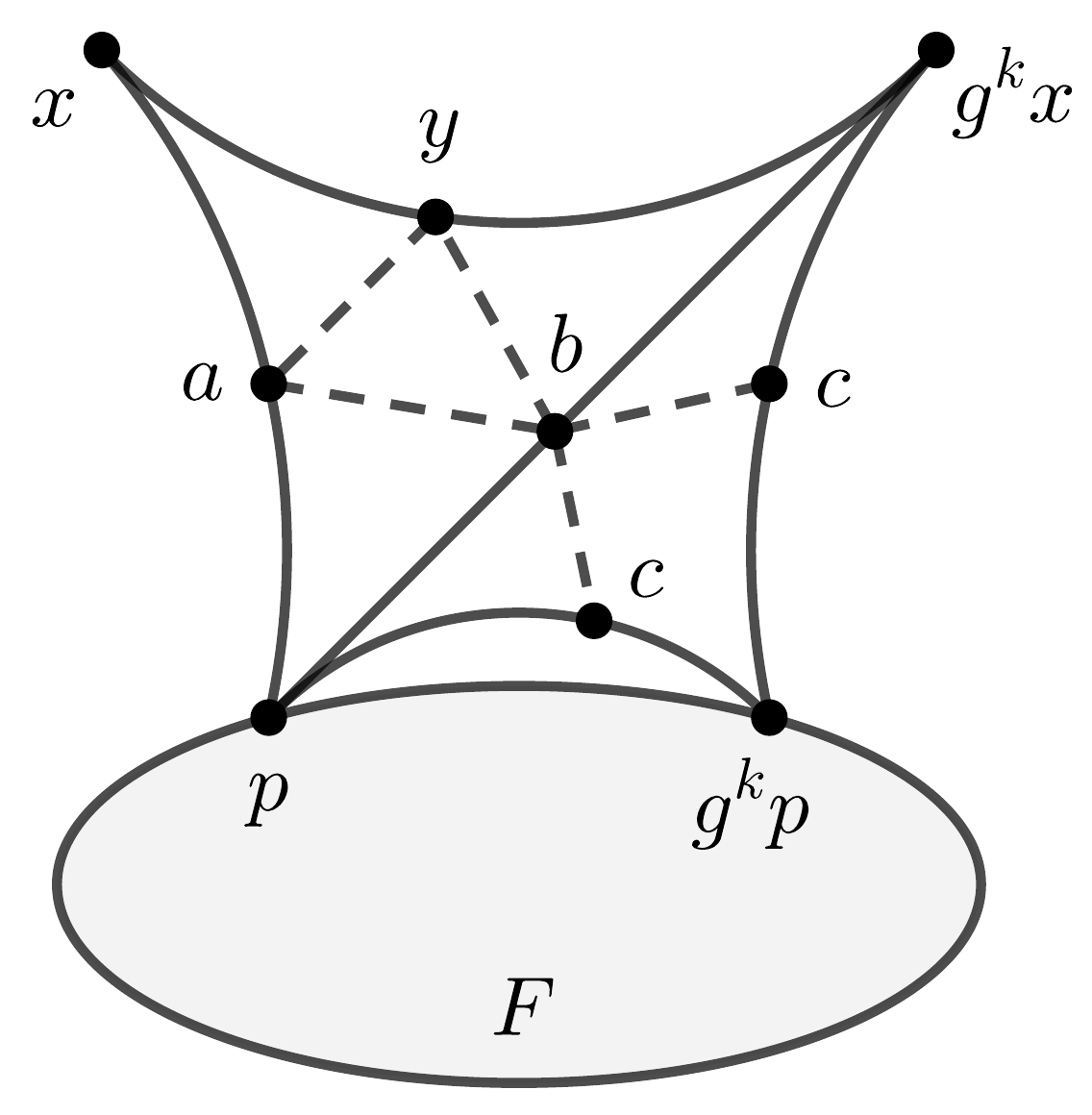}
\end{minipage}
\begin{minipage}{0.58\linewidth}
Now, fix a vertex $x \in X$, an integer $k \in \mathbb{Z}$, and a geodesic $[x,g^kx]$ between $x$ and $g^kx$. Also, fix an arbitrary vertex $p \in F$ and some geodesics $[x,p]$, $[p,g^kx]$, and $[p,g^kp]$. Because $X$ is $\delta$-hyperbolic, there exist vertices $y \in [x,g^kx]$, $a \in [x,p]$, and $b \in p,g^kx]$ pairwise at distance $\leq 2\delta$. Also, there exists $c \in [p,g^kp] \cup g^k[p,x]$ such that $d(b,c) \leq \delta$. 
\end{minipage}

\medskip \noindent
If $c \in [p,g^kp]$, then
$$d(y,F) \leq d(y,b)+d(b,c)+d(c,F) \leq 2\delta + \delta +12\delta= 15\delta.$$
Therefore, $\mathrm{diam}(\langle g \rangle \cdot y) \leq 32 \delta$.

\medskip \noindent
Otherwise, if $c \in g^k[p,x]$, we have
$$|d(g^kp,c) - d(p,a)] \leq d(a,c)+d(p,g^kp) \leq 3\delta +2\delta=5 \delta.$$
Because $c$ and $g^ka$ lies on the same geodesic $g^k[p,x]$, this implies that $d(g^ka,c) \leq 5\delta$. Thus,
$$d(g^ky,y) \leq d( g^ky,g^ka) + d(g^ka,c) + d(c,y) \leq 2\delta +5\delta +3\delta = 10 \delta.$$
Therefore, we have proved that $d(g^ky,y) \leq 32\delta$ in any case, concluding the proof of our lemma.
\end{proof}

\begin{proof}[Proof of Proposition~\ref{prop:EllOrLoxo}.]
Fix an arbitrary vertex $o \in X$. Given a computable isometry $g \in \mathrm{Isom}(X)$, construct the geodesic $[o,g^{17\delta}o]$ given by our combing, and, for every vertex $x \in [o,g^{17\delta}o]$, compute $d(x,g^{17\delta}x)$. As a consequence of Lemmas~\ref{lem:WhenLoxo} and~\ref{lem:WhenEll}, either all these values are greater than $33\delta$, and $g$ must be loxodromic; or $g$ is elliptic otherwise. 
\end{proof}

\begin{proof}[Proof of Theorem~\ref{thm:WhenStronglyContracting}.]
As a consequence of Propositions~\ref{prop:AlgoDistCrossing} and~\ref{prop:EllOrLoxo}, there exists an algorithm that determines, given a computable isometry $g \in \mathrm{Isom}(X)$ that does not include a parabolic isometry on the crossing (resp. contact) graph, whether or not $g$ is loxodromic in the crossing (resp. contact) graph, which amounts to saying that $g$ is strongly contracting according to Proposition~\ref{prop:Loxo}. Since we know from Proposition~\ref{prop:Loxo} that an isometry of $X$ cannot induce a parabolic isometry of the crossing (resp. contact) graph, our theorem follows. 
\end{proof}

\subsection{Computing translation lengths}

\noindent
We are now finally ready to prove the second main result of this article, namely Theorem~\ref{thm:AsymptoticLength}. In fact, we are going to prove the following stronger version of it:

\begin{thm}\label{thm:BigAlgo}
Let $X$ be a constructible quasi-median graph, and let $\Omega X$ be either the crossing graph or the contact graph of $X$. In the former case, we assume that $X$ has not cut-vertex; and, in any case, we assume that a vertex of $X$ belongs to at most $N$ cliques for some fixed $N \geq 1$. There exists an algorithm that computes, given a computable isometry $g \in \mathrm{Isom}(X)$, the translation length of $g$ in $\Omega X$. Moreover, if the translation length is positive, then the algorithm provides an integer $k \geq 1$ and a hyperplane $J$ such that $J$ belongs to an axis of $g^k$ in $\Omega X$. 
\end{thm}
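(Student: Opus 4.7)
The plan combines Theorem~\ref{thm:Rational} with the algorithmic machinery from Section~\ref{section:Constructible}, notably the constructible geodesic combing on $\Omega X$ furnished by Proposition~\ref{prop:AlgoDistCrossing}.

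First, apply Theorem~\ref{thm:WhenStronglyContracting} to decide whether $g$ is strongly contracting. If not, Proposition~\ref{prop:Loxo} implies $g$ has bounded orbits in $\Omega X$, so the translation length is zero and we return $0$. Otherwise $g$ is loxodromic in $\Omega X$, and no power of $g$ can act as an inversion on $X$ (an inversion stabilises a hyperplane, yielding a fixed vertex of $\Omega X$ incompatible with loxodromicity), so Proposition~\ref{prop:Axis} yields an axis of $g$ in $X$. A vertex $o$ on such an axis is located algorithmically by iterating the local-constructibility map $\mathcal{N}$ from a basepoint and testing the finite criterion characterising $\mathrm{Min}(g)$ (for instance, $d_X(y, g^2y) = 2 d_X(y, gy)$, combined with the quasi-median structure); such a vertex exists by Proposition~\ref{prop:Axis}, ensuring termination. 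Concatenating $g$-translates of a geodesic $[o, go]$ then yields an explicit axis $\gamma$ in $X$.

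The main loop searches for a pair $(k, J)$, with $k \geq 1$ an integer and $J$ a hyperplane of $X$, certifying that $J$ lies on an axis of $g^k$ in $\Omega X$. Candidates are enumerated by dovetailing over $k$ and over hyperplanes $J$ represented by edges of $X$ produced by iterated $\mathcal{N}$-expansion from $o$, using Corollary~\ref{cor:AlgoHyp} to eliminate duplicates. For each pair, the distances $d_{\Omega X}(J, g^{nk} J)$ are computed via Proposition~\ref{prop:AlgoDistCrossing}, and the axis property is certified by a finite-time collinearity check along the orbit $\{g^{nk}J\}_n$, made effective by the hyperbolicity of $\Omega X$ and the $\langle g^k\rangle$-equivariance of this orbit. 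Theorem~\ref{thm:Rational} guarantees the existence of such a certifiable pair, with $J$ in contact with $B(o, 2\mathrm{QC}(\gamma)+1) \cap \mathrm{CH}(\gamma)$ and $k \leq N^{\mathrm{QC}(g)}$; on the first one found, the algorithm returns $\tau(g) = d_{\Omega X}(J, g^k J)/k$ together with the certificate $(k, J)$.

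The hard part is the finite-time axis check in $\Omega X$: Lemma~\ref{lem:AxisFromAnother} characterises an axis of $g^k$ by the equality $d_{\Omega X}(J, g^k J) = \tau(g^k)$, but $\tau(g^k)$ is precisely what we are computing. An alternative route, which sidesteps this circularity, is to compute the finite quantity $\mathrm{QC}(\gamma)$ directly (it is finite for strongly contracting $g$ by Claim~\ref{claim:One} in the proof of Theorem~\ref{thm:Rational}, since $\mathrm{CH}(\gamma)$ is then locally finite and $\langle g\rangle$-cocompact), thereby bounding $k$ by $N^{\mathrm{QC}(g)}$ and restricting $J$ to the finite set of hyperplanes in contact with $B(o, 2\mathrm{QC}(\gamma)+1) \cap \mathrm{CH}(\gamma)$; the output is then simply $\tau(g) = \min_{(k,J)} d_{\Omega X}(J, g^k J)/k$, which equals $\tau(g)$ by combining Theorem~\ref{thm:Rational} (the minimum is attained) with the universal inequality $d_{\Omega X}(J, g^k J) \geq k\tau(g)$ valid for every $(k, J)$.
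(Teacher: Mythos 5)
Your overall scaffolding is the same as the paper's: decide strong contraction via Theorem~\ref{thm:WhenStronglyContracting} (output $0$ otherwise by Proposition~\ref{prop:Loxo}), and then compute $\tau(g)$ as a minimum of $d_{\Omega X}(J,g^kJ)/k$ over a finite candidate set supplied by Theorem~\ref{thm:Rational}, using the universal inequality $d_{\Omega X}(J,g^kJ)\geq k\tau(g)$; this is essentially the paper's Step 4 (via Lemma~\ref{lem:AxisFromAnother}). The genuine gap is that you never make the candidate set effective, and this is where all the work in the paper lies. First, your method for locating a vertex $o$ on an axis of $g$ in $X$ does not work as stated: the test $d_X(y,g^2y)=2d_X(y,gy)$ is not a justified characterisation of lying on an axis in a (quasi-)median graph (these graphs are not uniquely geodesic, and a two-step geodesic concatenation need not extend to a global one), and, more seriously, iterating the map $\mathcal{N}$ from a basepoint does not enumerate the vertices or hyperplanes of $X$: since $X$ need not be locally finite, $\mathcal{N}(x)$ contains only one representative neighbour per clique, so the $\mathcal{N}$-orbit is a thin subset (in $\mathrm{QM}(\Gamma,\mathcal{G})$ it is only the submonoid generated by the chosen generators $s_u$, which misses entire hyperplane carriers). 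The paper avoids this by finding a point on an axis of a \emph{power} of $g$ through skewered edges found on the constructible intervals $I(o,g^ko)$ (Lemmas~\ref{lem:Skewers}--\ref{lem:BelongsAxis}, Proposition~\ref{prop:FindAxis}), so that every search stays inside finite constructible sets.

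Second, your alternative route asserts that one can ``compute the finite quantity $\mathrm{QC}(\gamma)$ directly''; finiteness is not computability, and no procedure is offered. The paper's Steps 2 and 3 exist precisely for this: one first produces an edge $e$ on an axis and a power $L$ with the hyperplanes of $e$ and $g^Le$ strongly separated, and only then can one trap every pair of transverse chains crossing $\gamma$ between $g^{-3L}J$ and $g^{3L}J$, making $\mathrm{HQC}(g)$ (and hence, via Lemma~\ref{lem:FormulaQC}, the needed bound on $\mathrm{QC}$) computable from the finitely many hyperplanes separating $g^{-3L}e$ and $g^{3L}e$. Similarly, enumerating the hyperplanes in contact with $B(o,2\mathrm{QC}(\gamma)+1)\cap\mathrm{CH}(\gamma)$ is not immediate: balls in $X$ are not constructible (cliques may be infinite) and $\mathrm{CH}(\gamma)$ is an infinite object; the paper replaces this set by a subset of the finite, constructible interval $I(g^{-Q}x,g^Qx)$ using the strongly separated translates of $J$, and only then applies local constructibility. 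Finally, a small but real point about the ``moreover'' clause: taking the minimising pair $(k,J)$ over all $k\leq N^{\mathrm{QC}(g)}$ certifies the value of $\tau(g)$, but it does not certify that $J$ lies on an axis of $g^k$ unless $g^k$ is known to admit an axis in $\Omega X$; this is why the paper takes the single exponent $N^{2\mathrm{HQC}(g)}!$ (a common multiple of all possible periods) before invoking Lemma~\ref{lem:AxisFromAnother}.
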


\noindent
In addition of the preliminary work done so far, we need to be able to algorithmically find axes of strongly contracting isometries in constructible quasi-median graphs. 

\begin{prop}\label{prop:FindAxis}
Let $X$ be a constructible quasi-median graph. There exists an algorithm that provides, given a computable strongly contracting isometry $g \in \mathrm{Isom}(X)$, a vertex $x \in X$ and an integer $N \geq 1$ such that $x$ belongs to an axis of $g^N$. 
\end{prop}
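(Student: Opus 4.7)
My plan is to combine the algorithmic tools of Section~\ref{section:Constructible} with the geometry of strong separation to explicitly exhibit a vertex lying on an axis of some power of $g$.

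First, I would run the algorithm of Theorem~\ref{thm:WhenStronglyContracting} to verify that $g$ is strongly contracting, then pick any vertex $y_0$, compute a geodesic from $y_0$ to $gy_0$ (using geodesic constructibility), and let $J$ be the hyperplane containing any of its edges. I then enumerate $r = 1,2,3,\ldots$ and use Lemma~\ref{lem:AlgoStronglySeparated} to test whether $J$ and $g^r J$ are strongly separated; by Proposition~\ref{prop:Loxo}, the search terminates with some $r$, and a short convexity argument (any transverse hyperplane witnessing non-strong separation between two non-consecutive translates would descend to a consecutive pair) ensures that the full family $\{g^{rk}J \mid k \in \mathbb{Z}\}$ is pairwise strongly separated.

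Next, I would apply Lemma~\ref{lem:AlgoMinDist} to compute a pair of vertices $p \in N(J)$ and $q \in N(g^r J)$ realising the minimum distance between the two carriers. Strong separation of $J$ and $g^r J$ implies that the projection of $N(g^r J)$ onto $N(J)$ is a single vertex, necessarily equal to $p$, and symmetrically $q = \mathrm{proj}_{N(g^r J)}(p)$; by $g^r$-equivariance, $g^r p$ is then the unique vertex of $N(g^r J)$ closest to $N(g^{2r}J)$.

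Finally, I claim that $p$ lies on an axis of $g^r$, so the algorithm outputs $(x, N) := (p, r)$. The justification is that the bi-infinite concatenation
$$\gamma := \cdots \cup [g^{-r}q, p] \cup [p, q] \cup [q, g^r p] \cup [g^r p, g^r q] \cup \cdots,$$
where $[p, q]$ is any geodesic and $[q, g^r p]$ lies inside the gated carrier $N(g^r J)$, is itself a geodesic: by Lemma~\ref{lem:MinDistance}, the hyperplanes crossed by each translate of $[p, q]$ are exactly those separating $N(g^{rk} J)$ from $N(g^{r(k+1)}J)$, whereas those crossed by each translate of $[q, g^r p]$ lie inside $N(g^{r(k+1)}J)$ and are therefore either $g^{r(k+1)}J$ itself or transverse to it. Strong separation forbids any overlap between these two families across different indices $k$, ruling out repeated crossings; thus $\gamma$ is geodesic and $g^r$ acts on it by translation.

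The main obstacle is this last step: while the geometric picture is clear, a clean formal proof that the concatenation is geodesic requires care about the prism structure of carriers in the quasi-median setting together with the uniqueness of $p$, both of which rely on the interplay between strong separation and Helly-type arguments. Should that obstacle prove awkward in practice, a safer fallback is an iterative descent — enumerate vertices $z$ in the finite intervals $I(y_0, g^{kr} y_0)$ for growing $k$ (using Lemma~\ref{lem:IntervalFinite}), compute $d(z, g^{kr}z)$ at each step, and move to any $z$ with strictly smaller displacement; by integrality the descent terminates, and the equality $d(z, g^{2kr}z) = 2 d(z, g^{kr}z)$ combined with Lemma~\ref{lem:AxisFromAnother} certifies algorithmically when we have landed on an axis of $g^{kr}$.
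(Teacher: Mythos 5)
There is a genuine gap, and it occurs before the step you flagged as the main obstacle. Your hyperplane $J$ is extracted from an arbitrary geodesic $[y_0,gy_0]$, and nothing guarantees that $J$ crosses an axis of $g$. Proposition~\ref{prop:Loxo} only asserts that \emph{some} hyperplane crossing an axis has pairwise strongly separated $\langle g^r\rangle$-translates; it says nothing about your chosen $J$, so the termination claim is not justified as stated, and, more seriously, even when the test succeeds the output can be wrong. Concrete counterexample: let $X$ be the tree with vertex set $\mathbb{Z}\times\{0,1,2\}$, spine $\mathbb{Z}\times\{0\}$ and a tooth of length $2$ at each spine vertex, and let $g$ be the translation $(m,j)\mapsto(m+1,j)$, which is strongly contracting (in a tree any two distinct hyperplanes are strongly separated). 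Take $y_0=(0,2)$; the first edge of the geodesic to $gy_0$ gives the tooth hyperplane $J=\{(0,2),(0,1)\}$, and $J$, $gJ$ are strongly separated, so your search stops at $r=1$. The closest pair between $N(J)=\{(0,1),(0,2)\}$ and $N(gJ)=\{(1,1),(1,2)\}$ is $p=(0,1)$, $q=(1,1)$ with $d(p,q)=3$, yet $d(p,g^rp)=r+2>r=\tau(g^r)$ for every $r$, so $p$ lies on no axis of any power of $g$. The failure is visible in your final argument: the hyperplane of the edge $\{(1,0),(1,1)\}$ separates $N(J)$ from $N(gJ)$ \emph{and} $N(gJ)$ from $N(g^2J)$, so the families of hyperplanes crossed by consecutive translates of $[p,q]$ do overlap, and the concatenated path backtracks. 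Strong separation of $J$ and $g^rJ$ does not prevent this when $J$ misses the axis; this is exactly what the paper's \emph{skewering} condition is for. The paper searches over powers $k$ and edges of geodesics $[o,g^ko]$ for an edge that is both skewered by $g^k$ (checked via Lemma~\ref{lem:Skewers}) and has strongly separated image; Lemma~\ref{lem:FindSkewers} guarantees success, and in Lemma~\ref{lem:BelongsAxis} the skewering hypothesis is precisely what forces the hyperplane to cross the axis, after which the closest pair given by Lemma~\ref{lem:AlgoMinDist} lies on an axis of $g^{2k}$.

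Your fallback is also not a proof as it stands. The certificate $d(z,g^{2kr}z)=2\,d(z,g^{kr}z)$ is necessary for $z$ to lie on an axis of $g^{kr}$, but Lemma~\ref{lem:AxisFromAnother} certifies membership in an axis only from the equality $d(z,g^{kr}z)=\ell$, where $\ell$ is the translation length along an existing axis, a quantity the algorithm does not know; you would need to prove that the two-step equality implies $d(z,g^{nkr}z)=n\,d(z,g^{kr}z)$ for all $n$, which is not immediate. Moreover, the descent over the finite intervals $I(y_0,g^{kr}y_0)$ terminates at a local minimum of the displacement over the points you happen to enumerate, and you give no argument that this local minimum realises the global minimum, nor that the certificate will ever hold along the way. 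So the fallback needs essentially the same missing geometric input that the skewering mechanism supplies.
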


\noindent
The following notion will be useful in order to find axes.

\begin{definition}
Let $X$ be a quasi-median graph. An isometry $g \in \mathrm{Isom}(X)$ \emph{skewers} an oriented edge $e:=(x,y)$ if the sector containing $y$ and delimited by the hyperplane containing $e$ satisfies $gS \subsetneq S$.
\end{definition}

\noindent
As the typical example to keep in mind, let $\gamma$ be an axis of our isometry $g$ and let $e \subset \gamma$ be an edge. Orient $\gamma$ such that $g$ translates the vertices $\gamma$ in the positive direction and endow $e$ with the induced orientation. Then, if the hyperplanes containing $e$ and $ge$ are not transverse, then $g$ skewers $e$. 

\medskip \noindent
First, we notice that skewering an edge can be detected algorithmically.

\begin{lemma}\label{lem:Skewers}
Let $X$ be a quasi-median graph, $g \in \mathrm{Isom}(X)$ an isometry, and $e:=(x,y)$ an oriented edge. Then $g$ skewers $e$ if and only if the following conditions hold:
\begin{itemize}
	\item $gy$ does not belong to $I(y,gx)$;
	\item the hyperplanes containing $e$ and $ge$ are distinct and non-transverse.
\end{itemize}
\end{lemma}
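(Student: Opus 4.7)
The proof establishes the equivalence in two parts. The forward direction essentially unpacks the definition of skewering; the reverse direction identifies $gJ$ itself as the hyperplane witnessing condition~(1).

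\emph{Forward direction.} Suppose $gS \subsetneq S$. First, I would argue that $gJ \neq J$ and $gJ$ is not transverse to $J$: if $gJ = J$ then $gS$ is a sector of $J$, so $gS$ either equals $S$ or is disjoint from it, precluding proper containment; and transversality of $gJ$ with $J$ would force each sector of $gJ$ to meet both $S$ and $S^c$, by the combinatorics of transverse hyperplanes. Non-transversality then places $gJ$ in a single sector of $J$, and the inclusion $gS \subset S$ pins that sector down to $S$. Consequently, the sector of $gJ$ opposite to $gS$ contains $S^c$, and in particular contains $x$; since $e \not\subset gJ$, the other endpoint $y$ lies in the same sector of $gJ$ as $x$; and since $ge \subset gJ$, the vertex $gx$ lies opposite $gy$ across $gJ$, hence in the sector shared with $\{x,y\}$. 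Sectors being gated and therefore convex, every geodesic from $y$ to $gx$ stays in this common sector, which does not contain $gy \in gS$. Hence $gy \notin I(y,gx)$.

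\emph{Reverse direction.} Assume (1) and (2). Because $e \not\subset gJ$, the vertices $x,y$ lie in a common sector $T$ of $gJ$; and because $ge \subset gJ$ separates $gx$ from $gy$, the vertex $gx$ sits in a sector of $gJ$ distinct from $gS$. Condition~(1) yields a hyperplane $H$ separating $gy$ from both $y$ and $gx$. Since $gy$ and $gx$ are adjacent across the edge $ge$, the only hyperplane separating them is the one containing $ge$, namely $gJ$; so $H=gJ$. Then $gJ$ separates $gy$ from $y$ (placing $y$ in $T$, as claimed) and $gy$ from $gx$ (placing $gx$ in $T$ as well). Thus $T$ contains $y \in S$ and $x \in S^c$, so it meets both sides of $J$; by non-transversality, the unique sector of $gJ$ that can meet more than one sector of $J$ is the ``large'' one containing the carrier of $J$, so $T$ is large and $gS$ is a ``small'' sector contained in a single sector of $J$. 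Tracking the sides on which $gy$ and $gx$ sit, this sector must be $S$, giving $gS \subset S$; strict containment follows from $gJ \neq J$, so $gS \subsetneq S$ and $g$ skewers $e$.

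\emph{Main obstacle.} The delicate step is the final move of the reverse direction: after identifying $T$ as the large sector of $gJ$ and $gS$ as a small sector lying in a single sector of $J$, one must pin down that sector as $S$ itself. The crucial tool is the uniqueness of the hyperplane separating two adjacent vertices, which forces $gJ$ to be the witness for~(1); combined with the non-transversality of $J$ and $gJ$ and the observation that only the large sector of $gJ$ can straddle $J$, this constrains where $gJ$ sits relative to $J$ and therefore where $gS$ lands. All other parts of the argument are routine manipulations of sectors and intervals in a quasi-median graph, using the convexity/gatedness of sectors and the fact that geodesics cross each hyperplane at most once.
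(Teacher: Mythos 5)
Your forward direction is essentially the paper's, up to one small inaccuracy: in a quasi-median graph a hyperplane may have more than two sectors, so $y$ and $gx$ need not lie in a \emph{common} sector of $gJ$; the correct argument (and the paper's) is that both lie in the complement of $gS$, which is a multisector and hence convex by Proposition~\ref{prop:ConvexHull}, so $I(y,gx)$ avoids $gS \ni gy$. The genuine gap is in your reverse direction, at the sentence ``tracking the sides on which $gy$ and $gx$ sit, this sector must be $S$.'' Nothing in conditions (1) and (2) forces the sector of $J$ containing $gS$ (equivalently, containing $N(gJ)$) to be $S$ rather than some other sector of $J$, for instance the one containing $x$. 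What (1) and (2) actually give, as your own intermediate steps show, is exactly that the sector of $gJ$ containing $gy$ is not the sector of $gJ$ containing $J$; this says nothing about which side of $J$ the hyperplane $gJ$ sits on, and that missing piece of information is precisely what skewering requires.

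Moreover, this step cannot be repaired, because the ``if'' direction of the lemma is false as stated. Take $X=\mathbb{Z}$ (a bi-infinite path, hence a median and so quasi-median graph), $e=(0,1)$, and $g\colon n\mapsto -n-10$. The hyperplanes containing $e$ and $ge$ are the edges $\{0,1\}$ and $\{-11,-10\}$, which are distinct and non-transverse, so (2) holds; and $gy=-11\notin I(1,-10)=\{-10,\dots,0,1\}$, so (1) holds; yet $S=\{n\geq 1\}$ and $gS=\{n\leq -11\}$ are disjoint, so $g$ does not skewer $e$. Note that the paper's own proof of the converse suffers from the same defect: its claimed dichotomy ``$gS\subsetneq S$ unless $gS$ is the sector of $gJ$ containing $J$'' fails in this example, where neither alternative occurs. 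The statement (and your argument, like the paper's) does go through if one adds a condition guaranteeing that $gJ$ lies in $S$, e.g.\ that $gx\in S$: then $N(gJ)$, which lies in a single sector of $J$ since $J$ and $gJ$ are distinct and non-transverse, lies in $S$; every sector of $gJ$ other than the one containing $J$ meets $N(gJ)$ and contains no edge of $J$, hence is contained in $S$; by (1) this applies to $gS$, and the inclusion is proper since $y\in S\setminus gS$.
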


\begin{proof}
First, assume that $g$ skewers $e$. Let $S$ denote the sector containing $y$ delimited by the hyperplane $J$ containing $e$. Clearly, $gS \subsetneq S$ implies that the hyperplanes $J$ and $gJ$ are distinct and non-transverse. Moreover, $y$ and $gx$ do not belong to $gS$, hence $I(y,gx) \subset gS^c$. This implies that $gy$, which belongs to $gS$, cannot belong to $I(y,gx)$. Conversely, assume that the two conditions given by our lemma hold. Because $J$ and $gJ$ are distinct and non-transverse, we have $gS \subsetneq S$ unless $gS$ is the sector delimited by $gJ$ containing $J$. If so, $gy$ coincides with the projection of $y$ on the clique containing $gx$ and $gy$, hence $gy \in I(y,gx)$, contradicting our first condition. 
\end{proof}

\begin{cor}\label{cor:SkewersAlgo}
Let $X$ be a geodesically constructible quasi-median graph. There exists an algorithm that determines, given a computable isometry $g \in \mathrm{Isom}(X)$ and an oriented edge $e$, whether or not $g$ skewers $e$.
\end{cor}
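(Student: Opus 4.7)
The plan is to observe that Corollary~\ref{cor:SkewersAlgo} is a direct combination of the characterisation of skewering given by Lemma~\ref{lem:Skewers} with the algorithmic primitives established earlier in this subsection. By Lemma~\ref{lem:Skewers}, to decide whether $g$ skewers an oriented edge $e = (x,y)$, it suffices to check two decidable conditions: first, that $gy \notin I(y, gx)$; second, that the hyperplanes containing $e$ and $ge$ are distinct and non-transverse.

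To carry this out, I would proceed as follows. Using the computability of $g$, compute the vertices $gx$ and $gy$. Since $X$ is geodesically constructible, we can produce the interval $I(y, gx)$ as an explicit finite set of vertices (finiteness being guaranteed by Lemma~\ref{lem:IntervalFinite}); membership of $gy$ in this set is then decided by comparison, settling the first condition. Next, construct the edge $ge$ from the vertices $gx$ and $gy$, and apply Corollary~\ref{cor:AlgoHyp} to decide whether $e$ and $ge$ belong to the same hyperplane, and Lemma~\ref{lem:AlgoTransverse} to decide whether the two hyperplanes are transverse. Combining these tests yields the second condition of Lemma~\ref{lem:Skewers}.

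Since both conditions are algorithmically decidable, the conjunction is as well, and the algorithm returns \emph{true} precisely when $g$ skewers $e$. There is no substantive obstacle here beyond invoking the already-established lemmas in the right order; the only point worth emphasising is that the interval $I(y, gx)$ is a priori only finite as an abstract consequence of Lemma~\ref{lem:IntervalFinite}, but geodesic constructibility is exactly the hypothesis that lets us enumerate it in finite time, so the membership check for $gy$ is effective.
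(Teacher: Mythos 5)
Your proposal is correct and follows the paper's proof exactly: the paper likewise decides the first condition of Lemma~\ref{lem:Skewers} directly from geodesic constructibility and the second via Corollary~\ref{cor:AlgoHyp} and Lemma~\ref{lem:AlgoTransverse}. Your extra remark about finiteness of $I(y,gx)$ is harmless but unnecessary, since the constructibility hypothesis already outputs the interval as a finite set.
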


\begin{proof}
The first condition of Lemma~\ref{lem:Skewers} is decidable algorithmically because $X$ is geodesically constructible; as well as the second condition as a consequence of Corollary~\ref{cor:AlgoHyp} and Lemma~\ref{lem:AlgoTransverse}. 
\end{proof}

\noindent
Next, we show that powers of strongly contracting isometries skewer edges.

\begin{lemma}\label{lem:FindSkewers}
Let $X$ be a quasi-median graph and $g \in \mathrm{Isom}(X)$ a strongly contracting isometry. There exists some $N \geq 1$ such that, for every vertex $x \in X$ and every geodesic $[x,g^Nx]$ from $x$ to $g^Nx$, some oriented edge $e \subset [x,g^Nx]$ is skewered by $g^N$ and the hyperplanes containing $e$ and $g^Ne$ are strongly separated.
\end{lemma}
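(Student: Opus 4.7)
The plan is to exploit Proposition~\ref{prop:Loxo}: since $g$ is strongly contracting, there exist an integer $r \geq 1$ and a hyperplane $J$ crossing an axis $\gamma$ of $g$ in $X$ such that the family $\mathcal{J} := \{J_k := g^{kr}J \mid k \in \mathbb{Z}\}$ is pairwise strongly separated. I claim that $N := r$ satisfies the conclusion of the lemma. The whole argument is built around a nested sequence of sectors. For each $k$, let $T_k^+$ denote the sector delimited by $J_k$ that contains the forward subray of $\gamma$ past $J_k$. Strong separation of $J_k$ from $J_{k+1}$ forces the carrier $N(J_{k+1})$ to lie in a single sector of $J_k$, which by construction must be $T_k^+$; combined with the fact that vertices of $N(J_{k+1})$ on the backward side of $J_{k+1}$ lie in $T_k^+ \setminus T_{k+1}^+$, this gives the strict chain $T_{k+1}^+ \subsetneq T_k^+$.

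Given a vertex $x \in X$, I would set $k_x := \max\{k \in \mathbb{Z} \mid x \in T_k^+\}$. Because $g^r$ shifts the sequence $(T_k^+)$ by one, one gets $k_{g^r x} = k_x + 1$, so $x \notin T_{k_x+1}^+$ while $g^r x \in T_{k_x+1}^+$. Hence $J_{k_x+1}$ separates $x$ from $g^r x$, and by Theorem~\ref{thm:BigQM}(iii) any geodesic $[x, g^r x]$ crosses $J_{k_x+1}$ at a unique edge $e$. Orienting $e = (u,v)$ with $v$ on the $g^r x$-side, the sector $S$ containing $v$ equals $T_{k_x+1}^+$, and
\[
g^r S \;=\; T_{k_x+2}^+ \;\subsetneq\; T_{k_x+1}^+ \;=\; S,
\]
so $g^r$ skewers $e$. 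The hyperplane containing $g^r e$ is $g^r J_{k_x+1} = J_{k_x+2}$, which is strongly separated from $J_{k_x+1}$ since both belong to $\mathcal{J}$. Both required conclusions then follow.

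The step I expect to be the main obstacle is the well-definedness of $k_x$, i.e.\ the assertion that $A_x := \{k \in \mathbb{Z} \mid x \in T_k^+\}$ is non-empty and bounded above. Nestedness makes $A_x$ downward-closed, so the task reduces to controlling its asymptotics. Fix a basepoint $o \in \gamma$: along the axis one checks directly that $o \in T_k^+$ for all $k$ sufficiently small and $o \in T_k^-$ (the analogously defined backward sector) for all $k$ sufficiently large. For an arbitrary $x$, only finitely many hyperplanes separate $x$ from $o$, so only finitely many $J_k$ can do so, meaning that for all but finitely many $k$ the vertices $x$ and $o$ lie in the same sector of $J_k$. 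Combining this with the behaviour of $o$ forces $x \in T_k^+$ for all $k$ small enough and $x \notin T_k^+$ for all $k$ large enough, making $k_x$ a well-defined integer and completing the argument.
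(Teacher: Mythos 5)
Your proposal is correct and follows essentially the same route as the paper: take $N$ and the pairwise strongly separated family $\{g^{kN}J\}$ from Proposition~\ref{prop:Loxo}, place $x$ between two consecutive translates, and use the dual edge of the intermediate hyperplane on $[x,g^Nx]$. Your nested-sector bookkeeping with $T_k^+$ and $k_x$ is just a careful justification of the step the paper states as ``up to replacing $J$ with a $\langle g^N\rangle$-translate, $x$ lies between $g^{-N}J$ and $J$,'' and the skewering verification matches the paper's ``$g^N$ clearly skewers $e$.''
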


\begin{proof}
According to Proposition~\ref{prop:Loxo}, there exists some integer $N \geq 1$ and some hyperplane $J$ crossing an axis of $g$ such that $\{g^{kN}J \mid k \in \mathbb{Z} \}$ is a collection of pairwise strongly separated hyperplanes. Up to replacing $J$ with a $\langle g^N \rangle$-translate, assume that our vertex $x$ lies between $g^{-N}J$ and $J$. Of course, $g^Nx$ lies between $J$ and $g^NJ$. Therefore, our geodesic $[x,g^Nx]$ from $x$ to $g^Nx$ must cross $J$; let $e$ denote the corresponding oriented edge. The hyperplanes containing $e$ and $g^Ne$, namely $J$ and $g^NJ$, are strongly separated; and $g^N$ clearly skewers $e$. 
\end{proof}

\noindent
Finally, we explain how skewered edges can be helpful in order to find axes of strongly contracting isometries. 

\begin{lemma}\label{lem:BelongsAxis}
Let $X$ be a quasi-median graph, $g \in \mathrm{Isom}(X)$ a loxodromic isometry, and $e \subset X$ an oriented edge skewered by $g$. If the hyperplanes containing $e$ and $ge$ are strongly separated, then the two vertices minimising the distance between the carriers of these hyperplanes belong to an axis of $g^2$.
\end{lemma}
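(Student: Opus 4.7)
My plan is to construct an explicit bi-infinite geodesic $\gamma$ on which $g$ (and \emph{a fortiori} $g^2$) acts as a translation, with the two minimising vertices sitting on $\gamma$. Let $J_0$ denote the hyperplane containing $e$, set $J_k := g^k J_0$ for $k \in \mathbb{Z}$, and let $S$ denote the sector of $J_0$ containing the terminal endpoint of $e$; the skewering hypothesis provides the nested chain $\cdots \supsetneq S \supsetneq gS \supsetneq g^2 S \supsetneq \cdots$, whose boundary hyperplanes are precisely the $J_k$'s.

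First I would upgrade the hypothesis and show that the entire family $\{J_k\}_{k \in \mathbb{Z}}$ is \emph{pairwise} strongly separated. If some hyperplane $H$ were transverse to both $J_i$ and $J_j$ with $|j-i|\geq 2$, then the convexity of $N(H)$ together with the fact that $J_{i+1}$ lies between $J_i$ and $J_j$ would force $N(H)$ to contain vertices on both sides of $J_{i+1}$, so that $H$ would also be transverse to $J_{i+1}$; this contradicts the strong separation of $J_i$ and $J_{i+1}$, which follows from the hypothesis by $g$-equivariance.

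Next, I would observe that strong separation forces the projection $\mathrm{proj}_{N(J_0)}(N(J_1))$ to be reduced to a single vertex $p$: any hyperplane crossing this projection would have to cross both $N(J_0)$ and $N(J_1)$, hence be transverse to both $J_0$ and $J_1$ (strong separation already prevents $J_0$ from crossing $N(J_1)$, and symmetrically), which is forbidden. Similarly $\mathrm{proj}_{N(J_1)}(N(J_0)) = \{q\}$, and $\{p,q\}$ is exactly the pair of minimising vertices from the statement. By $g$-equivariance, $\{g^kp, g^kq\}$ is the analogous pair for $(N(J_k), N(J_{k+1}))$.

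Finally, for every $k \in \mathbb{Z}$ I would fix a geodesic $[g^kp, g^kq]$ together with a geodesic $[g^kq, g^{k+1}p]$ inside $N(J_{k+1})$, and concatenate all of them to obtain a bi-infinite path $\gamma$. The main difficulty, which is the heart of the proof, is to verify that $\gamma$ is genuinely a bi-infinite geodesic; I would do this by checking that no hyperplane is crossed by two distinct segments. By Lemma~\ref{lem:MinDistance}, the segment $[g^kp, g^kq]$ crosses exactly the hyperplanes strictly separating $N(J_k)$ from $N(J_{k+1})$, while $[g^kq, g^{k+1}p]$ crosses $J_{k+1}$ together with those hyperplanes transverse to $J_{k+1}$ that separate $N(J_k)$ from $N(J_{k+2})$. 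The pairwise strong separation of the $J_k$'s established above rules out any single hyperplane being transverse to two distinct $J_k$'s, and the pairwise disjointness of the ``gaps'' between consecutive carriers takes care of the remaining cases. Since $g$ cyclically permutes the segments of $\gamma$ by mapping $p \mapsto gp$ and $q \mapsto gq$, it acts on $\gamma$ as a translation, so $\gamma$ is an axis of $g^2$ containing both $p$ and $q$.
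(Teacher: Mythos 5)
Your argument is correct, but it follows a genuinely different route from the paper. The paper starts from an axis $\gamma$ of $g$ (this is where the loxodromic hypothesis is really used), checks that the hyperplane $J$ containing $e$ must cross $\gamma$, picks a vertex $p\in\gamma$ between $g^{-1}J$ and $J$, and builds a geodesic from $p$ to $g^2p$ passing through the two minimising vertices; Proposition~\ref{prop:Axis} then promotes this geodesic to an axis of $g^2$ through those vertices -- the passage to $g^2$ is forced by the need to travel from before $J$ to beyond $gJ$. You instead build the axis from scratch: you upgrade the hypothesis to pairwise strong separation of the whole family $\{g^kJ\}$ (your betweenness argument is fine; note only that in the excluded case $H=J_{i+1}$ one uses that $J_i$ and $J_{i+1}$ are non-transverse, which follows from the nesting given by skewering), you use strong separation to see that the mutual projections of consecutive carriers are single vertices (here you implicitly use the standard fact, derivable from Proposition~\ref{prop:Projection}, that a hyperplane separating two points of $\mathrm{proj}_{N(J_0)}(N(J_1))$ must cross both carriers, and that a hyperplane crossing a carrier $N(J)$ is either $J$ or transverse to $J$ -- not stated explicitly in the paper but standard), and then you concatenate gap geodesics (whose crossing sets are controlled by Lemma~\ref{lem:MinDistance}) with geodesics inside the carriers and verify geodesicity by wall-chasing. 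Two payoffs of your approach: it does not invoke Proposition~\ref{prop:Axis} or the pre-existing axis of $g$ at all, and it actually yields the stronger conclusion that the minimising pair lies on an axis of $g$ itself, not merely of $g^2$. One small point of hygiene: to have $g$ literally preserve your path you should choose the segments equivariantly, i.e.\ take $g^k[p,q]$ and $g^k[q,gp]$ for two fixed base geodesics; and the final wall-chasing (\emph{no hyperplane crossed by two distinct segments}) deserves to be written out, since it is where the nesting of the sectors and the disjointness of carriers from the ``gap'' hyperplanes are actually used.
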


\begin{proof}
Let $x,y \in X$ be two vertices such that $e=(x,y)$, let $J$ denote the hyperplane containing $e$, and let $S$ denote the sector delimited by $J$ containing $y$. Given an axis $\gamma$ of $g$, notice that $J$ crosses $\gamma$. Indeed, otherwise either $\gamma \subset S$, and $J,gJ,g^2J, \ldots$ yields infinitely many hyperplanes separating $S^c$ and $\gamma$, which is impossible; or $S \cap \gamma= \emptyset$, and, given an arbitrary vertex $v \in S$, the hyperplanes $J, \ldots, g^nJ$ separate $g^nv$ from $\gamma$ for every $n \geq 1$, which is also impossible since $d(g^nx, \gamma)=d(x,\gamma)$ for every $n \geq 1$. Thus, $\{ g^k J \mid k \in \mathbb{Z}\}$ defines a collection of pairwise strongly separated hyperplanes crossing the axis $\gamma$ of $g$. Fix a vertex $p \in \gamma$ lying between $g^{-1}J$ and $J$. Of course, $g^2p$ lies between $gJ$ and $g^2J$. As a consequence of Proposition~\ref{prop:Axis}, $g^2$ admits an axis passing through any fixed geodesic between $p$ and $g^2p$. But, given two vertices $x \in N(J)$ and $y \in N(gJ)$ minimising the distance between $N(J)$ and $N(gJ)$, concatenating a geodesic from $p$ to $x$ with a geodesic from $x$ to $y$ and next a geodesic from $y$ to $g^2p$ yields a geodesic from $p$ to $g^2p$. (Indeed, no hyperplane can cross both $[p,x]$ and $[y,g^2p]$ because $J$ and $gJ$ are strongly separated; and no hyperplane can cross both $[p,x]$ and $[x,y]$ or $[x,y]$ and $[y,g^2p]$ since the hyperplanes separating $x$ and $y$ coincide with the hyperplanes separating $N(J)$ and $N(gJ)$. So no hyperplane can cross our path twice, proving that it is a geodesic.) 
\end{proof}

\begin{proof}[Proof of Proposition~\ref{prop:FindAxis}.]
Fix an arbitrary vertex $o \in X$. For successive powers $k=1,2,\ldots$, check whether some oriented edge $e$ of some geodesic between $o$ and $g^ko$ satisfies the following condition: $g^k$ skewers $e$ and the hyperplanes containing $e$ and $g^ke$ are strongly separated. This can be done algorithmically according to Corollary~\ref{cor:SkewersAlgo} and Lemma~\ref{lem:AlgoStronglySeparated}. According to Lemma~\ref{lem:FindSkewers}, we eventually find a power $N \geq 1$ and an oriented edge $e$ for which it works. Applying Lemma~\ref{lem:AlgoMinDist} to the edges $e$ and $g^Ne$ yields two vertices that, according to Lemma~\ref{lem:BelongsAxis}, belong to an axis of $g^{2N}$. So our algorithm outputs one of these two vertices and the integer $2N$. 
\end{proof}

\begin{proof}[Proof of Theorem~\ref{thm:BigAlgo}.]
Let $g \in \mathrm{Isom}(X)$ be a computable isometry. 

\medskip \noindent
\emph{Step 1.} Determine whether or not $g$ is strongly contracting. If no, stop and output $\| g \|_{\Omega X}=0$. If yes, pass to the next step.

\medskip \noindent
Checking whether $g$ is strongly contracting can be done algorithmically according to Theorem~\ref{thm:WhenStronglyContracting}. If not, then it follows from Proposition~\ref{prop:Loxo} that $g$ has bounded orbits in $\Omega X$, hence $\| g \|_{\Omega X}=0$. From now on, we assume that $g$ is strongly contracting.

\medskip \noindent
\emph{Step 2.} Find an edge $e$ and a power $L \geq 1$ such that $e$ is contained in an axis of $g^L$ and the hyperplanes containing $e$ and $g^Le$ are strongly separated.

\medskip \noindent
According to Proposition~\ref{prop:FindAxis}, we can find algorithmically a vertex $x \in X$ and a power $n \geq 1$ such that $x$ belongs to an axis of $g^n$. Construct a geodesic $[x,g^nx]$. We know from Proposition~\ref{prop:Axis} that the $\langle g^n \rangle$-translates of $[x,g^nx]$ yields an axis $\gamma$ of $g^n$. Testing the successive values of $k \geq 0$, find an edge $e \subset [x,g^nx]$ such that the hyperplanes containing $e$ and $g^{kn}e$ are strongly separated. This can be done algorithmically according to Lemma~\ref{lem:AlgoStronglySeparated} and we know that we will eventually find such a $k$ thanks to Proposition~\ref{prop:Loxo}. Then $e$ and $L:=kn$ are the data we are looking for. 

\medskip \noindent
\emph{Step 3.} Compute $\mathrm{HQC}(g)$. 

\medskip \noindent
Let $\mathcal{H},\mathcal{V}$ be two finite transverse collection of hyperplanes crossing the axes of $g$. Up to translating $\mathcal{H},\mathcal{V}$ by a power of $g^L$, we assume that at least one hyperplane $H$ in $\mathcal{H} \cup \mathcal{V}$ lies between $g^{-L}J$ and $g^LJ$, where $J$ denotes the hyperplane containing our edge $e$. Without loss of generality, say that $H$ belongs to $\mathcal{H}$. Because every hyperplane in $\mathcal{V}$ is transverse to $H$, they must lie between $g^{-2L}J$ and $g^{2L}J$. And because every hyperplane in $\mathcal{H}$ is transverse to every hyperplane in $\mathcal{V}$, they must lie between $g^{-3L}$ and $g^{3L}$. Thus, every hyperplane in $\mathcal{H} \cup \mathcal{V}$ must separate $g^{-3L}e$ and $g^{3L}e$. We immediately deduce that $\mathrm{HQC}(g) \leq 6L \|g\|_X$, which is sufficient for the rest of the proof; but we can also compute precisely $\mathrm{HQC}(g)$ by studying the pattern of transverse hyperplanes separating $g^{-3L}e$ and $g^{3L}e$ (Lemma~\ref{lem:AlgoTransverse}). 

\medskip \noindent
\emph{Step 4.} Let $x$ be an endpoint of $e$. Enumerate the hyperplanes in contact with 
$$B \left( x,N^{2 \mathrm{HQC}(g)} \right) \cap I \left( g^{-1-L(4 \mathrm{HQC}(g)+1)}x, g^{1+L(4 \mathrm{HQC}(g)+1)}x \right) $$ 
and, for each such hyperplane $H$, compute $d_{\Omega X} (H,g^{N^{2\mathrm{HQC}(g)}!}H)$. Pick a hyperplane $K$ for which this quantity is minimal. Then $K$ belongs to an axis of $g^{N^{2 \mathrm{HQC}(g)}!}$ in $\Omega X$ and $\|g\|_{\Omega X}= d_{\Omega X}(K, g^{N^{2 \mathrm{HQC}(g)}!} K)/ N^{2 \mathrm{HQC}(g)}!$. 

\medskip \noindent
We deduce from Theorem~\ref{thm:Rational} and Lemma~\ref{lem:FormulaQC} that there exists some hyperplane $H$ in contact with $F:=B(x,4 \mathrm{HQC}(g)+1) \cap \mathrm{CH}(\gamma)$ such that $K$ belongs to an axis of $g^k$ in $\Omega X$ where $k := N^{2 \mathrm{HQC}(g)}!$. 

\medskip \noindent
Observe that $F \subset I(g^{-Q}x,g^{Q}x)$ where $Q:=1+ L(4 \mathrm{HQC}(g)+1)$. Indeed, $x$ lies between $g^{-L}J$ and $g^LJ$, so $B(x,4 \mathrm{HQC}(g)+1)$ must lie between $g^{-Q+1}J$ and $g^{Q-1}J$. Now, if $p$ belongs to $\mathrm{CH}(\gamma)$ and lies between $g^{-Q+1}J$ and $g^{Q-1}J$, then $p$ must lie in $I(g^{-Q}x,g^Qx)$ because otherwise there would exist some hyperplane separating $p$ from $\{g^{-Q}x,g^Qx\}$ but also crossing $\gamma$, necessarily not between $g^{-Q}x$ and $g^Qx$, so such a hyperplane would be transverse to both $g^{-Q+1}J$ and $g^{-Q}J$ or to both $g^{Q-1}J$ and $g^QJ$, which is impossible since these two pairs are strongly separated. 

\medskip \noindent
Thus, there exists some hyperplane $H$ in contact with $F':= B(x,4\mathrm{HQC}(g)+1) \cap I(g^{-Q}x,g^{Q}x)$ such that $H$ belongs to an axis of $g^k$ in $\Omega X$. We can enumerate the hyperplanes in contact with $F'$ because an interval is always finite (Lemma~\ref{lem:IntervalFinite}) and because $X$ is locally constructible. Because distances in $\Omega X$ are computable according to Proposition~\ref{prop:AlgoDistCrossing}, we can pick a hyperplane $K$ in our collection for which $d_{\Omega X}(K, g^kK)$ is minimal. It follows from Lemma~\ref{lem:AxisFromAnother} that $K$ belongs to an axis of $g^k$. We have
$$\|g\|_{\Omega X} = \frac{1}{k} \| g^k \| = \frac{1}{k} \lim\limits_{n \to + \infty} \frac{d_{\Omega X}(K,g^{kn}K)}{n} = d_{\Omega X}(K,g^k K)/k,$$
concluding the proof of our theorem. 
\end{proof}

\section{Graph products of groups}

\noindent
Let $\Gamma$ be a graph and $\mathcal{G}= \{ G_u \mid u \in V(\Gamma) \}$ a collection of groups indexed by the vertex-set $V(\Gamma)$ of $\Gamma$. The \emph{graph product} $\Gamma \mathcal{G}$ is 
$$\langle G_u \ (u\in V(\Gamma)) \mid [G_u,G_v]=1 \ (\{u,v\} \in E(\Gamma)) \rangle$$
where $E(\Gamma)$ denotes the edge-set of $\Gamma$ and where $[G_u,G_v]=1$ is a shorthand for $[g,h]=1$ for all $g \in G_u$, $h \in G_v$. The groups of $\mathcal{G}$ are referred to as \emph{vertex-groups}. We emphasize that, unless explicitly stated, vertex-groups are not assumed to be finite.

\medskip \noindent
\textbf{Convention.} We always assume that the groups in $\mathcal{G}$ are non-trivial. Notice that it is not a restrictive assumption, since a graph product with some trivial factors can be described as a graph product over a smaller graph all of whose factors are non-trivial.

\medskip \noindent
A \emph{word} in $\Gamma \mathcal{G}$ is a product $g_1 \cdots g_n$ where $n \geq 0$ and where, for every $1 \leq i \leq n$, $g_i \in G$ for some $G \in \mathcal{G}$; the $g_i$'s are the \emph{syllables} of the word, and $n$ is the \emph{length} of the word. Clearly, the following operations on a word does not modify the element of $\Gamma \mathcal{G}$ it represents:
\begin{description}
	\item[Cancellation:] delete the syllable $g_i$ if $g_i=1$;
	\item[Amalgamation:] if $g_i,g_{i+1} \in G$ for some $G \in \mathcal{G}$, replace the two syllables $g_i$ and $g_{i+1}$ by the single syllable $g_ig_{i+1} \in G$;
	\item[Shuffling:] if $g_i$ and $g_{i+1}$ belong to two adjacent vertex-groups, switch them.
\end{description}
A word is \emph{graphically reduced} if its length cannot be shortened by applying these elementary moves. Every element of $\Gamma \mathcal{G}$ can be represented by a graphically reduced word, and this word is unique up to the shuffling operation. 
For more information on graphically reduced words, we refer to \cite{GreenGP} (see also \cite{HsuWise,VanKampenGP}). 

\medskip \noindent
The connection between graph products and quasi-median graphs is made explicit by the following statement \cite[Proposition~8.2, Corollary~8.7]{QM}:

\begin{thm}
Let $\Gamma$ be a graph and $\mathcal{G}$ a collection of groups indexed by $V(\Gamma)$. The Cayley graph 
$$\mathrm{QM}(\Gamma, \mathcal{G}):= \mathrm{Cayl} \left( \Gamma \mathcal{G}, \bigcup\limits_{G \in \mathcal{G}} G \backslash \{1 \} \right)$$ 
is a quasi-median graph of cubical dimension $\mathrm{clique}(\Gamma)= \max \{ \# V(\Lambda ) \mid \Lambda \subset \Gamma \ \text{clique} \}$. 
\end{thm}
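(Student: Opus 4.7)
The plan is to work entirely in terms of graphically reduced words, which control both the distance function and the local structure of the Cayley graph $\mathrm{QM}(\Gamma, \mathcal{G})$.

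First I would establish the basic dictionary. The length $|g|$ of any graphically reduced word representing $g \in \Gamma\mathcal{G}$ is an invariant of $g$ (by uniqueness up to shuffling), and one checks directly that it equals the distance from $1$ to $g$ in the Cayley graph: any path from $1$ to $g$ corresponds to some non-reduced word, and cancellations, amalgamations, and shufflings can only shorten it. In particular, $d(x,y)$ is the length of a graphically reduced word for $x^{-1}y$. I would then identify cliques: the sets $gG_u$ (for $g \in \Gamma\mathcal{G}$ and $u \in V(\Gamma)$) are precisely the cliques of $\mathrm{QM}(\Gamma,\mathcal{G})$, because two adjacent edges at a common vertex $g$ lie in a common clique if and only if their syllables belong to the same vertex-group.

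Next I would verify the four defining properties. The absence of induced $K_4^-$ follows because if three vertices pairwise share edges inside the same clique structure, they must all lie in a single coset $gG_u$ and hence span a clique, not $K_4^-$; formally, two distinct cosets $gG_u$ and $g'G_v$ meet in at most one point, and the obstruction comes from the fact that an edge determines the vertex-group of its syllable. The absence of induced $K_{3,2}$ reduces to showing that if two vertices have two common neighbours via syllables in different vertex-groups $G_u, G_v$, then $\{u,v\}\in E(\Gamma)$, so a square exists, and a third common neighbour would force the shuffle pattern that forbids $K_{3,2}$. For the triangle condition, given $a,x,y$ with $x\sim y$ and $d(a,x)=d(a,y)$, write the adjacency $x\sim y$ as multiplication by a syllable in some $G_u$; reducing the graphically reduced word for $a^{-1}x$ through the last syllable produces a vertex $z$ adjacent to both $x$ and $y$ in the same clique $xG_u=yG_u$ with $d(a,z)=d(a,x)-1$. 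The quadrangle condition is similar: given the hypothesis, the last syllables of the reduced words for $a^{-1}x$ and $a^{-1}y$ lie in distinct vertex-groups $G_u$, $G_v$; since both occur in a reduced word for $a^{-1}z$, a shuffling argument forces $\{u,v\}\in E(\Gamma)$, and the desired $w$ is obtained by stripping both last syllables from $a^{-1}z$.

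Finally, for the cubical dimension, I would show that every prism is of the form $g(G_{u_1}\times\cdots\times G_{u_k})$ where $\{u_1,\dots,u_k\}$ spans a clique of $\Gamma$. One direction is immediate: if $\{u_1,\dots,u_k\}$ induces a clique in $\Gamma$ then the vertex-groups pairwise commute, and the corresponding coset is a Cartesian product of cliques. Conversely, given a prism, its factor cliques at a fixed vertex $g$ are cosets $gG_{u_1},\dots,gG_{u_k}$, and the existence of the squares spanned by pairs of factors forces $\{u_i,u_j\}\in E(\Gamma)$ for all $i\neq j$. Maximising $k$ gives $\mathrm{clique}(\Gamma)$.

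The main obstacle will be the verification of the quadrangle condition: one must argue cleanly that the syllables witnessing the two neighbours of $z$ must actually commute in $\Gamma\mathcal{G}$, which requires a careful normal-form argument comparing the reduced expressions of $a^{-1}x$, $a^{-1}y$, and $a^{-1}z$ and exploiting the uniqueness of graphically reduced words up to shuffling. The other properties are essentially formal consequences of the normal form, once it is established.
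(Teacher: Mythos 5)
This theorem is not proved in the paper at all: it is imported verbatim from \cite{QM} (Proposition~8.2 and Corollary~8.7 there), so there is no internal proof to compare against. Your normal-form strategy is exactly the standard route (and the one the cited source follows): take Green's theorem on graphically reduced words and uniqueness up to shuffling as the engine, identify $d(x,y)$ with the syllable length of $x^{-1}y$ and the cliques with the cosets $gG_u$, and then verify the $K_4^-$/$K_{3,2}$ exclusions and the triangle/quadrangle conditions syllable by syllable; the identification of prisms with cosets $g(G_{u_1}\times\cdots\times G_{u_k})$ over cliques $\{u_1,\dots,u_k\}$ of $\Gamma$ is also the standard way to get the cubical dimension. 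The outline is correct, with one slip worth fixing in the step you yourself flag as delicate: in the quadrangle condition the relevant syllables are not ``the last syllables of the reduced words for $a^{-1}x$ and $a^{-1}y$'' but rather $s:=x^{-1}z$ and $t:=y^{-1}z$, each of which is the final syllable of some graphically reduced word for $a^{-1}z$ (because $d(a,x)=d(a,y)=d(a,z)-1$); uniqueness up to shuffling then shows that if $s,t$ lie in the same vertex-group they coincide (so $x=y$, a degenerate case to dispose of separately), and otherwise their vertex-groups are adjacent in $\Gamma$, so $a^{-1}z$ has a reduced expression ending in $ts=st$ and $w:=zs^{-1}t^{-1}$ works. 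Similarly, the $K_{3,2}$ exclusion should be made explicit: normalising one vertex of the $2$-side to $1$, the other has syllable length $2$, and its possible first syllables (equivalently, its common neighbours with $1$) number at most two by uniqueness up to shuffling, which is the precise content of your ``shuffle pattern'' remark. With these points written out, the argument is complete and coincides in substance with the proof in \cite{QM}.
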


\noindent
Notice that $\Gamma \mathcal{G}$ naturally acts by isometries on $\mathrm{QM}(\Gamma, \mathcal{G})$ by left-multiplication and that, as a Cayley graph, the edges of $X(\Gamma, \mathcal{G})$ are naturally labelled by generators, but also by vertices of $\Gamma$ (corresponding to the vertex-group which contains the generator). It turns out that any two edges of $\mathrm{QM}(\Gamma, \mathcal{G})$ must be labelled by the same vertex of $\Gamma$ if they belong to the same hyperplane (see \cite[Lemma~8.9]{QM}), which implies that the hyperplanes of $\mathrm{QM}(\Gamma, \mathcal{G})$ are also naturally labelled by vertices of $\Gamma$. An easy observation that will be needed later is \cite[Lemma~8.12]{QM}, namely:

\begin{lemma}\label{lem:LabelHyp}
Let $\Gamma$ be a simplicial graph and $\mathcal{G}$ a collection of groups indexed by $V(\Gamma)$. Two transverse hyperplanes are labelled by adjacent vertices of $\Gamma$, and two tangent hyperplanes are labelled by distinct vertices of $\Gamma$. 
\end{lemma}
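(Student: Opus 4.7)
The plan is to exploit that $\mathrm{QM}(\Gamma,\mathcal{G})$ is a Cayley graph and that its triangles, squares, and cliques have a very explicit group-theoretic origin. First I would verify that the labelling of edges by vertices of $\Gamma$ descends to a labelling of hyperplanes. For this it suffices to check the two elementary moves defining the hyperplane equivalence: in any triangle of $\mathrm{QM}(\Gamma,\mathcal{G})$, the three edges correspond to multiplications by three non-trivial elements of a common vertex-group (this is because triangles of the Cayley graph arise from relations $g \cdot h \cdot (gh)^{-1}=1$ with $g,h,gh \in G_u\setminus\{1\}$), so the three edges share the same label $u$; in any $4$-cycle, opposite edges are translates of one another by an element of an adjacent vertex-group, so opposite edges carry the same label. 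Consequently every hyperplane acquires a well-defined label in $V(\Gamma)$.

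For the transverse case, suppose $J_1,J_2$ are transverse hyperplanes with labels $u,v$. By definition there are edges $e_1\subset J_1$, $e_2\subset J_2$ spanning a $4$-cycle $C$. The $4$-cycle $C$ comes from a commutation relation $gh=hg$ with $g\in G_{u'}\setminus\{1\}$ and $h\in G_{v'}\setminus\{1\}$ for some $\{u',v'\}\in E(\Gamma)$, and the two pairs of opposite edges of $C$ carry labels $u'$ and $v'$. Since the labels are preserved along hyperplanes, $\{u,v\}=\{u',v'\}$, and in particular $u$ and $v$ are adjacent in $\Gamma$.

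For the tangent case, suppose $J_1,J_2$ are tangent hyperplanes sharing a vertex $x \in N(J_1)\cap N(J_2)$, so there exist edges $e_1\subset J_1$ and $e_2\subset J_2$ incident to $x$. If their labels coincided, say both equal to $u$, then $e_1$ and $e_2$ would both lie in the $G_u$-clique through $x$ (the clique $\{x\cdot g \mid g\in G_u\}$). But a clique is a complete subgraph in which every pair of edges sits in a common triangle, hence all edges of a clique belong to the same hyperplane. This would force $J_1=J_2$, contradicting tangency. Thus the labels of $J_1$ and $J_2$ must be distinct.

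The only technical point requiring care is the first step: knowing that triangles and squares in $\mathrm{QM}(\Gamma,\mathcal{G})$ have exactly the combinatorial form described. This is not deep but does rely on the structure of the Cayley graph and on the normal-form result for graphically reduced words recalled just before the statement; in particular, identifying all $4$-cycles as coming from single commutation relations (rather than longer combinatorial accidents) is what guarantees that opposite edges of a square carry the same label. Once this is established, both assertions of the lemma follow from the short arguments above.
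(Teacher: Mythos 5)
Your argument is correct in substance, and it is worth noting that the paper itself contains no proof of this statement: it is quoted directly from \cite[Lemma~8.12]{QM}, just as the preliminary fact you re-derive (that the edge-labelling by vertices of $\Gamma$ is constant on hyperplanes) is quoted from \cite[Lemma~8.9]{QM}. So there is no in-paper argument to compare with; your direct verification through the structure of triangles and $4$-cycles in the Cayley graph is exactly the expected one. Two small points deserve more care. First, in the normal-form analysis of a $4$-cycle $x_0,x_0a,x_0ab,x_0abc$ with $abcd=1$, besides the commutation square there is the degenerate case where two cyclically consecutive edges carry the same label; the relation then forces all four syllables into a single vertex-group, so the $4$-cycle lies in a coset $xG_u$, all its edges are labelled $u$, and they all belong to a single hyperplane. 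This case is not a commutation relation, so your phrase ``all $4$-cycles come from single commutation relations'' is slightly too strong; it is harmless only because the two transverse hyperplanes in the statement are distinct, so a $4$-cycle witnessing their transversality cannot lie in a clique, and the remaining case of the analysis is indeed the commutation square with distinct adjacent labels $u',v'$. Second, in the tangent case, your claim that every pair of edges of a clique sits in a common triangle fails for disjoint edges of a large clique; but the edges you actually use share the vertex $x$, and if they are distinct their other endpoints $xg_1,xg_2$ are adjacent since $g_1^{-1}g_2\in G_u\setminus\{1\}$, so they do span a triangle and your conclusion $J_1=J_2$, contradicting the (implicit) distinctness of tangent hyperplanes, goes through.
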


\noindent
Essentially by construction of the quasi-median graph, we have the following description of its geodesics \cite[Lemma~8.3]{QM}:

\begin{lemma}\label{lem:DistInX}
Let $\Gamma$ be a graph and $\mathcal{G}$ be a collection of groups indexed by $V(\Gamma)$. Fix two elements $g,h \in \Gamma \mathcal{G}$ and write $g^{-1}h$ as a graphically reduced word $u_1 \cdots u_n$. Then the sequence of vertices $$g,gu_1,gu_1u_2, \ldots, gu_1 \cdots u_n=h$$ defines a geodesic between $g$ and $h$ in $\mathrm{QM}(\Gamma, \mathcal{G})$. Conversely, any geodesic between $g$ and $h$ is labelled by a graphically reduced word representing $g^{-1}h$.
\end{lemma}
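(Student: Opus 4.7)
The plan is to reduce everything to the well-known description of word length in graph products, and then translate back into the Cayley-graph picture. Since left multiplication is a label-preserving isometry of $\mathrm{QM}(\Gamma,\mathcal{G})$, I may replace $(g,h)$ by $(1, g^{-1}h)$ and work with an arbitrary element $x \in \Gamma\mathcal{G}$. Any edge-path in the Cayley graph starting at $1$ is naturally labelled by a word $w$ in $\bigcup_{G \in \mathcal{G}} G \setminus \{1\}$, with the path ending at the element of $\Gamma\mathcal{G}$ that $w$ represents and with path-length equal to the number of syllables of $w$. Consequently $d(1,x)$ equals the minimal syllable-length of a word in the generators representing $x$.

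Next, I would invoke the normal form theory for graph products (Green's thesis \cite{GreenGP}; see also \cite{HsuWise,VanKampenGP}): every element of $\Gamma\mathcal{G}$ admits a graphically reduced expression, any two such expressions for the same element are related by a finite sequence of shuffling moves, and in particular all graphically reduced expressions for $x$ have the same syllable length $\ell(x)$. Since each of the three elementary moves (cancellation, amalgamation, shuffling) cannot increase syllable length, and since a non-graphically-reduced word admits some move that strictly shortens it (after a shuffle if necessary), iterating the moves transforms any word representing $x$ into a graphically reduced one of length $\leq$ its own. Hence the minimal syllable-length of a word representing $x$ equals $\ell(x)$, i.e.\ $d(1,x) = \ell(x)$.

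From this the first assertion is immediate: if $u_1 \cdots u_n$ is graphically reduced, then the successive partial products give a path of length $n = \ell(x) = d(1,x)$ from $1$ to $x$, which is therefore a geodesic; translating back by $g$ yields the stated geodesic from $g$ to $h$. For the converse, any geodesic from $g$ to $h$ reads as a word $w$ representing $g^{-1}h$ whose length equals $d(g,h) = \ell(g^{-1}h)$; if $w$ were not graphically reduced it would, by the previous paragraph, be strictly longer than $\ell(g^{-1}h)$, a contradiction. Hence $w$ is graphically reduced.

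The only nontrivial ingredient is the normal-form statement for graph products, which is standard; given that, the argument is a direct translation between Cayley-graph paths and words in the generators. I do not foresee a substantive obstacle, which is consistent with the paper's remark that the lemma holds ``essentially by construction''.
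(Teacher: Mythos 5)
Your argument is correct and matches the intended one: the paper does not reprove this lemma but cites \cite[Lemma~8.3]{QM}, and both there and in your write-up the content reduces to Green's normal form theorem for graph products (graphically reduced words realise the minimal syllable length, uniquely up to shuffling) combined with the observation that edge-paths from $g$ in $\mathrm{QM}(\Gamma,\mathcal{G})$ correspond to words in $\bigcup_{G\in\mathcal{G}} G\setminus\{1\}$ representing $g^{-1}h$, with path length equal to syllable length. No gap to report.
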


\noindent
Before turning to the proof of Theorem~\ref{thm:IntroGP}, we need a last preliminary lemma. Namely, we need to understand when the quasi-median graph of a graph product is hyperbolic. A characterisation is already given by \cite[Fact~8.33]{QM}, but we include a proof here with an estimate on the hyperbolicity constant. 

\begin{lemma}\label{lem:QMHyp}
Let $\Gamma$ be a simplicial graph and $\mathcal{G}$ a collection of groups indexed by $V(\Gamma)$. The quasi-median graph $\mathrm{QM}(\Gamma,\mathcal{G})$ is hyperbolic if and only if $\mathrm{clique}(\Gamma)$ is finite and $\Gamma$ has no induced $4$-cycle. If so, the graph is $5 \mathrm{clique}(\Gamma)$-hyperbolic. 
\end{lemma}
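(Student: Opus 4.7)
The plan is to prove necessity and sufficiency separately.

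\textbf{Necessity.} I argue contrapositively: if $\Gamma$ has an induced $4$-cycle or an infinite clique, then $\QM$ is not hyperbolic. For an induced $4$-cycle $a - b - c - d - a$, pick non-trivial $g_u \in G_u$ for each $u \in \{a,b,c,d\}$. Because $b, d$ are each adjacent in $\Gamma$ to both $a$ and $c$, the elements $\alpha = g_a g_c$ and $\beta = g_b g_d$ commute in $\Gamma \mathcal{G}$; moreover, each is loxodromic on $\QM$ of positive translation length, since their $n$-th powers admit graphically reduced expressions of length $2n$ by Lemma~\ref{lem:DistInX}. Hence $\langle \alpha, \beta \rangle \cong \mathbb{Z}^2$ quasi-isometrically embeds in $\QM$, contradicting hyperbolicity. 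If instead $\mathrm{clique}(\Gamma)$ is infinite, pick non-trivial $g_i \in G_{v_i}$ along vertices of an arbitrarily long clique: the two words $g_1 g_2 \cdots g_n$ and $g_n g_{n-1} \cdots g_1$ are both graphically reduced and yield two geodesics from $e$ to $g_1 \cdots g_n$ whose midpoints lie in disjoint sub-products, hence at distance linear in $n$, contradicting hyperbolicity.

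\textbf{Sufficiency.} Set $\kappa := \mathrm{clique}(\Gamma)$ and assume $\kappa < \infty$ and $\Gamma$ has no induced $4$-cycle. Fix a triple $x_1, x_2, x_3 \in \QM$. Proposition~\ref{prop:MedianTriangle} provides a median triangle $(a_1, a_2, a_3)$ whose gated hull $P$ is a prism, necessarily of dimension $\leq \kappa$, so $\mathrm{diam}(P) \leq \kappa$. Using Proposition~\ref{prop:Projection} and the fact that the hyperplanes crossing $P$ are precisely those pairwise separating $x_1, x_2, x_3$, one checks that $a_i$ is the projection $\pi_P(x_i)$ and that $d(x_i, a_i)$ equals the number of hyperplanes separating $x_i$ from $\{x_j, x_k\}$. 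Choose canonical geodesics
\[
\gamma^{\mathrm{can}}_{ij} := [x_i, a_i] \cdot [a_i, a_j] \cdot [a_j, x_j],
\]
with a common choice of the sub-geodesic $[x_i, a_i]$ between $\gamma^{\mathrm{can}}_{ij}$ and $\gamma^{\mathrm{can}}_{ik}$, and with $[a_i, a_j] \subset P$. A point $p$ on $[x_1, a_1]$ (resp. $[a_2, x_2]$) of $\gamma^{\mathrm{can}}_{12}$ also lies on $\gamma^{\mathrm{can}}_{13}$ (resp. $\gamma^{\mathrm{can}}_{23}$); and if $p \in [a_1, a_2] \subset P$, then $d(p, [a_1, a_3]) \leq \mathrm{diam}(P) \leq \kappa$. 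Hence the canonical triangle is $\kappa$-slim.

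To handle arbitrary geodesic sides, I would show that any two geodesics from $x$ to $y$ are at Hausdorff distance $\leq 2\kappa$. Such geodesics correspond via Lemma~\ref{lem:DistInX} to two graphically reduced words for $x^{-1}y$ differing by shuffles of commuting syllables, and their prefixes of a common length correspond to two order ideals $S, T$ of equal size in the partial order induced by non-commutation of syllable labels in $\Gamma$. For $a \in S \setminus T$ and $b \in T \setminus S$, the ideal closure conditions force the labels $v_a, v_b \in V(\Gamma)$ to be either equal or adjacent; then the no-induced-$4$-cycle hypothesis forces the labels appearing on at least one of the two sides of $S \triangle T$ to span a clique in $\Gamma$, and accounting for same-vertex-group cancellations yields the $2\kappa$ bound on distance between the corresponding vertices. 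Combining the canonical $\kappa$-slimness with this $2\kappa$ fellow-travelling bound gives $(\kappa + 2 \cdot 2\kappa) = 5\kappa$-slimness for arbitrary geodesic triangles, hence $5\kappa$-hyperbolicity.

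The main obstacle is this final step: translating the no-induced-$4$-cycle hypothesis into the uniform $2\kappa$ bound on Hausdorff distance between two same-endpoint geodesics. The pairwise commutation relation on the labels of $S \triangle T$ is immediate from the ideal conditions, but extracting a clique of bounded size and accounting for syllables carrying repeated labels requires some combinatorial bookkeeping.
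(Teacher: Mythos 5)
Your overall architecture matches the paper's (necessity via an embedded $\mathbb{Z}^2$ for an induced $4$-cycle and via arbitrarily large cliques; sufficiency by combining $\kappa$-slimness of the canonical median-triangle with a $2\kappa$ bound for bigons, giving $2\kappa+\kappa+2\kappa=5\kappa$), but there is a genuine gap: the bigon statement --- any two geodesics with the same endpoints are at Hausdorff distance at most $2\,\mathrm{clique}(\Gamma)$ --- is exactly the step where the no-induced-$4$-cycle hypothesis must do its work, and you do not prove it; you explicitly flag it as the unresolved ``main obstacle''. In the paper this is Fact~\ref{fact:Bigon}, proved geometrically: by \cite[Lemma~2.114]{QM} a vertex $x$ on one geodesic and a vertex on the other are opposite corners of an isometrically embedded $[0,a]\times[0,a]$; if $a>\mathrm{clique}(\Gamma)$, each side of the square is crossed by two tangent hyperplanes, and Lemma~\ref{lem:LabelHyp} (transverse implies adjacent labels, tangent implies distinct labels) produces an induced $4$-cycle in $\Gamma$, a contradiction. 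Since the sufficiency direction stands or falls with this fact, your proof is incomplete as written.

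That said, your sketched combinatorial route can be pushed through, and it is a genuinely different (normal-form) argument from the paper's hyperplane one. Writing the two length-$k$ prefixes as order ideals $S,T$ of the syllable poset: a syllable of $S\setminus T$ and a syllable of $T\setminus S$ cannot carry the same label (they would be comparable, violating one of the ideal conditions), so their labels are distinct and, being incomparable, adjacent in $\Gamma$; absence of induced $4$-cycles then forces the labels on one side, say $S\setminus T$, to be pairwise adjacent; moreover no label can repeat inside $S\setminus T$, since in a graphically reduced word two syllables with the same label $w$ are separated by a syllable whose label is distinct from and non-adjacent to $w$, and that separating syllable is forced into $S\setminus T$ as well, contradicting pairwise adjacency. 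Hence $|S\setminus T|=|T\setminus S|\leq\mathrm{clique}(\Gamma)$, and since $S\cap T$ is an ideal one can factor the prefixes so that the corresponding vertices are at distance at most $2\,\mathrm{clique}(\Gamma)$, which is even a synchronous fellow-travelling bound. With that supplied, your triangle bookkeeping gives the stated $5\,\mathrm{clique}(\Gamma)$. One minor further point: in the $4$-cycle case of necessity, ``two commuting loxodromics'' is not by itself a proof of a quasi-isometrically embedded $\mathbb{Z}^2$; as in the paper, one should check directly (using that $a\not\sim c$ and $b\not\sim d$) that $(g_ag_c)^m(g_bg_d)^n$ is graphically reduced, so the embedding is isometric up to scale.
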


\begin{proof}
If $\Gamma$ contains a clique with $n$ vertices, then $\mathrm{QM}(\Gamma,\mathcal{G})$ contains an isometrically embedded product of $n$ complete graphs, and a fortiori of $[0,1]^n$. Therefore, if $\mathrm{clique}(\Gamma)$ is infinite, i.e.\ if $n$ can be chosen arbitrarily large, then $\mathrm{QM}(\Gamma, \mathcal{G})$ cannot be hyperbolic. If $\Gamma$ contains an induced $4$-cycle $(a,b,c,d)$, then, fixing non-trivial elements $p \in G_a$, $q \in G_b$, $r \in G_c$, and $s \in G_d$, the map $(i,j) \mapsto (pr)^i(qs)^j$ defines an isometric embedding $\mathbb{Z}^2 \to \mathrm{QM}(\Gamma,\mathcal{G})$. Therefore, $\mathrm{QM}(\Gamma,\mathcal{G})$ cannot be hyperbolic.

\medskip \noindent
Conversely, assume that $\Gamma$ has a finite clique number and has no induced $4$-cycle. Under these assumptions, observe that:

\begin{fact}\label{fact:Bigon}
The Hausdorff distance between two geodesics with the same endpoints is at most $2 \mathrm{clique}(\Gamma)$. 
\end{fact}

\noindent
Let $\alpha,\beta$ be two geodesics with the same endpoints and $x \in \alpha$ a vertex. It follows from \cite[Lemma~2.114]{QM}) that there exists an isometrically embedded copy of $[0,a] \times [0,a]$ in $\mathrm{QM}(\Gamma,\mathcal{G})$ with $(0,0)=x$ and $(a,a) \in \beta$. If $a>\mathrm{clique}(\Gamma)$, then there are at least two non-transverse hyperplanes separating $(0,0)$ and $(0,a)$. Because the hyperplanes in a maximal collection of pairwise non-transverse hyperplanes separating two vertices are successively tangent, it follows that there exist two tangent hyperplanes separating $(0,0)$ and $(0,a)$. Similarly, there must exist two tangent hyperplanes separating $(0,0)$ and $(a,0)$. Because these two pairs of hyperplanes are transverse, we deduce from Lemma~\ref{lem:LabelHyp} that $\Gamma$ contains an induced $4$-cycle, a contradiction. Thus, we must have $a \leq \mathrm{clique}(\Gamma)$, which implies that $d(p,\beta) \leq 2 \mathrm{clique}(\Gamma)$, and which finally prove our fact.

\medskip \noindent
Now, fix three vertices $a,b,c$ and three geodesics $[a,b]$, $[b,c]$, $[a,c]$. Let $(x,y,z)$ be the median triangle of $(a,b,c)$ and let $[a,x]$, $[b,y]$, $[c,z]$, $[x,y]$, $[y,z]$, $[x,z]$ be geodesics. Given a vertex $p \in [a,b]$, it follows from Fact~\ref{fact:Bigon} that there exists some $q \in [a,x] \cup [x,y] \cup [y,b]$ such that $d(p,q) \leq 2 \mathrm{clique}(\Gamma)$. If $q \in [a,x] \cup [y,b]$, then we can apply Fact~\ref{fact:Bigon} again and deduce that $p$ lies in the $4 \mathrm{clique}(\Gamma)$-neighbourhood of $[b,c] \cup [c,a]$. Otherwise, if $q \in [x,y]$, then it follows from Proposition~\ref{prop:MedianTriangle} and Lemma~\ref{lem:LabelHyp} that $d(q,y) \leq \mathrm{clique}(\Gamma)$. Applying Fact~\ref{fact:Bigon} again, we conclude that $p$ lies in the $5\mathrm{clique}(\Gamma)$-neighbourhood of $[b,c] \cup [c,a]$. 
\end{proof}

\begin{proof}[Proof of Theorem~\ref{thm:IntroGP}.]
The first assertion follows from Corollary~\ref{cor:IntroRational}, from the fact that every vertex in $\mathrm{QM}(\Gamma, \mathcal{G})$ belongs to exactly $|V(\Gamma)|$ cliques, and from the observation that $\mathrm{QM}(\Gamma, \mathcal{G})$ has no cut-vertex as justified by Claim~\ref{claim:CutVertex} and by the fact that the clique-link of a vertex in $\mathrm{QM}(\Gamma, \mathcal{G})$ is isomorphic to $\Gamma$. 

\medskip \noindent
The second assertion follows from Corollary~\ref{cor:HyperbolicCase} and Lemma~\ref{lem:QMHyp}. The third assertion follows from Theorem~\ref{thm:AsymptoticLength} and our next observation:

\begin{claim}
If the groups in $\mathcal{G}$ have solvable word problems, then $\mathrm{QM}(\Gamma, \mathcal{G})$ is constructible. 
\end{claim}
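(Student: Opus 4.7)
The plan is to verify the two clauses of constructibility — local and geodesic — independently, in both cases reducing to the manipulation of graphically reduced words in $\Gamma\mathcal{G}$.

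For local constructibility at a vertex $g \in \Gamma\mathcal{G}$, I use the fact that $\QM$ is the Cayley graph with generating set $\bigcup_{u \in V(\Gamma)} (G_u \setminus \{1\})$, so the cliques through $g$ are exactly the finite family of cosets $\{gG_u : u \in V(\Gamma)\}$, and two distinct such cosets meet only at $g$. It therefore suffices to exhibit, for each $u \in V(\Gamma)$, a single non-identity element $s_u \in G_u$: since $\Gamma$ is finite and every $G_u$ is non-trivial, I can enumerate the generators of $G_u$ in some fixed order and, using the solvable word problem in $G_u$, pick the first one not representing the identity. Outputting $\mathcal{N}(g) := \{gs_u : u \in V(\Gamma)\}$ then yields a finite set that intersects each clique through $g$ in exactly one vertex, as required.

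For geodesic constructibility, I lean on Lemma~\ref{lem:DistInX}: the vertices of $I(g,h)$ are exactly the elements $gu_{i_1} \cdots u_{i_k}$ where $u_{i_1} \cdots u_{i_k}$ is a prefix of some graphically reduced word for $g^{-1}h$, and any two such reduced words differ by a sequence of shuffles of commuting adjacent syllables. Starting from an arbitrary word representing $g^{-1}h$, I produce a graphically reduced form $u_1 \cdots u_n$ by iterating cancellation, amalgamation, and shuffling — the first two using only the solvable word problem in a single vertex-group to detect trivial syllables and combine adjacent ones, the third depending only on the edges of $\Gamma$; the process terminates because the syllable length is non-increasing and only finitely many shuffles remain once it stabilises. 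I then enumerate all shuffle-equivalents of $u_1 \cdots u_n$ (a finite list, bounded by $n!$), form the collection of all elements $gu_{\sigma(1)}\cdots u_{\sigma(k)}$ for $0 \leq k \leq n$ and every admissible shuffle $\sigma$, and finally deduplicate this collection using the word problem of $\Gamma\mathcal{G}$ itself — solvable as a classical consequence of the solvable word problems in the $G_u$ (cf.\ \cite{GreenGP}). By Lemma~\ref{lem:DistInX}, the resulting set is precisely $I(g,h)$.

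The main obstacle is minor and bookkeeping in nature: one must ensure that reduction to graphically reduced form is algorithmically effective and that every geodesic representative is obtained by shuffling a single reduced form. Both facts are furnished by the classical normal form theory of graph products, so no further median-geometric input beyond Lemma~\ref{lem:DistInX} is required.
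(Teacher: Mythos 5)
Your proposal is correct and follows essentially the same route as the paper: local constructibility via a fixed non-trivial element $s_u \in G_u$ for each $u \in V(\Gamma)$, giving the set $\{gs_u \mid u \in V(\Gamma)\}$, and geodesic constructibility via Lemma~\ref{lem:DistInX} together with the algorithmic enumeration of graphically reduced representatives (made possible by the solvable word problems of the vertex-groups). The extra detail you supply on reducing words and enumerating shuffle-equivalents is just an explicit unpacking of what the paper leaves implicit.
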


\noindent
Algorithmically, we think of $\mathrm{QM}(\Gamma, \mathcal{G})$ whose vertices are words in $\Gamma \mathcal{G}$, two words representing the same vertex if they are equal in $\Gamma \mathcal{G}$ (or equivalently, if they admit identical graphically reduced representatives, which can be checked algorithmically if the groups in $\mathcal{G}$ have solvable word problems); and whose edges connect two words if one can be obtained from the other by right-multiplying by a non-trivial element in some vertex-group. The fact that $\mathrm{QM}(\Gamma, \mathcal{G})$ is geodesically constructible follows from Lemma~\ref{lem:DistInX} (and from the fact that graphically reduced representatives of an element of $\Gamma \mathcal{G}$ can be easily enumerated because the groups in $\mathcal{G}$ have solvable word problems). In order to show that $\mathrm{QM}(\Gamma, \mathcal{G})$ is also locally constructible, fix a non-trivial element $s_u \in G_u$ for every $u \in V(\Gamma)$. Then, for every word $g$, $\{gs_u \mid u \in V(\Gamma)\}$ is a set of neighbours of $g$ such that every clique containing $g$ contains exactly one of these neighbours. We conclude that $\mathrm{QM}(\Gamma, \mathcal{G})$ is constructible, as desired. 
\end{proof}

\addcontentsline{toc}{section}{References}

\bibliographystyle{alpha}
{\footnotesize\bibliography{TranslationLengths}}

\Address

%

\end{document}